\documentclass[11pt]{amsart}

\addtolength{\hoffset}{-1.5cm}
\addtolength{\voffset}{-1 cm}
\addtolength{\textwidth}{3cm}
\addtolength{\textheight}{1.5 cm}

\newcommand{\preprint}[1]{}

\newcommand{\hide}[1]{}

\usepackage{amssymb}
\usepackage{amsbsy}
\usepackage{amscd}
\usepackage{amsmath}
\usepackage{amsthm}
\usepackage{bbold}
\usepackage{mathrsfs}
\input xy
\xyoption{all}

\numberwithin{equation}{section}

\theoremstyle{plain}
\newtheorem{thm}{Theorem}[section]
\newtheorem{prop}[thm]{Proposition}

\newtheorem{cor}[thm]{Corollary}
\newtheorem{lem}[thm]{Lemma}

\newtheorem{assumption}[thm]{Assumption}

\theoremstyle{definition}
\newtheorem{example}[thm]{Example}

\newtheorem{rem}[thm]{Remark}

\theoremstyle{definition}
\newtheorem{defi}[thm]{Definition}

\renewcommand{\span}{\mbox{span}}

\newcommand{\A}{{\mathscr A}}

\newcommand{\B}{{\mathscr B}}

\newcommand{\C}{{\mathscr C}}
\newcommand{\CC}{{\mathbb C}}

\newcommand{\E}{{\mathscr E}}

\newcommand{\G}{{\mathscr G}}

\renewcommand{\H}{{\mathscr H}}

\newcommand{\I}{{\mathscr I}}
\newcommand{\II}{{\mathbb I}}

\newcommand{\K}{{\mathscr K}}

\newcommand{\LB}{{\mathscr L}}
\newcommand{\LLambda}{{\mathbb{\Lambda}}}

\newcommand{\fM}{{\mathfrak M}}

\newcommand{\M}{{\mathscr M}}

\renewcommand{\P}{{\mathscr P}}

\newcommand{\PP}{{\mathbb P}}

\newcommand{\Q}{{\mathscr Q}}

\newcommand{\RR}{{\mathbb R}}

\newcommand{\cS}{{\mathcal{S}}}

\newcommand{\T}{{\mathscr T}}

\newcommand{\U}{{\mathscr U}}

\newcommand{\V}{{\mathscr V}}
\newcommand{\W}{{\mathscr W}}

\newcommand{\X}{{\mathscr X}}
\newcommand{\Y}{{\mathscr Y}}

\newcommand{\Z}{{\mathscr Z}}

\newcommand{\RealNumbers}{{\mathbb R}}
\newcommand{\Integers}{{\mathbb Z}}
\newcommand{\ZZ}{{\mathbb Z}}

\newcommand{\linsys}[1]{{\mid}#1{\mid}}

\newcommand{\Mon}{{\rm Mon}}

\newcommand{\IsomRightArrow}{\widetilde{\to}}

\newcommand{\RightArrowOf}[1]{\stackrel{#1}{\rightarrow}}
\newcommand{\LeftArrowOf}[1]{\stackrel{#1}{\leftarrow}}

\newcommand{\StructureSheaf}[1]{{\mathscr O}_{#1}}
\newcommand{\EndProof}{\hfill  $\Box$}
\newcommand{\restricted}[2]{#1_{\mid_{#2}}}

\newcommand{\rank}{{\rm rank}}

\renewcommand{\Im}{{\rm Im}}
\renewcommand{\Re}{{\rm Re}}
\newcommand{\Pic}{{\rm Pic}}

\newcommand{\Ext}{{\rm Ext}}

\newcommand{\Hom}{{\rm Hom}}

\newcommand{\SheafHom}{{\mathscr H}om}
\newcommand{\SheafEnd}{{\mathscr E}nd}
\newcommand{\SheafExt}{{\mathscr E}xt}

\newcommand{\doubletilde}[1]{\stackrel{\approx}{#1}}
\newcommand{\doublebar}[1]{\bar{\bar{#1}}}

\newcommand{\ringTw}{\mathring{T}w^k_\Lambda}
\newcommand{\ringI}{\mathring{\I}}

\begin{document}
\title[]
{Rigid hyperholomorphic sheaves remain rigid along twistor deformations of the underlying hypark\"{a}hler manifold}
\date{\today}
\author{E. Markman}
\address{Department of Mathematics and Statistics, 
University of Massachusetts, Amherst, MA 01003}
\email{markman@math.umass.edu}
\author{S. Mehrotra}
\address{Facultad de Matem\'aticas, PUC Chile, 4860 Vicu\~na Mackenna, Santiago, Chile}
\email{smehrotra@mat.uc.cl}
\author{M. Verbitsky}
\address{Laboratory of Algebraic Geometry, National Research University, HSE, Department of Mathematics, 7 Vavilova Street, Moscow, Russia} 
\email{verbit@mccme.ru}

\begin{abstract}
Let $S$ be a $K3$ surface 
and $M$ a smooth and projective $2n$-dimensional moduli space of stable coherent sheaves on $S$. 
Over $M\times M$ there exists a rank $2n-2$ reflexive hyperholomorphic sheaf $E_M$, whose fiber over a non-diagonal point $(F_1,F_2)$ is 
$\Ext^1_S(F_1,F_2)$. The sheaf $E_M$ can be deformed along some twistor path to a sheaf $E_X$ over the cartesian square $X\times X$ of every
K\"{a}hler manifold $X$ deformation equivalent to $M$. We prove that $E_X$ is infinitesimally rigid, and the isomorphism class 
of the Azumaya algebra $\SheafEnd(E_X)$ is independent of the twistor path chosen. This verifies conjectures in 
\cite{torelli,generalized-deformations} and renders the results of these two papers unconditional.
\end{abstract}

\maketitle

\setcounter{tocdepth}{4}
\tableofcontents

%
\section{Introduction} 
%
\subsection{Twistor families and hyperholomorphic sheaves} 
An {\em irreducible holomorphic symplectic manifold} is a simply connected compact K\"{a}hler manifold $X$, such that 
$H^0(X,\Omega^2_X)$ is spanned by an everywhere non-degenerate holomorphic two form. 
The second cohomology $H^2(X,\Integers)$ of such a manifold is endowed with an integral non-degenerate symmetric bilinear pairing of signature $(3,b_2(X)-3)$, known as the {\em Beauville-Bogomolov-Fujiki} pairing \cite{beauville}.
Fix a lattice $\Lambda$  isometric to the second integral cohomology of an irreducible holomorphic symplectic manifolds $X$.
A {\em marking} of $X$ is an isometry $\eta:H^2(X,\Integers)\rightarrow \Lambda$. 
Two marked pairs $(X_i,\eta_i)$ are said to be {\em isomorphic}, if there exists an isomorphism $g:X_1\rightarrow X_2$, such that
$\eta_2=\eta_1\circ g^*$.
The moduli space $\fM_\Lambda$ of isomorphism classes of marked irreducible holomorphic symplectic manifolds
is a non-Hausdorff complex manifold of dimension $\rank(\Lambda)-2$ \cite{huybrects-basic-results}. 

Given a K\"{a}hler class $\omega$ on an irreducible holomorphic symplectic manifold $X$, denote by 
$\pi:\X\rightarrow \PP^1_\omega$ the associated twistor family. 
A choice of a marking $\eta$ for $X$ determines one for each fiber of $\pi$, since the projective line $\PP^1_\omega$ is simply connected. The associated classifying morphism $\PP^1_\omega\rightarrow \fM_\Lambda$ is an embedding 
\cite[1.17]{huybrects-basic-results}.
We refer to the $d$-th fiber product 
$\Pi:\X^d_\pi\rightarrow \PP^1_\omega$ of $\X$ over $\PP^1_\omega$ as the {\em diagonal twistor family} of $X^d$ associated to $\omega$.
Denote by $\tilde{\omega}$ the K\"{a}hler class $\sum_{i=1}^d \pi_i^*\omega$ over $X^d$, where $\pi_i:X^d\rightarrow X$ is the projection onto the $i$-th factor.

\begin{defi}
A {\em reflexive sheaf of Azumaya $\StructureSheaf{X}$-algebras of rank $r$} over a complex manifold $X$ is a sheaf $A$ of coherent 
$\StructureSheaf{X}$-modules, with a global section $1_A$ and an associative multiplication $A\otimes_{\StructureSheaf{X}} A\rightarrow A$ with identity $1_A$, admitting an open covering $\{U_\alpha\}$ of $X$  and an isomorphism of unital associative algebras of the restriction of $A$ to each $U_\alpha$ with $\SheafEnd(E_\alpha)$, for some reflexive sheaf $E_\alpha$ of rank $r$ on $U_\alpha$.
\end{defi}

We will use the term 
{\em Azumaya algebra} as an abbreviation for the term a reflexive sheaf of Azumaya $\StructureSheaf{X}$-algebras.

\begin{defi}
Let $X$ be  a $d$-dimensional compact K\"{a}hler manifold  
and $\omega$ a K\"{a}hler class on $X$. The {\em $\omega$-degree} of a  coherent sheaf $G$ on $X$ is $\deg_\omega(G):=\int_X\omega^{d-1}c_1(G)$. A twisted coherent torsion free sheaf $E$ is {\em $\omega$-slope-stable}, if for every subsheaf $F$ of $E$,
satisfying $0<\rank(F)<\rank(E)$, the inequality $\deg_\omega(\SheafHom(E,F))<0$ holds. $E$ is {\em $\omega$-slope-semistable}, 
if the latter holds with $<$ replaced by $\leq$. A reflexive torsion free sheaf $E$  is {\em $\omega$-slope-polystable}, if it is $\omega$-slope-semistable and it decomposes as a direct sum of $\omega$-slope-stable sheaves.
\end{defi}

The following  is a slight generalization of \cite[Theorem 3.19]{kaledin-verbitski-book} of Verbitsky for twisted reflexive sheaves.

\begin{thm}
\label{thm-twistor-deformation-of-a-sheaf}
\cite[Cor. 6.11]{markman-hodge}
Let $E$ be a $\tilde{\omega}$-slope-stable reflexive possibly twisted sheaf on $X^d$. Assume that the parallel transport of $c_2(\SheafEnd(E))$ remains of Hodge type along the local system $R^4\Pi_* \ZZ$ over $\PP^1_\omega$. Then there exists a sheaf $\E$ over
$\X^d_\pi$, which restricts to $X^d$ as $E$, and whose restriction $E_t$ to the fiber of $\Pi$ over $t\in \PP^1_\omega$  is 
$\tilde{\omega}_t$-slope-stable, for all $t\in \PP^1_\omega$, where $\omega_t$ is the canonical K\"{a}hler class on the fiber $X_t$ of $\pi$ over $t$. Furthermore, the reflexive sheaf of Azumaya algebras $\SheafEnd(\E)$ depends canonically on $E$. 
\end{thm}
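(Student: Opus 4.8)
The plan is to reduce the statement to Verbitsky's theory of hyperholomorphic bundles in the form developed in \cite{kaledin-verbitski-book}, layering the twisted, reflexive, and ``canonical dependence'' refinements on top of it, using three analytic inputs: the Bando--Siu existence theorem for Hermitian--Einstein metrics on slope-stable reflexive sheaves, Verbitsky's criterion recognizing when such a connection is hyperholomorphic, and the Bando--Siu removable-singularity theorem for Hermitian--Einstein bundles across analytic subsets of codimension $\geq 3$. First, let $Z\subset X^d$ be the singular locus of $E$ (closed analytic, of codimension $\geq 3$) and $U=X^d\setminus Z$, so $E|_U$ is a locally free $\alpha$-twisted sheaf, $\alpha$ the Brauer class of $E$. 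By the twisted Bando--Siu theorem the $\tilde\omega$-slope-stable sheaf $E$ carries a Hermitian--Einstein metric $h$ on $U$, unique up to a positive scalar, with $\Lambda_{\tilde\omega}F_h=\lambda\cdot\mathrm{id}$ and with integrable Chern--Weil representatives of $c_1$ and $c_2$; it induces a genuine (untwisted) Hermitian--Einstein metric on $\SheafEnd(E)|_U$ with $\Lambda_{\tilde\omega}F_{\SheafEnd(E)}=0$. (For the twisted input one works in local trivializations, where $h$ is unique up to scalar and the transitions lie in $\mathrm{PU}(r)$, or simply applies the untwisted theorem to the stable reflexive algebra sheaf $\SheafEnd(E)$.) Scaling $h$ does not change the Chern connection, so the resulting connection on $E|_U$ and the induced one on $\SheafEnd(E)|_U$ are canonically attached to $E$.

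\medskip

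Second, I would invoke Verbitsky's theorem. Since $c_1(\SheafEnd(E))=0$ is trivially of Hodge type on every fibre and $c_2(\SheafEnd(E))$ remains of Hodge type along $R^4\Pi_*\ZZ$ by hypothesis, the Hermitian--Einstein connection on $\SheafEnd(E)|_U$ is hyperholomorphic: its curvature is of type $(1,1)$ with respect to the complex structure $I_t$ induced from every $t\in\PP^1_\omega$, i.e.\ $F_{\SheafEnd(E)}$ is $SU(2)$-invariant. (The mechanism is the standard one: the Chern--Weil integral $\int_{X^d}\mathrm{tr}(F\wedge F)\wedge\tilde\omega^{d-2}$ and its analogues for the other $I_t$ are computed by $c_1^2$ and $c_2$; the Hodge-type hypothesis annihilates the offending term, and with the Hermitian--Einstein equation and the pointwise positivity of $\|F^{2,0}_{I_t}\|^2$ this forces $F^{2,0}_{I_t}=0$ for all $t$.) Hence the connection on $E|_U$ is hyperholomorphic in the twisted sense, its curvature being of type $(1,1)$ for every $I_t$ modulo the central trace term, which is absorbed into the twisting gerbe over the twistor family. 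By Verbitsky's spreading-out, the integrable $\bar\partial$-operator defined by this connection makes the pullback of $E|_U$ into a holomorphic $\alpha$-twisted sheaf on $\X^d_\pi\setminus\tilde Z$, where $\tilde Z$ is the deformation over $\PP^1_\omega$ of the singular locus $Z$ (trianalytic by Verbitsky's theory, hence of codimension $\geq 3$ in every fibre); likewise $\SheafEnd(E)|_U$ gives a genuine holomorphic sheaf of Azumaya algebras there. Finally, the Bando--Siu removable-singularity theorem extends these across $\tilde Z$ to a reflexive $\alpha$-twisted coherent sheaf $\E$ on $\X^d_\pi$ and a reflexive sheaf of Azumaya algebras $\SheafEnd(\E)$; restricting over the base point of $\PP^1_\omega$ recovers $E$, since there the hyperholomorphic connection is the Chern connection of the given holomorphic structure.

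\medskip

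Third, for each $t$ the sheaf $E_t=\E|_{X_t^d}$ carries on $U_t$ the Hermitian--Einstein connection, whose mean curvature $\Lambda_{\tilde\omega_t}F$ is constant by $SU(2)$-invariance, so $E_t$ is $\tilde\omega_t$-slope-polystable. To pass to stability, apply the hyperk\"ahler Weitzenb\"ock identity of \cite{kaledin-verbitski-book}: on a compact hyperk\"ahler manifold an $I_t$-holomorphic section of a hyperholomorphic bundle is parallel for the Hermitian--Einstein connection (the curvature term in the Bochner formula vanishes by $SU(2)$-invariance), and with the Bando--Siu estimates this persists for the reflexive algebra sheaf $\SheafEnd(\E)$. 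Thus $\End(E_t)=H^0(X_t^d,\SheafEnd(E_t))$ is identified with the space of global parallel sections of $\SheafEnd(\E)$ along $X_t^d$, which is independent of $t$ by parallel transport over $\PP^1_\omega$ and equals $\End(E)=\CC$ since $E$ is slope-stable; a polystable sheaf with only scalar endomorphisms is stable, so $E_t$ is $\tilde\omega_t$-slope-stable for all $t$. Finally, every step is intrinsic to $E$ — the Chern connection is insensitive to the scalar ambiguity in $h$, the holomorphic spreading-out is canonical, and the reflexive hull is functorial — so an isomorphism $E\cong E'$ induces an isomorphism $\SheafEnd(\E)\cong\SheafEnd(\E')$, i.e.\ $\SheafEnd(\E)$ depends canonically on $E$.

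\medskip

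\textbf{Main obstacle.} The formal skeleton above is clear; the real work is analytic. One must run Bando--Siu existence and the removable-singularity extension simultaneously in the twisted setting and across the codimension-$\geq 3$ trianalytic locus $\tilde Z$, with enough uniformity in $t$ that the hyperholomorphic gluing is compatible with the fibrewise singular extensions; establishing the trianalyticity of $Z$ and the coherence of $\tilde Z$ inside $\X^d_\pi$ is part of this. A secondary delicate point is the polystable-to-stable upgrade, which requires the ``holomorphic implies parallel'' Weitzenb\"ock principle for the singular Azumaya sheaf $\SheafEnd(\E)$ rather than for a smooth bundle.
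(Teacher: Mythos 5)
The paper does not prove this theorem: it is imported verbatim as \cite[Cor.~6.11]{markman-hodge}, so there is no internal argument to compare against. That said, your sketch reproduces precisely the mechanism the paper itself recalls in the proof of Proposition~\ref{prop-relative-twistor-deformation-of-a-sheaf} — Bando--Siu existence of an admissible Hermitian--Einstein metric on the stable reflexive (Azumaya) sheaf away from the singular locus, Verbitsky's criterion that the Hodge-type hypothesis on $c_2(\SheafEnd E)$ forces $SU(2)$-invariance of the curvature, holding the connection fixed while rotating the complex structure over $\PP^1_\omega$, and then Bando--Siu/Siu removable-singularity extension across the codimension-$\ge 3$ trianalytic locus — and it passes to the twisted case exactly the way the cited reference does, namely by working with the untwisted Azumaya algebra $\SheafEnd(E)$. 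Your polystable-to-stable upgrade via the Weitzenb\"ock identity (parallel sections independent of $t$ since the connection is fixed, forcing $\End(E_t)=\CC$) is also the standard one; the subtlety you flag about running these estimates in the reflexive twisted setting and proving trianalyticity of the singular locus is real, and is the substance of Verbitsky's \cite[Theorem~3.19]{kaledin-verbitski-book} and its extension in \cite{markman-hodge}, but it is not a gap in your plan — it is the content of the cited results. One small inaccuracy: the independence of the space of parallel sections from $t$ is immediate because the connection on the underlying $C^\infty$-bundle does not vary; no parallel transport over $\PP^1_\omega$ is involved.
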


\begin{defi}
A reflexive sheaf $E$ satisfying the hypothesis of  Theorem \ref{thm-twistor-deformation-of-a-sheaf} is said to be {\em $\omega$-hyperholomorphic.}
\end{defi}

The following important fact is known, unfortunately, only in the locally free case.

\begin{thm}
\label{thm-constant-coherent-cohomology-dimension}
\cite[Cor. 8.1]{verbitsky-1996}
Let $E$ be a locally free $\omega$-hyperholomorphic possibly twisted sheaf over $X^d$. 
Then the dimension of $H^i(X^d,\SheafEnd(E_t))$ is independent of the point $t\in \PP^1_\omega$, for all $i\geq 0$. 
\end{thm}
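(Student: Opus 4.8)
The statement is Verbitsky's \cite[Cor.~8.1]{verbitsky-1996}, so the task is really to recall his argument and check that the twist does not interfere. The plan is to reduce everything to Hodge theory for the hyperholomorphic connection on the endomorphism bundle. First I would pass from the twisted sheaf $E$ to $W:=\SheafEnd(E)$, which is a genuine untwisted holomorphic vector bundle on $X^d$ (the twist of $E$ cancels in its sheaf of endomorphisms), and to the quaternionic picture: by Yau's theorem the K\"ahler class $\tilde\omega$ is represented by a Ricci-flat, hence hyperk\"ahler, metric $g$ on the $C^\infty$ manifold underlying $X^d$, whose twistor sphere is $\PP^1_\omega$; a point $t\in\PP^1_\omega$ corresponds to a complex structure $I_t=a_tI+b_tJ+c_tK$ with $a_t^2+b_t^2+c_t^2=1$, and $X^d_t:=(X^d,I_t)$ is the fiber of $\Pi$ over $t$, with K\"ahler class $\tilde\omega_t$. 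Since $E$ is $\tilde\omega$-slope-stable it carries a Hermitian--Einstein metric $h$ (twisted Uhlenbeck--Yau), and since $c_1(W)=0$ while $c_2(W)=c_2(\SheafEnd(E))$ remains of Hodge type along $R^4\Pi_*\ZZ$ --- equivalently, is $SU(2)$-invariant in $H^4(X^d,\RR)$ --- Verbitsky's criterion for hyperholomorphic bundles shows that the metric connection $\nabla$ induced by $h$ on $W$ has curvature $F_\nabla$ of type $(1,1)$ with respect to every $I_t$ simultaneously; that is, $\nabla$ is hyperholomorphic, and by construction the holomorphic structure it induces on $(W,I_t)$ is that of $\SheafEnd(E_t)$.

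Second, I would run fibrewise Hodge theory. For fixed $t$, $F_\nabla$ being of type $(1,1)$ and $\nabla$ being metric forces $\nabla$ to be the Chern connection of the holomorphic bundle $\SheafEnd(E_t)$, so the K\"ahler identities give $\Delta_{d_\nabla}=2\Delta_{\bar\partial_{I_t}}$ on the space $A^\bullet(X^d,W)$ of $W$-valued forms. Hodge theory then identifies the finite-dimensional graded space $\mathcal H^\bullet:=\ker\Delta_{d_\nabla}$ of $d_\nabla$-harmonic forms: for each $k$ one has $\mathcal H^k\cong\bigoplus_{p+q=k}H^q(X^d,\Omega^p_{X^d_t}\otimes W_t)$, and in particular the part $\mathcal H^{0,q}_{I_t}\subset\mathcal H^q$ of forms of type $(0,q)$ with respect to $I_t$ is isomorphic to $H^q(X^d,\SheafEnd(E_t))$. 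Hence it suffices to prove that $\dim\mathcal H^{0,q}_{I_t}$ is independent of $t$.

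The crucial point --- and the step I expect to be the main obstacle --- is that the harmonic space $\mathcal H^\bullet$ is invariant under the natural action of $SU(2)$ on $A^\bullet(X^d,W)=\Gamma(\Lambda^\bullet T^*X^d\otimes W)$ that comes from the quaternionic structure and acts on the form factor $\Lambda^\bullet T^*X^d$ alone. Because $F_\nabla$ is of type $(1,1)$ with respect to all $I_t$ at once, it lies in the $SU(2)$-invariant part of $\Lambda^2\otimes\End(W)$, and a Weitzenb\"ock--Bochner computation --- the technical heart of Verbitsky's argument --- then shows that $\Delta_{d_\nabla}$ commutes with the $SU(2)$-action, so that $\mathcal H^\bullet$ is an $SU(2)$-subrepresentation of $A^\bullet(X^d,W)$. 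Local freeness of $E$ enters precisely here: it makes $\nabla$ a smooth connection on an honest bundle, so the Weitzenb\"ock machinery applies; this is why the merely reflexive case is left open.

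Finally, $SU(2)$ acts transitively, by conjugation, on the twistor sphere $\{aI+bJ+cK:a^2+b^2+c^2=1\}$. So given $t,t'\in\PP^1_\omega$ I would choose $\gamma\in SU(2)$ with $\gamma I_t\gamma^{-1}=I_{t'}$; by the previous step $\gamma$ acts on $\mathcal H^\bullet$, and since $\gamma$ intertwines the $I_t$- and $I_{t'}$-actions on $T^*X^d$ it carries $\mathcal H^{0,q}_{I_t}$ isomorphically onto $\mathcal H^{0,q}_{I_{t'}}$. Combined with the Hodge-theoretic identification of the second step this gives $\dim H^q(X^d,\SheafEnd(E_t))=\dim H^q(X^d,\SheafEnd(E_{t'}))$ for all $q$, which is the assertion. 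The only place where the twist demands care is the construction of $\nabla$ (twisted Uhlenbeck--Yau together with Verbitsky's hyperholomorphicity criterion); everything afterwards takes place on the genuine bundle $W=\SheafEnd(E)$.
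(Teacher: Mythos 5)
The paper offers no proof of this theorem---it is quoted verbatim from Verbitsky's Cor.~8.1---and your argument is a faithful reconstruction of that proof: stability plus $SU(2)$-invariance of $c_2(\SheafEnd(E))$ yields a hyperholomorphic connection $\nabla$ on the genuine untwisted bundle $\SheafEnd(E)$, the K\"ahler identities identify the $d_\nabla$-harmonic space with the $\bar\partial_{I_t}$-harmonic space for each $t$, a Weitzenb\"ock computation makes $\Delta_{d_\nabla}$ commute with the $SU(2)$-action on form coefficients, and transitivity of $SU(2)$ on the twistor sphere then equates the dimensions. Your handling of the twist by passing immediately to $\SheafEnd(E)$, and your identification of local freeness as the point where a smooth global connection is required, are both exactly right.
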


A coherent sheaf $E$ is said to be {\em infinitesimally rigid} if $\Ext^1(E,E)=0$. 
If $E$ is a locally free $\omega$-hyperholomorphic and infinitesimally rigid, then the sheaves $E_t$ are infinitesimally rigid, for all $t\in \PP^1_\omega$, by Theorem \ref{thm-constant-coherent-cohomology-dimension}. The goal of this paper is to establish the analogous fact for a certain class of reflexive non-locally free hyperholomorphic sheaves (Theorem \ref{thm-rigidity} below).

\begin{defi}
\label{def-twistor-path}
\begin{enumerate}
\item
\label{def-item-twistor-path}
A {\em twistor path} from $(X_1,\eta_1)$ to $(X_2,\eta_2)$ consists of the following data.
\begin{enumerate}
\item
A sequence $(Y_i,\eta_i)$, $1\leq i \leq n$, of marked pairs in $\fM_\Lambda$, with $(Y_1,\eta_1)=(X_1,\eta_1)$ and 
$(Y_n,\eta_n)=(X_2,\eta_2)$.
\item
A K\"{a}hler class $\omega_i$ over $Y_i$, $1\leq i \leq n-1$, each up to a positive scalar multiple.
\end{enumerate}
This data is assumed to satisfy the condition that the twistor line $\PP_{\omega_i}^1$ through $(Y_i,\eta_i)$, associated to the K\"{a}hler class $\omega_i$, passes through $(Y_{i+1},\eta_{i+1})$, for $1\leq i \leq n-1$. 
\item
The twistor path is said to be {\em generic}, if $\Pic(Y_i)$ is trivial, 
or cyclic generated by a class of non-negative self-intersection with respect to the Beauville-Bogomolov-Fujiki pairing, for
$2\leq i \leq n-1$.
\end{enumerate}
\end{defi}

\begin{defi}
\label{def-gamma-hyperholomorphic}
Let $\gamma:=\left(\{(Y_i,\eta_i\}_{i=1}^n,\{\omega_i\}_{i=1}^{n-1}\right)$ be a twistor path from $(X_1,\eta_1)$ to $(X_2,\eta_2)$. A reflexive sheaf $E$ over $X_1^d$ is said to be 
{\em $\gamma$-hyperholomorphic}, if 
$c_2(\SheafEnd(E))$ remain of Hodge type along $\gamma$, $E$ is $\tilde{\omega}_1$-slope-stable and the family $\E_i$, constructed recursively via Theorem \ref{thm-twistor-deformation-of-a-sheaf} over the first $i$ twistor lines in $\gamma$, 
restricts to $Y_{i+1}$ as a $\tilde{\omega}_{i+1}$-slope-stable sheaf, for $1\leq i\leq n-1$. 
\end{defi}

Note that if $E$ is $\gamma$-hyperholomorphic, then it extends  to a sheaf $\E$ over the twistor family over $\gamma$. Denote by 
\begin{equation}
\label{eq-E-gamma}
E_\gamma
\end{equation}
the restriction of $\E$ to $X_2^d$. The isomorphism class of the reflexive sheaf of Azumaya algebras $\SheafEnd(E_\gamma)$ depends canonically on $E$ and $\gamma$.


%
\subsection{The modular hyperholomorphic sheaf}
Let $S$ be a $K3$ surface. The {\em Mukai lattice} 
$\widetilde{H}(S,\Integers)$ of $S$ is its total integral cohomology ring $H^*(S,\Integers)$ endowed with the following symmetric {\em Mukai pairing}.
Given a class $v:=(r,c,s)\in H^*(S,\Integers)$, with $r\in H^0(S)$, $c\in H^2(S)$, and $s\in H^4(S)$, set 
$(v,v):=(c,c)-2rs$, where $(c,c)$ is the self-intersection pairing of $H^2(S,\Integers)$ and we identify $H^i(S,\Integers)$, $i=0,4$, with 
$\Integers$ sending the unit and orientation classes to $1$. The {\em Mukai vector} of a coherent sheaf $F$ on $S$ is the
class $ch(F)\sqrt{td_S}$ in $\widetilde{H}(S,\Integers)$ \cite{mukai-hodge}.

Let $v\in \widetilde{H}(S,\Integers)$ be a primitive Mukai vector of 
self-intersection $(v,v)\geq 2$ with 
$c_1(v)$  of Hodge type $(1,1)$ and of non-negative rank. If the rank of $v$ is zero we assume further that the class $c_1(v)$ is effective.
If $v$ is the Mukai vector $(1,0,1-n)$ of the ideal sheaf of a length $n$ subscheme, we let $M$ be the Douady space $S^{[n]}$ of such sheaves. 
An irreducible holomorphic symplectic manifold is said to be of {\em $K3^{[n]}$-type} if it is deformation equivalent to such a Douady space.
If $v\neq (1,0,1-n)$, assume that $S$ is projective. Associated to $v$ is a locally finite collection of hyperplanes in the ample cone of a projective $S$, and a class not lying on any wall is called {\em $v$-generic} \cite{huybrechts-lehn-book}. 
Let $H$ be a $v$-generic polarization. Then the moduli space $M:=M_H(v)$ of $H$-stable sheaves with Mukai vector $v$ is a projective  irreducible holomorphic symplectic manifold of $K3^{[n]}$-type, where $n=[(v,v)+2]/2$, by a theorem due to Mukai, Huybrechts, O'Grady, and Yoshioka. It can be found in its final form in \cite{yoshioka-abelian-surface}. 

Let $\U$ be a possibly twisted universal sheaf over $S\times M$. 
There exists a twisted locally free sheaf $W$ over $M$, such that the sheaf $\Q:=\U\otimes \pi_M^*W$ is untwisted
\cite[Appendix]{mukai-hodge}. The {\em Mukai homomorphism}
\begin{equation}
\label{eq-Mukai-isomorphism}
m_v:v^\perp\rightarrow H^2(M,\Integers)
\end{equation}
is given by
$m_v(x):=\frac{1}{\rank(W)}c_1\left(\pi_{M,*}(\pi_S^*[x^\vee\sqrt{td_S}]\cup ch(\Q)\right)$, where $v^\perp$ is the sublattice  of the Mukai lattice orthogonal to $v$ and $x^\vee$ is obtained from $x$ by changing the sign of the summand in $H^2(S,\Integers)$. The homomorphism $m_v$ is independent of the choice of $W$. 
It follows from the proof of the above mentioned result of O'Grady and Yoshioka that 
$m_v$ 
is an integral isometry, where  $H^2(M,\Integers)$ is endowed with the Beauville-Bogomolov-Fujiki pairing. 

\begin{defi}
\label{def-monodromy}
The {\em monodromy group} $\Mon(X)$ of a compact K\"{a}hler manifold $X$ is the subgroup of the automorphism group of the cohomology ring $H^*(X,\Integers)$ generated by monodromy operators $g$ of families $\X\rightarrow B$ (which may depend on $g$) of compact K\"{a}hler manifolds deforming $X$. Let $\Mon^2(M)$ be the image of $\Mon(M)$ in the automorphism group of 
$H^2(M,\Integers)$. 
\end{defi}

Let $\pi_{ij}$ be the projection from $M\times S\times M$ onto the product of the $i$-th and $j$-th factors. 
Let
\begin{equation}
\label{eq-modular-sheaf}
E:= \SheafExt^1_{\pi_{13}}(\pi_{12}^*\U,\pi_{23}^*\U)
\end{equation}
be the relative extension sheaf over $M\times M$. Then $E$ is a reflexive sheaf of rank $2n-2$, which is locally free away from the diagonal, by \cite[Prop. 4.1]{markman-hodge}. 
The class $c_2(\SheafEnd(E))$ is invariant under the diagonal action of $\Mon(M)$, by \cite[Prop. 3.4]{markman-hodge}.
The sheaf $E$ is $\tilde{\omega}$-slope-stable with respect to every K\"{a}hler class of $M$, by 
\cite[Theorem 1.2]{markman-naturality}. Furthermore, $E$ is $\gamma$-hyperholomorphic with respect to every twistor path $\gamma$ staring at $(M,\eta)$, for any marking $\eta$, by 
\cite[Theorem 1.4]{markman-naturality}\footnote{
The assumptions of \cite[Theorem 1.4]{markman-naturality} are preserved under twistor deformations of $E$, by \cite[Prop. 3.2]{torelli} and \cite[Lemma 7.6]{torelli}. The proofs of \cite[Prop. 3.2]{torelli} and \cite[Lemma 7.6]{torelli} are unconditional.
}.
Following is an abbreviated statement of the main result of this paper.

\begin{thm}
\label{main-thm-abbreviated}
Over the cartesian square $X\times X$ of a manifold $X$ of $K3^{[n]}$-type, $n\geq 2$, there exists a canonical unordered pair 
$\{A_1,A_2\}$ of Azumaya algebras, with $A_2$ isomorphic to $A_1^*$, and each $A_i$ is obtained from $\SheafEnd(E)$, for the sheaf $E$ in (\ref{eq-modular-sheaf}), via a deformation along some twistor path. Furthermore, if the rank of $Pic(X)$ is less than $21$, then 
$A_i$ is infinitesimally rigid, for $i=1,2$.
\end{thm}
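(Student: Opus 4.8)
\emph{Strategy.} The proof has two parts: the construction of the canonical pair $\{A_1,A_2\}$ with $A_2\cong A_1^*$, valid for every $X$ of $K3^{[n]}$-type with $n\geq 2$; and, under the hypothesis $\rank\Pic(X)<21$, the infinitesimal rigidity of the $A_i$. The second part is where the real work lies.

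\emph{The pair and its path--independence.} Fix a marking $\eta'$ of $X$. By the Global Torelli theorem and the connectivity by twistor paths of the component of $\fM_\Lambda$ containing $(M,\eta)$, there is a twistor path $\gamma$ from $(M,\eta)$ to $(X,\eta')$, which may be taken generic when $\rank\Pic(X)<21$. Since the modular sheaf $E$ of (\ref{eq-modular-sheaf}) is $\gamma$-hyperholomorphic for every twistor path starting at $(M,\eta)$, Theorem \ref{thm-twistor-deformation-of-a-sheaf} produces a sheaf $E_\gamma$ on $X\times X$ and a canonically associated Azumaya algebra $A_\gamma:=\SheafEnd(E_\gamma)$. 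Over $M\times M$ there are exactly two natural choices of modular sheaf, the extension sheaf $E$ of (\ref{eq-modular-sheaf}) and its Serre dual $\sigma^*E^\vee$ with $\sigma$ the interchange of the two factors; they satisfy $\SheafEnd(E)\cong\SheafEnd(\sigma^*E^\vee)$ over $M\times M$, but their twistor deformations $A_1:=\SheafEnd(E_\gamma)$ and $A_2:=\SheafEnd((\sigma^*E^\vee)_\gamma)\cong A_1^*$ need not be isomorphic on $X\times X$, which accounts for the unordered pair. To see that the class of $A_1$, for the fixed choice $E$, does not depend on $\gamma$ nor on the choices entering $M$: any two admissible paths with the same endpoint data differ by a twistor loop based at $(X,\eta')$, whose monodromy operator lies in $\Mon(X)$; since $c_2(\SheafEnd(E))$ is $\Mon(M)$-invariant and the twistor transport of Theorem \ref{thm-twistor-deformation-of-a-sheaf} is natural with respect to it, Markman's description of the monodromy of $K3^{[n]}$-type manifolds shows that this operator carries $A_\gamma$ to an isomorphic Azumaya algebra. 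This is the pair conjectured in \cite{torelli}.

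\emph{Rigidity over the moduli space.} The next input is $\Ext^1_{M\times M}(E,E)=0$, essentially the rigidity content of \cite{generalized-deformations}. The sheaf $E=\SheafExt^1_{\pi_{13}}(\pi_{12}^*\U,\pi_{23}^*\U)$ is the degree--one cohomology sheaf of the perfect complex $P:=R\SheafHom_{\pi_{13}}(\pi_{12}^*\U,\pi_{23}^*\U)$ on $M\times M$, whose cohomology sheaves in degrees $0$ and $2$ are line bundles supported on the diagonal $\Delta_M$, coming from $\Hom_S(F,F)$ and $\Ext^2_S(F,F)$. Via the convolution calculus for Fourier--Mukai kernels one reduces $\Ext^\bullet_{M\times M}(P,P)$ to a Hochschild--type cohomology of $D^b(S)$, whose relevant first group $H^0(S,T_S)\oplus H^1(S,\StructureSheaf{S})$ vanishes; then $\Ext^1_{M\times M}(E,E)=0$ follows by chasing the distinguished triangles linking $P$, $E$ and the diagonal line bundles, the correction terms $\Ext^\bullet_{M\times M}(i_*L,E)$ and $\Ext^\bullet_{M\times M}(E,i_*L)$ being controlled by the geometry of $\Delta_M$.

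\emph{Propagation, the main obstacle, and conclusion.} It remains to propagate rigidity along $\gamma$, i.e. to prove $\Ext^1_{X\times X}(E_\gamma,E_\gamma)=0$. From this the infinitesimal rigidity of $A_i=\SheafEnd(E_\gamma)$ will follow, because on the complement of the diagonal $\SheafEnd(A_i)$ splits off $\SheafEnd(E_\gamma)$ through the trace, so that the vanishing of $\Ext^1(A_i,A_i)$ reduces to that of $\Ext^1(E_\gamma,E_\gamma)$ and of $H^1(X\times X,\SheafEnd(E_\gamma))$, up to contributions supported along $\Delta_X$, which has codimension $2n\geq 4$. The difficulty is that Theorem \ref{thm-constant-coherent-cohomology-dimension}, which would immediately give constancy of these cohomology dimensions along the twistor family, is known only for locally free sheaves, while $E$ is reflexive and fails to be locally free along $\Delta$. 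I would establish the required reflexive analogue by expressing the complex $P$ — hence $E$ and $\SheafEnd(E)$ — in terms of locally free $\omega$-hyperholomorphic sheaves on $X\times X$ together with pushforwards of hyperholomorphic line bundles from $\Delta_X\cong X$, objects to which Theorem \ref{thm-constant-coherent-cohomology-dimension} and its one--dimensional case apply directly, and then showing that the hypercohomology spectral sequence computing $\Ext^\bullet_{X_t\times X_t}(E_t,E_t)$ has fibrewise--constant dimensions, so that constancy is inherited from the building blocks; alternatively one works on $X\times X\setminus\Delta_X$, where $E_\gamma$ is locally free $\omega$-hyperholomorphic, applies Verbitsky's $L^2$-Hodge theory there, and controls the local contributions along $\Delta_X$ by depth estimates, using that $\Delta_X$ has codimension $2n\geq 4$. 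Propagating line by line along $\gamma$, starting from the vanishing established over $M\times M$, yields $\Ext^1_{X\times X}(E_\gamma,E_\gamma)=0$. The hypothesis $\rank\Pic(X)<21$ is used exactly here — to guarantee that along the generic twistor path the sheaf $E_t$ near its singular locus always matches a fixed local model and acquires no extra infinitesimal deformations, equivalently that the relevant cohomology dimensions do not jump — and making this precise is the step I expect to be the main obstacle.
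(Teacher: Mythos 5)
Your proposal diverges from the paper at the crucial step, and the divergence is a genuine gap, not just a different route. You try to propagate rigidity along a single twistor path $\gamma$, line by line, by constancy of cohomology dimensions, and you correctly identify that this requires a reflexive analogue of Theorem~\ref{thm-constant-coherent-cohomology-dimension}. But the paper's whole point is that this analogue is \emph{not available}, and the authors do not prove it: the introduction states explicitly that Theorem~\ref{thm-constant-coherent-cohomology-dimension} is known ``unfortunately, only in the locally free case,'' and the paper is designed to circumvent exactly this obstruction. Your two proposed fixes --- expressing $E$ in terms of locally free hyperholomorphic pieces plus pushforwards from $\Delta$, or $L^2$-Hodge theory on the complement of $\Delta$ with depth control --- are the natural first ideas, but the paper does not carry either of them out, and you yourself flag this as ``the step I expect to be the main obstacle.'' That is not a proof.

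The paper's actual mechanism is different in kind. It constructs a smooth \emph{connected} manifold $\ringTw$ of twistor paths and the fiber $\Gamma=\mathring{f}_k^{-1}((M,\eta_0),(X,\eta))$ of paths with fixed endpoints (Propositions~\ref{prop-breve-Tw}, \ref{prop-ring-f}), and then shows (Proposition~\ref{prop-single-isomorphism-class-or-non-rigid}) that in the differentiable family $\{B_\gamma\}_{\gamma\in\Gamma}$ the rigidity locus is \emph{both open and closed}: open by the Kodaira--Spencer upper-semicontinuity theorem, closed by a stability argument using that $\Hom(\widetilde B_{\gamma_0},\widetilde B_{\gamma_1})\neq 0$ iff $B_{\gamma_0}\cong B_{\gamma_1}$ for slope-stable sheaves of equal slope, plus upper-semicontinuity of $\dim H^0$. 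Connectedness of $\Gamma$ then forces the rigidity locus to be all of $\Gamma$ or empty. This yields an open subset $U_\B\subset\fM^0_\Lambda$ (Corollary~\ref{cor-open-subset-U-G-of-moduli}) invariant under the monodromy group $\Mon(\B)$ of the triple (Corollary~\ref{cor-U-B-is-Mon-B-invariant}). Section~\ref{sec-monodromy-equivariance-of-the-modular-sheaf} then shows $\Mon(\B)$ has finite index (index $2$ in $\Mon(\fM^0_\Lambda)$, through surface monodromy groups and Fourier--Mukai equivalences), and $U_\B$ is non-empty since it contains marked Hilbert schemes $(S^{[n]},\eta)$ where rigidity of $E$ is known from \cite{generalized-deformations}. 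Verbitsky's density/ergodicity theorems then imply $U_\B$ contains every marked pair of non-maximal Picard rank (Theorem~\ref{thm-main-general-ihsm}). This also corrects your reading of the Picard-rank hypothesis: it is not about controlling the local model of $E_t$ along $\Delta$, but appears only because the density theorem for monodromy orbits fails for marked pairs of maximal Picard rank, which form a countable union of orbits. So the ergodicity argument, not constancy of cohomology along a twistor line, is what does the work, and your proposal is missing this entire structure.
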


Let $\theta'_v\in H^2(M,\Integers)/(2n-2)H^2(M,\Integers)$ 
be the coset 
\begin{equation}
\label{eq-coset}
\theta'_v=m_v\left(\{w\in v^\perp \ : \ w-v\in (2n-2)\widetilde{H}(S,\Integers)\}\right).
\end{equation}
The pair $\{\theta'_v, -\theta'_v\}$ is $\Mon^2(M)$-invariant, by \cite[Lemma 7.2]{markman-hodge}, and $\Mon^2(M)$ acts transitively on this set, by \cite[Theorem 1.6]{markman-monodromy-I}.

\begin{defi}
\label{def-compatible-modular-markings}
Let $M_{H_i}(v_i)$, $i=1,2$, be  moduli spaces as above with $(v_i,v_i)=2n-2$, $n\geq 2$.
Two markings $\eta_i:H^2(M_{H_i}(v_i),\Integers)\rightarrow \Lambda$, $i=1,2$, are said to be {\em compatible}, if 
$(M_{H_1}(v_1),\eta_1)$ and $(M_{H_2}(v_2),\eta_2)$ belong to the same connected component of $\fM_\Lambda$ and the isometry
$\eta_2^{-1}\eta_1:H^2(M_{H_1}(v_1),\Integers)\rightarrow H^2(M_{H_2}(v_2),\Integers)$ takes $\theta'_{v_1}$ to 
$\theta'_{v_2}$. 
\end{defi}

Note that compatibility is an equivalence relation.  Marked pairs $(M_H(v),\eta)$ in a fixed connected component $\fM^0_\Lambda$ of moduli space form two
compatibility classes, if $n>2$, and one compatibility class, if $n=2$, by the monodromy invariance of the pair $\{\theta'_v,-\theta_v'\}$ and the transitivity of the monodromy action on this set. 

Let $E$ be the sheaf given in (\ref{eq-modular-sheaf}). 
Choose a marking $\eta_0:H^2(M,\Integers)\rightarrow \Lambda$ and let
$\fM^0_\Lambda$ be the component of the moduli space of marked pairs containing $(M,\eta_0)$. 
The Picard rank of  $X$ is said to be {\em maximal} if $\rank(\Pic(X))=\dim H^{1,1}(X)$. 
Let $O(\Lambda)$ be the isometry group of the lattice $\Lambda$ and $O^+(\Lambda)$ its index $2$ subgroup, which is the kernel of the norm character \cite[Sec. 4]{chevalley,markman-survey}. Denote by $\Mon(\Lambda)$ the subgroup of  
$O^+(\Lambda)$ acting  on the discriminant group $\Lambda^*/\Lambda$ via multiplication by plus or minus one. 
The marking $\eta$, of every point $(X,\eta)\in \fM_\Lambda^0$, conjugates $\Mon^2(X)$ to $\Mon(\Lambda)$, by
\cite[Theorem 1.2 and Lemma 4.2]{markman-constraints}.
Let $\Mon(\Lambda)_{cov}$ be the subgroup of $\Mon(\Lambda)$ acting trivially on the discriminant group $\Lambda^*/\Lambda$.
Following is the detailed statement of Theorem \ref{main-thm-abbreviated}.

\begin{thm}
\label{thm-rigidity}
Let $\gamma$ be a twistor path from $(M,\eta_0)$ to a point $(X,\eta)$ of $\fM^0_\Lambda$.
\begin{enumerate}
\item
\label{thm-item-rigidity}
The sheaf $E_\gamma$ on $X\times X$ is infinitesimally rigid,
if the Picard rank of $X$ is not maximal. 
\item
\label{thm-item-independence-of-the-path}
The Azumaya algebra $\SheafEnd(E_\gamma)$ depends only on the endpoint $(X,\eta)$ of $\gamma$  and is independent of the path $\gamma$, regardless of the Picard rank of $X$. 
\item
\label{thm-item-monodromy-invariance}
Let $\phi\in \Mon(\Lambda)$, and let $\gamma'$ be a twistor path from $(M,\eta_0)$
to the translate $(X,\phi\circ\eta)$ of the end point $(X,\eta)$ of $\gamma$. 
The Azumaya algebras $\SheafEnd(E_\gamma)$ and $\SheafEnd(E_{\gamma'})$
are isomorphic, if $\phi$ belongs to $\Mon(\Lambda)_{cov}$ and 
$\SheafEnd(E_{\gamma'})$ is isomorphic to $\SheafEnd(E_\gamma^*)$ otherwise.
\item
\label{thm-item-isomorphic-if-compatible}
Let $\tilde{M}:=M_{\tilde{H}}(\tilde{v})$ be another smooth and projective such $2n$-dimensional moduli space of stable sheaves on some polarized $K3$ surface $(\tilde{S},\tilde{H})$,  let $\tilde{E}$ be 
the corresponding sheaf over $\tilde{M}\times \tilde{M}$ given in (\ref{eq-modular-sheaf}), let $\tilde{\eta}_0$ be a marking for $\tilde{M}$,   
and let $\tilde{\gamma}$ be a twistor path from $(\tilde{M},\tilde{\eta}_0)$
to the end point $(X,\eta)$ of  $\gamma$. 
The Azumaya algebras $\SheafEnd(E_\gamma)$ and $\SheafEnd(\tilde{E}_{\tilde{\gamma}})$
are isomorphic, if the markings of $(M,\eta_0)$ and $(\tilde{M},\tilde{\eta}_0)$ are compatible and 
$\SheafEnd(\tilde{E}_{\tilde{\gamma}})$ is isomorphic to $\SheafEnd(E_\gamma^*)$ otherwise.
\end{enumerate}
\end{thm}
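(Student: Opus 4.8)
\emph{Part (\ref{thm-item-rigidity}) is the heart of the matter, and I expect essentially all of the difficulty to be concentrated there.} The obstacle is that $E_\gamma$ is never locally free: it fails to be locally free exactly along the diagonal $\Delta\subset X\times X$, which is a copy of $X$ and has codimension $2n\geq 4$, so Theorem~\ref{thm-constant-coherent-cohomology-dimension} is not available for $E_\gamma$ itself. The plan is to start from the local-to-global spectral sequence
\[
H^p\bigl(X\times X,\SheafExt^q(E_\gamma,E_\gamma)\bigr)\ \Longrightarrow\ \Ext^{p+q}(E_\gamma,E_\gamma),
\]
which exhibits $\Ext^1(E_\gamma,E_\gamma)$ as an extension of the kernel of the differential $H^0\bigl(\SheafExt^1(E_\gamma,E_\gamma)\bigr)\to H^2\bigl(X\times X,\SheafEnd(E_\gamma)\bigr)$ by $H^1\bigl(X\times X,\SheafEnd(E_\gamma)\bigr)$, so that it suffices to kill both contributions.

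For $H^1\bigl(X\times X,\SheafEnd(E_\gamma)\bigr)$ I would relate this group to the cohomology of a locally free hyperholomorphic sheaf, e.g. via restriction to the complement $U:=(X\times X)\setminus\Delta$: since $\SheafEnd(E_\gamma)$ is reflexive and $\Delta$ has codimension $\geq 4$, the group is controlled by the cohomology of the locally free, hyperholomorphic sheaf $\SheafEnd(E_\gamma)|_U$, which brings Verbitsky's constancy theorem to bear, so that one may propagate the relevant dimension along the twistor lines of $\gamma$ and reduce the vanishing to a direct computation over $M\times M$, where $E$ is the relative $\Ext^1$ of universal sheaves and the statement becomes one about $S$. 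The term $H^0\bigl(\SheafExt^1(E_\gamma,E_\gamma)\bigr)$, together with the higher $\SheafExt^q$, is supported on $\Delta\cong X$; here I would analyze $E_\gamma$ explicitly in a formal neighbourhood of $\Delta$, using that over $M\times M$ this structure is dictated by the $K3$-geometry of $S$ and that it deforms along $\gamma$ in a controlled way, and identify $\SheafExt^q(E_\gamma,E_\gamma)$ with an explicit natural sheaf on $X$ built from $T_X$, $\Omega^1_X$ and $\StructureSheaf{X}$. The hypothesis that the Picard rank of $X$ is not maximal enters precisely at this point: a surviving global section would produce a Hodge class in a cohomology group of $X$ which is present only when $\dim H^{1,1}(X)=\rank\Pic(X)$. \emph{Making the passage around the non-locally-free locus $\Delta$ rigorous --- both the comparison on $U$ and the identification of the $\Delta$-supported $\SheafExt$ sheaves --- is the step I expect to be the main obstacle.}

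\emph{Part (\ref{thm-item-independence-of-the-path}).} Granting rigidity at every $(X,\eta)$ of non-maximal Picard rank, two twistor paths $\gamma,\gamma'$ with the same endpoint are joined by a finite chain of elementary moves (inserting or deleting a twistor line, or sliding an intermediate marked pair along a twistor line). Each move organizes the sheaves furnished by Theorem~\ref{thm-twistor-deformation-of-a-sheaf} into a single family of reflexive sheaves of Azumaya algebras over a connected base whose generic member has non-maximal Picard rank; rigidity of the generic member forces $\SheafEnd(E_\gamma)$ to be locally constant over that base, hence unchanged by the move, and a specialization argument carries the conclusion to endpoints of maximal Picard rank as well --- which is why no Picard-rank hypothesis appears in this part.

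\emph{Parts (\ref{thm-item-monodromy-invariance}) and (\ref{thm-item-isomorphic-if-compatible}).} The assignment $E\mapsto\SheafEnd(E_\gamma)$ depends canonically on $E$ and $\gamma$ and is equivariant for the monodromy action, so for $\phi\in\Mon(\Lambda)$ one compares $\SheafEnd(E_\gamma)$ with $\SheafEnd(E_{\gamma'})$ via the action of $\phi$ on $H^\ast(X\times X,\ZZ)$ and on the invariant pair $\{\theta'_v,-\theta'_v\}$. An element of $\Mon(\Lambda)_{cov}$ acts trivially on the discriminant group and fixes $\theta'_v$, hence preserves $\SheafEnd(E_\gamma)$ up to isomorphism; the remaining elements of $\Mon(\Lambda)$ act by $-1$ on the discriminant group and exchange $\theta'_v$ with $-\theta'_v$, and this exchange is realized geometrically by the transposition of the two factors of $X\times X$, which carries the relative $\Ext^1$-sheaf (\ref{eq-modular-sheaf}) to its dual by Serre duality on $S$; combined with part~(\ref{thm-item-independence-of-the-path}) this yields the stated dichotomy. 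Finally, compatibility of the markings of $(M,\eta_0)$ and $(\tilde M,\tilde\eta_0)$ means exactly that $\tilde\eta_0^{-1}\eta_0$ lies in $\Mon(\Lambda)$ and carries $\theta'_v$ to $\theta'_{\tilde v}$ --- using the transitivity of $\Mon^2$ on $\{\theta'_v,-\theta'_v\}$ together with the naturality of the modular sheaf across moduli spaces of the same dimension --- so part~(\ref{thm-item-isomorphic-if-compatible}) follows from parts~(\ref{thm-item-monodromy-invariance}) and (\ref{thm-item-independence-of-the-path}).
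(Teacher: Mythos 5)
Your reduction of $\Ext^1(E_\gamma,E_\gamma)$ to $H^1(\SheafEnd(E_\gamma))$ and $H^0(\SheafExt^1(E_\gamma,E_\gamma))$ is a piece that does appear in the paper (it is used in Step~2 of the paper's proof, together with \cite[Prop.~3.2 and 3.5]{torelli}), but it appears there only \emph{after} rigidity of the projective bundle $B_\gamma$ over the blow-up has been established by an entirely different mechanism, and the core of your strategy --- propagating cohomology dimensions along twistor lines via Theorem~\ref{thm-constant-coherent-cohomology-dimension} --- does not work and is precisely the obstruction the whole paper is designed to circumvent. Theorem~\ref{thm-constant-coherent-cohomology-dimension} requires a compact hyperk\"ahler manifold $X^d$; the open complement $U=(X\times X)\setminus\Delta$ where $\SheafEnd(E_\gamma)$ becomes locally free is not compact, so the constancy theorem does not apply there. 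Passing instead to the blow-up $Y$ of $\Delta$ (where the pullback of $E_\gamma$ modulo torsion \emph{is} locally free and the base is compact) does not rescue this either, since $Y$ is not hyperk\"ahler and so Verbitsky's constancy theorem does not apply on $Y$. There is simply no place to invoke Theorem~\ref{thm-constant-coherent-cohomology-dimension}, which is exactly why the paper flags it as ``known, unfortunately, only in the locally free case'' and develops an alternative. You also have no mechanism at all for showing constancy of the term $H^0(\SheafExt^1(E_\gamma,E_\gamma))$ along twistor deformations; it is supported on $\Delta$ and Verbitsky's theorem says nothing about $\SheafExt^q$ for $q\geq 1$. (In the paper this group vanishes by a separate local computation, \cite[Prop.~3.5]{torelli}.)

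Your identification of where the non-maximal Picard rank hypothesis enters is also incorrect. It plays no role in killing a Hodge class on $X$. The hypothesis appears only through Verbitsky's ergodicity theorem (\cite[Theorem 4.11]{verbitsky-ergodicity} and \cite[Theorem 2.5]{verbitsky-ergodic-erratum}), used in Theorem~\ref{thm-main-general-ihsm}: the $\Mon(\Lambda)_{cov}$-orbit of a marked pair $(X,\eta)$ is dense in $\fM^0_\Lambda$ when $r(X)=0$, and dense in the positive-$r$ locus when $r(X)=1$, but not when $r(X)=2$ (maximal Picard rank). Rigidity is first proved on a dense monodromy orbit and then propagated by openness; that is where non-maximality of the Picard rank is needed.

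The paper's actual mechanism, which your proposal does not contain, is the following. It works with the holomorphic projective bundle $B_\gamma$ over the (compact, non-hyperk\"ahler) blow-up $Y$ rather than $E_\gamma$ directly, and shows (Proposition~\ref{prop-single-isomorphism-class-or-non-rigid}) that along a connected differentiable family of such bundles over stable reflexive sheaves, the locus of infinitesimally rigid members is \emph{both} open and closed: open by semi-continuity of $\dim H^1$, and closed because the \emph{non-rigid} locus is also open --- this uses the Kodaira--Spencer local triviality of rigid bundles together with a stability argument showing that $\Hom(\widetilde B_{\sigma_0},\widetilde B_{\sigma_1})=0$ when the two bundles are non-isomorphic, whence $H^0$ vanishes in an open neighbourhood by semi-continuity. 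Combined with the connectedness of the space of twistor paths with fixed endpoints (Propositions~\ref{prop-breve-Tw}, \ref{prop-ring-f}), this shows that the rigidity locus $U_{B_1}\subset\fM^0_\Lambda$ is open and independent of the choice of path, and then monodromy invariance plus the known rigidity over the Hilbert scheme and Verbitsky's ergodicity conclude Part~(\ref{thm-item-rigidity}). Your sketches of Parts~(\ref{thm-item-independence-of-the-path})--(\ref{thm-item-isomorphic-if-compatible}) are roughly in the right direction (path-independence via rigidity plus a specialization to maximal Picard rank, and monodromy/Fourier--Mukai equivariance for the last two parts), but they all presuppose the machinery for Part~(\ref{thm-item-rigidity}) whose central argument is missing in your proposal.
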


The Theorem is proved in Section \ref{sec-proof-of-rigidity-theorem}. Fix the compatibility class of $(M,\eta_0)$.
We denote by $E_{(X,\eta)}$ the equivalence class of the twisted sheaf $E_\gamma$ under isomorphisms and tensor product by line bundles, as it is determined by the  endpoint $(X,\eta)$ of $\gamma$ in view of the above Theorem.
We will refer to the sheaf $E$ given in (\ref{eq-modular-sheaf}) as the {\em modular sheaf} and the sheaf $E_{(X,\eta)}$ of Theorem
\ref{thm-rigidity} as the {\em deformed modular sheaf}. Similarly, $\SheafEnd(E)$ will be called the {\em modular Azumaya algebra}
and $\SheafEnd(E_{(X,\eta)})$ the {\em deformed modular Azumaya algebra}.

The conclusion of part (\ref{thm-item-rigidity}) of Theorem \ref{thm-rigidity} is established away from a closed and countable subset of $\fM^0_\Lambda$, since 
the set of isomorphism classes of marked pairs of maximal Picard rank $21$ is countable. We expect the conclusion to hold even when the Picard rank is maximal. 
Infinitesimal rigidity of $E$ was known when
$v=(1,0,1-n)$, so that $M$ is the Douady space, by 
\cite[Lemma 5.2]{generalized-deformations}. 
Infinitesimal rigidity of $E_\gamma$ was conjectured in \cite[Conj. 1.6]{generalized-deformations}
and \cite[Conj. 1.7]{torelli}. The main results of these two papers, \cite[Theorem  1.11]{torelli} for $X$ of non-maximal Picard rank, and \cite[Theorem 1.8]{generalized-deformations},
thus hold unconditionally, by Theorem \ref{thm-rigidity}. 

%
\subsection{The characteristic class $\bar{c}_1(E)$ of the modular sheaf}
\label{sec-first-characteristic-class}
We relate next the compatibility relation for markings (Definition \ref{def-compatible-modular-markings}) to a 
characteristic class $\bar{c}_1(E)$ in $H^2(M\times M,\mu_{2n-2})$ of the modular sheaf. A holomorphic $\PP^{r-1}$ bundle $\PP$ over a complex manifold $X$ determines a class $[\PP]$ in the first cohomology of the sheaf of holomorphic maps to $PGL(r)$. The connecting homomorphism associated to 
the short exact sequence $0\rightarrow \mu_r\rightarrow SL(r)\rightarrow PGL(r)\rightarrow 0$ maps $[\PP]$ to 
a class $\tilde{\theta}(\PP)$ in $H^2(X,\mu_r)$. Now $H^2(M\times M,\mu_{2n-2})$ is isomorphic to
$H^2([M\times M]\setminus \Delta,\mu_{2n-2})$, since the diagonal $\Delta$ has complex codimension $2n\geq 4$.
Hence, the projectivization of $E$ over $[M\times M]\setminus \Delta$ determines a class 
$\bar{c}_1(E)$ in $H^2(M\times M,\mu_{2n-2})$. We recall next its computation. 
Set $\theta_v:=\exp\left(\frac{-2\pi i}{2n-2}\theta'_v\right)\in H^2(M,\mu_{2n-2}),$ 
where $\theta'_v$ is the coset given in (\ref{eq-coset}).
Then
\begin{equation}
\label{eq-characteristic-class-of-modular-sheaf}
\bar{c}_1(E)=\pi_1^*\theta_v^{-1}\pi_2^*\theta_v,
\end{equation} 
by \cite[Lemma 7.3]{markman-hodge}. Consequently, the pair 
$\{\bar{c}_1(E),\bar{c}_1(E)^{-1}\}$ is invariant under the diagonal action of $\Mon^2(M)$, and the latter acts transitively on this set, 
since the analogous result holds for $\{\theta'_v, -\theta'_v\}$, as mentioned above. 

The markings $\eta_{v_1}$ and $\eta_{v_2}$ in Definition \ref{def-compatible-modular-markings} are compatible, if and only if
the cartesian square of $\eta_{v_2}^{-1}\circ\eta_{v_1}$ maps the characteristic class $\bar{c}_1(E_{v_1})$ of the modular sheaf over
$M_{H_1}(v_1)\times M_{H_1}(v_1)$ to $\bar{c}_1(E_{v_2})$, by Equation (\ref{eq-characteristic-class-of-modular-sheaf}).

%
\subsection{Outline of the proof of the main result}

Let $(M,\eta_0)$ be the marked moduli space of Theorem \ref{thm-rigidity} and $E$ the modular sheaf given in (\ref{eq-modular-sheaf}).
In Section \ref{sec-twistor-paths} we construct a smooth and connected differentiable manifold $\ringTw$ of twistor paths in
$\fM^0_\Lambda$ consisting of $k-1$ twistor lines, $k\geq 10$, 
and a surjective map $\mathring{f}_k:\ringTw\rightarrow \fM^0_\Lambda\times \fM^0_\Lambda$ with smooth connected fibers, sending a
twistor path to its start and end points. We then construct the universal twistor family over the universal twistor path over 
$\ringTw$. 
Every twistor path in $\fM^0_\Lambda$ 
is equivalent (Definition \ref{def-equivalent-twsitor-paths}) to a path $\gamma$ 
in $\ringTw$,
for all $k$ sufficiently large. $E_\gamma$ is isomorphic to $E_{\gamma'}$, if $\gamma$ and $\gamma'$ are equivalent in that sense.

In Section \ref{sec-hyperholomorphic-sheaves} we construct the universal hyperholomorphic deformation of the modular sheaf $E$ over the universal twistor path.
 We use the notions of differentiable families of holomorphic manifolds and bundles due to Kodaira and Spencer. Some of the basic tools of algebraic geometry, such as the Semi-Continuity Theorem and local triviality of rigid objects, hold in this setting \cite{kodaira-spencer}.

Let $\Gamma_{(M,\eta_0)}^{(X,\eta)}$ be the fiber of $\mathring{f}_k$ over $((M,\eta_0),(X,\eta))$.
The first crucial observation is that the locus  in $\Gamma_{(M,\eta_0)}^{(X,\eta)}$, consisting 
of twistor paths $\gamma$ such that $E_\gamma$ is infinitesimally rigid, is both open (by the Semi-Continuity Theorem) and closed (by stability of $E_\gamma$), see Proposition \ref{prop-single-isomorphism-class-or-non-rigid}. 
This locus is either empty or it consists of the entire fiber, by the connectedness of the latter.

Let $U$ be the locus  in $\fM^0_\Lambda$ consisting of marked pairs $(X,\eta)$, such that $E_\gamma$ is infinitesimally rigid for all twistor paths $\gamma$ from $(M,\eta_0)$ to $(X,\eta)$. The second crucial observation is that  $U$ is an open subset, which is invariant under the monodromy group of the triple $(M,\eta_0,E)$ (Corollary \ref{cor-U-B-is-Mon-B-invariant}). 

In Section \ref{sec-monodromy-equivariance-of-the-modular-sheaf} we prove that the monodromy group of the
triple $(M,\eta_0,E)$ is a subgroup of index $2$ in the monodromy group of the pair $(M,\eta_0)$. $U$ is non-empty, as it contains all marked Hilbert schemes of length $n$ subschemes of a $K3$ surface.
Thus, the monodromy invariance property of $U$ implies that it contains every marked pair  in $\fM^0_\Lambda$, of non-maximal Picard rank, by a density theorem of Verbitsky \cite[Theorem 4.11]{verbitsky-ergodicity} and \cite[Theorem 2.5]{verbitsky-ergodic-erratum}. 

Sections \ref{sec-twistor-paths}, \ref{sec-hyperholomorphic-sheaves}, 
and \ref{sec-rigid-hyperholomorphic-sheaves} are written for general irreducible holomorphic symplectic manifolds. 
Theorem \ref{thm-main-general-ihsm} is 
a version of Theorem \ref{thm-rigidity} for a general irreducible holomorphic symplectic manifold.
Section \ref{sec-monodromy-equivariance-of-the-modular-sheaf} specializes to manifolds of $K3^{[n]}$-type and the deformed modular sheaf. We expect a similar result to hold for the modular sheaf over the cartesian square of generalized Kummer manifolds.
%
\section{Twistor paths}
\label{sec-twistor-paths}
Let $\Lambda$ be a lattice isometric to the Beauville-Bogomolov-Fujiki lattice of some irreducible holomorphic symplectic manifold.
Assume that the rank of $\Lambda$ is greater than or equal to $7$.
Let $\Omega_\Lambda:=\{\ell\in \PP(\Lambda_\CC)\ : \ (\ell,\ell)=0, \ (\ell,\bar{\ell})>0\}$ be the period domain.
We consider in section \ref{sec-spaces-of-twistor-paths} the space $Tw^k_\Lambda$ of twistor paths in $\Omega_\Lambda$ consisting of $k-1$ twistor lines. 
In Section \ref{sec-twistor-paths-with-fixed-end-points} we identify a smooth and dense open subset 
$\breve{T}w^k_\Lambda$ of $Tw^k_\Lambda$,
such that the map $\breve{f}_k:\breve{T}w^k_\Lambda\rightarrow \Omega_\Lambda\times \Omega_\Lambda$, sending a twistor path to its initial point and end point, is submersive with smooth connected fibers (Proposition \ref{prop-breve-Tw}). 
In Section \ref{sec-universal-twistor-path-and-family} we prove the analogous statement for an open subset $\ringTw$ of the space of twistor paths in $\fM^0_\Lambda$ and the analogous map $\mathring{f}_k:\ringTw\rightarrow \fM^0_\Lambda\times \fM^0_\Lambda$ (Proposition \ref{prop-ring-f}). The relationship between these two statements involves 
 the moduli space of marked irreducible holomorphic symplectic manifolds endowed with a K\"{a}hler-Einstein metric, 
which is described in Section \ref{sec-universal-twistor-family} using the Global Torelli Theorem and recent results about the K\"{a}hler cone of such manifolds. 
In Section \ref{sec-homotopy} we introduce an equivalence relation for twistor paths, which is a weak analogue of the homotopy relation for ordinary paths. Every twistor path is equivalent to a twistor path in $\ringTw$, for all $k$ sufficiently large. 

Given a family $\pi:\X\rightarrow \Sigma$ 
of irreducible holomorphic symplectic manifolds over a smooth connected analytic manifold $\Sigma$ with a marking 
$\eta:R^2\pi_*\Integers\rightarrow \Lambda$, we get a classifying morphism $\kappa:\Sigma\rightarrow \fM^0_\Lambda$.
Given a marked pair $(Y,\psi)$, let $\Gamma_\Sigma^{(Y,\psi)}$ be the fiber product
\begin{equation}
\label{diagram-Gamma-Sigma-X-k-eta-k}
\xymatrix{
\Gamma_\Sigma^{(Y,\psi)} \ar[rrr] \ar[d] & & &
\ringTw \ar[d]^{\mathring{f}_k}
\\
\Sigma \ar[r]_{\kappa} & \fM^0_\Lambda \ar[r]_{\cong\hspace{4ex}} &  \fM^0_\Lambda\times \{(Y,\psi)\}\ar[r]_{\subset} & \fM^0_\Lambda\times \fM^0_\Lambda.
}
\end{equation}
We get a smooth differentiable fibration $\Gamma_\Sigma^{(Y,\psi)}\rightarrow \Sigma$ with connected fibers, whose fiber over  $\sigma\in\Sigma$ consists of twistor paths in $\fM^0_\Lambda$ from $(X_\sigma,\eta_\sigma)$ to $(Y,\psi)$.

%
\subsection{Spaces of twistor paths}
\label{sec-spaces-of-twistor-paths}
Set $r:=\rank(\Lambda)$.
The component $\fM_\Lambda^0$ determines an orientation of the positive cone of $\LLambda:=\Lambda\otimes_\Integers\RR$, hence for any positive definite $3$-dimensional subspace of $\LLambda$ \cite[Sec. 4]{markman-survey}.
The period domain $\Omega_\Lambda$ is naturally identified with the Grassmannian $Gr_{++}(\LLambda)$ 
of oriented positive definite two dimensional subspaces of $\LLambda$. 
Given a point in $\Omega_\Lambda$, corresponding to an isotropic line spanned by a class $\sigma\in \Lambda_\CC$, we get the positive definite oriented subspace spanned by the real and imaginary parts of $\sigma$. Conversely, given a point $V$ of $Gr_{++}(\LLambda)$, corresponding to a two dimensional positive definite subspace $\overline{V}$ and a choice of an orientation for $\overline{V}$, we get  two isotropic lines 
in the complexification of $\overline{V}_\CC$, endowing $\overline{V}$ with two different  orientations, and so the orientation singles out one of the isotropic lines. We will routinely identify $\Omega_\Lambda$ and $Gr_{++}(\LLambda)$.

Let $Gr_{+++}(\LLambda)$ be the 
Grasmannian of positive three dimensional subspaces.  
A {\em twistor path in $Gr_{++}(\LLambda)$ from $V_1$ to $V_k$ 
}
consists of the data $\{(V_1, \dots, V_k); (W_1, \dots, W_{k-1})\}$, 
$V_j\in Gr_{++}(\LLambda)$, $W_i\in Gr_{+++}(\LLambda)$,
such that $W_j$ contains both $V_j$ and $V_{j+1}$, $1\leq j \leq k-1$. 
The twistor path is {\em generic}, if $\overline{V}_i^\perp\cap\Lambda$ is trivial, or cyclic generated by a class of non-negative 
self-intersection, for $2\leq i\leq k-1$.

Denote by 
\[
\I \ \subset \ Gr_{++}(\LLambda) \times Gr_{+++}(\LLambda)
\]
the incidence variety of pairs $(V,W)$, such that $\overline{V}$ is contained in $W$. 
We get the natural projections
\begin{equation}
\label{eq-natural-projections-from-I}
Gr_{++}(\LLambda) \LeftArrowOf{p} \I\RightArrowOf{q}  Gr_{+++}(\LLambda).
\end{equation}
The fiber of $q$ over $W\in Gr_{+++}(\LLambda)$ is the complex plane conic $\PP(W_\CC)\cap \Omega_\Lambda.$
The fiber of $p$ over $V\in Gr_{++}(\LLambda)$ is the open subset $Gr_+(\LLambda/\overline{V})$ of positive lines in 
$\PP(\LLambda/\overline{V})\cong \RR\PP^{r-3}$, where $r$ is the rank of $\Lambda$ and $\LLambda/\overline{V}$ is endowed with a pairing via the isomorphism $\overline{V}^\perp\cong \LLambda/\overline{V}$. We may and will view the fibers of $p$ as subsets of $Gr_{+++}(\LLambda)$.
We have $\dim_\RR(\I)=3r-7$. 

Let $\V$ be the tautological rank $2$ subbundle of the trivial rank $r$ vector bundle over $Gr_{++}(\LLambda)$ with fiber $\LLambda$. Denote by 
$\H^{1,1}$ the orthogonal complement of $\V$. The identification $\Omega_\Lambda\cong Gr_{++}(\LLambda)$ 
pulls back $\H^{1,1}$ to the Hodge bundle of type $(1,1)$ over the period domain $\Omega_\Lambda$. 
Let $\C^+\subset \H^{1,1}$ be the positive cone. The fiber of $\C^+$ over $V$ consists of vectors $\omega\in V^\perp$
satisfying $(\omega,\omega)>0$ and such that the orientation of the positive definite three dimensional subspace spanned by $V$ and $\omega$ and determined by $\fM_\Lambda^0$ is equal to the orientation determined by a basis $\{v_1,v_2,\omega\}$, for a basis $\{v_1,v_2\}$ of $V$ compatible with the orientation of $V$. The projectivization 
$\RR\PP\C^+$ is a bundle of hyperbolic spaces over $Gr_{++}(\LLambda)$, which is naturally isomorphic to $\I$
\begin{equation}
\label{eq-incidence-correspondence-is-the-universal-projectivised-positive-cone}
\I\cong \RR\PP\C^+. 
\end{equation}
The isomorphism sends a pair $(V,W)\in \I$ to the point in the fiber of $\RR\PP\C^+$  over $V$ corresponding to 
the ray in $V^\perp\cap W$ 
compatible with the orientation of $W$ determined by $\fM_\Lambda^0$.

A twistor path in $Gr_{++}(\LLambda)$ from $V_1$ to $V_k$, consisting of $(k-1)\geq 1$ twistor lines, is a point in the fiber product
of $2k-2$ copies of $\I$ (alternating over $Gr_{+++}(\LLambda)$ and over $\Omega_\Lambda$)
\[
Tw_\Lambda^k \ := \ \I \times_{Gr_{+++}(\LLambda)} \I \times_{\Omega_\Lambda} \I\times _{Gr_{+++}(\LLambda)} \I \cdots  \I\times_{Gr_{+++}(\LLambda)} \I.
\]

\begin{lem}
\label{lemma-Tw-k-is-simply-connected}
$Tw_\Lambda^k$  is a simply connected real analytic manifold of dimension $\mbox{(k+1)(r-1)-2}$.
The map $Tw_\Lambda^k\rightarrow \Omega_\Lambda$, sending a twistor path to its starting point $V_1$, is surjective and submersive with simply connected fibers of dimension $(k-1)(r-1)$.
\end{lem}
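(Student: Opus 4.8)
The plan is to realize $Tw_\Lambda^k$ as an iterated locally trivial fibre bundle over the period domain $\Omega_\Lambda$, and to extract every assertion from the homotopy exact sequences of the stages. The geometric input is that the two projections $p:\I\to\Omega_\Lambda$ and $q:\I\to Gr_{+++}(\LLambda)$ of (\ref{eq-natural-projections-from-I}) are locally trivial fibre bundles. I would see this by exhibiting $\I$, $\Omega_\Lambda=Gr_{++}(\LLambda)$ and $Gr_{+++}(\LLambda)$ as homogeneous spaces of the connected Lie group $G:=SO^+(\LLambda)$: transitivity of $G$ on positive definite $3$-subspaces and on oriented positive definite $2$-subspaces is elementary (in each case the stabilizer in $O(\LLambda)$ already meets every component of $O(\LLambda)$), and, given these, transitivity on $\I$ follows because the $G$-stabilizer of an oriented positive $2$-plane $\overline{V}$ acts on $\overline{V}^\perp\cong\RR^{1,r-3}$ as $SO^+(1,r-3)$, which is transitive on the positive lines, i.e. on the positive definite $3$-spaces through $\overline{V}$. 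Since $p$ and $q$ are $G$-equivariant surjections of homogeneous $G$-spaces, they are fibre bundles; under the identification (\ref{eq-incidence-correspondence-is-the-universal-projectivised-positive-cone}) the map $p$ is simply the bundle projection of $\RR\PP\C^+$, whose fibre over $V$ is the projectivised positive cone of the Lorentzian lattice $\overline{V}^\perp$, a hyperbolic $(r-3)$-space $\mathbb{H}^{r-3}$ (contractible, of dimension $r-3$), while the fibre of $q$ over $W$ is the smooth conic $\PP(W_\CC)\cap\Omega_\Lambda\cong\PP^1$, compact and simply connected. I would also record the standard facts that $\Omega_\Lambda$ is connected and simply connected (for $\pi_1$ one can invoke the homotopy exact sequence of $G\to G/H=\Omega_\Lambda$ together with the surjectivity of $\pi_1(SO(2))\to\pi_1(SO(3))$), so that $\I$, being $\mathbb{H}^{r-3}$-fibred over $\Omega_\Lambda$, is connected and simply connected too.

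I would then isolate the building block $P:=\I\times_{Gr_{+++}(\LLambda)}\I=\{(V_1,W_1,V_2):\overline{V_1}\subseteq W_1\supseteq\overline{V_2}\}$ with the map $\phi:P\to\Omega_\Lambda$ remembering $V_1$. Factoring $\phi=p\circ\mathrm{pr}_1$, where $\mathrm{pr}_1:P\to\I$ is the base change of $q$ along $q$ and hence a locally trivial $\PP^1$-bundle, I claim $\phi$ is itself a locally trivial fibre bundle with fibre $Z:=\mathbb{H}^{r-3}\times\PP^1$: over an open $U\subseteq\Omega_\Lambda$ trivialising $p$, the $\PP^1$-bundle $\mathrm{pr}_1$ restricts to a $\PP^1$-bundle over $U\times\mathbb{H}^{r-3}$, which (as $\mathbb{H}^{r-3}$ is contractible) is the pullback of a $\PP^1$-bundle over $U$, and after shrinking $U$ it is trivial, giving $\phi^{-1}(U)\cong U\times\mathbb{H}^{r-3}\times\PP^1$ over $U$. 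In particular $\phi$ is a surjective submersion whose fibres are connected, simply connected, and of dimension $(r-3)+2=r-1$.

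Next I would stack these blocks. Write $P_j$ ($1\leq j\leq k$) for the space of partial twistor paths $(V_1,W_1,\dots,W_{j-1},V_j)$, so $P_1=\Omega_\Lambda$, $P_2=P$ and $P_k=Tw_\Lambda^k$, and $\rho_j:P_j\to P_{j-1}$ for the map forgetting $(W_{j-1},V_j)$. The identity $P_j=P_{j-1}\times_{\Omega_\Lambda}P$ — with $P_{j-1}\to\Omega_\Lambda$ the last-vertex map and $P\to\Omega_\Lambda$ equal to $\phi$ — exhibits $\rho_j$ as a base change of $\phi$, hence a locally trivial fibre bundle with fibre $Z$. From this all the claims follow: every $P_j$ is a real analytic manifold, and $\dim Tw_\Lambda^k=\dim\Omega_\Lambda+(k-1)(r-1)=2(r-2)+(k-1)(r-1)=(k+1)(r-1)-2$; the map of the statement is the composite $\rho_2\circ\dots\circ\rho_k:Tw_\Lambda^k\to\Omega_\Lambda$ of surjective submersions, hence a surjective submersion; its fibre over $V_1$, obtained by restricting the whole tower over the point $V_1$, is an iterated $Z$-bundle over a point, hence connected of dimension $(k-1)(r-1)$, and, $Z$ being simply connected, simply connected by repeated use of the homotopy exact sequence; and applying the same exact sequences to the $\rho_j$ (where $\pi_1(Z)=0$, $\pi_0(Z)=\ast$) yields $\pi_1(P_j)\cong\pi_1(P_{j-1})$, so $\pi_1(Tw_\Lambda^k)\cong\pi_1(\Omega_\Lambda)=0$.

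The one genuinely non-formal step, which I expect to be the main obstacle, is the claim that the composite $P\xrightarrow{\mathrm{pr}_1}\I\xrightarrow{p}\Omega_\Lambda$ of two fibre bundles is again a fibre bundle: this fails for general composites and uses crucially that the fibre of $p$ is contractible, so that the $\PP^1$-bundle appearing over a chart $U\times\mathbb{H}^{r-3}$ is the pullback of a $\PP^1$-bundle over $U$ (homotopy invariance of fibre bundles over the paracompact base $U\times\mathbb{H}^{r-3}\simeq U$). Everything else is routine: the homogeneity statements and the consequent fibre-bundle structure of $p$ and $q$, the identification of their fibres, the fibre-product identity $P_j=P_{j-1}\times_{\Omega_\Lambda}P$, the simple connectivity of $\Omega_\Lambda$, and the homotopy-exact-sequence bookkeeping.
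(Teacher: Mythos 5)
Your proposal is correct and follows essentially the same strategy as the paper: the paper also builds $Tw^k_\Lambda$ as an iterated tower over $\Omega_\Lambda$ via the forgetful maps $p_k:Tw^k_\Lambda\to Tw^{k-1}_\Lambda$, identifies the fibre of $p_k$ as the conic ($\PP^1$) bundle $q^{-1}(p^{-1}(V_{k-1}))$ over the hyperbolic space $p^{-1}(V_{k-1})=Gr_+(\LLambda/\overline{V}_{k-1})$ — exactly your $Z\cong\mathbb{H}^{r-3}\times\PP^1$ — and concludes by induction using simple connectivity of base and fibre. Your write-up just makes explicit some points the paper leaves tacit (local triviality via homogeneity under $SO^+(\LLambda)$, and the contractibility argument for the composite fibration).
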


\begin{proof}
Set $Tw_\Lambda^1:=Gr_{++}(\LLambda)$.
Let $p_k:Tw_\Lambda^k\rightarrow Tw_\Lambda^{k-1}$, $k\geq 2$, be the map given by
\[
\{(V_1, \dots, V_k); (W_1, \dots, W_{k-1})\} 
\mapsto
\{(V_1, \dots, V_{k-1}); (W_1, \dots, W_{k-2})\}. 
\]
The fiber of $p_k$ over $\{(V_1, \dots, V_{k-1}); (W_1, \dots, W_{k-2})\}$ 
is the conic bundle 
\[
q^{-1}(p^{-1}(V_{k-1}))=q^{-1}(Gr_+(\LLambda/\overline{V}_{k-1}))\rightarrow Gr_+(\LLambda/\overline{V}_{k-1})
\]
over the hyperbolic space $Gr_+(\LLambda/\overline{V}_{k-1})\subset Gr_{+++}(\LLambda)$. 
The dimension of $Tw_\Lambda^k$ is thus given by
\[
\dim_\RR(Tw_\Lambda^k)=\dim_\RR(\Omega_\Lambda)+(k-1)[(r-3)+2] =2(r-2)+(k-1)(r-1)=(k+1)(r-1)-2.
\]
It follows, by induction on $k$, that 
$Tw_\Lambda^k$  is a simply connected  manifold, being a fibration with simply connected fibers over a simply connected base. 
The proof of the statement for the map $Tw_\Lambda^k\rightarrow \Omega_\Lambda$ is similar.
\end{proof}

The tangent space to $Tw^k_\Lambda$ at $\{(V_1, \dots, V_k); (W_1, \dots, W_{k-1})\}$ is the kernel of the surjective homomorphism
\[
\left[
\bigoplus_{i=1}^k\Hom(\overline{V}_i,\LLambda/\overline{V}_i)
\right]
\oplus
\left[
\bigoplus_{j=1}^{k-1}\Hom(W_j,\LLambda/W_j)
\right]
\longrightarrow
\left[
\bigoplus_{i=1}^{k-1}\Hom(\overline{V}_i,\LLambda/W_i)
\right]
\oplus
\left[
\bigoplus_{j=2}^{k}\Hom(\overline{V}_j,\LLambda/W_{j-1})
\right]
\]
given by
$((a_i)_{i=1}^k;(b_j)_{j=1}^{k-1})\mapsto
((\bar{a}_i-\tilde{b}_i)_{i=1}^{k-1};(\doublebar{a}_j-\doubletilde{b}_{j-1})_{j=2}^k)$,
where 
\begin{eqnarray*}
(\bar{\hspace{1ex}}): \Hom(\overline{V}_i,\LLambda/\overline{V}_i) & \rightarrow & \Hom(\overline{V}_i,\LLambda/W_i),
\\
(\doublebar{\hspace{1ex}}): \Hom(\overline{V}_i,\LLambda/\overline{V}_i) & \rightarrow & \Hom(\overline{V}_i,\LLambda/W_{i-1}),
\\
(\tilde{\hspace{1ex}}): \Hom(W_i,\LLambda/W_i) & \rightarrow & \Hom(\overline{V}_i,\LLambda/W_i),
\\
(\doubletilde{\hspace{1ex}}): \Hom(W_i,\LLambda/W_i) & \rightarrow & \Hom(\overline{V}_{i+1},\LLambda/W_i),
\end{eqnarray*}
are the natural homomorphisms.

%
\subsection{Twistor paths with fixed end points}
\label{sec-twistor-paths-with-fixed-end-points}
Let $f_k:Tw^k_\Lambda\rightarrow Gr_{++}(\LLambda)^2$ be given by
\[
\{(V_1, \dots, V_k); (W_1, \dots, W_{k-1})\} 
\mapsto
(V_1,V_k).
\]
We describe in this section an open dense subset $\breve{T}w^k_\Lambda$ of $Tw^k_\Lambda$, 
such that the restriction of $f_k$ to $\breve{T}w^k_\Lambda$ is submersive with smooth connected fibers, for $k\geq 8$
(see Proposition \ref{prop-breve-Tw} below).
The following example shows that $\breve{T}w^k_\Lambda$ must be a proper subset of $Tw^k_\Lambda$.

\begin{example}
Let $t\in Tw^k_\Lambda$ be a twistor path such that all the $V_i$'s are equal to the same oriented two dimensional subspace $V$  and all the $W_j$'s are equal to the same positive definite three dimensional subspace $W$. Then the tangent space of $Tw^k_\Lambda$ at $t$ consists of elements
$((a_i)_{i=1}^k;(b_j)_{j=1}^{k-1})$, such that 
$
\bar{a}_1=\tilde{b}_1=\doubletilde{b}_1=\doublebar{a}_2=\bar{a}_2= \cdots = \doubletilde{b}_{k-1}=\doublebar{a}_k.
$
In particular, $\bar{a}_1=\doublebar{a}_k$ and the differential of $f_k$ at $t$ has rank $2r-2$ and is not surjective. Consequently, the fiber 
of $f_k$ is singular at $t$ or of dimension larger than that of the generic fiber. 
Such $t$ is contained in the $(k-1)(r-3)$-dimensional subset of the fiber
$f_k^{-1}(V,V)$ consisting of all  twistor paths with all $V_i$ equal $V$, while for $k\geq 3$ the generic fiber of $f_k$ has dimension $(k-3)(r-1)+2$. Hence, such $t$ belongs to a fiber of dimension larger than that of the generic fiber if $k<r-1$.
\end{example}

%
\subsubsection{Fiber dimension estimates}
\begin{lem}
\label{lemma-open-subset-where-f-k-is-submersive}
The restriction of $f_k$ to the open subset of $Tw_\Lambda^k$, where $\overline{V}_1\cap\overline{V}_k=0$, is submersive.
\end{lem}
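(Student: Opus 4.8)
The plan is to work at an arbitrary point $t=\{(V_1,\dots,V_k);(W_1,\dots,W_{k-1})\}$ with $\overline{V}_1\cap\overline{V}_k=0$ and to show that the differential $df_k$ at $t$ is surjective onto $T_{(V_1,V_k)}\bigl(Gr_{++}(\LLambda)^2\bigr)=\Hom(\overline{V}_1,\LLambda/\overline{V}_1)\oplus\Hom(\overline{V}_k,\LLambda/\overline{V}_k)$. Using the description of $T_t Tw^k_\Lambda$ from Section~\ref{sec-spaces-of-twistor-paths} as the kernel of the stated linear map, a tangent vector is a tuple $((a_i)_{i=1}^k;(b_j)_{j=1}^{k-1})$ with $\bar a_i=\tilde b_i$ and $\doublebar a_{j}=\doubletilde b_{j-1}$, and $df_k$ sends it to $(a_1,a_k)$. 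So I must show: given any $(\alpha,\beta)\in\Hom(\overline{V}_1,\LLambda/\overline{V}_1)\oplus\Hom(\overline{V}_k,\LLambda/\overline{V}_k)$, one can find a compatible tuple with $a_1=\alpha$ and $a_k=\beta$.

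The key step is a decoupling observation: set $a_1=\alpha$, $a_k=\beta$, and choose $a_2=\dots=a_{k-1}=0$, $b_2=\dots=b_{k-2}=0$ (this uses $k$ large enough that these indices are distinct, hence the hypothesis $k\geq 8$ in the surrounding discussion is more than sufficient; in fact $k\geq 4$ suffices here). The compatibility constraints then reduce to four equations: $\bar a_1=\tilde b_1$ and $\doublebar a_2=\doubletilde b_1$ near the start, and $\bar a_{k-1}=\tilde b_{k-1}$ and $\doublebar a_k=\doubletilde b_{k-1}$ near the end. Since $a_2=0$ forces $\doublebar a_2=0$, I need $b_1\in\Hom(W_1,\LLambda/W_1)$ with $\tilde b_1=\bar a_1=\bar\alpha\in\Hom(\overline{V}_1,\LLambda/W_1)$ and $\doubletilde b_1=0\in\Hom(\overline{V}_2,\LLambda/W_1)$; symmetrically at the other end I need $b_{k-1}$ with $\doubletilde b_{k-1}=\doublebar\beta$ and $\tilde b_{k-1}=0$. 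So everything comes down to surjectivity of the restriction-type maps $\Hom(W_i,\LLambda/W_i)\to\Hom(\overline V_i,\LLambda/W_i)\oplus\Hom(\overline V_{i+1},\LLambda/W_i)$, i.e. of $W_i\hookrightarrow\LLambda$ restricted to $\overline V_i\oplus\overline V_{i+1}\subset W_i$: an element of $\Hom(W_i,\LLambda/W_i)$ is determined freely by its values on any complement-respecting decomposition, and since $\overline V_i,\overline V_{i+1}$ are $2$-dimensional subspaces of the $3$-dimensional $W_i$ the map $\Hom(W_i,-)\to\Hom(\overline V_i,-)$ is already surjective; to hit a pair of prescribed values on $\overline V_i$ and $\overline V_{i+1}$ one needs $\overline V_i\cap\overline V_{i+1}$ to be where the two prescriptions agree — but here I only ever prescribe $(\bar\alpha,0)$, and at $i=1$ the value $0$ on $\overline V_2$ together with $\bar\alpha$ on $\overline V_1$ is consistent precisely because I am free to take $b_1$ vanishing on any chosen complement of $\overline V_1$ inside $W_1$ that contains $\overline V_1\cap\overline V_2$. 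The point is that the only genuine obstruction would be a compatibility requirement tying the start data to the end data, and the hypothesis $\overline V_1\cap\overline V_k=0$ guarantees no such tie arises — but in fact with the $0$-padding above even that is automatic since start and end are handled by disjoint blocks of indices.

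The main obstacle I anticipate is purely bookkeeping: writing the four overline/doublebar/tilde/doubletilde maps correctly and checking that the system I set up is consistent, i.e. that prescribing $b_1$ on $\overline V_1$ and on $\overline V_2$ simultaneously is possible. This is where the hypothesis $\overline{V}_1\cap\overline{V}_k=0$ should actually be used in the author's intended argument — most likely they do \emph{not} $0$-pad but instead use a uniform construction in which a single obstruction class lives in $\Hom(\overline V_1\cap\overline V_k,\ \cdot\ )$, which vanishes exactly under the transversality hypothesis. So after the reduction above I would, if needed, recast the verification to make that obstruction explicit: express the failure of surjectivity of $df_k$ as a coboundary map in the \v{C}ech-type complex computing $T_t Tw^k_\Lambda$, identify its target with a space of homomorphisms out of $\overline V_1\cap\overline V_k$, and conclude vanishing. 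Either route closes the argument; I expect the clean writeup to favor the explicit-tuple construction for $k\geq 4$ and simply remark that it a fortiori covers the range $k\geq 8$ used later.
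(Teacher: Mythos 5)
Your decoupling strategy has a genuine gap, and it is not merely bookkeeping. You set $a_2=0$ and then try to pick $b_1\in\Hom(W_1,\LLambda/W_1)$ with $\tilde b_1=\bar\alpha$ on $\overline V_1$ and $\doubletilde b_1=0$ on $\overline V_2$. But $\overline V_1$ and $\overline V_2$ are two $2$-dimensional subspaces of the $3$-dimensional $W_1$, so $\ell:=\overline V_1\cap\overline V_2$ is at least a line; a single $b_1$ can only meet both prescriptions if they agree on $\ell$, i.e.\ if the image of $\alpha(\ell)$ in $\LLambda/W_1$ is zero. That is a nontrivial condition on $\alpha$, and it fails in general. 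Your remark that you are ``free to take $b_1$ vanishing on any chosen complement of $\overline V_1$ inside $W_1$ that contains $\overline V_1\cap\overline V_2$'' is incoherent: $\overline V_1\cap\overline V_2\subset\overline V_1$, so no complement of $\overline V_1$ can contain it. In short, the obstruction is not a single global one tying $V_1$ to $V_k$; it appears already at each consecutive pair $(\overline V_i,\overline V_{i+1})$, and zero-padding does not make it vanish. Your fallback cohomological sketch misidentifies the target of the obstruction as involving $\overline V_1\cap\overline V_k$; the local intersections $\overline V_i\cap\overline V_{i+1}$ are never zero.

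The paper's actual argument sidesteps all of this by never padding at all. It lifts $\alpha,\beta$ to maps $\hat a_1:\overline V_1\to\LLambda$, $\hat a_k:\overline V_k\to\LLambda$, uses the hypothesis $\overline V_1\cap\overline V_k=0$ to extend these simultaneously to a single $a\in\Hom(\LLambda,\LLambda)$, and then defines every $a_i$ and $b_j$ by composing $a$ with the projection $\LLambda\to\LLambda/Z$ for $Z=\overline V_i$ or $W_j$. Because all components come from the one global $a$, the compatibility equations $\bar a_i=\tilde b_i$ and $\doublebar a_j=\doubletilde b_{j-1}$ hold identically — both sides are simply the composite of $a$ with the same inclusion and quotient — so the constructed tuple is automatically a tangent vector mapping to $(\alpha,\beta)$. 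This is where the hypothesis is used (to build $a$), and it renders the local compatibility checks that derail your plan vacuous. If you want to keep a ``minimal data'' flavor, the right move is not to zero the interior but to take the interior data induced by a common extension; that is exactly the paper's construction.
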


\begin{proof}
Fix a twistor path $\{(V_1, \dots, V_k); (W_1, \dots, W_{k-1})\}$ such that $\overline{V}_1\cap\overline{V}_k=0$.
Let $a_1$ be an element of $\Hom(\overline{V}_1,\LLambda/\overline{V}_1)$
and  $a_k$ of $\Hom(\overline{V}_k,\LLambda/\overline{V}_k)$ so that $(a_1,a_k)$ is a tangent vector to 
$\Omega_\Lambda\times \Omega_\Lambda$ at $(V_1,V_k)$. 
Let $\hat{a}_i$ be an element of $\Hom(\overline{V}_i,\LLambda)$ mapping to $a_i$ via the natural homomorphism, $i=1,k$.
The vanishing $\overline{V}_1\cap \overline{V}_k=(0)$ enables us to choose a homomorphism 
$a:\LLambda\rightarrow \LLambda$ restricting to the subspace $\overline{V}_i$ as $\hat{a}_i$, $i=1,k$.
Given a subspace $Z$ of $\LLambda$ we have the natural homomorphism
\[
\Hom(\LLambda,\LLambda)\rightarrow \Hom(Z,\LLambda/Z)
\]
obtained by composition with the inclusion $Z\rightarrow \LLambda$ and projection $\LLambda\rightarrow \LLambda/Z$.
We recover $a_i$ as the image of $a$ by choosing $Z$ to be $\overline{V}_i$, for $i=1,k$. Define $a_i$ that way for $1\leq i \leq k$.
Define $b_j\in Hom(W_j,\LLambda/W_j)$ as the image of $a$ by choosing $Z=W_j$, $1\leq j \leq {k-1}$. 
Then $((a_i)_{i=1}^k;(b_j)_{j=1}^{k-1})$ is a tangent vector to $Tw^k_\Lambda$ which maps to $(a_1,a_k)$ via the differential of 
$f_k$. We conclude that $f_k$ is submersive at the given twistor path.
\end{proof}

Set  
\begin{eqnarray*}
d_k&:=&(k-3)(r-1)+2,\\
\alpha_k&:=&\max\{d_k,(k-2)(r-2)\},
\\
\beta_k&:=&\max\{d_k,(k-1)(r-3),(k-2)(r-2)+1\}.
\end{eqnarray*}

\begin{lem}
\label{lemma-fiber-dimension-estimates}
Assume that $k\geq 3$. The dimension of $f_k^{-1}(V_1,V_k)$ is $d_k$, if $\overline{V}_1\cap \overline{V}_k=0$, and 
\begin{equation}
\label{eq-fiber-dimension-estimate}
\dim(f_k^{-1}(V_1,V_k))\leq 
\left\{\begin{array}{ccl}
\alpha_k & \mbox{if} & \dim(\overline{V}_1\cap \overline{V}_k)=1,
\\
\beta_k & \mbox{if} & \dim(\overline{V}_1\cap \overline{V}_k)=2.
\end{array}
\right.
\end{equation}
In particular, the fibers of $f_k$ all have the same dimension for $k\geq r+1$. 
\end{lem}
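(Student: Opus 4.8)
The plan is to analyze the three cases $\dim(\overline{V}_1\cap\overline{V}_k)=0,1,2$ separately, in each case stratifying the fiber $f_k^{-1}(V_1,V_k)$ according to how the intermediate data $V_2,\dots,V_{k-1}$ and $W_1,\dots,W_{k-1}$ are positioned relative to each other and relative to the fixed subspaces $V_1$ and $V_k$, and bounding the dimension of each stratum by a direct parameter count. First I would treat the generic case $\overline{V}_1\cap\overline{V}_k=0$: here Lemma \ref{lemma-open-subset-where-f-k-is-submersive} shows $f_k$ is submersive along the whole fiber, so $f_k^{-1}(V_1,V_k)$ is a smooth manifold of dimension $\dim_\RR(Tw^k_\Lambda)-\dim_\RR(\Omega_\Lambda\times\Omega_\Lambda)=[(k+1)(r-1)-2]-2[2(r-2)]$; a short arithmetic check gives $(k-3)(r-1)+2=d_k$, establishing the first claim and simultaneously identifying $d_k$ as the ``expected'' fiber dimension.

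For the cases $\dim(\overline{V}_1\cap\overline{V}_k)=1$ and $=2$ the submersivity argument fails, and the idea is to produce the stated bounds by combining two kinds of estimates. On the one hand, for each choice of the intermediate subspaces one still has the smooth local structure coming from the tangent-space description in Section \ref{sec-spaces-of-twistor-paths}, and a generic stratum will have dimension controlled by the rank of the differential of $f_k$ there; on the other hand, the special strata (where some $V_i=V_j$, or some $V_i$ already contains part of $V_1\cap V_k$, or some $W_j$ is constrained) are cut out by closed conditions and can be bounded crudely by counting: a $W_j\in Gr_{+++}(\LLambda)$ constrained to contain a fixed plane contributes at most $r-3$ parameters, an unconstrained $W_j$ through a given line contributes at most $r-2$, each conic fiber contributes $1$, etc. The quantities $\alpha_k=\max\{d_k,(k-2)(r-2)\}$ and $\beta_k=\max\{d_k,(k-1)(r-3),(k-2)(r-2)+1\}$ are exactly designed to be the maxima over these competing stratum-dimension estimates, so the proof amounts to: (i) enumerate the relevant strata, (ii) bound each by one of the listed quantities, (iii) observe the fiber is their union. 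The Example preceding the lemma shows the stratum of constant paths really does contribute $(k-1)(r-3)$ when $\dim(\overline{V}_1\cap\overline{V}_k)=2$, which is why that term appears in $\beta_k$ and not in $\alpha_k$.

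Finally, for the ``in particular'' clause I would simply observe that for $k$ large the term $d_k=(k-3)(r-1)+2$ dominates: since $(k-1)(r-1)-\text{[each competing term]}$ is linear in $k$ with positive leading coefficient (one checks $d_k-(k-2)(r-2)=k-r+1-1\cdot\text{const}$ and $d_k-(k-1)(r-3)$ likewise grows), there is a threshold $k_0$ beyond which $\alpha_k=\beta_k=d_k$; a direct computation shows $k_0=r+1$ suffices, so for $k\ge r+1$ all fibers have dimension exactly $d_k$ regardless of $\dim(\overline{V}_1\cap\overline{V}_k)$. The main obstacle I anticipate is bookkeeping in step (i)–(ii) of the non-generic cases: one must be careful that the degenerations are genuinely closed conditions (so that the stratum closures stay inside the fiber) and that no stratum has been overlooked, since an omitted stratum of larger dimension would invalidate the bound; the positivity constraints defining $Gr_{++}$ and $Gr_{+++}$ are open and hence harmless here, but the incidence constraints $\overline{V}_i\subset W_{i-1}\cap W_i$ chain the data together and require the telescoping tangent-space computation of Section \ref{sec-spaces-of-twistor-paths} to be tracked through each degeneration.
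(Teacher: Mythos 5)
Your plan gets the generic case (\(\overline{V}_1\cap\overline{V}_k=0\)) right — that is exactly Lemma~\ref{lemma-open-subset-where-f-k-is-submersive} plus the dimension count — and your ``in particular'' arithmetic is also correct. But for the non-generic cases you propose a direct ``enumerate all strata and bound each by counting'' strategy over all \(k\), and this is not a proof; it is a statement of what the numbers ought to come from, with the hard part (``the main obstacle I anticipate is bookkeeping'') explicitly deferred. For a twistor path of length \(k\) the degenerations of the intermediate data \(V_2,\dots,V_{k-1}\) form a combinatorially growing family of closed conditions, and there is no indicated mechanism to control it.

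The idea that makes the lemma tractable, and which your plan does not contain, is an \emph{induction on \(k\)}. One stratifies the fiber \(f_{k+1}^{-1}(V_1,V_{k+1})\) only according to the position of the \emph{first} intermediate plane \(V_2\) (specifically, by \(\dim(\overline{V}_2\cap\overline{V}_{k+1})\) and, when that dimension is \(1\), whether \(\overline{V}_1\cap\overline{V}_2=\overline{V}_1\cap\overline{V}_{k+1}\) or \(\overline{V}_2\subset\overline{V}_1+\overline{V}_{k+1}\)). Each such stratum fibres over the choices of \((V_2,W_1)\), and its fibre is a fibre of \(f_k\) for the truncated path from \(V_2\) to \(V_{k+1}\), so the inductive hypothesis applies. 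The bounds \(\alpha_k\) and \(\beta_k\) are not ``the max over stratum parameter counts'' in the flat sense you describe; they are designed so that the recursion \(\delta(k+1,1)=\max\{d_{k+1},\delta(k,2),(r-2)+\delta(k,1)\}\), \(\delta(k+1,2)=\max\{(r-1)+\delta(k,1),(r-3)+\delta(k,2)\}\) (with base case \(k=3\) computed directly) stays below them. Without this recursive structure the only stratum you can genuinely bound is the ``all \(V_i\) equal'' locus from the preceding Example; the rest of the fiber would require the very bookkeeping you flag as the obstacle. You also gesture at using ``the rank of the differential of \(f_k\)'' to control generic strata in the degenerate fibers, which the paper never does and which seems hard to justify, since in the degenerate cases the rank drops and the fiber need not be smooth.

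So the gap is concrete: you have the right target numbers but not the inductive mechanism that produces them, and the direct stratification you propose in its place is not developed enough to replace it.
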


\begin{proof}
If $\overline{V}_1\cap \overline{V}_k=0$, then $f_k$ is submersive along the fiber $f_k^{-1}(V_1,V_k)$, by Lemma \ref{lemma-open-subset-where-f-k-is-submersive},
and so the dimension of the fiber is $\dim(Tw^k_\Lambda)-2\dim(\Omega_\Lambda)=d_k$. 
If $k\geq r+1$, then the right hand side of  inequality (\ref{eq-fiber-dimension-estimate}) is $d_k$, and the equidimensionality of $f_k$ follows from the Semicontinuity Theorem, since $Tw^k_\Lambda$ is an open analytic subset of a real projective algebraic variety and $f_k$ is the restriction of a projective morphism. 
The proof of inequality  (\ref{eq-fiber-dimension-estimate})  is by induction on $k$. Set $\epsilon:=\dim(\overline{V}_1\cap\overline{V}_k)$. We prove that for $\epsilon=1,2$, the dimension of the fiber
$f_k^{-1}(V_1,V_k)$ is bounded by the function 
$\delta(k,\epsilon)$ defined recursively by $\delta(3,1)=r-2$, $\delta(3,2)=2r-6$, and for $k\geq 3$ 
\begin{eqnarray*}
\delta(k+1,1) &=&\max\{d_{k+1},\delta(k,2),(r-2)+\delta(k,1)\},
\\
\delta(k+1,2) &=& \max\{(r-1)+\delta(k,1),(r-3)+\delta(k,2)\}.
\end{eqnarray*}
The proof will show also that the function $\delta(k,\epsilon)$ is bounded from above by the right hand side of inequality
(\ref{eq-fiber-dimension-estimate}). As we need to prove simultaneously both facts we will not use the notation 
$\delta(k,\epsilon)$, but it explains the right hand side of inequality (\ref{eq-fiber-dimension-estimate}).

The case $k=3$: If $\overline{V}_1=\overline{V}_3=V$, then $W_1$ and $W_2$ contain $V$. If $W_1\neq W_2$, 
then $\overline{V}_2=W_1\cap W_2$, so that this subset of the fiber has dimension $2\dim(\RR\PP(\LLambda/\overline{V}))=2r-6.$
The subset of the fiber where $W_1=W_2$ has dimension $\dim(\RR\PP(\LLambda/\overline{V})+2=r-1<2r-6.$ Hence, the dimension of 
the fiber is $2r-6.$ Assume next that $\ell:=\overline{V}_1\cap\overline{V}_3$ is one dimensional. 
Case 3.a: The subset of the fiber, where $\ell$ is not contained in $\overline{V}_2$, consists of twistor lines with 
$W_1=\ell+\overline{V}_2=W_2$. Now $\overline{V}_1+\overline{V}_3$ is contained in $W_1+W_2=\ell+\overline{V}_2$. 
Hence, $\overline{V}_1+\overline{V}_3=\ell+\overline{V}_2$ and  $\overline{V}_2$ is contained in  $\overline{V}_1+\overline{V}_3$. The dimension of this subset is $2$.
Case 3.b: If $\ell$ is contained in $\overline{V}_2$, $\overline{V}_2\neq \overline{V}_1$, and $\overline{V}_2\neq \overline{V}_3$, then
$W_1=\overline{V}_1+ \overline{V}_2$ and $W_2=\overline{V}_2+ \overline{V}_3$. The corresponding subset of the fiber has dimension
$\dim(\RR\PP(\LLambda/\ell))=r-2$.
Case 3.c: If $\overline{V}_2= \overline{V}_1$, then $W_2=\overline{V}_2+ \overline{V}_3$. 
The corresponding subset of the fiber has dimension
$\dim(\RR\PP(\LLambda/\overline{V}_1))=r-3$.
The case where $\overline{V}_2= \overline{V}_3$ is analogous. We conclude that when
$\dim(\overline{V}_1\cap\overline{V}_3)=1$ the fiber $f_3^{-1}(V_1,V_3)$ has dimension $r-2$.
Hence, 
inequality (\ref{eq-fiber-dimension-estimate}) holds for $k=3$. 

Assume that inequality (\ref{eq-fiber-dimension-estimate}) 
holds for $k$ and $\dim(\overline{V}_1\cap \overline{V}_{k+1})>0$. 
We establish separately the two cases of inequality (\ref{eq-fiber-dimension-estimate}) for $k+1$, according to the dimension of $\overline{V}_1\cap \overline{V}_{k+1}$ being $1$ or $2$.

Assume that $\dim(\overline{V}_1\cap \overline{V}_{k+1})=1$. The fiber $f_{k+1}^{-1}(V_1,V_{k+1})$
is the union of three subsets $\Sigma_d$, determined by the dimension $d$ of $\overline{V}_2\cap \overline{V}_{k+1}$.
If $d\neq 1$, then $\overline{V}_1\neq \overline{V}_2$, and $W_1=\overline{V}_1+ \overline{V}_2$. 
The dimension of $\Sigma_0$ is $(r-1)+d_k=d_{k+1}$. 
The dimension of $\Sigma_2$ is $\leq \beta_k$, by the induction hypothesis, and $\beta_k\leq \alpha_{k+1}$. 
Hence, $\dim(\Sigma_2)\leq \alpha_{k+1}$. We claim that 
$\Sigma_1$ is the union of two set, $\Sigma_1'$, where $\overline{V}_1\cap \overline{V}_2=\overline{V}_1\cap \overline{V}_{k+1}$,  
and $\Sigma_1''$,  where $\overline{V}_2$ is contained in $\overline{V}_1+\overline{V}_{k+1}$. It suffices to prove the implication 
\[
\dim(\overline{V}_1\cap \overline{V}_{k+1})=1
\ \mbox{and} \ 
\dim(\overline{V}_2\cap \overline{V}_{k+1})=1
\ \mbox{and} \ 
\overline{V}_{2} \not\subset \overline{V}_{1}+\overline{V}_{k+1}
\Rightarrow 
\overline{V}_1\cap \overline{V}_{2} = \overline{V}_1\cap \overline{V}_{k+1}.
\]
The assumption $\overline{V}_{2} \not\subset \overline{V}_{1}+\overline{V}_{k+1}$ 
implies that 
$\dim(\overline{V}_1\cap \overline{V}_2)=1$ and 
$\overline{V}_1\cap \overline{V}_{2}=\overline{V}_2\cap \overline{V}_{k+1}$, which implies the conclusion of the displayed implication and verifies the claim.
The dimension of $\Sigma_1'$ is $\leq (r-2)+\alpha_k$, by the induction hypothesis, 
and the latter is $\leq \alpha_{k+1}$.
The dimension of $\Sigma_1''$ is smaller than that of $\Sigma_1'$. 
Hence, $\dim(\Sigma_1)\leq \alpha_{k+1}$.

Assume that $\overline{V}_1= \overline{V}_{k+1}=\overline{V}$. The fiber $f_{k+1}^{-1}(V_1,V_{k+1})$
is the union of two subsets $S_d$,  determined by the dimension $d$ of $\overline{V}_2\cap \overline{V}$. 
Note that inequality (\ref{eq-fiber-dimension-estimate}) is equivalent to the following inequality:
\[
\dim(f_k^{-1}(V_1,V_k))\leq 
\left\{\begin{array}{cclrcl}
d_k& \mbox{if} & \dim(\overline{V}_1\cap \overline{V}_k)=1 \ \mbox{and} & &k&\geq r-1,
\\
(k-2)(r-2) & \mbox{if} & \dim(\overline{V}_1\cap \overline{V}_k)=1 \ \mbox{and} & &k&\leq r-2,
\\
d_k & \mbox{if} & \dim(\overline{V}_1\cap \overline{V}_k)=2  \ \mbox{and} & &k&\geq r,
\\
(k-2)(r-2)+1 & \mbox{if} &  \dim(\overline{V}_1\cap \overline{V}_k)=2  \ \mbox{and} & r-2\leq &k&\leq r-1,
\\
(k-1)(r-3) & \mbox{if} & \dim(\overline{V}_1\cap \overline{V}_k)=2  \ \mbox{and} & &k& \leq r-3.
\end{array}
\right.
\]
If $d=1$, then $\overline{V}\neq \overline{V}_2$, and $W_1=\overline{V}+ \overline{V}_2$. 
The dimension of $S_1$ satisfies 
\[
\dim(S_1)\leq (r-1)+\alpha_k =
 \left\{\begin{array}{ccl}
 d_{k+1}& \mbox{if} & k\geq r
 \\
 (k-1)(r-2)+2& \mbox{if} & k\leq r-1,
\end{array}
\right.
\]
by the induction hypothesis, and the right hand side is $\leq \beta_{k+1}$, by definition of the latter.
 The desired upper bound  $\dim(S_1)\leq \beta_{k+1}$ follows.
The dimension of $S_2$ satisfies
\begin{eqnarray*}
\dim(S_2)&\leq &(r-3)+\beta_k
\end{eqnarray*}
by the induction hypothesis. The latter is 
$\leq \beta_{k+1}$.
The upper bound $\dim(S_2)\leq \beta_{k+1}$ follows.
\end{proof}

Let $\pi_k:Tw^k_\Lambda\rightarrow \Omega_\Lambda$ send a twistor path to its endpoint $V_k$. Fix $V$ and $V_k$ in $\Omega_\Lambda$. Let $\Sigma_V$ be the open subset of the fiber $\pi_k^{-1}(V_k)$ 
consisting of twistor paths with starting point $V_1$ satisfying 
$\overline{V}_1\cap \overline{V}=(0).$ Denote by $\Sigma_V^c$ the complement of $\Sigma_V$ in the fiber.

\begin{lem}
\label{lemma-Sigma-V-is-connected}
If $k\geq 4$, then the codimension of $\Sigma_V^c$ in the fiber $\pi_k^{-1}(V_k)$ is greater than or equal to $2$. Consequently, the set $\Sigma_V$ is connected. 
\end{lem}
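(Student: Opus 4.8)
The plan is to cut $\Sigma_V^c$ into three locally closed pieces according to how $\overline{V}_1$ meets $\overline{V}_k$, bound the dimension of each piece using the fibre dimension estimates of Lemma \ref{lemma-fiber-dimension-estimates}, and then deduce connectedness of $\Sigma_V$ from simple connectedness of the ambient fibre $\pi_k^{-1}(V_k)$. Two preliminary observations are needed. First, the reversal involution $\{(V_1,\dots,V_k);(W_1,\dots,W_{k-1})\}\mapsto\{(V_k,\dots,V_1);(W_{k-1},\dots,W_1)\}$ is a real analytic automorphism of $Tw^k_\Lambda$ interchanging the starting point map of Lemma \ref{lemma-Tw-k-is-simply-connected} with $\pi_k$; hence, applying that lemma to $\pi_k$, the fibre $\pi_k^{-1}(V_k)$ is a simply connected real analytic manifold of dimension $(k-1)(r-1)$. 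Second, for a fixed $W\in Gr_{++}(\LLambda)$ the subset $Z_W:=\{V'\in\Omega_\Lambda:\overline{V'}\cap\overline{W}\neq 0\}$ is a closed real algebraic subset of $\Omega_\Lambda$ of dimension at most $r-1$: over its open stratum, where $\overline{V'}\cap\overline{W}$ is a single line, it fibres over $\PP(\overline{W})\cong\RR\PP^1$ by $V'\mapsto\overline{V'}\cap\overline{W}$, the fibre over a line $\ell$ being the set of positive definite $2$-planes containing $\ell$, of dimension $r-2$; and the remaining stratum $\{\overline{V'}=\overline{W}\}$ is finite. Since $\dim_\RR(\Omega_\Lambda)=2r-4$, this means $\operatorname{codim}_{\Omega_\Lambda}(Z_W)\geq r-3$.

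Now stratify $\Sigma_V^c$ by the integer $j:=\dim(\overline{V}_1\cap\overline{V}_k)\in\{0,1,2\}$. For $j\in\{0,1\}$ the locus of relevant starting points $V_1$ (those with $\overline{V}_1\cap\overline{V}\neq 0$) is contained in $Z_V$, hence has dimension $\leq r-1$; for $j=2$ it is contained in the finite set $\{V':\overline{V'}=\overline{V}_k\}$. The fibre of $f_k$ over a pair $(V_1,V_k)$ with such $j$ has dimension equal to $d_k$, at most $\alpha_k$, or at most $\beta_k$, respectively, by Lemma \ref{lemma-fiber-dimension-estimates}. Consequently
\[
\dim_\RR(\Sigma_V^c)\ \leq\ \max\bigl\{\,(r-1)+d_k,\ (r-1)+\alpha_k,\ \beta_k\,\bigr\},
\]
and since $\beta_k\geq\alpha_k\geq d_k$ the middle term dominates the first. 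A direct computation with the definitions of $d_k$, $\alpha_k$, $\beta_k$ gives
\[
(k-1)(r-1)-\bigl[(r-1)+\alpha_k\bigr]=(k-2)(r-1)-\alpha_k=\min\{\,r-3,\ k-2\,\},
\]
\[
(k-1)(r-1)-\beta_k=\min\{\,2r-4,\ 2(k-1),\ k+r-4\,\},
\]
while $(k-1)(r-1)-\bigl[(r-1)+d_k\bigr]=r-3$. For $k\geq 4$ one has $k-2\geq 2$, and the standing hypothesis $r\geq 7$ gives $r-3\geq 4$; so all three quantities are $\geq 2$, whence $\operatorname{codim}_{\pi_k^{-1}(V_k)}(\Sigma_V^c)\geq 2$.

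Finally, $\Sigma_V^c$ is a closed semialgebraic subset --- in particular, locally a finite union of submanifolds --- of codimension $\geq 2$ in the connected manifold $\pi_k^{-1}(V_k)$; hence its complement $\Sigma_V$ is connected, since any two of its points may be joined by a path in $\pi_k^{-1}(V_k)$ which a general position perturbation pushes off the codimension $\geq 2$ set $\Sigma_V^c$. I do not expect a serious obstacle: the content is entirely in the dimension bookkeeping, and the one genuinely tight point is the $j=1$ stratum, whose base can attain the full dimension $r-1$ (for instance when $\overline{V}=\overline{V}_k$), forcing the inequality $\alpha_k\leq(k-2)(r-1)-2$; this fails for $k=3$ and is exactly where the hypothesis $k\geq 4$ --- rather than the $k\geq 3$ of Lemma \ref{lemma-fiber-dimension-estimates} --- is used.
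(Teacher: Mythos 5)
Your proof is correct and follows essentially the same strategy as the paper's: stratify $\Sigma_V^c$ by intersection-dimension invariants among $\overline V$, $\overline V_1$, $\overline V_k$, bound each stratum using the fibre-dimension estimates of Lemma \ref{lemma-fiber-dimension-estimates} together with the $\leq r-1$ bound on the family of admissible $\overline V_1$, and conclude connectedness from codimension $\geq 2$. The only cosmetic difference is that the paper stratifies by the full triple $(i,j,\ell)$ (reducing the computation to the dominant stratum $S_{1,2,1}$), whereas you stratify by $\ell=\dim(\overline V_1\cap\overline V_k)$ alone and absorb the $\overline V_1\cap\overline V\neq 0$ constraint into the uniform bound $\dim Z_V\leq r-1$; this is a harmless coarsening that yields the same bounds, and your explicit accounting of the $k\geq 4$ threshold and the tight $\ell=1$ stratum matches the paper's.
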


\begin{proof}
The set $\Sigma_V^c$ is the union of the sets 
$
S_{i,j,\ell}
$
consisting of twistor paths satisfying
\[
\dim(\overline{V}\cap\overline{V}_1)=i, \ \dim(\overline{V}\cap\overline{V}_k)=j, \ \dim(\overline{V}_1\cap\overline{V}_k=\ell),
\]
$1\leq i\leq 2$, $0\leq j,\ell\leq 2$.
Note that if one of the indices is $2$, then the other two are equal.  
Hence, we need to estimate the codimensions 
 of
$S_{1,0,0}$, $S_{1,0,1}$, $S_{1,1,0}$, $S_{1,1,1}$, $S_{2,1,1}$, $S_{2,2,2}$, $S_{2,0,0}$, $S_{1,2,1}$, and $S_{1,1,2}$.
The dimension of the fiber $\pi_k^{-1}(V_k)$ is $(k-1)(r-1)$, by Lemma \ref{lemma-Tw-k-is-simply-connected}.
If $\overline{V}_1=\overline{V}$ ($i=2$), or $\overline{V}_1=\overline{V}_k$ ($\ell=2$), then 
the dimension $s_{i,j,\ell}$ of $S_{i,j,\ell}$ equals that of $f_k^{-1}(V_1,V_k)$ and the statement easily follows from 
Lemma \ref{lemma-fiber-dimension-estimates}. 

Let us estimate the dimension of $S_{1,2,1}$, where $\overline{V}=\overline{V}_k$ and $\dim(\overline{V}_1\cap\overline{V}_k)=1$.
The space $\overline{V}_1$ varies in an $r-1$ dimensional family, so that
\[
s_{1,2,1}\leq r-1+\left\{
\begin{array}{lcl}
(k-3)(r-1)+2, & \mbox{if} & k\geq r-1
\\
(k-2)(r-2), & \mbox{if} & k\leq r-2.
\end{array}
\right.
\]
by Lemma \ref{lemma-fiber-dimension-estimates}. We conclude the inequality
\[
\mbox{codim}(S_{1,2,1})=(k-1)(r-1)-s_{1,2,1}\geq \left\{
\begin{array}{clc}
r-3, & \mbox{if} & k\geq r-1
\\
k-2, & \mbox{if} & k\leq r-2.
\end{array}
\right.
\]
The statement follows. The inequalities $s_{1,0,1}\leq s_{1,2,1}$ and $s_{1,1,1}\leq s_{1,2,1}$ hold, since the subspace $\overline{V}_1$ is restricted by two conditions when
$\overline{V}\neq \overline{V}_k$ and only by one condition when $\overline{V}= \overline{V}_k$. Finaly, for $j=0,1$, we have
$
s_{1,j,0}\leq r-1+d_k
$
and its codimension is $\geq r-3$.
\end{proof}

%
\subsubsection{The well behaved open subset $\breve{T}w_\Lambda^k$}
Let $g_k:Tw^k_\Lambda\rightarrow Gr_{+++}(\LLambda)^{k-1}$ be the natural map. Assume that $k\geq 5$.
Let $U^k_i$, $1\leq i \leq k-4$, be the open subset of $Gr_{+++}(\LLambda)^{k-1}$, where $W_i\cap W_{i+3}=(0)$. 
The complement $D^k_i$ of $g_k^{-1}(U^k_i)$ has codimension $1$ in $Tw^k_\Lambda$.
Set $U^k:=\bigcup_{i=1}^{k-4}U^k_i$ and 
\[
\breve{T}w_\Lambda^k:= g_k^{-1}(U^k)=
Tw^k_\Lambda\setminus \left(\bigcap_{i=1}^{k-4}D^k_i\right). 
\]
\hide{
Let $U^5\subset Gr_{+++}(\LLambda)^4$ be the open subset of the image of $g_5$ consisting of $4$-tuples 
$(W_1,W_2,W_3,W_4)$ satisfying the following conditions:
\begin{eqnarray*}
\dim(W_i\cap W_{i+1})&=& 2, \ \mbox{for} \ 1\leq i\leq 3,
\\
\dim(W_i\cap W_{i+2})&=&1 , \ \mbox{for} \ 1\leq i\leq 2,  
\\
W_1\cap W_4 & = &(0).
\end{eqnarray*}
Note that the first two condition above follow from the third and the fact that points in the image of $g_5$ satisfy
$\dim(W_i\cap W_{i+1})\geq 2$, for  $1\leq i\leq 3$.
Let $U^k\subset Gr_{+++}(\LLambda)^{k-1}$, $k\geq 5$, be the subset consisting elements $(W_1,W_2, \dots, W_{k-1})$,
such that for some $i$, $1\leq i \leq k-4$, the sequence $(W_i,W_{i+1},W_{i+2},W_{i+3})$ belongs to $U^5$.
Set 
\[
\breve{T}w_\Lambda^k:= g_k^{-1}(U^k), \ k\geq 5.
\]
}
Denote the restriction of $f_k$ to $\breve{T}w_\Lambda^k$ by $\breve{f}_k:\breve{T}w_\Lambda^k\rightarrow Gr_{++}(\LLambda)^2$.
Following is the main result of Subsection \ref{sec-twistor-paths-with-fixed-end-points}.

\begin{prop}
\label{prop-breve-Tw}
If $k\geq 6$, then $\breve{T}w_\Lambda^k$ is connected and the map $\breve{f}_k$ is submersive.
If $k\geq 7$, then $\breve{f}_k$ is surjective. If $k\geq 8$, then 
every fiber of $\breve{f}_k$ is smooth and connected.
\end{prop}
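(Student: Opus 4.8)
The plan is to break the four assertions of Proposition~\ref{prop-breve-Tw} apart and handle them in increasing order of difficulty, relying throughout on the fiber-dimension estimates of Lemma~\ref{lemma-fiber-dimension-estimates}, the connectedness input of Lemma~\ref{lemma-Sigma-V-is-connected}, and the simple-connectedness of $Tw^k_\Lambda$ from Lemma~\ref{lemma-Tw-k-is-simply-connected}. First I would dispose of \emph{submersivity} of $\breve f_k$. By construction a point $t\in \breve{T}w^k_\Lambda$ lies in $g_k^{-1}(U^k_i)$ for some $i$, so the subpath $(W_i,W_{i+1},W_{i+2},W_{i+3})$ satisfies $W_i\cap W_{i+3}=(0)$, which forces $\overline V_{i+1}\cap \overline V_{i+3}=(0)$ and in particular $\dim(\overline V_{i+1}\cap \overline V_{i+3})=0$. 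The key remark is that $f_k$ factors (up to reindexing the path into an initial segment, a short three-line middle segment, and a terminal segment) through the map that remembers the two endpoints $V_{i+1},V_{i+3}$ of the middle segment; concatenation of twistor paths gives a submersion onto the fiber product $Tw^{i+1}_\Lambda\times_{\Omega_\Lambda}Tw^4_\Lambda\times_{\Omega_\Lambda}Tw^{k-i-2}_\Lambda$, and since $\overline V_{i+1}\cap\overline V_{i+3}=(0)$, Lemma~\ref{lemma-open-subset-where-f-k-is-submersive} applies to the middle $Tw^4_\Lambda$ factor. Composing the differential surjections shows $df_k$ is surjective at $t$; here the bound $k\geq 6$ guarantees the two outer segments are nonempty (each has at least one twistor line), which is what is needed to realize arbitrary endpoint deformations.

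Next, \emph{connectedness of $\breve{T}w^k_\Lambda$} for $k\geq 6$: since each $D^k_i$ has codimension $1$ in the irreducible (real-analytic, in fact an open subset of a real projective variety) manifold $Tw^k_\Lambda$, removing the single closed set $\bigcap_i D^k_i$ cannot disconnect it provided $\bigcap_i D^k_i$ has codimension $\geq 2$; one checks $\bigcap_{i=1}^{k-4}D^k_i$ is an intersection of distinct divisors and, using the product structure of $Gr_{+++}(\LLambda)^{k-1}$ together with $k-4\geq 2$, that this intersection is proper in each $D^k_i$, giving codimension $\geq 2$. Then \emph{surjectivity} of $\breve f_k$ for $k\geq 7$ follows by combining submersivity (hence openness of the image) with connectedness, plus a direct construction: given any $(V_1,V_k)$ one builds an explicit path of $k-1\geq 6$ twistor lines whose $W$-flags are in sufficiently general position to land in some $U^k_i$ — concretely, insert a generic intermediate point $V'$ with $\overline V'\cap \overline V_1=\overline V'\cap\overline V_k=(0)$ and connect $V_1\to V'\to V_k$ padding with generic lines; $k\geq 7$ leaves room to make four consecutive $W$'s satisfy $W_i\cap W_{i+3}=(0)$.

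The main obstacle is the last clause: \emph{smoothness and connectedness of every fiber $\breve f_k^{-1}(V_1,V_k)$ for $k\geq 8$}, uniformly in the endpoints, including the degenerate cases $\dim(\overline V_1\cap\overline V_k)=1,2$. Smoothness will follow once one shows $\breve f_k$ is submersive \emph{along the whole fiber} — but submersivity was only proved on $g_k^{-1}(U^k)$, so the real content is that every point of $\breve f_k^{-1}(V_1,V_k)$ already lies in some $g_k^{-1}(U^k_i)$; this is forced by the definition of $\breve{T}w^k_\Lambda$, so smoothness is immediate from the submersivity step, and the fiber has the expected dimension $d_k=(k-3)(r-1)+2$ everywhere by Lemma~\ref{lemma-fiber-dimension-estimates} once $k\geq r+1$ — but we want all $k\geq 8$, so one must instead argue dimension-theoretically using the $\alpha_k,\beta_k$ estimates to see that on $\breve{T}w^k_\Lambda$ the bad strata have been excised. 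For connectedness I would fiber $\breve f_k^{-1}(V_1,V_k)$ over the choice of a middle point, say the starting point of the last several lines, reducing via Lemma~\ref{lemma-Sigma-V-is-connected} (whose hypothesis $k\geq 4$ is comfortably met after splitting a $k\geq 8$ path) to connectedness of (a) the space $\Sigma_V$ of generic-endpoint initial segments, which is connected by that lemma, and (b) for each fixed generic intermediate configuration, the terminal segment space, which is a fibration with connected fibers over a connected base by Lemma~\ref{lemma-Tw-k-is-simply-connected}; the open-dense condition cutting out $\breve{T}w^k_\Lambda$ is then checked to be preserved, so the total space is connected. The delicate point throughout is bookkeeping the numerical thresholds $6,7,8$ against $r\geq 7$ so that every splitting of the path leaves enough twistor lines on each side; I expect this to be the part requiring the most care.
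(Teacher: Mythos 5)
Your overall strategy — decompose $\breve{T}w^k_\Lambda$ along the locus where $W_i\cap W_{i+3}=0$, use Lemma~\ref{lemma-open-subset-where-f-k-is-submersive} for the middle segment, invoke the codimension estimate for connectedness of $\breve{T}w^k_\Lambda$, and reduce surjectivity to an explicit path construction — is the right shape, and the observations about the first three clauses (connectedness of $\breve{T}w^k_\Lambda$, submersivity, surjectivity, smoothness of fibers) are essentially sound modulo indexing slips. (One small error: with $i=1$ the initial segment $Tw^i_\Lambda=Tw^1_\Lambda=\Omega_\Lambda$ is a single vertex with no twistor line, so your claim that $k\geq 6$ forces both outer segments to carry at least one twistor line is false, and in fact not needed.) However, your argument for the last clause — connectedness of the fibers of $\breve{f}_k$ — has a genuine gap.

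The gap is this. The condition $W_i\cap W_{i+3}=0$ singles out a middle sub-path of five vertices lying in $\breve{T}w^5_\Lambda$. Lemma~\ref{lemma-breve-f-5}, which you do not invoke at all, shows that $\breve{T}w^5_\Lambda$ has exactly \emph{two} connected components, and that each non-empty fiber of $\breve{f}_5$ meets each component along a connected piece — so the fiber of $\breve{f}_5$ is \emph{disconnected}, with exactly two components. Your proposed fibration of $\breve{f}_k^{-1}(V_1,V_k)$ over the choice of a middle point therefore does not have connected fibers: once the endpoints of the middle segment are fixed, the middle segment itself ranges over a disconnected space. Concluding connectedness of the total space from ``connected base, connected fibers'' breaks at exactly this point. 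The paper's proof of fiber connectedness is much more delicate: it considers \emph{both} $g_k^{-1}(U^k_1)$ and $g_k^{-1}(U^k_{k-4})$, tracks the two components of the middle segment via the involution $\tau_2$ (which interchanges the two components in the $i=1$ chart but preserves each in the $i=k-4$ chart, by the explicit description of the components of $\breve{T}w^5_\Lambda$ in Lemma~\ref{lemma-breve-f-5}), and shows that the $2+2$ pieces glue along a common connected dense open subset into a single connected set. Without something playing the role of this two-chart argument and the $\tau_2$-bookkeeping, the connectedness of $\breve{f}_k^{-1}(V_1,V_k)$ does not follow.

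A second, smaller issue: in the submersivity step you speak of ``a short three-line middle segment'' and a factor $Tw^4_\Lambda$ while simultaneously asserting $\overline V_{i+1}\cap\overline V_{i+3}=0$; these do not match. The decomposition that actually works is $Tw^i_\Lambda\times_{\Omega_\Lambda}\breve{T}w^5_\Lambda\times_{\Omega_\Lambda}Tw^{k-i-3}_\Lambda$, with the middle being the full five-vertex sub-path determined by the four subspaces $W_i,\dots,W_{i+3}$ (the condition $W_i\cap W_{i+3}=0$ gives $\overline V_{i+1}\cap\overline V_{i+4}=0$, which is what Lemma~\ref{lemma-open-subset-where-f-k-is-submersive} needs for the five-vertex path). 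Finally, your aside about needing the $\alpha_k,\beta_k$ estimates to pin down the fiber dimension for $k\geq 8$ is unnecessary for smoothness: once you have submersivity on all of $\breve{T}w^k_\Lambda=g_k^{-1}(U^k)$, the fibers are automatically smooth of the expected dimension $d_k$; the dimension estimates are needed elsewhere, namely inside Lemma~\ref{lemma-Sigma-V-is-connected}, which feeds into the connectedness argument you would still need to repair.
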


We will need the following preliminary lemmas. 

\begin{lem}
\label{lemma-negative-definite-subspaces-of-hyperbolic-space}
Let $V$ be a vector space with a non-degenerate bilinear pairing of signature $(1,\rho-1)$, $\rho\geq 3$. 
Then the space $Z_d(V)$ of oriented negative definite $d$-dimensional subspaces of $V$  is connected for $d\leq \rho-2$.
\end{lem}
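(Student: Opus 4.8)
The plan is to realize $Z_d(V)$ as the total space of a fibration whose base and fibers are already known to be connected, and to induct on $d$. Fix a maximal negative definite subspace $N\subset V$, so $\dim N=\rho-1$, and fix a positive line $\ell_0=N^\perp$. For a negative definite $d$-dimensional subspace $U$, orthogonal projection to $N$ along $\ell_0$ gives a linear map $p_U:U\to N$; since $U$ is negative definite and the only ``positive direction'' is $\ell_0$, this projection is injective (any vector in $\ker p_U$ lies in $\ell_0$, hence is either zero or has positive norm, contradicting negative definiteness). Thus $Z_d(V)$ maps to the Grassmannian $\widetilde{Gr}_d(N)$ of oriented $d$-planes in $N$, which is connected (it is the oriented real Grassmannian $\widetilde{Gr}(d,\rho-1)$, connected for all $d$).

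The first key step is to analyze the fibers of the map $Z_d(V)\to \widetilde{Gr}_d(N)$, $U\mapsto p_U(U)$ with its induced orientation. The fiber over an oriented $d$-plane $P\subset N$ consists of all oriented negative definite $d$-planes $U$ projecting isomorphically onto $P$; such $U$ is the graph of a linear map $\phi:P\to \ell_0$, i.e.\ $U=\{x+\phi(x):x\in P\}$, and the negative definiteness condition reads $(x,x)+(\phi(x),\phi(x))<0$ for all nonzero $x\in P$, which since $(x,x)<0$ on $P\setminus\{0\}$ and $(\cdot,\cdot)$ is positive on $\ell_0$ cuts out a convex (hence contractible, in particular connected) open subset of $\Hom(P,\ell_0)\cong \RR^d$. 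The orientation on $U$ is then determined by that of $P$ via the isomorphism $p_U$. Hence the fibers are connected (indeed contractible), and $Z_d(V)$, being a fibration with connected fibers over a connected base, is connected.

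I expect the main subtlety to be purely bookkeeping about orientations and about where the hypothesis $d\le \rho-2$ is actually used: the argument above as stated seems to give connectedness for all $d\le \rho-1$. The point is that the signature-$(1,\rho-1)$ hypothesis is what makes the projection $p_U$ injective regardless of $d$; the bound $d\le\rho-2$ is presumably needed only to ensure the complementary space has enough room for the applications in Proposition \ref{prop-breve-Tw} (e.g.\ so that $U^\perp$ still contains a positive direction and negative directions of the right dimension), so in the proof I will simply verify connectedness in the stated range and not worry that the argument proves slightly more. A clean alternative, avoiding the orientation discussion entirely, is to first prove connectedness of the unoriented space $Z_d^{un}(V)$ by the graph argument, then observe that $Z_d(V)\to Z_d^{un}(V)$ is a double cover which is connected iff it is nontrivial, and exhibit an explicit loop: rotate an orthonormal frame of a fixed $U_0$ within a negative definite $(d+1)$-plane (which exists since $d+1\le \rho-1$) back to itself with reversed orientation. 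This is where $d\le\rho-2$ enters honestly, so I would present this version.
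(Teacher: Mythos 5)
Your approach is sound in the stated range $d\leq\rho-2$ but genuinely different from the paper's. The paper proceeds by induction on $d$: it checks directly that $Z_1$ is connected (a bundle over $\RR$ with $S^{\rho-2}$ fibers, using $\rho\geq 3$), then passes from $Z_d$ to $Z_{d+1}$ by realizing the latter as the image of a bundle over $Z_d$ whose fiber over an oriented negative $d$-plane $N$ is $Z_1(N^\perp)$, which is again connected provided $N^\perp$ has signature $(1,m)$ with $m\geq 2$, i.e.\ $d<\rho-2$. You instead fibre $Z_d(V)$ directly over the oriented Grassmannian of a fixed maximal negative definite subspace $N$, with the fiber over $P\subset N$ identified (via the graph construction) with an open convex ball in $\Hom(P,\ell_0)$. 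Your version avoids the induction at the cost of invoking connectedness of the oriented Grassmannian as a known fact; both are about equally short.

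That known fact is, however, mis-stated in your write-up, and the misstatement leads you to a confused conclusion. The oriented Grassmannian $\widetilde{Gr}(d,n)$ is connected for $1\leq d\leq n-1$, but for $d=n$ it consists of exactly two points (the two orientations of $\RR^n$). Thus $\widetilde{Gr}(d,\rho-1)$ is connected precisely because $d\leq\rho-2=(\rho-1)-1$: the hypothesis $d\leq\rho-2$ already enters your first argument, and it enters honestly, not merely ``for the applications.'' Consequently the parenthetical assertion that ``the argument above as stated seems to give connectedness for all $d\leq\rho-1$'' is false, and it is a good thing it is false: $Z_{\rho-1}(V)$ is the orientation double cover of the contractible hyperbolic space of unoriented maximal negative definite subspaces of $V$, hence disconnected, so the lemma's bound is sharp. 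With this correction your first version is already complete, and there is no need to pass to the unoriented space and analyze the double cover --- though the double-cover argument you sketch (rotating a frame of $U_0$ inside a negative definite $(d+1)$-plane, available since $d+1\leq\rho-1$) is also correct and is a perfectly legitimate alternative.
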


\begin{proof}
Take $V=\RR^\rho$ with the quadratic form  $x_0^2-\sum_{i=1}^{\rho-1}x_i^2$.
The proof is by induction on $d$. $Z_1(V)$ is the real affine variety cut out by the equation $x_0^2-\sum_{i=1}^{\rho-1}x_i^2=-1$,
which is a bundle over $\RR^1$ (the $x_0$-line) with $(\rho-2)$-dimensional spheres as fibers. 
Let $F_d$ be the tautological rank $d$ subbundle over $Z_d$ and $F_d^\perp$ the orthogonal complement subbundle. 
Denote by $\pi_d:\Z(F_d^\perp)\rightarrow Z_d$ the bundle whose fiber over a negative definite oriented $d$-dimensional subspace $N$
is $Z_1(N^\perp)$. 
Assume that $d <\rho-2$ and that $Z_d$ is connected. 
Then the fibers of $\pi_d$ are connected and hence so is $\Z(F_d^\perp)$.
Now $\Z(F_d^\perp)$ maps naturally onto $Z_{d+1}$. Consequently, $Z_{d+1}$ is connected as well.
\end{proof}

Let $A\subset Gr_{++}(\LLambda)^2$ be the subset consisting of pairs $(V_1,V_2)$ such that 
$\overline{V}_1\cap \overline{V}_2=0$ and  the signature of $\overline{V}_1+ \overline{V}_2$ is $(3,1)$. 
Note that the complement of $A$  contains the non-empty open subset  of $Gr_{++}(\LLambda)^2$
of pairs $(V_1,V_2)$ such that $\overline{V}_1+ \overline{V}_2$ is four dimensional of signature $(2,2)$.

\begin{lem}
\label{lemma-subset-A-of-Tw-3}
$A$ is an open and connected subset of $Gr_{++}(\LLambda)^2$. The open subset $\widetilde{A}:=f_3^{-1}(A)$ of
$Tw^3_\Lambda$ has two connected components. Given $(V_1,V_3)\in A$, 
the intersection of the fiber $f_3^{-1}(V_1,V_2)$ with each connected component of $\widetilde{A}$ is non-empty and connected.
\end{lem}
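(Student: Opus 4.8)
The plan is to treat the three assertions in turn, reducing each to the geometry of the Grassmannians $Gr_{++}(\LLambda)$ and $Gr_{+++}(\LLambda)$ and the fibrations $p,q$ of \eqref{eq-natural-projections-from-I}. For openness of $A$, note that both defining conditions — $\overline{V}_1\cap\overline{V}_2=0$ and $\overline{V}_1+\overline{V}_2$ having signature $(3,1)$ — are open: the first because transversality of subspaces is open, the second because the signature of a non-degenerate form on a varying $4$-dimensional subspace is locally constant (and indefiniteness of the ambient form forces the signature of a positive-$4$-dimensional-impossible space to be $(3,1)$ or $(2,2)$, the latter being the complementary open set flagged in the statement). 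For connectedness of $A$, I would fiber $A$ over $Gr_{++}(\LLambda)=\Omega_\Lambda$ via the first projection; the period domain is connected, so it suffices to show each fiber $A_{V_1}$ is connected. Fixing $V_1$, the fiber is the set of positive oriented $2$-planes $V_2$ with $\overline{V}_2\cap\overline{V}_1=0$ and $\overline{V}_1+\overline{V}_2$ of signature $(3,1)$; writing $\LLambda=\overline{V}_1\oplus \overline{V}_1^\perp$ with $\overline{V}_1^\perp$ of signature $(1,r-3)$, such a $V_2$ is the graph of a linear map $\overline{V}_1\to\overline{V}_1^\perp$, and the signature condition says the image has a $1$-dimensional positive part and lands (generically) in a hyperbolic subspace of $\overline{V}_1^\perp$. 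This is a connected condition on graphs; concretely one contracts to the locus where $V_2$ meets a fixed hyperbolic $3$-space $W\supset V_1$, and uses Lemma \ref{lemma-negative-definite-subspaces-of-hyperbolic-space} (with $d=1$) together with the connectedness of $Gr_+(\LLambda/\overline{V}_1)$ to conclude.

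For the statement about $\widetilde{A}:=f_3^{-1}(A)$, recall that a point of $Tw^3_\Lambda$ over $(V_1,V_3)\in A$ is a pair $(W_1,W_2)$ of positive $3$-spaces with $\overline{V}_1\subset W_1$, $\overline{V}_3\subset W_2$, together with the middle plane $\overline{V}_2\subset W_1\cap W_2$. Since $\overline{V}_1+\overline{V}_3$ is already a $(3,1)$-space of dimension $4$, any $W_1,W_2$ (each $3$-dimensional, hence of signature $(3,0)$, being positive definite) must satisfy $W_1+W_2\subseteq$ something, and $W_1\cap W_2\neq 0$; the key point is that $W_1\cap W_2$ is forced to be a positive line or plane inside the fixed $4$-space $\overline{V}_1+\overline{V}_3$, or $W_1+W_2$ is $5$-dimensional. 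I would stratify the fiber $f_3^{-1}(V_1,V_3)$ accordingly and identify the open dense stratum; the two connected components of $\widetilde{A}$ should come from a discrete choice — morally, the two rulings/orientations available when $W_1=W_2$ is not forced, or equivalently a sign governing which side of the $(3,1)$-space the $3$-planes bend towards. More precisely, over a fixed $(V_1,V_3)\in A$ one produces a distinguished positive $3$-space, and the ``component'' invariant is whether the path $V_1\leadsto V_2\leadsto V_3$ goes through that $3$-space or is deformed off it in one of two homotopically distinct ways — I'd pin this down by exhibiting an explicit $\ZZ/2$-valued locally constant function on $\widetilde{A}$ (e.g. the sign of a determinant built from orientations of $W_1$, $W_2$ and $\overline{V}_1+\overline{V}_3$) and checking it is non-constant and that its two level sets are connected.

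Connectedness of each of the two pieces of $\widetilde{A}$, and of the intersection of a fixed fiber $f_3^{-1}(V_1,V_3)$ with each piece, I would again obtain by a fibration argument: project to $A$ (known connected) and analyze fibers, or project the fiber $f_3^{-1}(V_1,V_3)$ to the space of admissible $\overline{V}_2$'s — a connected hyperbolic-space-like region inside $\PP(\overline{V}_1+\overline{V}_3)$ — with fibers that are products of connected spaces of completions $W_1\supset \overline{V}_1+\overline{V}_2$, $W_2\supset \overline{V}_2+\overline{V}_3$, using Lemma \ref{lemma-negative-definite-subspaces-of-hyperbolic-space} to handle the Grassmannians of extra directions. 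The main obstacle I anticipate is the bookkeeping that separates the \emph{two} components cleanly: one must produce the $\ZZ/2$ invariant in a way that is manifestly locally constant on all of $\widetilde{A}$ (not merely on each fiber), and simultaneously verify that within a single fiber the invariant still separates that fiber into exactly two connected nonempty pieces rather than collapsing or fragmenting further. Carefully handling the degenerate sub-loci where $W_1=W_2$, or where $\overline{V}_2$ meets $\overline{V}_1$ or $\overline{V}_3$ nontrivially, so that they do not disconnect the open strata, is the delicate technical part.
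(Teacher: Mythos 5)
Your proposal for the openness and connectedness of $A$ is along roughly the same lines as the paper (fiber over the first projection and analyze fibers), modulo a small slip: to say that $\overline{V}_1\cap\overline{V}_2=0$ is to say the projection $\overline{V}_2\to\overline{V}_1^\perp$ is injective, so $\overline{V}_2$ is the graph of a map \emph{from} a $2$-plane in $\overline{V}_1^\perp$ \emph{to} $\overline{V}_1$, not the other way around. The paper's argument fixes the intermediate $4$-space $Z=\overline{V}_1+\overline{V}_3$ of signature $(3,1)$, parametrizes $\overline{V}_3$ by the graph of a map $\overline{V}_1^\perp\cap Z\to\overline{V}_1$, and deformation-retracts the space of admissible graphs by an explicit Gram-matrix computation; you should be able to recover this if you push your sketch through.

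The genuine gap is in the analysis of the fiber $f_3^{-1}(V_1,V_3)$ and the two-component assertion. You do not use the key structural fact: when $\overline{V}_1\cap\overline{V}_3=0$, the containments $\overline{V}_2\subset W_1\cap W_2$ force $\dim(W_1\cap W_2)\ge 2$, so $\dim(W_1+W_2)\le 4$, whence $W_1+W_2=\overline{V}_1+\overline{V}_3$ exactly and $W_1\ne W_2$ (otherwise $W_1$ would contain the $4$-dimensional $\overline{V}_1+\overline{V}_3$). Consequently $W_1\cap W_2=\overline{V}_2$, and $W_1=\overline{V}_1+\overline{V}_2$, $W_2=\overline{V}_2+\overline{V}_3$ are \emph{determined}; there are no ``completions'' or stratification, and a point of the fiber is nothing but an oriented $V_2$ with $\dim(\overline{V}_1\cap\overline{V}_2)=\dim(\overline{V}_2\cap\overline{V}_3)=1$. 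Your proposal instead imagines extra freedom in $W_1,W_2$ and a ``hyperbolic-space-like region'' of $\overline{V}_2$'s inside $\PP(\overline{V}_1+\overline{V}_3)$; neither is what happens (you need $2$-planes, not lines, and the admissible locus is a product of two intervals, not a hyperbolic ball). The correct picture, which your argument is missing, is to record $\overline{V}_2$ by the pair of lines $\ell_1:=\overline{V}_1\cap\overline{V}_2$ and $\ell_3:=\overline{V}_2\cap\overline{V}_3$ in $\RR\PP(\overline{V}_1)\times\RR\PP(\overline{V}_3)$; the positive-definiteness of $W_1$ and $W_2$ then cuts out an open product of two intervals $\Upsilon$, which is contractible, and the oriented fiber is a double cover of $\Upsilon$, hence automatically splits into two connected sheets.

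Your $\ZZ/2$-valued invariant is also not quite the right object: $\overline{V}_1+\overline{V}_3$ carries no canonical orientation, and the orientations of $W_1,W_2$ are already fixed by the choice of component $\fM^0_\Lambda$, so a ``determinant of orientations of $W_1,W_2$ and $\overline{V}_1+\overline{V}_3$'' is not well defined and does not track the varying datum, which is the orientation of $V_2$. The invariant used in the paper is the comparison, under the canonical isomorphism $\wedge^3W_1\otimes\wedge^3W_2\cong\wedge^2\overline{V}_1\otimes\wedge^2\overline{V}_2\otimes\wedge^2\overline{V}_3$, of the orientation of the left side (fixed by $\fM^0_\Lambda$) with the orientation of the right side (determined by the orientations of $V_1,V_2,V_3$); this is manifestly locally constant on all of $\widetilde{A}$, it flips under orientation-reversal of $V_2$, and its level sets are connected by the double-cover argument above. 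Without this — or some equally explicit substitute — the claim that $\widetilde{A}$ has exactly two components, and that each fiber meets each component in a nonempty connected set, is not established.
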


\begin{proof}
Let $t:=\{(V_1, V_2, V_3); (W_1, W_2)\}$ be a point of $Tw^3_\Lambda$ such that $\overline{V}_1\cap \overline{V}_3=0$.
The dimension of $W_1+W_2$ is at most $4$ and 
$\overline{V}_1+ \overline{V}_3$ is contained in  $W_1+W_2$, so both spaces are four dimensional and equal. 
$W_1^\perp$ is negative definite and it intersects $\overline{V}_1+\overline{V}_3$ in a one dimensional subspace. 
We conclude that $(V_1,V_3)$ belongs to $A$. 
The subset $\widetilde{A}$ is open, as it is equal to the subset of $Tw^3_\Lambda$ consisting of twistor paths where 
$\overline{V}_1\cap \overline{V}_3=0$. 
The map $f_3$ restricts to $\widetilde{A}$ as a submersive map into $Gr_{++}(\LLambda)^2$, by Lemma 
\ref{lemma-open-subset-where-f-k-is-submersive}. 
Hence, its image $A$ is an open subset. 

We prove next that $A$ is connected. The projection from $A$ to the first factor $Gr_{++}(\LLambda)$ is surjective.
It suffices to prove that its fibers are connected. Fix $V_1\in Gr_{++}(\LLambda)$ and denote by 
$A_{V_1}$ the fiber over $V_1$. Let $\overline{A}_{V_1}$ be the quotient of the fiber by the involution reversing the orientation of the second subspace. 
The set of four dimensional subspaces of $\LLambda$ of signature $(3,1)$ containing $\overline{V}_1$ 
is isomorphic to the set 
$\overline{G}r_{+-}(\overline{V}_1^\perp)$
of two dimensional subspaces of $\overline{V}_1^\perp$ 
of signature $(1,1)$. Denote by $Gr_{+-}(\overline{V}_1^\perp)$ its double cover corresponding to oriented two-dimensional subspaces. We get the certesian diagram
\[
\xymatrix{
A_{V_1} \ar[r]\ar[d] & Gr_{+-}(\overline{V}_1^\perp)\ar[d]
\\
\overline{A}_{V_1}\ar[r] & \overline{G}r_{+-}(\overline{V}_1^\perp).
}
\]
The space  $Gr_{+-}(\overline{V}_1^\perp)$ is connected, by Lemma \ref{lemma-negative-definite-subspaces-of-hyperbolic-space}. 
It remains to prove that the fibers of the top horizontal map are connected.
The fibers of the two horizontal maps are isomorphic, so we may prove connectedness of  the fibers of the bottom horizontal map.
It suffices to prove that 
given a four dimensional subspace $Z$ of signature $(3,1)$ containing $\overline{V}_1$, 
the set $\overline{A}_Z$ of oriented positive definite two-dimensional subspaces $\overline{V}_3$ of $Z$ such that 
$\overline{V}_1\cap \overline{V}_3=0$ is connected. 

The orthogonal projection of the subspace $\overline{V}_3$ to $\overline{V}_1^\perp\cap Z$ is injective, hence an isomorphism.
Composing its inverse with the orthogonal projection from $\overline{V}_3$ to $\overline{V}_1$ we get the linear homomorphism 
$\phi:\overline{V}_1^\perp\cap Z\rightarrow \overline{V}_1$, whose graph is $\overline{V}_3$.
Choose an orthogonal basis $\{e_1,e_2,e_3,e_4\}$ of $Z$, such that $\{e_1,e_2\}$ is an orthonormal basis of
$\overline{V}_1$, $(e_3,e_3)=1$, and $(e_4,e_4)=-1$. 
Write $\phi(e_3)=ae_1+ce_2$ and $\phi(e_4)=be_1+de_2$. 
Then $\{e_3+ae_1+ce_2,e_4+be_1+de_2\}$ is a basis for $\overline{V}_3$ with Gram matrix 
$G:=\left(
\begin{array}{cc}
a^2+c^2+1 & ab+cd\\
ab+cd & b^2+d^2-1
\end{array}
\right)$. 
The inequality $b^2+d^2>1$ is 
a necessary condition for $\overline{V}_3$ to be positive definite.
Set $\phi_t(e_3)=t(ae_1+ce_2)$ and $\phi_t(e_4)=\phi(e_4)=be_1+de_2$. 
The determinant of the Gram matrix $G_t$ of the graph of  $\phi_t$ with respect to the analogous basis  satisfies
\[
\det(G_t)=(1-t^2)(b^2+d^2-1)+t^2\det(G).
\]
If $\overline{V}_3$ is positive definite, then $\det(G)>0$ and so is $\det(G_t)$, for $0\leq t \leq 1$. 
We conclude that $\overline{A}_Z$ admits a deformation retract to the set of graphs of $\phi$,
such that $a=c=0$ and $b^2+d^2>1$, which is a connected set. Hence $\overline{A}_Z$ is connected. Consequently, so is $A$.

The fiber of $f_3$ over $(V_1,V_3)\in A$ consists of oriented subspaces $V_2\in Gr_{++}(\LLambda)$, such that 
$\overline{V}_1\cap \overline{V}_2$ and $\overline{V}_2\cap \overline{V}_3$ are both one-dimensional.
The three dimensional subspaces $W_i$ of points in such fibers are determined by the $V_i$'s as follows: $W_1$ is spanned by $\overline{V}_1$ and $\overline{V}_2$
and $W_2$ is spanned by $\overline{V}_2$ and $\overline{V}_3$. 
Let $\Upsilon$ be the open subset of $\RR\PP(\overline{V}_1)\times \RR\PP(\overline{V}_3)$ consisting of pairs of lines
$(\ell_1,\ell_3)$, such that 
\begin{enumerate}
\item
\label{cond-V-1-ell-3-is-positive-definite}
$W_1:=\overline{V}_1+\ell_3$ is positive definite.
\item
\label{cond-V-3-ell-1-is-positive-definite}
$W_2:=\overline{V}_3+\ell_1$ is positive definite.
\end{enumerate}
The fiber 
$f_3^{-1}(V_1,V_3)$ is a double cover of $\Upsilon$ corresponding to the two choices of orientation of 
$\overline{V}_2:=\ell_1+\ell_3$. For later use we note that we have a canonical isomorphism
\begin{equation}
\label{eq-canonical-isomorphism-of-oriented-lines}
\wedge^3W_1\otimes \wedge^3W_2\cong 
\wedge^2\overline{V}_1\otimes \wedge^2\overline{V}_2\otimes \wedge^2\overline{V}_3.
\end{equation}

We claim that Condition (\ref{cond-V-1-ell-3-is-positive-definite}) in the definition of $\Upsilon$ 
corresponds to a non-empty open connected proper subset of
$\RR\PP(\overline{V}_3)$ (an ``interval'') and Condition (\ref{cond-V-3-ell-1-is-positive-definite})  corresponds to such a subset of
$\RR\PP(\overline{V}_1)$, so that $\Upsilon$ is non-empty and simply connected. It suffices to verify the statement for Condition (\ref{cond-V-1-ell-3-is-positive-definite}). 
Choose an orthonormal basis $\{e_1, e_2\}$ of $\overline{V}_1$ and an orthogonal basis $\{e_3, e_4\}$ of 
$\overline{V}_1^\perp\cap [\overline{V}_1+\overline{V}_3]$ satisfying $(e_3,e_3)=1$ and $(e_4,e_4)=-1$.
We can choose a basis $\{b_1,b_2\}$ of $\overline{V}_3$ of the form
\begin{eqnarray*}
b_1&=&c_1e_1+c_2e_2+e_3
\\
b_2&=&d_1e_1+d_2e_2+e_4,
\end{eqnarray*}
since $\overline{V_1}\cap \overline{V_3}=(0)$. Let $\ell_3\subset \overline{V}_3$ be spanned by $t_1b_1+t_2b_2$.
Then Condition (\ref{cond-V-1-ell-3-is-positive-definite}) is equivalent to the inequality $t_1^2>t_2^2$
verifying the assertion.

Let $S_i$ be the unit circle in $\overline{V}_i$. Then $S_1\times S_3$ is a $\ZZ/2\times \ZZ/2$ covering of 
$\RR\PP(\overline{V}_1)\times \RR\PP(\overline{V}_3)$.
A point $(u_1,u_3)$ in $S_1\times S_3$ determines an orientation of $\mbox{span}\{u_1,u_2\}$. The fiber of $f_3$ is embedded in the quotient of
$S_1\times S_3$ by the orientation preserving involution $(u_1,u_3)\mapsto (-u_1,-u_3)$. The orientation and the metric of $V_i$ determine a complex structure on $\overline{V}_i$, for $i=1,3$, so an action by the standard torus $U(1)\times U(1)$ on $S_1\times S_3$. The above involution corresponds to a translation by the point of order two  $(e^{\pi i},e^{\pi i})\in U(1)\times U(1)$.
The fiber of $f_3$ over $(V_1,V_3)$ is thus the disjoint union of two open subsets 
$\Upsilon_1$ and $\Upsilon_2$ of a real two-dimensional torus.
These two sheets are {\em naturally} labeled. 
The orientation of $\LLambda$ determines an orientation of $W_1$ and $W_2$. 
Hence, both lines in Equation (\ref{eq-canonical-isomorphism-of-oriented-lines}) are oriented.
One of the sheets consists of twistor paths where the isomorphism
(\ref{eq-canonical-isomorphism-of-oriented-lines}) is orientation preserving and the other where is is orientation reversing.
We conclude that the subset  $\widetilde{A}:=f_3^{-1}(A)$ is disconnected, 
consisting  of two connected components $\widetilde{A}_=$, where the isomorphism (\ref{eq-canonical-isomorphism-of-oriented-lines})  is orientation preserving, and $\widetilde{A}_{\neq}$ where it is not. 
The two components are interchanged by the involution reversing the orientation of $V_2$.
\end{proof}


\begin{lem}
\label{lemma-W-and-W-prime}
Let $W$ be a $d$-dimensional subspace of $\LLambda$, where $d\leq r-4$, and $V$ a positive definite $2$-dimensional subspace of $\LLambda$ such that $V\cap W=0$. Then the subset of $Gr_{+++}(\LLambda)$, consisting of 
$W'$ containing $V$ such that $W\cap W'=0$, is non-empty and connected.
\end{lem}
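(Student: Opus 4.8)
## Proof plan for Lemma \ref{lemma-W-and-W-prime}

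The plan is to reduce the statement to connectedness of a Grassmannian-of-subspaces fibration, exactly in the style of Lemmas \ref{lemma-negative-definite-subspaces-of-hyperbolic-space} and \ref{lemma-subset-A-of-Tw-3}. First I would fix $V$ and observe that the subspaces $W'\in Gr_{+++}(\LLambda)$ containing $V$ are parametrized by the positive lines $\ell\subset \overline{V}^\perp$, i.e.\ by the hyperbolic space $Gr_+(\LLambda/\overline{V})$ attached to $\overline{V}^\perp$, which has signature $(1, r-3)$. Under this identification $W'=\overline{V}\oplus \ell$, so the condition $W\cap W'=0$ needs to be translated into a condition on $\ell$. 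The natural thing is to project $W$ into $\overline{V}^\perp$: let $p:\LLambda\to \overline{V}^\perp$ be the orthogonal projection (an isomorphism onto $\overline{V}^\perp$ when restricted to a complement of $\overline{V}$), and set $\wt{W}:=p(W)\subset \overline{V}^\perp$; since $V\cap W=0$, $p|_W$ is injective, so $\dim\wt W=d$. The key elementary observation I would establish is that $W\cap W'\neq 0$ if and only if $\ell\subset \wt W$ (one direction: a nonzero $w\in W\cap(\overline V\oplus\ell)$ has $p(w)\in\wt W\cap\ell$ and $p(w)\neq 0$ since $w\notin\overline V$; conversely a line $\ell\subset\wt W$ is $p(w)$ for some $w\in W$, and then $w\in\overline V\oplus\ell=W'$). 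Hence the set we must show is connected is
\[
\{\ell\in Gr_+(\LLambda/\overline{V})\ :\ \ell\not\subset \wt W\},
\]
the complement in the hyperbolic space $Gr_+(\overline V^\perp)\cong\RR\PP^{\,r-3}\cap\{\text{positive}\}$ of the (possibly empty) subvariety of positive lines lying in the $d$-dimensional subspace $\wt W$.

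The second step is the codimension count that gives connectedness of this complement. The full hyperbolic space $Gr_+(\overline V^\perp)$ has real dimension $r-3$. The locus of positive lines contained in $\wt W$ is contained in $\PP(\wt W_\RR)\cap\{\text{positive}\}$, hence has dimension at most $\dim_\RR\PP(\wt W)=d-1$. Since $d\le r-4$ we get $d-1\le r-5=(r-3)-2$, so the removed locus has codimension at least $2$ in $Gr_+(\overline V^\perp)$. A connected real-analytic manifold remains connected after removing a closed subset of codimension $\geq 2$; since $Gr_+(\overline V^\perp)$, being a hyperbolic space of dimension $r-3\ge 3$ (as $d\le r-4$ forces $r\ge d+4\ge 5$, in fact $r-3\ge 2$ suffices but we have more), is connected, the complement is connected. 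Non-emptiness is immediate from the same count: the removed locus is a proper subset. This closes the argument. I should be slightly careful that "positive line in $\wt W$" might be empty (if $\wt W$ is negative semidefinite), in which case there is nothing to remove and connectedness is trivial; the codimension bound handles this uniformly since the empty set has codimension $+\infty$.

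The only genuinely delicate point — the main obstacle — is the equivalence $W\cap W'\neq 0 \iff \ell\subset\wt W$, and in particular making sure the orthogonal projection $p$ behaves well. The subtlety is that $p$ restricted to $W$ is injective precisely because $W\cap\overline V=0$, which is part of the hypothesis $V\cap W=0$; one must also note that $\wt W$ need not be nondegenerate or positive, but that is irrelevant — we only use it as a linear subspace of $\overline V^\perp$ cutting out a low-dimensional locus of positive lines. Once that equivalence is in hand the rest is the standard codimension-$\geq 2$ connectedness principle already used repeatedly in this section (cf.\ Lemma \ref{lemma-Sigma-V-is-connected}), so no further difficulties are expected. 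If one prefers to avoid the projection, an alternative is to run the induction on $d$ exactly as in Lemma \ref{lemma-negative-definite-subspaces-of-hyperbolic-space}, peeling off one dimension of $W$ at a time and using a bundle whose fibers are the hyperbolic spaces minus a hyperplane section; but the projection argument is cleaner and I would write that one.
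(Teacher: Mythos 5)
Your proof is correct and follows essentially the same route as the paper's: parametrize the $W'$ containing $V$ by the $(r-3)$-dimensional hyperbolic space $Gr_+(\overline V^\perp)$, show the locus of bad $W'$ (those meeting $W$ nontrivially) has dimension at most $d-1\le r-5$, and conclude by the codimension-$\geq 2$ connectedness principle. The only cosmetic difference is that you identify the bad locus with positive lines in the projected subspace $\wt W=p(W)\subset\overline V^\perp$, whereas the paper observes directly that a bad $W'$ satisfies $W'=V+(W'\cap W)$ and so is parametrized by the line $W'\cap W\in\RR\PP(W)$; since $p|_W$ is an isomorphism onto $\wt W$, these two parametrizations are the same up to relabeling, and the paper's version avoids introducing the projection.
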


Note: The above Lemma will be applied below with $d=2$ and $d=3$, which is the reason we assumed $r\geq 7$.

\begin{proof}
The set of $W'\in Gr_{+++}(\LLambda)$ containing $V$ is isomorphic to the hyperbolic space associated to $V^\perp$ and is hence connected. If $W'$ contains $V$ and $W'\cap W\neq 0$, then $W'\cap W$ is one dimensional and
$W'=V+(W'\cap W)$. Hence, the set of $W'\in Gr_{+++}(\LLambda)$ containing $V$ and intersecting $W$ non-trivially is isomorphic to an open subset of $\RR\PP(W)$, which is $(d-1)$-dimensional, and its complement in the $(r-3)$-dimensional hyperbolic space associated to $V^\perp$ is connected.
\end{proof}

\begin{lem}
\label{lemma-on-two-pairs}
Let $V_1$, $V_2$ be elements of $Gr_{++}(\LLambda)$, such that $\overline{V}_1\cap \overline{V}_2=0$. The subset of
$Gr_{+++}(\LLambda)^2$, consisting of pairs $(W_1,W_2)$, such that $W_i$ contains  $\overline{V}_i$, $i=1,2$, 
and $W_1\cap W_2=0$, is non-empty and connected.
\end{lem}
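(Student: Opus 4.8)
## Proof strategy for Lemma \ref{lemma-on-two-pairs}

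The plan is to reduce the statement to the connectedness results already established, by first fixing $W_1$ and varying $W_2$, and then varying $W_1$. Concretely, consider the set $P \subset Gr_{+++}(\LLambda)^2$ of pairs $(W_1,W_2)$ with $\overline{V}_i \subset W_i$ and $W_1 \cap W_2 = 0$. Project $P$ onto its first coordinate; the image is the set of $W_1 \in Gr_{+++}(\LLambda)$ containing $\overline{V}_1$ for which there exists a valid $W_2$. I would first note that this image is all of the hyperbolic space $\H_1$ of three-dimensional positive definite subspaces containing $\overline{V}_1$: given any such $W_1$, since $\overline{V}_1 \cap \overline{V}_2 = 0$ we have $\dim(W_1 \cap \overline{V}_2) \leq 1$, so $W_1$ meets $\overline{V}_2$ in at most a line, and Lemma \ref{lemma-W-and-W-prime} (applied with $W = W_1$, $d = 3$, $V = \overline{V}_2$, using $r \geq 7$) — after first checking $W_1 \cap \overline{V}_2 = 0$, or otherwise a minor variant — produces a $W_2 \supset \overline{V}_2$ with $W_1 \cap W_2 = 0$. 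Actually the cleanest route: for \emph{generic} $W_1$ in $\H_1$ we do have $W_1 \cap \overline{V}_2 = 0$, and then Lemma \ref{lemma-W-and-W-prime} applies verbatim; I would handle the non-generic $W_1$ by a separate elementary argument or by shrinking to an open dense locus and invoking that the full $P$ is the closure-stable union.

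The key structural step is then: the projection $\rho: P \to \H_1$ has, over each point $W_1$ in the open dense locus $\H_1^\circ := \{W_1 \in \H_1 : W_1 \cap \overline{V}_2 = 0\}$, fiber equal to the set of $W_2 \in Gr_{+++}(\LLambda)$ containing $\overline{V}_2$ with $W_1 \cap W_2 = 0$, which is nonempty and connected by Lemma \ref{lemma-W-and-W-prime} (with $W = W_1$, $d=3$, $V = \overline{V}_2$). Since $\H_1$ is a hyperbolic space, hence connected, and $\H_1^\circ$ is the complement of a positive-codimension subset (the condition $W_1 \cap \overline{V}_2 \neq 0$ cuts out something of codimension $\geq r - 5 \geq 2$ inside $\H_1$, because such $W_1$ is forced to contain one of a line's worth of lines in $\overline{V}_2$, so lies in a lower-dimensional family), $\H_1^\circ$ is itself connected. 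A fibration with connected base and connected fibers has connected total space; so $\rho^{-1}(\H_1^\circ)$ is connected. Finally $P$ is the union of $\rho^{-1}(\H_1^\circ)$ with the part lying over $\H_1 \setminus \H_1^\circ$, and every component of the latter has a point in its closure lying in $\rho^{-1}(\H_1^\circ)$ (move $W_1$ slightly to make it miss $\overline{V}_2$ while keeping $W_1 \cap W_2 = 0$, which is an open condition), so $P$ is connected. Nonemptiness is immediate from the nonemptiness of one such fiber.

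I expect the main obstacle to be the bookkeeping around the non-generic locus $\H_1 \setminus \H_1^\circ$: one must verify both that it has codimension $\geq 2$ in $\H_1$ (so that removing it does not disconnect anything and so that $P$ is not ``pinched'' there) and that no component of $P$ lies entirely over it. The codimension count is the delicate point and is exactly where the hypothesis $d \leq r-4$, i.e. $r \geq 7$, is used, mirroring the note after Lemma \ref{lemma-W-and-W-prime}; the dimension of $\H_1$ is $r-3$, while the locus of $W_1 \supset \overline{V}_1$ meeting $\overline{V}_2$ is parametrized by a choice of line $\ell \subset \overline{V}_2$ (a $\PP^1$) together with the requirement $W_1 \supseteq \overline{V}_1 + \ell$, which pins $W_1$ down entirely, giving dimension $1 < r - 3 - 1$ when $r \geq 7$. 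Everything else is a routine application of the ``connected fibration over connected base'' principle already used repeatedly in this section.
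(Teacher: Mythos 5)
Your overall strategy — project $P$ onto its first coordinate $W_1$ and analyze base and fibers via Lemma~\ref{lemma-W-and-W-prime} — is the same route the paper takes, but there is one outright false claim in your write-up, and a detour the paper avoids.

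The false claim: you assert that the image of $\rho\colon P\to\H_1$ is \emph{all} of $\H_1$, promising a ``minor variant'' of Lemma~\ref{lemma-W-and-W-prime} for the case $W_1\cap\overline{V}_2\neq 0$. No such variant can exist, because the fiber over such a $W_1$ is \emph{empty}: if $\ell:=W_1\cap\overline{V}_2\neq 0$, then every $W_2\supset\overline{V}_2$ contains $\ell$, so $W_1\cap W_2\supset\ell\neq 0$. Hence the image of $\rho$ is exactly $\H_1^\circ$, not $\H_1$. This error happens not to wreck your proof — the ``part of $P$ lying over $\H_1\setminus\H_1^\circ$'' is empty, so your closing closure/perturbation argument is vacuously true and $P=\rho^{-1}(\H_1^\circ)$ — but that last paragraph is scaffolding built on a misdiagnosis and should be cut.

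Where the paper is cleaner: you obtain connectedness of the base $\H_1^\circ$ by a codimension count inside the hyperbolic space $\H_1$ (the count is correct: the bad $W_1$ are forced to equal $\overline{V}_1+\ell$ for a line $\ell\subset\overline{V}_2$, giving a $1$-dimensional locus inside the $(r-3)$-dimensional $\H_1$). The paper instead notices that $\H_1^\circ$ \emph{is itself} an instance of Lemma~\ref{lemma-W-and-W-prime}, applied with $W=\overline{V}_2$ (so $d=2$) and $V=\overline{V}_1$; that gives nonemptiness and connectedness of the base in one stroke. It then applies the same lemma a second time with $W=W_1$ (so $d=3$) and $V=\overline{V}_2$ for the fibers. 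This is precisely why the note after Lemma~\ref{lemma-W-and-W-prime} says it is applied with $d=2$ and $d=3$, and why $r\geq 7$ is required. Your proof only uses the $d=3$ application and redoes the $d=2$ case by hand.
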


\begin{proof}
The subset of $Gr_{+++}(\LLambda)$ consisting of $W_1$ containing $\overline{V}_1$, such that $W_1\cap \overline{V}_2=0$,
is non-empty and connected,  by Lemma \ref{lemma-W-and-W-prime}. Fix such a $W_1$. It suffices to show that the 
subset of $Gr_{+++}(\LLambda)$ consisting of $W_2$ containing $\overline{V}_2$, such that $W_1\cap W_2=0$, is connected.
This is the case by Lemma \ref{lemma-W-and-W-prime} again.
\end{proof}

\begin{lem}
\label{lemma-fiber-of-u}
Let $W_1$ and $W_2$ be two $3$-dimensional positive definite subspaces of $\LLambda$ such that $W_1\cap W_2=0$.
Denote by $\widetilde{\RR\PP}(W_i)$ the universal cover of $\RR\PP(W_i)$ parametrizing oriented two dimensional subspaces of 
$W_i$. 
Then the open subset $\Xi$ of $\widetilde{\RR\PP}(W_1)\times \widetilde{\RR\PP}(W_2)$, consisting of pairs $(V_1,V_2)$ with
$\overline{V}_1+ \overline{V}_2$ of signature $(3,1)$, is non-empty and connected.
\end{lem}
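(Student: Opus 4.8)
The plan is to realize each $\widetilde{\RR\PP}(W_i)$ as the unit sphere $S(W_i)\subset W_i$ with respect to the positive definite form induced from $\LLambda$, an oriented plane $\overline{V}_i$ corresponding to its positive unit normal $u_i\in W_i$; this is legitimate since $W_i$ inherits an orientation from $\fM^0_\Lambda$, and it identifies $\widetilde{\RR\PP}(W_1)\times\widetilde{\RR\PP}(W_2)$ with $S^2\times S^2$. Since $\overline{V}_i\subset W_i$ and $W_1\cap W_2=0$, we always have $\overline{V}_1\cap\overline{V}_2=0$, so $\overline{V}_1+\overline{V}_2$ is $4$-dimensional; as it contains the positive definite plane $\overline{V}_1$ and $\LLambda$ has only three positive directions, its signature is $(3,1)$, $(2,2)$, or a degenerate type. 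I will study the projection $p\colon\Xi\to S(W_1)$, $(\overline{V}_1,\overline{V}_2)\mapsto\overline{V}_1$, which is an open map because it is the restriction to the open set $\Xi$ of the (open) product projection. Granting that every fibre of $p$ is non-empty and connected, $\Xi$ is non-empty and connected: an open surjection with connected fibres onto a connected base has connected total space, since if $\Xi=U\sqcup U'$ with $U,U'$ open non-empty, then $p(U),p(U')$ are open and cover $S(W_1)\cong S^2$, hence meet in a point $b$, and then $p^{-1}(b)$ would be disconnected by $U,U'$.

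The heart of the matter is the analysis of a fixed fibre $p^{-1}(\overline{V}_1)$. The orthogonal projection $P_{V_1}\colon\LLambda\to\overline{V}_1$ is defined ($\overline{V}_1$ being non-degenerate), and one has the orthogonal decomposition $\overline{V}_1+\overline{V}_2=\overline{V}_1\oplus\bigl(V_1^{\perp}\cap(\overline{V}_1+\overline{V}_2)\bigr)$, the second summand being $\{w-P_{V_1}w:w\in\overline{V}_2\}$, which is isomorphic to $\overline{V}_2$ with pulled-back bilinear form $q_{V_1}(w,w'):=(w,w')-(P_{V_1}w,P_{V_1}w')$. Hence $(\overline{V}_1,\overline{V}_2)\in\Xi$ if and only if $q_{V_1}$ has signature $(1,1)$ on $\overline{V}_2$. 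Now $q_{V_1}$, viewed as a quadratic form on all of $W_2$, is the pullback of the form on $V_1^{\perp}$ along the injection $w\mapsto w-P_{V_1}w$ (injective because $W_2\cap\overline{V}_1\subset W_2\cap W_1=0$); since $V_1^{\perp}$ has signature $(1,r-3)$, $q_{V_1}$ has at most one positive direction on $W_2$, and it has at least one, because $P_{V_1}$ has rank at most $2$ and so vanishes on a non-zero subspace of the $3$-dimensional $W_2$, on which $q_{V_1}$ agrees with the positive definite form; a $3$-dimensional subspace of a space of signature $(1,r-3)$ containing a positive vector has signature $(1,2)$. Thus $q_{V_1}$ has signature exactly $(1,2)$ on $W_2$, for every $\overline{V}_1$.

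It remains to identify, inside $S(W_2)\cong S^2$, the set of oriented planes $\overline{V}_2$ with $q_{V_1}|_{\overline{V}_2}$ of signature $(1,1)$. By interlacing, the only possible signatures of $q_{V_1}|_{\overline{V}_2}$ are $(1,1)$, $(0,2)$ and the degenerate $(0,1,1)$, so this set is the complement in $S^2$ of the closure of the \emph{bad set} $\{\overline{V}_2:q_{V_1}|_{\overline{V}_2}\text{ negative definite}\}$. Sending a plane to its $q_{V_1}$-orthogonal line identifies the bad set with the set of $q_{V_1}$-positive lines in $\RR\PP(W_2)$, which, since $q_{V_1}$ has signature $(1,2)$, is an open disc in $\RR\PP(W_2)\cong\RR\PP^2$ whose closure contains no projective line; its preimage under the double cover $S(W_2)\to\RR\PP(W_2)$ (the composition of the universal covering with a diffeomorphism of $\RR\PP^2$) is therefore a pair of disjoint open discs with disjoint closures. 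Hence $p^{-1}(\overline{V}_1)$, being the complement in $S^2$ of two disjoint closed discs, is a non-empty connected annulus. This establishes that all fibres of $p$ are non-empty and connected, and with it the lemma.

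I expect the main obstacle to be the signature bookkeeping of the last two paragraphs — in particular verifying that the bad set is genuinely a disc that does not wrap around $\RR\PP(W_2)$, so that its preimage in $S^2$ is two discs rather than one annulus. This is exactly where the hypothesis $W_1\cap W_2=0$ is used: it forces $\overline{V}_1+\overline{V}_2$, hence $V_1^{\perp}\cap(\overline{V}_1+\overline{V}_2)$ and $q_{V_1}$, to be honestly of type $(1,2)$ and never of a degenerate type, and the computation parallels the ``interval''/torus analysis in the proof of Lemma~\ref{lemma-subset-A-of-Tw-3}.
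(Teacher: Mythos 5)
Your proof is correct and follows essentially the same strategy as the paper's: project $\Xi$ to one factor, observe that the fibre over $\overline{V}_1$ is, after passing to the $\overline{V}_1$-orthogonal picture, identified with the set of planes (equivalently, lines) of a prescribed signature type in a $3$-dimensional Lorentzian space of signature $(1,2)$, and conclude connectedness from the geometry of that Lorentzian space. The only substantive difference is in the last step: the paper reduces the fibre to the set of oriented negative-definite lines in the projected Lorentzian $3$-space $\bar{W}_2 \subset \overline{V}_1^\perp$ and appeals to Lemma~\ref{lemma-negative-definite-subspaces-of-hyperbolic-space} (which, for $d=1$, $\rho=3$, exhibits this set as a one-sheeted hyperboloid), while you pull the form back to $W_2$ and argue directly that the fibre is the complement in $S^2$ of two disjoint closed discs; these are of course the same space. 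Your preliminary signature analysis (that $q_{V_1}$ on $W_2$ always has signature exactly $(1,2)$, using $W_1\cap W_2=0$ via both the rank of $P_{V_1}$ and the non-degeneracy of Lorentzian subspaces containing a positive vector) is a mild repackaging of the paper's observation that $\overline{V}_1 + W_2$ has signature $(3,2)$, and the explicit ``open surjection with connected fibres over a connected base'' argument is the step the paper leaves implicit with the word ``fibration''.
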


\begin{proof}
Let $p_i:\Xi\rightarrow \widetilde{\RR\PP}(W_i)$ be the natural projection. We prove that $p_i$ is surjective with connected fibers. 
It suffices to prove it for $i=1$. Fix 
$V_1\in \widetilde{\RR\PP}(W_1)$. 
Denote by $\bar{W}_2$ the image of $W_2$ via the orthogonal projection into $\overline{V}_1^\perp$.
Then $\overline{V}_1+W_2=\overline{V}_1+\bar{W}_2$. 
Note that $V_1+W_2$ has signature $(3,2)$. Hence, $\bar{W}_2$ has signature $(1,2)$. 
The fiber of $p_1^{-1}(V_1)$ is isomorphic to the open subset of $\widetilde{\RR\PP}(\bar{W}_2)^*$
consisting of oriented $2$-dimensional subspaces of $\bar{W}_2$ of signature $(1,1)$. 
Equivalently, $p_1^{-1}(V_1)$ is isomorphic to open subset of $\widetilde{\RR\PP}(\bar{W}_2)$
consisting of oriented $1$-dimensional negative definite subspaces of $\bar{W}_2$. 
The latter 
is connected, by Lemma \ref{lemma-negative-definite-subspaces-of-hyperbolic-space}.
Hence, $p_1$ is a surjective fibration with connected fibers
and $\Xi$ is connected.
\hide{
Denote by $\sigma_i$ the involution of $\Xi$ corresponding to reversing the orientation of $V_i$, $i=1,2$. 
Then $p_1$ is $\sigma_2$ invariant, and $\sigma_2$ interchanges the two connected components of the fiber of $p_1$. 
The rest of the proof is by contradiction.
Assume $\Xi$ is not connected. Then it has two connected components, which are interchanged by $\sigma_2$.
The two connected components must be interchanged also by $\sigma_1$, as $\Xi$ is symmetric under transposition of the factors. 
Hence, each connected component is invariant under $\sigma_1\sigma_2$.
Consequently, the two connected components are distinguished by the orientation of the space
$W_1/\overline{V}_1\otimes W_2/\overline{V}_2$, or equivalently, by that of the negative definite two dimensional subspace
$\nu(V_1,V_2):=[W_1+W_2]\cap[\overline{V}_1+\overline{V}_2]^\perp$ endowed with the orientation determined by
those of $W_i$ and $V_j$.

Denote by $Gr_{--}(W_1+W_2)$ the Grassmannian of oriented negative definite two dimensional subspaces of $W_1+W_2$.
Let $\sigma$ be its involution corresponding to reversing the orientation.
Let  $\nu:\Xi\rightarrow Gr_{--}(W_1+W_2)$ be the map given by $(V_1,V_2)\mapsto \nu(V_1,V_2)$. 
Then $\nu$ is $\sigma_1\sigma_2$-invariant and $(\sigma_i,\sigma)$-equivariant, $i=1,2$. 
Each fiber of $\nu$ consists of two points, since 
$\overline{V}_i$ is the intersection $W_i\cap \nu(V_1,V_2)^\perp$, $i=1,2$,  and the orientations of $\overline{V}_1\otimes \overline{V}_2$ is determined by
that of $\nu(V_1,V_2)$. The intersection of each fiber of $\nu$ with each of the two connected component of $\Xi$ has the same cardinality, since $\nu$ is $(\sigma_1,\sigma)$-equivariant. The latter cardinality is at least $2$, since $\nu$ is $\sigma_1\sigma_2$-invariant, as is each of the connected components of $\Xi$. Hence, each fiber of $\nu$ has cardinality at least $4$. A contradiction. 
}
\end{proof}

Denote by $\tau_i$, $1\leq i \leq k$, the involution of $Tw^k_\Lambda$ taking a twistor path
$\{(V_1, \dots, V_k); (W_1, \dots, W_{k-1})\}$ to the one obtained from it by reversing the orientation of $V_i$.

\begin{lem}
\label{lemma-breve-f-5}
The map $\breve{f}_5$ is  submersive. Its image consists of all pairs $(V_1,V_5)$ in $\Omega_\Lambda^2$, such that
$\overline{V}_1\cap\overline{V}_5=0$. 
Given a twistor path $\{(V_1, \dots, V_5); (W_1, \dots, W_4)\}$ in $\breve{T}w_\Lambda^5$, there is a natural isomorphism
\begin{equation}
\label{eq-isomorphism-of-oriented-lines}
\wedge^2\overline{V}_2\otimes \wedge^2\overline{V}_3\otimes \wedge^2\overline{V}_4\cong 
\wedge^3W_2\otimes\wedge^3W_3.
\end{equation}
$\breve{T}w_\Lambda^5$ is disconnected with two connected components. 
One connected component $\breve{T}w_{\Lambda,=}^5$ consists of twistor paths such that the above isomorphism is orientation preserving with respect to the orientations of both lines above. The other component $\breve{T}w_{\Lambda,\neq}^5$
consists of twistor paths where the isomorphism is orientation reversing. 
The two connected components are interchanged by $\tau_i$, if $2\leq i\leq 4$, and each component is invariant with respect to $\tau_1$ and $\tau_5$. Each non-empty fiber of $\breve{f}_5$ intersects each connected component of $\breve{T}w_\Lambda^5$ along a connected set. 
\end{lem}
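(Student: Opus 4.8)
The plan is to reduce the statement to the structure of the \emph{middle} twistor line $(V_2,V_3,V_4;W_2,W_3)$ of a $5$-path and feed it into the connectedness Lemmas \ref{lemma-negative-definite-subspaces-of-hyperbolic-space}--\ref{lemma-fiber-of-u}. First, since a point of $\breve{T}w_\Lambda^5$ satisfies $W_1\cap W_4=0$ and has $\overline{V}_1\subset W_1$, $\overline{V}_5\subset W_4$, it also satisfies $\overline{V}_1\cap\overline{V}_5=0$; hence $\breve{T}w_\Lambda^5$ lies in the open locus on which $f_5$ is submersive by Lemma \ref{lemma-open-subset-where-f-k-is-submersive}, so $\breve{f}_5$ is submersive and its image is contained in $\{(V_1,V_5):\overline{V}_1\cap\overline{V}_5=0\}$. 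For the reverse inclusion I would, given such a pair, first use Lemma \ref{lemma-on-two-pairs} to produce $W_1\supset\overline{V}_1$ and $W_4\supset\overline{V}_5$ with $W_1\cap W_4=0$, then Lemma \ref{lemma-fiber-of-u} to produce $V_2\subset W_1$ and $V_4\subset W_4$ with $\overline{V}_2+\overline{V}_4$ of signature $(3,1)$ (so that $(V_2,V_4)\in A$), and finally Lemma \ref{lemma-subset-A-of-Tw-3} to fill in a single twistor line joining $V_2$ to $V_4$; the resulting $5$-path lies in $\breve{T}w_\Lambda^5$ and maps to $(V_1,V_5)$.

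Next, applying the canonical isomorphism (\ref{eq-canonical-isomorphism-of-oriented-lines}) from the proof of Lemma \ref{lemma-subset-A-of-Tw-3} to the middle sub-path $(V_2,V_3,V_4;W_2,W_3)$ yields (\ref{eq-isomorphism-of-oriented-lines}) on the dense open locus where $\overline{V}_2,\overline{V}_3$ span $W_2$ and $\overline{V}_3,\overline{V}_4$ span $W_3$. Since $\LLambda$ is oriented, $W_2$ and $W_3$ carry orientations, and each $V_i$ is oriented, so the ``orientation type'' of (\ref{eq-isomorphism-of-oriented-lines}) is a locally constant $\pm$-valued function which extends across all of $\breve{T}w_\Lambda^5$ and partitions it into open--closed subsets $\breve{T}w_{\Lambda,=}^5$ and $\breve{T}w_{\Lambda,\neq}^5$. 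Reversing the orientation of $V_i$ reverses that of $\wedge^2\overline{V}_i$ and leaves every $W_j$ unchanged, so $\tau_i$ interchanges the two subsets for $2\le i\le 4$ and preserves each of them for $i=1,5$.

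To see that $\breve{T}w_{\Lambda,=}^5$ and $\breve{T}w_{\Lambda,\neq}^5$ are connected, let $\psi:\breve{T}w_\Lambda^5\to Tw^3_\Lambda$ send a $5$-path to its middle sub-path. Then $\overline{V}_2\cap\overline{V}_4\subset W_1\cap W_4=0$, so $\psi$ takes values in $\widetilde{A}=f_3^{-1}(A)$; conversely, given a point of $\widetilde{A}$ one uses Lemma \ref{lemma-on-two-pairs} to choose $W_1\supset\overline{V}_2$, $W_4\supset\overline{V}_4$ with $W_1\cap W_4=0$ and then arbitrary $V_1\subset W_1$, $V_5\subset W_4$, so $\psi$ is surjective onto $\widetilde{A}$; and a fibre of $\psi$ maps onto the non-empty connected set of Lemma \ref{lemma-on-two-pairs} with connected fibre $\widetilde{\RR\PP}(W_1)\times\widetilde{\RR\PP}(W_4)$, hence is non-empty and connected. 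As (\ref{eq-isomorphism-of-oriented-lines}) is pulled back from $\widetilde{A}$ along $\psi$, one gets $\breve{T}w_{\Lambda,=}^5=\psi^{-1}(\widetilde{A}_=)$ and $\breve{T}w_{\Lambda,\neq}^5=\psi^{-1}(\widetilde{A}_{\neq})$, which are connected because $\widetilde{A}_=,\widetilde{A}_{\neq}$ are (Lemma \ref{lemma-subset-A-of-Tw-3}) and $\psi$ is a surjection with connected fibres; in particular $\breve{T}w_\Lambda^5$ is disconnected. For the final assertion, fix $(V_1,V_5)$ with $\overline{V}_1\cap\overline{V}_5=0$ and consider $\breve{f}_5^{-1}(V_1,V_5)\cap\breve{T}w_{\Lambda,=}^5$: project first to $(W_1,W_4)$ with $\overline{V}_1\subset W_1$, $\overline{V}_5\subset W_4$, $W_1\cap W_4=0$ (non-empty connected base, Lemma \ref{lemma-on-two-pairs}); over each such pair project to $(V_2,V_4)$ with $V_2\subset W_1$, $V_4\subset W_4$ and $\overline{V}_2+\overline{V}_4$ of signature $(3,1)$ (the non-empty connected set $\Xi$ of Lemma \ref{lemma-fiber-of-u}); the remaining fibre over $(V_2,V_4)$ is $f_3^{-1}(V_2,V_4)\cap\widetilde{A}_=$, non-empty and connected by Lemma \ref{lemma-subset-A-of-Tw-3}. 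Being a tower of fibrations with connected fibres over a connected base, $\breve{f}_5^{-1}(V_1,V_5)\cap\breve{T}w_{\Lambda,=}^5$ is connected; the case of $\breve{T}w_{\Lambda,\neq}^5$ is identical.

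The main obstacle is that none of the projection maps above is literally a locally trivial fibration, so each appeal to ``a surjection with connected fibres over a connected base has connected total space'' must be justified; I would do this by noting that, after a semialgebraic stratification, these are locally trivial (or at least open surjective) semialgebraic maps whose bases and fibres are exactly the open semialgebraic subsets of real Grassmannians controlled by Lemmas \ref{lemma-negative-definite-subspaces-of-hyperbolic-space}--\ref{lemma-fiber-of-u}. A secondary point needing care is that the isomorphism (\ref{eq-isomorphism-of-oriented-lines}) is a priori defined only on a dense open locus, so one must check that its orientation type is locally constant in order to define the decomposition over all of $\breve{T}w_\Lambda^5$.
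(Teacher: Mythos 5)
Your proposal reproduces the paper's argument closely: both establish submersivity via Lemma~\ref{lemma-open-subset-where-f-k-is-submersive}, both pass to $Tw^3_\Lambda$ through the ``forget the endpoints'' map (the paper's $\mu$, your $\psi$) and pull back the two components of $\widetilde A$ from Lemma~\ref{lemma-subset-A-of-Tw-3}, and both prove connectedness of fibers of $\breve f_5$ by a factorization through $(W_1,W_4)$, then $(V_2,V_4)$, then the middle sub-path, using Lemmas~\ref{lemma-on-two-pairs}, \ref{lemma-fiber-of-u}, \ref{lemma-subset-A-of-Tw-3}. Two points are worth flagging. First, for surjectivity of $\breve f_5$ the paper constructs a preimage explicitly from an orthonormal basis of $W_1$ and a graph homomorphism $\phi:W_1\to W_1^\perp$, whereas you reduce to nonemptiness of the connectedness lemmas; your route is cleaner and not less rigorous, since Lemmas~\ref{lemma-on-two-pairs}, \ref{lemma-fiber-of-u} and \ref{lemma-subset-A-of-Tw-3} each do assert nonemptiness. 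Second, the caveat you raise about (\ref{eq-isomorphism-of-oriented-lines}) being a priori defined only on a dense open locus is in fact vacuous: on $\breve T w^5_\Lambda$ the condition $W_1\cap W_4=0$ forces $\ell_1:=\overline V_3\cap W_1$ and $\ell_4:=\overline V_3\cap W_4$ to be distinct lines with $\overline V_3=\ell_1+\ell_4$, whence $\overline V_2\neq\overline V_3$, $\overline V_3\neq\overline V_4$, $W_2=\overline V_2+\overline V_3$ and $W_3=\overline V_3+\overline V_4$; together with $\overline V_2\cap\overline V_4\subset W_1\cap W_4=0$, the middle sub-path always lies in $\widetilde A$ and (\ref{eq-isomorphism-of-oriented-lines}) is defined globally, so your local-constancy extension is unnecessary (though it, or equivalently pulling the decomposition of $\widetilde A$ back along $\psi$, is harmless). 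Third, your remark that the ``surjection with connected fibers over connected base'' principle needs an openness hypothesis is well taken: the statement as written in the paper fails for general continuous maps, and the implicit justification is precisely that all the maps in play are submersions (hence open), which is what your proposed semialgebraic stratification supplies.
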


\begin{proof}
Let $\{(V_1, \dots, V_5); (W_1, \dots, W_{4})\}$ be a  point of $\breve{T}w_\Lambda^5$. 
Then 
\begin{eqnarray*}
\dim(\overline{V}_3\cap W_4)&\geq &\dim(\overline{V}_3\cap\overline{V}_4)\geq 1, \ \mbox{and} 
\\
\dim(\overline{V}_3\cap W_1)&\geq &\dim(\overline{V}_3\cap\overline{V}_2)\geq 1.
\end{eqnarray*} 
Now $W_1\cap W_4=(0)$. Hence, 
$\ell_4:=\overline{V}_3\cap W_4$ and $\ell_1:=\overline{V}_3\cap W_1$ are both one dimensional,
$\overline{V}_3=\ell_1+\ell_4$, $W_2=\overline{V}_2+\ell_4$, and $W_3=\ell_1+\overline{V}_4$. 

The map $\breve{f}_5$  is submersive, by Lemma
\ref{lemma-open-subset-where-f-k-is-submersive}, since $\breve{T}w_\Lambda^5$ is contained in the open subset 
of $Tw^5_\Lambda$, where $\overline{V}_1\cap\overline{V}_5=0$.
\hide{
We prove first that $\breve{f}_5$ is submersive. Let $a_1$ be an element of $\Hom(\overline{V}_1,\LLambda/\overline{V}_1)$
and  $a_5$ of $\Hom(\overline{V}_5,\LLambda/\overline{V}_5)$ so that $(a_1,a_5)$ is a tangent vector to 
$\Omega_\Lambda\times \Omega_\Lambda$ at $(V_1,V_5)$. 
Let $\hat{a}_i$ be an element of $\Hom(\overline{V}_i,\LLambda)$ mapping to $a_i$ via the natural homomorphism, $i=1,5$.
The vector spaces $\overline{V}_i$, $1\leq i \leq 5$, and $W_j$, $1\leq j \leq 4$, are all subspaces of 
$W_1+W_4$. The vanishing $\overline{V}_1\cap \overline{V}_5=(0)$ enables us to choose a homomorphism 
$a:[W_1+W_4]\rightarrow \LLambda$ restricting to the subspace $\overline{V}_i$ as $\hat{a}_i$, $i=1,5$.
Given a subspace $Z$ of $W_1+W_4$ we have the natural homomorphism
\[
\Hom(W_1+W_4,\LLambda)\rightarrow \Hom(Z,\LLambda/Z)
\]
obtained by composition with the inclusion $Z\rightarrow W_1+W_4$ and projection $\LLambda\rightarrow \LLambda/Z$.
We recover $a_i$ as the image of $a$ by choosing $Z$ to be $\overline{V}_i$, for $i=1,5$. Define $a_i$ that way for $1\leq i \leq 5$.
Define $b_j\in Hom(W_j,\LLambda/W_j)$ as the image of $a$ by choosing $Z=W_j$, $1\leq j \leq 4$. 
Then $((a_i)_{i=1}^5;(b_j)_{j=1}^4)$ is a tangent vector to $\breve{T}w^5_\Lambda$ which maps to $(a_1,a_5)$ via the differential of 
$\breve{f}_5$. We conclude that $\breve{f}_5$ is submersive.
}

We prove that  the image of $\breve{f}_5$ contains every pair $(V_1,V_5)$, such that $\overline{V}_1\cap \overline{V}_5=0$. 
There exist three dimensional positive definite subspaces $W_1$ containing $\overline{V}_1$ and
$W_4$ containing $\overline{V}_5$, such that $W_1\cap W_4=0$, by Lemma \ref{lemma-W-and-W-prime}. 
Now $W_1^\perp$ is negative definite. Hence $W_4\cap W_1^\perp=0$, the orthogonal projection from $W_4$ to $W_1$ is an isomorphism, and composing its inverse with the orthigonal projection to $W_1^\perp$ yields an injective
homomorphism $\phi:W_1\rightarrow W_1^\perp$, such that $W_4$ is its graph. 
Let $\{e_1,e_2,e_3\}$ be an orthonormal basis of $W_1$.
Set 
\begin{eqnarray*}
\overline{V}_2&:=&\span\{e_1,e_3\},
\\
\overline{V}_3&:=&\span\{e_3,e_2+\phi(e_2)\},
\\
\overline{V}_4&:=&\span\{e_1+\phi(e_1), e_2+\phi(e_2)\},
\\
W_2&:=&\span\{e_1,e_2+\phi(e_2),e_3\},
\\
W_3&:=&\span\{e_1+\phi(e_1), e_2+\phi(e_2),e_3\}.
\end{eqnarray*}
$\overline{V}_3$ is positive definite, since the basis given is orthogonal with elements of positive self intersection. 
$\overline{V}_2$ is contained in $W_1$ and $\overline{V}_4$ in $W_4$. The element 
$e_2+\phi(e_2)$ has positive self intersection, it is orthogonal to $\overline{V}_2$, and $W_2=\overline{V}_2+\RR(e_2+\phi(e_2))$.
Hence, $W_2$ is positive definite. The element $e_3$ has positive self intersection, it is orthogonal to $\overline{V}_4$, and $W_3=\overline{V}_4+\RR e_3$. Hence, $W_3$ is positive definite. 
Any choice of orientations for $\overline{V}_i$, $i=2,3,4$, yields a twistor path in $\breve{T}w^5_\Lambda$, which is mapped to $(V_1,V_5)$ via $\breve{f}_5$.

We prove next that $\breve{T}w_\Lambda^5$ has precisely two connected components, which are interchanged by
$\tau_i$, $2\leq i\leq 4$, and are invariant under $\tau_1$ and $\tau_2$. Let 
\[
\mu:\breve{T}w_\Lambda^5 \rightarrow Tw^3_\Lambda
\]
be the map given by $\{(V_1, V_2, V_3, V_4, V_5); (W_1, W_2, W_3, W_{4})\} \mapsto \{(V_2,V_3,V_4);(W_2,W_3)\}$. 
We claim that the image of $\mu$ is precisely the subset $\widetilde{A}$ of 
Lemma \ref{lemma-subset-A-of-Tw-3}, and the fibers of $\mu$ are connected.
The image is clearly contained in $\widetilde{A}$. Given a point $t:=\{(V_2,V_3,V_4);(W_2,W_3)\}$ of
$\widetilde{A}$, the set $WW$ of pairs $(W_1,W_4)$, such that $W_1$ contains $\overline{V}_2$ and
$W_4$ contains $\overline{V}_4$ and $W_1\cap W_4=0$ is non-empty and connected, by 
Lemma \ref{lemma-on-two-pairs}. Over $WW$ we have the two pullbacks $\W_1$ and $\W_4$  of the tautological rank $3$ 
real vector subbundle over $Gr_{+++}(\LLambda)$. 
Let $Gr_{++}(\W_i)$ be the bundle of oriented two dimensional subspaces in the fibers of $\W_i$, $i=1,4$. 
Then $Gr_{++}(\W_i)$ is a bundle of two dimensional spheres over $WW$. 
The fiber of $\mu$ over $t$ is the fiber product $Gr_{++}(\W_1)\times_{WW}Gr_{++}(\W_4)$, which is non-empty and connected.
Lemma \ref{lemma-subset-A-of-Tw-3} implies that 
$\breve{T}w_\Lambda^5$ has precisely two connected components, which are interchanged by
$\tau_i$, $2\leq i\leq 4$, and are invariant under $\tau_1$ and $\tau_2$.

\hide{
We prove next that $\breve{T}w_\Lambda^5/\tau_2$ is connected. Let
\[
e:\breve{T}w_\Lambda^5/\tau_2\rightarrow Tw^3_\Lambda
\]
be the map given by $\{(V_1, \overline{V}_2, V_3, V_4, V_5); (W_1, \dots, W_{4})\} \mapsto \{(V_3,V_4,V_5);(W_3,W_4)\}$. 
The image of $e$ is equal to the open subset of $Tw^3_\Lambda$, where $\dim(\overline{V}_3\cap W_4)=1=\dim(\overline{V}_5\cap W_3)$. Its complement has codimension larger than $1$, so the image of $e$ is connected. 
We show next that all fibers of $e$ are connected.
Fix a point $\bar{t}:=\{(V_3,V_4,V_5);(W_3,W_4)\}$ in the image of $e$. Set $\ell_4:=\overline{V}_3\cap W_4$.
Given a point $\{(V_1, \overline{V}_2, V_3,V_4, V_5); (W_1, \dots, W_4)\} $ in the fiber $e^{-1}(\bar{t})$ we get the one dimensional space 
$\ell_1:=\overline{V}_2\cap \overline{V}_3$, which is necessarily different from $\ell_4$. 
We show next that the fiber $e^{-1}(\bar{t})$ maps surjectively onto the connected space 
$\RR\PP(\overline{V}_3)\setminus \{\ell_4\}$ with connected fibers. The subspace $W_2$ contains $\overline{V}_3$ and is
determined by $W_2\cap \overline{V}_3^\perp$, which is a point $w_2$ in the connected hyperbolic space 
$\RR\PP_+(\overline{V}_3^\perp)$
of positive definite lines 
in $\overline{V}_3^\perp$. 
The fiber $e^{-1}(\bar{t})$ maps surjectively onto the open subset of $\RR\PP_+(\overline{V}_3^\perp)$
consisting of lines $w_2$, such that the intersection of $W_2:=\overline{V}_3+w_2$ and $W_4$ is one dimensional 
(and so equal to $\ell_4$). The latter open set is connected, since its complement has codimension larger than one
in the connected manifold $\RR\PP_+(\overline{V}_3^\perp)$.
It suffices to prove that the subset of the fiber $e^{-1}(\bar{t})$  mapping to
$(\ell_1,w_2)\in [\RR\PP(\overline{V}_3)\setminus \{\ell_4\}]\times \RR\PP_+(\overline{V}_3^\perp)$ is connected.
The data $\overline{V}_2$ is equivalent to a choice of a point in the connected set
$\RR\PP(W_2/\ell_1)\setminus \{\overline{V}_3/\ell_1\}$. Given $\ell_1$, $W_2$, and $\overline{V}_2$, the data 
$W_1$ is determined by the choice of a points $w_1$ in an open connected subset
of the hyperbolic space $\RR\PP_+(\overline{V}_2^\perp).$ Next, the choice of the oriented two dimensional subspace $V_1$
corresponds to a choice of a point in the double cover of 
$\RR\PP(W_1)$, which does not lie over $\overline{V}_2$. The latter is a point of the two sphere minus two points, hence of a  connected set.

Given a point $\{(V_1, V_2, V_3, V_4, V_5); (W_1, \dots, W_{4})\}$ of $\breve{T}w^5_\Lambda$ we get a natural isomorphism
(\ref{eq-isomorphism-of-oriented-lines}).
The choice of the connected component $\fM^0_\Lambda$ endows the right hand side with a fixed orientation. Hence, $\tau_i$,
$2\leq i \leq 4$, 
takes a connected component of $\breve{T}w^5_\Lambda$ where the orientations of both sides above agree to a 
{\em different} connected component  where the orientations of the two sides are opposite. In each connected component 
the orientations of any two among 
$V_2$, $V_3$, and $V_4$ determine the orientation of the third. The double cover
$\breve{T}w^5_\Lambda\rightarrow \breve{T}w^5_\Lambda/\tau_2$ is thus disconnected and the two connected components are permuted as stated.
}

It remains to prove that each non-empty fiber of $\breve{f}_5$ has two connected components. 
Let $U\subset Gr_{+++}(\LLambda)^2$ be the subset of pairs $(W_1,W_2)$, such that $W_1\cap W_2=0$.
Let $\II\subset A\times U \subset Gr_{++}(\LLambda)^2\times Gr_{+++}(\LLambda)^2$ be the subset of pairs
$\{(V_2,V_4);(W_1,W_4)\}$, such that 
$W_1\cap W_4=0$, $\overline{V}_2$ is contained in $W_1$, $\overline{V}_4$ is
contained in $W_4$, and $\overline{V}_2+ \overline{V}_4$ has signature $(3,1)$. 
Denote by $a:\II\rightarrow A$ and $u:\II\rightarrow U$ the natural projections. 
Let $\W_1$ and $\W_4$ be the two pullbacks to $U$ of the rank $3$ tautological subbundle over $Gr_{+++}(\LLambda)$.
Let $Gr_{++}(\W_i)$ be the bundle of oriented two-dimensional subspaces in the fibers of $\W_i$, $i=1,4$. 
Then the connected component $\breve{T}w^5_{\Lambda,=}$ is isomorphic to the fiber product
\begin{eqnarray*}
\breve{T}w^5_{\Lambda,=} &\IsomRightArrow&
[Gr_{++}(\W_1)\times_U Gr_{++}(\W_4)]\times_U\II\times_A\widetilde{A}_=,
\\
\{(V_1,\dots, V_5);(W_1, \dots, W_4)\}&\mapsto &
\left[
\{(V_1,V_5);(W_1,W_4)\}, \{(V_2,V_4);(W_1,W_4)\}, \{(V_2,V_3,V_4);(W_2,W_3)\}\right].
\end{eqnarray*}
Let $Gr_{++}^0(\LLambda)^2$ be the open subset of $Gr_{++}(\LLambda)^2$ consisting of pairs $(V_1,V_5)$,
such that $\overline{V}_1\cap \overline{V}_5=0.$ The restriction of  $\breve{f}_5$ to $\breve{T}w^5_{\Lambda,=}$ factors as the composition of  the natural maps
\begin{eqnarray*}
b:[Gr_{++}(\W_1)\times_U Gr_{++}(\W_4)]&\rightarrow &Gr_{++}^0(\LLambda)^2,
\\
c:\breve{T}w^5_{\Lambda,=}&\rightarrow & [Gr_{++}(\W_1)\times_U Gr_{++}(\W_4)].
\end{eqnarray*}
The maps $a:\II\rightarrow A$, $u:\II\rightarrow U$, $\widetilde{A}_=\rightarrow A$, and
$[Gr_{++}(\W_1)\times_U Gr_{++}(\W_4)]\rightarrow U$ are all surjective with connected fibers. This is clear for the latter, 
for $a$ use Lemma \ref{lemma-on-two-pairs}, for $u$ use Lemma \ref{lemma-fiber-of-u}, and for $\widetilde{A}_=\rightarrow A$ use Lemma \ref{lemma-subset-A-of-Tw-3}.
Given two surjective continuous maps $X\rightarrow Y$ and $Z\rightarrow Y$ with connected fibers over a connected topological space $Y$, the fiber product $X\times_Y Z$ is connected and maps onto $X$ with connected fibers. Hence, $c$ is surjective and has connected fibers.
The map $b$ is surjective with connected fibers, by Lemma \ref{lemma-on-two-pairs}. 
Hence, so is $c\circ b$, which is the restriction of $\breve{f}_5$ to $\breve{T}w^5_{\Lambda,=}$. Similarly, the restriction of $\breve{f}_5$
to $\breve{T}w^5_{\Lambda,\neq}$ has connected fibers.
\end{proof}

%
\subsubsection{Smooth connected spaces of twistor paths with fixed end points}
\begin{proof}[{\bf Proof of Proposition \ref{prop-breve-Tw}}]
$\breve{T}w_\Lambda^k$ is connected, since $Tw^k_\Lambda$ is a smooth connected manifold and the former is the complement of a subset of real codimension $\geq 2$, when $k\geq 6$.

The subset $g_k^{-1}(U^k_i)$ of $\breve{T}w_\Lambda^k$ is the fiber product 
\begin{equation}
\label{eq-triple-fiber-product}
Tw_\Lambda^i\times_{\Omega_\Lambda} \breve{T}w_\Lambda^5 \times_{\Omega_\Lambda}Tw_\Lambda^{k-i-3},
\end{equation}
with respect to the following maps:
\begin{enumerate}
\item
 $\pi_i:Tw_\Lambda^i\rightarrow \Omega_\Lambda$ mapping
$\{(V_1, \dots, V_i),(W_1, \dots, W_{i-1})\}$ to $V_i$,
\item
the left map $\breve{T}w_\Lambda^5\rightarrow \Omega_\Lambda$ mapping 
$\{(V_i, \dots,V_{i+4}),(W_i, \dots, W_{i+3})\}$ to $V_i$,
\item
the right map $\breve{T}w_\Lambda^5\rightarrow \Omega_\Lambda$ mapping 
$\{(V_i, \dots,V_{i+4}),(W_i, \dots, W_{i+3})\}$ to $V_{i+4}$, and
\item
the map $\pi_{i+4}:Tw_\Lambda^{k-i-3}\rightarrow \Omega_\Lambda$ mapping 
$\{(V_{i+4}, \dots, V_k),(W_{i+4}, \dots, W_{k-1})\}$ to $V_{i+4}$.
\end{enumerate}
Equivalently, $g_k^{-1}(U^k_i)$  is the fiber product over the cartesian square $\Omega_\Lambda^2$ of $\breve{T}w_\Lambda^5$ and the cartesian product 
$[Tw_\Lambda^i\times  Tw_\Lambda^{k-i-3}]$ with respect to the maps
$\breve{f}_5:\breve{T}w_\Lambda^5\rightarrow \Omega_\Lambda^2$ and 
$(\pi_i,\pi_{i+4}):[Tw_\Lambda^i\times  Tw_\Lambda^{k-i-3}]\rightarrow \Omega_\Lambda^2$. 
The map $(\pi_i,\pi_{i+4})$ is surjective and submersive, by Lemma \ref{lemma-Tw-k-is-simply-connected}. The map
$\breve{f}_5$ is submersive, by Lemma \ref{lemma-breve-f-5}. Hence, the projection
\[
\psi:g_k^{-1}(U^k_i) \rightarrow [Tw_\Lambda^i\times  Tw_\Lambda^{k-i-3}]
\]
is submersive. The map 
$
(\pi_1,\pi_k): [Tw_\Lambda^i\times  Tw_\Lambda^{k-i-3}]\rightarrow \Omega_\Lambda^2,
$
sending the pair of twistor paths to the pair $(V_1,V_k)$ consisting of the starting point $V_1$ of the first and the ending point $V_k$ of the second, is submersive
by Lemma \ref{lemma-Tw-k-is-simply-connected}. Hence, the map $f_k$ restricts to $g_k^{-1}(U^k_i)$, $1\leq i \leq k-4$, as 
a submersive map. It follows that the map $\breve{f}_k$ is submersive and so all its fibers are smooth.
\[
\xymatrix{
& g^{-1}_k(U^k_i) \ar[dl] \ar[dr]^\psi \ar[rr]^{\subset} & & \breve{T}w^k_\Lambda \ar[dd]^{\breve{f}_k}
\\
\breve{T}w^5_\Lambda \ar[dr]_{\breve{f}_5} & & 
Tw_\Lambda^i\times  Tw_\Lambda^{k-i-3} \ar[dl]^{(\pi_i,\pi_{i+4})} \ar[dr]_{(\pi_1,\pi_k)} \\
& \Omega^2_\Lambda & & \Omega^2_\Lambda.
}
\]

We prove next the surjectivity of $\breve{f}_k$, for $k\geq 7$. 
It suffices to prove it for $k=7$, since the concatenation of
a path in $\breve{T}w^7_\Lambda$ from $V_1$ to $V_7$ with any path in $Tw_\Lambda^{k-7+1}$ from $V_7$ to itself results
in a path in $\breve{T}w_\Lambda^k$. Given $(V_1,V_7)$ in $\Omega_\Lambda^2$ choose $V_2$ and $V_6$ in $\Omega_\Lambda$,
such that $\overline{V}_2\cap\overline{V}_6=0$, $W_1:=\overline{V}_1+\overline{V}_2$ is a positive definite three dimensional subspace, and
so is $W_6:=\overline{V}_6+\overline{V}_7$. Then $(V_2,V_6)$ belongs to the image of $\breve{f}_5$, by Lemma \ref{lemma-breve-f-5}.
If  $\{(V_2, \cdots, V_6);(W_2, \dots, W_5)\}$ belongs to $\breve{T}w^5_\Lambda$, then
$\{(V_1,V_2, \cdots, V_6,V_7);(W_1,W_2, \dots, W_5,W_6)\}$ belongs to $\breve{T}w^7_\Lambda$.
Surjectivity of $\breve{f}_7$ follows.
\hide{
Assume that $k\geq 7$. 
Given any pair $(V_1,V_k)$ in $\Omega_\Lambda^2$, 
set $V_7:=V_k$ and choose a twistor path $\{(V_7, \dots, V_k),(W_7, \dots, W_{k-1})\}$ in $Tw^{k-6}_\Lambda$ from 
$V_7$ to $V_k$. Recall that $Tw^1_\Lambda$ was defined to be $\Omega_\Lambda$, so when $k=7$ the above still makes sense.
Choose $V_3\in \Omega_\Lambda$, 
for which there exists a twistor path
$\{(V_3, \dots, V_7),(W_3, \dots, W_6)\}$ in $\breve{T}w^5_\Lambda$. 
The map $f_i:Tw^i_\Lambda\rightarrow \Omega_\Lambda^2$ is surjective, for $i\geq 3$ ({\bf not true} for $i=3$). 
Choose a twistor path $\{(V_1, V_2, V_3),(W_1, W_2)\}$ in $Tw^3_\Lambda$ from $V_1$ to $V_3$.
Then the concatenated twistor path from $V_1$ to $V_k$ belongs to 
$g_7^{-1}(U^k_3)$ and hence to $\breve{T}w^k_\Lambda$.
Surjectivity of $\breve{f}_k$ follows.
}

It remains to prove that all fibers of $\breve{f}_k$ are connected, for $k\geq 8$. 
Recall that $Tw^1_\Lambda=\Omega_\Lambda$.
Fix a pair $(V_1,V_k)$ in $\Omega^2_\Lambda$. Consider the above diagram with $i=1$. 
Fibers of $(\pi_1,\pi_k)$ are isomorphic to fibers of $\pi_k:Tw_\Lambda^{k-4}\rightarrow \Omega_\Lambda$
and are thus connected, by Lemma \ref{lemma-Tw-k-is-simply-connected}.
Consider the open subset $\Sigma_{V_1}$ of the fiber of $\pi_k:Tw_\Lambda^{k-4}\rightarrow \Omega_\Lambda$
over $V_k$,  where $\overline{V}_1\cap \overline{V}_5=0.$ Similarly, we have the open subset $\Sigma$ of 
$Tw^1_\Lambda\times Tw_\Lambda^{k-4}$ 
consisting of pairs $(V_1,t)$, where $t$ is a twistor path from $V_5\in \Omega_\Lambda$ to $V_k$, 
where $\overline{V}_1\cap \overline{V}_5=0$. Fibers of $\psi$ are non-empty precisely over points of $\Sigma$, by the description of the image of $\breve{f}_5$ in Lemma \ref{lemma-breve-f-5}. 
$\Sigma_{V_1}$ forms a connected dense open subset of the fiber of 
$\pi_k$, 
by Lemma \ref{lemma-Sigma-V-is-connected}. 
Similarly, $\Sigma$ forms a dense open subset of $Tw^1_\Lambda\times Tw_\Lambda^{k-4}$. 
Hence, the fiber  of $(\pi_1,\pi_k)$ intersects the image of $\psi$ in a connected set $\{V_1\}\times \Sigma_{V_1}$. 
Fibers of $\psi$ over points $(V_1,t)\in \Sigma$ are isomorphic to  fibers of
$\breve{f}_5$, which have two connected components, by Lemma \ref{lemma-breve-f-5}. Hence,
the intersection of the fiber $f_k^{-1}(V_1,V_k)$ with $g_k^{-1}(U_1^k)$ has two connected components,
as it maps to the connected set $\{V_1\}\times \Sigma_{V_1}$ with fibers each of which has two (labeled) connected components.
The two connected components are interchanged under $\tau_2$. 

Next consider the above diagram with $i=k-4$. This case is analogous to the case $i=1$, by reversing the twistor paths. 
Arguing as above, we get that 
the intersection of the fiber $f_k^{-1}(V_1,V_k)$ with  $g_k^{-1}(U_{k-4}^k)$
has two connected components, each of which is invariant under $\tau_2$.
We conclude that the union of 
$f_k^{-1}(V_1,V_k)\cap g_k^{-1}(U_1^k)$ with each of the connected components of $f_k^{-1}(V_1,V_k)\cap g_k^{-1}(U_{k-4}^k)$ is connected.
Hence, the union of $f_k^{-1}(V_1,V_k)\cap g_k^{-1}(U_1^k)$ and $f_k^{-1}(V_1,V_k)\cap g_k^{-1}(U_{k-4}^k)$ is connected. 
This is a connected dense open subset of $\breve{f}_k^{-1}(V_1,V_k)$. Hence,  $\breve{f}_k^{-1}(V_1,V_k)$ is connected.
\end{proof}

\hide{
%
\subsubsection{Case $k=3$} 
This case is special, as $f_3$ is not surjective and its generic fiber is disconnected as we shall see. 
Given an oriented subspace $V\in Gr_{++}(\LLambda)$, let $\overline{V}$ denote the underlying subspace, forgetting the orientation. 
Let $V_1$, $V_3$ be points in $Gr_{++}(\LLambda)$, such that $\overline{V_1}\cap \overline{V_3}=(0)$.
Then $\overline{V_1}+ \overline{V_3}$ is four dimensional and its signature is either $(3,1)$, $(2,2)$, or it is degenerate with a null space of dimension $\leq 2$. 
If this pair $(V_1,V_3)$ is contained in the image of $f_3$, then $\overline{V_1}+ \overline{V_3}=W_1+W_2$ and so
either $\overline{V_1}+ \overline{V_3}$ is of signature $(3,1)$,
or it is degenerate with a one dimensional null space and the induced pairing on the non-degenerate quotient is positive definite. In particular, $f_3$ is not surjective.

Denote by $A\subset Gr_{++}(\LLambda)\times Gr_{++}(\LLambda)$
the open subset of pairs $(V_1,V_3)$, such that $\overline{V_1}\cap \overline{V_3}=(0)$. 
Then the fiber of $f_3$ over $(V_1,V_3)\in A$ is non-empty, if and only if the signature of $\overline{V_1}+ \overline{V_3}$ is $(3,1)$. 
When non-empty, $f_3^{-1}(V_1,V_3)$
consists of oriented subspaces $V_2\in Gr_{++}(\LLambda)$, such that 
$\overline{V}_1\cap \overline{V}_2$ and $\overline{V}_2\cap \overline{V}_3$ are both one-dimensional.
The three dimensional subspaces $W_i$ of points in such fibers are determined by the $V_i$'s as follows: $W_1$ is spanned by $\overline{V}_1$ and $\overline{V}_2$
and $W_2$ is spanned by $\overline{V}_2$ and $\overline{V}_3$. 
Let $\Upsilon$ be the open subset of $\RR\PP(\overline{V}_1)\times \RR\PP(\overline{V}_3)$ consisting of pairs of lines
$(\ell_1,\ell_3)$ satisfying (\ref{cond-V-1-ell-3-is-positive-definite}) and (\ref{cond-V-3-ell-1-is-positive-definite}). 
The fiber 
$f_3^{-1}(V_1,V_3)$ is a double cover of $\Upsilon$ corresponding to the two choices of orientation of 
$\overline{V}_2:=\ell_1+\ell_3$. 

We claim that Condition (\ref{cond-V-1-ell-3-is-positive-definite}) in the definition of $\Upsilon$ 
corresponds to a non-empty open connected proper subset of
$\RR\PP(\overline{V}_3)$ (an ``interval'') and Condition (\ref{cond-V-3-ell-1-is-positive-definite})  corresponds to such a subset of
$\RR\PP(\overline{V}_1)$, so that $\Upsilon$ is non-empty and simply connected. It suffices to verify the statement for Condition (\ref{cond-V-1-ell-3-is-positive-definite}). 
Choose an orthonormal basis $\{e_1, e_2\}$ of $\overline{V}_1$ and an orthogonal basis $\{e_3, e_4\}$ of 
$\overline{V}_1^\perp\cap [\overline{V}_1+\overline{V}_3]$ satisfying $(e_3,e_3)=1$ and $(e_4,e_4)=-1$, if 
the signature of $\overline{V_1}+ \overline{V_3}$ is $(3,1)$, and $(e_4,e_4)=0$, if the induced pairing on this space is degenerate.
We can choose a basis $\{b_1,b_2\}$ of $\overline{V}_3$ of the form
\begin{eqnarray*}
b_1&=&c_1e_1+c_2e_2+e_3
\\
b_2&=&d_1e_1+d_2e_2+e_4,
\end{eqnarray*}
since $\overline{V_1}\cap \overline{V_3}=(0)$. Let $\ell_3\subset \overline{V}_3$ be spanned by $t_1b_1+t_2b_2$.
Then Condition (\ref{cond-V-1-ell-3-is-positive-definite}) is equivalent to the inequalities 
\[
\begin{array}{ccl}
t_1^2>t_2^2 & \mbox{if} & \mbox{the signature of} \  \overline{V_1}+ \overline{V_3} \ \mbox{is}  \ (3,1),
\\
t_1\neq 0 & \mbox{if} & \mbox{the induced pairing on} \  \overline{V_1}+ \overline{V_3} \ \mbox{is} \ \mbox{degenerate},
\end{array}
\]
verifying the assertion.

Let $S_i$ be the unit circle in $\overline{V}_i$. Then $S_1\times S_3$ is a $\ZZ/2\times \ZZ/2$ covering of 
$\RR\PP(\overline{V}_1)\times \RR\PP(\overline{V}_3)$.
A point $(u_1,u_3)$ in $S_1\times S_3$ determines an orientation of $\mbox{span}\{u_1,u_2\}$. The fiber of $f_3$ is the quotient of
$S_1\times S_3$ by the orientation preserving involution $(u_1,u_3)\mapsto (-u_1,-u_3)$. The orientation and the metric of $V_i$ determine a complex structure on $\overline{V}_i$, for $i=1,3$, so an action by the standard torus $U(1)\times U(1)$ on $S_1\times S_3$. The above involution corresponds to a translation by the point of order two  $(e^{\pi i},e^{\pi i})\in U(1)\times U(1)$.
The fiber of $f_3$ over $(V_1,V_3)$ is thus the disjoint union of two open subsets 
$\Upsilon_1$ and $\Upsilon_2$ of a real two-dimensional torus.
These two sheets are {\em naturally} labeled. The orientation of $\LLambda$ determines an orientation of $W_1$ and $W_2$. 
The orientation of $V_1$ and $W_1$ determine an orientation of the line $\overline{V}_1^\perp\cap W_1$, hence of the line $\ell_1$ (the later projects isomorphically onto the former). Similarly, $\ell_3$ comes with a choice of orientation induced by that of $V_3$.
Hence, the orientations of $V_1$ and $V_3$ determine the orientation of $\overline{V}_2:=\ell_1+\ell_2$ corresponding to a basis
$\{u_1,u_2\}$, such that $u_i$  corresponds to the above orientation of $\ell_i$. We conclude that the the subset 
$\widetilde{A}:=f_3^{-1}(A)$ is disconnected, consisting  of two connected components $\widetilde{A}_a$ and $\widetilde{A}_b$. The two components are interchanged by the involution reversing the orientation of $V_2$.

Let $g_3:Tw^3_\Lambda\rightarrow Gr_{+++}(\LLambda)^2$ be the map sending a twistor path 
$\{(V_1,V_2,V_3),(W_1,W_2)\}$ to the pair $(W_1,W_2)$. Note that the image of $g_3$ consists of all pairs, such that 
$\dim(W_1\cap W_2)\geq 2$. 
Let $U^3\subset Gr_{+++}(\LLambda)^2$ be the open subset of the image of $g_3$ consisting of pairs, such that 
$\dim(W_1\cap W_2)=2$. $U^3$ is the complement of the diagonal in the image of $g_3$.
Let $\widetilde{U}^3$ be the double cover of $U^3$ corresponding to the two orientations of $\overline{V}_2:=W_1\cap W_2$. 
Set $\breve{T}w^3_\Lambda:=g_3^{-1}(U_3)$. Clearly, the restriction of $g_3$ factors trough a map 
$\tilde{g}_3:\breve{T}w^3_\Lambda\rightarrow \widetilde{U}^3.$ The map
$\tilde{g}_3$ is a fibration, whose fiber over $(W_1,W_2)$ is the product of the two smooth complex plane conics
in $\PP(W_i\otimes_\RR\CC)$, $i=1,2$, defined by the pairing.  

\begin{lem}
\label{lemma-divisor-separating-two-sheets}
$U^3$ and $\breve{T}w^3_\Lambda$ are connected. 
\end{lem}

\begin{proof}
It suffices to prove that $\breve{T}w^3_\Lambda$ is connected. 
The  diagonal of $Gr_{+++}(\LLambda)^2$ has codimension $\rank(\Lambda)-1$ in the image of $g_3$. The complement of
$\breve{T}w^3_\Lambda$ in $Tw^3_\Lambda$ has the same codimension, as $g_3$ is a fibration. Connectedness of
$\breve{T}w^3_\Lambda$ then follows from that of $Tw^3_\Lambda$.
%

Second proof:
The restriction of the morphism $f_3$ to $\breve{T}w^3_\Lambda$ 
factors through a morphism 
\[
\tilde{f}_3:\breve{T}w^3_\Lambda\rightarrow 
Gr_{++}(\LLambda)^2\times Gr(4,\LLambda).
\]
Fix a pair $(V_1,V_3)\in Gr_{++}(\LLambda)^2$ satisfying $\dim_\RR(\overline{V}_1\cap \overline{V}_3)= 1$.
Then the signature of $\overline{V}_1+ \overline{V}_3$ is one of $(3,0)$, $(2,1)$ or its induced pairing is degenerate.
Assume that the signature is $(3,0)$ (positive definite). 
The fiber $\tilde{f}_3^{-1}(V_1,V_3,Z)$ consists of two real algebraic components.
An open subset of one component consists of paths, such that $\overline{V}_2$
is not contained in $\overline{V}_1+ \overline{V}_3$ and so 
$Z=\overline{V}_1+\overline{V}_2+ \overline{V}_3$. 
If $\overline{V}_2$ is not contained in $\overline{V}_1+ \overline{V}_3$, then we must have the equalities
\[
\overline{V}_1\cap \overline{V}_2=\overline{V}_2\cap \overline{V}_3=\overline{V}_1\cap \overline{V}_3,
\]
$W_1=\overline{V}_1+ \overline{V}_2$, $W_2=\overline{V}_2+ \overline{V}_3$, and 
$\overline{V}_2=W_1\cap W_2$. $W_i$ is determined by the line $\ell_i:=\overline{V}_i^\perp\cap W_i$ 
in the real projective line $\RR\PP(\overline{V}_i^\perp\cap Z),$ $i=1,3$. The condition that $W_i$ is positive definite again
corresponds to a non-empty open connected proper subset of $\RR\PP(\overline{V}_i^\perp\cap Z).$ 
Again, the orientation of $V_i$ determines one on each of $\ell_i$, $i=1,3$. However, the two orthogonal projections
$\overline{V}_2\rightarrow \overline{V}_i^\perp$, $i=1,3$, now have a common kernel $\overline{V}_1\cap \overline{V}_3$.
Hence, we no longer have a labeling of the two orientations on $\overline{V}_2$ by the labeling of the orientations
of $\ell_1\oplus \ell_3$.

An orientation for $Z$ determines a labeling of the orientations of $\overline{V}_2$ as follows.
The orientations of $Z$ and the positive definite $\overline{V}_1+ \overline{V}_3$ determines an orientation 
of $Z/(\overline{V}_1+ \overline{V}_3)\cong \overline{V}_2/(\overline{V}_1\cap \overline{V}_3)$.
The orientations of $\overline{V}_1+ \overline{V}_3$ and $V_i$ determine an orientation of $\overline{V}_i^\perp\cap [\overline{V}_1+ \overline{V}_3]$, $i=1,3$. The latter two orientations and that of $\overline{V}_1+ \overline{V}_3$
determine a labeling of the orientations of $\overline{V}_1\cap \overline{V}_3$. The latter and that of 
$\overline{V}_2/(\overline{V}_1\cap \overline{V}_3)$ determine one for $\overline{V}_2$. Now, fixing 
$\overline{V}_1+ \overline{V}_3$, the subspace $Z$ varies in the non-simply connected space
$\RR\PP([\overline{V}_1+ \overline{V}_3]^\perp)$, whose double cover parametrizes oriented four-dimensional subspaces containing $\overline{V}_1+ \overline{V}_3$.
\end{proof}

%
\subsubsection{Case $k$ is an odd integer and $k\geq 3$}  {\bf (???) This subsection has mistakes which are not corrected yet.
In particular, the subset $\widetilde{U}^k$ needs to be enlarged, in analogy to 
Lemma \ref{lemma-divisor-separating-two-sheets}, so that it is not disconnected.}
Let 
\[
g:Tw^k_\Lambda\rightarrow Gr_{++}(\LLambda)^{(k+1)/2}
\]
be the map forgetting all the $W_j$'s and those $V_i$'s with even index $i$ 
\[
\{(V_1, \dots, V_k); (W_1, \dots, W_{k-1})\} 
\mapsto
(V_1, \dots, V_{2i-1}, V_{2i+1}, \dots, V_{k}). 
\]
Let $U^k\subset Gr_{++}(\LLambda)^{(k+1)/2}$ be the subset of points 
$(V_1, \dots, V_{2i-1}, V_{2i+1}, \dots, V_{k})$ satisfying $\overline{V}_{2i-1}\cap \overline{V}_{2i+1}=(0)$ 
and the signature of $\overline{V}_{2i-1}+ \overline{V}_{2i+1}$ is $(3,1)$,
for $1\leq i\leq (k-1)/2$.
We get the open subset 
$
\widetilde{U}^k:=g^{-1}(U^k)
$ 
of $Tw^k_\Lambda$. The fiber of $g$ over a point 
$(V_1, \dots, V_{2i-1}, V_{2i+1}, \dots, V_{k})$ of
$U^k$ is a $2^{(k-1)/2}$ sheeted covering of
\[
\RR\PP(\overline{V}_1)\times \RR\PP(\overline{V}_3)^2\times \dots \times \RR\PP(\overline{V}_{k-2})^2\times \RR\PP(\overline{V}_k).
\]
Given a point $(\ell_1, \ell_{3,1}, \ell_{3,2}, \dots, \ell_{k-2,1}, \ell_{k-2,2}, \ell_k)$ of the above product, we set
$\overline{V}_2:=\ell_1+\ell_{3,1}$, $\overline{V}_{2i}:=\ell_{2i-1,2}+\ell_{2i+1,1}$, $2\leq i \leq (k-1)/2$,
$\overline{V}_k:=\ell_{k-2,2}+\ell_k$. The fiber of $g$ then consists of 
$\{(V_1, \dots, V_k); (W_1, \dots, W_{k-1})\}$, where $V_{2i}$ corresponds to a choice of an orientation of $\overline{V}_{2i}$ and
$W_j=V_j+V_{j+1}$. Topologically, such a fiber of $g$ is a $(k-1)$-dimensional compact (real) torus, being the product of $(k-1)/2$ two-dimensional tori.

Assume that $k$ is an odd integer and $k\geq 5$. Let
\begin{equation}
\label{eq-h}
h:\widetilde{U}^k\rightarrow Gr_{++}(\LLambda)^2
\end{equation}
be the restriction of $f_k$.
Let $V_1$ and $V_k$ be points of $Gr_{++}(\LLambda)^2$. Denote by
$U^{k-4}_{V_1,V_k}$ the open subset of $U^{k-4}\subset Gr_{++}(\LLambda)^{(k-3)/2}$ consisting of points
$(V_3, \dots, V_{2i-1}, V_{2i+1}, \dots, V_{k-2})$
such that $\overline{V}_1\cap \overline{V}_3=(0)$ and $\overline{V}_{k-2}\cap \overline{V}_k=(0)$. Then 
$\{V_1\}\times   U^{k-4}_{V_1,V_k}\times \{V_k\}$ is contained in $U^k$. 

\begin{lem}
\label{lemma-twistor-paths-with-fixed-endpoints}
Assume that $k$ is an odd integer larger than or equal to $5$. Then $h$ is a surjective fibration. 
The fiber of $h$ over $(V_1,V_k)$ is a fibration over $U^{k-4}_{V_1,V_k}$ each of which fibers is 
isomorphic to a real $(k-1)$-dimensional compact  torus.
\end{lem}

\begin{proof}
The map $h$ is the composition of the restriction of $g$ to $\widetilde{U}^k$ followed by the projection from the subset $U^k$ of 
$Gr_{++}(\LLambda)^{(k+1)/2}$ to $Gr_{++}(\LLambda)^2$. The map $g$ maps $\widetilde{U}^k$ onto $U$ and the latter projection maps $U^k$ onto $Gr_{++}(\LLambda)^2$. The description of the fibers follows from the preceding discussion.
\end{proof}

}

%
\subsection{A universal twistor family}
\label{sec-universal-twistor-family}
We have  the period map  
\[
P:\fM_\Lambda^0\rightarrow \Omega_\Lambda,
\]
given by $P(X,\eta):=\eta(H^{2,0}(X))$.
The period map is surjective and it is injective over the locus $\Omega_\Lambda^{gen}$ of periods of manifolds $X$, such that $\Pic(X)$ is trivial, or cyclic generated by a class of non-negative BBF-degree, by the Global Torelli Theorem \cite{verbitsky-torelli}. The K\"{a}hler cone of such an $X$ is equal to its positive cone in $H^{1,1}(X,\RR)$.

\begin{rem}
\label{rem-canonical-Kahler-ray-for-fibers-of-twistor-family}
Let $X$ be an irreducible holomorphic symplectic manifold and $\omega$ a K\"{a}hler class on $X$. Set
\[
V(X):=[H^{2,0}(X)\oplus H^{0,2}(X)]\cap H^2(X,\RR).
\]
Let $W\subset H^2(X,\RR)$ be the positive definite three dimensional subspace spanned by $V(X)$ and $\omega$.
The base $\PP^1_\omega$ of the twistor family $\X\rightarrow \PP^1_\omega$ associated to $\omega$ is the conic in $\PP(W_\CC)$ of isotropic lines in $W_\CC$ \cite[Sec. 1.13 and 1.17]{huybrects-basic-results}. The real subspace, of the direct sum of an isotropic line $t\in \PP^1_\omega$ and its complex conjugate, corresponds to a positive definite two dimensional subspace
of $W$, which coincides with the subspace $V(\X_t)$, associated to the fiber $\X_t$ of $\X$ over $t$,  under the natural identification of $H^2(X,\RR)$ and 
$H^2(\X_t,\RR)$ via the Gauss-Manin connection over the simply connected base $\PP^1_\omega$. We get the open ray
$\rho_t$ in the line $V(\X_t)^\perp\cap W$, consisting of classes $\omega_t$, such that a basis $\{v_1, v_2\}$
of $V(\X_t)$, compatible with its orientation, extends to a basis $\{v_1, v_2, \omega_t\}$ compatible with the orientation of $W$.
The fact we would like to recall is that the ray 
$\rho_t$ consists of K\"{a}hler classes  of $\X_t$ \cite{hklr}.
\end{rem}

\begin{lem}
\label{lemma-lift-of-generic-twistor-path}
Given a marked pair 
$(X,\eta)$ in $\fM_\Lambda^0$, and a generic twistor path \\
$\{(V_1, \dots, V_k);(W_1, \dots, W_{k-1})\}$
satisfying $V_1=P(X,\eta)$, such that $\eta^{-1}(\overline{V}_1^\perp\cap W_1)$ is spanned by a K\"{a}hler class,  there exists a unique lift of the path to a generic twistor path in $\fM_\Lambda^0$ starting at $(X,\eta)$. 
\end{lem}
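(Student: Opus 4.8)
The plan is to lift the given path one twistor line at a time, feeding the output of each step into the next via the twistor family construction of Remark \ref{rem-canonical-Kahler-ray-for-fibers-of-twistor-family} and the Global Torelli Theorem \cite{verbitsky-torelli}. The inductive building block is the following. Let $(Z,\psi)\in\fM^0_\Lambda$ have period $P(Z,\psi)=V$, let $W\supseteq\overline{V}$ be a positive definite three dimensional subspace of $\LLambda$, and suppose $\psi^{-1}(\overline{V}^\perp\cap W)$ is spanned by a K\"{a}hler class $\omega$ of $Z$. Then $W$ is the span of $V(Z)=\psi^{-1}(\overline{V})$ and $\psi(\omega)$, so the twistor family $\X\to\PP^1_\omega$ has base the conic $\PP(W_\CC)\cap\Omega_\Lambda$; it depends on $\omega$ only through the ray it spans, and that ray is pinned down by the pair $(W,V)$ and the orientation of $W$ determined by $\fM^0_\Lambda$. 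Its classifying map is an embedding $\PP^1_\omega\hookrightarrow\fM_\Lambda$ \cite[1.17]{huybrects-basic-results} with connected image meeting $\fM^0_\Lambda$ at $(Z,\psi)$, hence landing in $\fM^0_\Lambda$, and by Remark \ref{rem-canonical-Kahler-ray-for-fibers-of-twistor-family} the fibre $(Z',\psi')$ over a point $V'$ of the conic has $P(Z',\psi')=V'$. The crucial extra point is that if $\overline{V'}^\perp\cap\Lambda=\psi'(\Pic(Z'))$ is trivial or cyclic generated by a class of non-negative self-intersection, then $V'\in\Omega^{gen}_\Lambda$, so by the Global Torelli Theorem the K\"{a}hler cone of $Z'$ equals its positive cone in $H^{1,1}(Z',\RR)$; hence for any positive definite three space $W'\supseteq\overline{V'}$ the line $\psi'^{-1}(\overline{V'}^\perp\cap W')$, which is positive and lies in $\psi'^{-1}(\overline{V'}^\perp)=H^{1,1}(Z',\RR)$, is spanned by a K\"{a}hler class of $Z'$. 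This is exactly what is needed to rerun the block.

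Existence of the lift then follows by induction. Set $(Y_1,\eta_1):=(X,\eta)$; the hypothesis that $\eta^{-1}(\overline{V}_1^\perp\cap W_1)$ is spanned by a K\"{a}hler class lets the building block run with $W=W_1$, producing a K\"{a}hler class $\omega_1$ on $Y_1$, a twistor family over the first twistor line, and its fibre $(Y_2,\eta_2)$ over $V_2$ with $P(Y_2,\eta_2)=V_2$. Given $(Y_i,\eta_i)$ with $P(Y_i,\eta_i)=V_i$ and $2\le i\le k-1$, genericity of the given path says $\overline{V}_i^\perp\cap\Lambda$ is trivial or cyclic with non-negative generator, so the last part of the building block yields a K\"{a}hler class $\omega_i$ on $Y_i$ spanning $\eta_i^{-1}(\overline{V}_i^\perp\cap W_i)$, and running the block again yields $(Y_{i+1},\eta_{i+1})$ with period $V_{i+1}$. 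After $k-1$ steps we obtain $\gamma:=(\{(Y_i,\eta_i)\}_{i=1}^k,\{\omega_i\}_{i=1}^{k-1})$; by construction each $\omega_i$ is K\"{a}hler and the twistor line through $(Y_i,\eta_i)$ associated to $\omega_i$ passes through $(Y_{i+1},\eta_{i+1})$, so $\gamma$ is a twistor path in $\fM^0_\Lambda$ starting at $(X,\eta)$ in the sense of Definition \ref{def-twistor-path}, and it lifts the given one since $P(Y_i,\eta_i)=V_i$ and $\eta_i(\mathrm{span}(V(Y_i),\omega_i))=W_i$. Since $\eta_i(\Pic(Y_i))=\overline{V}_i^\perp\cap\Lambda$, genericity of the given path makes $\gamma$ generic; indeed this shows any lift is automatically generic.

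For uniqueness I would argue along the same chain. Let $\gamma'=(\{(Y_i',\eta_i')\},\{\omega_i'\})$ be another such lift. Then $(Y_1',\eta_1')=(X,\eta)=(Y_1,\eta_1)$, and if $(Y_i',\eta_i')=(Y_i,\eta_i)$ for some $1\le i\le k-1$, then, because $\gamma'$ lifts the given path, its $i$-th three space must be $W_i$; as $\omega_i'$ is K\"{a}hler, i.e. $\eta_i(\omega_i')\in H^{1,1}(Y_i,\RR)=\overline{V}_i^\perp$, this forces $\eta_i(\omega_i')$ to span the positive line $\overline{V}_i^\perp\cap W_i$ in the ray singled out by the orientation from $\fM^0_\Lambda$, so $\omega_i'=\omega_i$ as twistor path data. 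Hence the associated twistor family coincides with the one built above, and its fibre over $V_{i+1}$ with the propagated marking, namely $(Y_{i+1}',\eta_{i+1}')$, equals $(Y_{i+1},\eta_{i+1})$; by induction $\gamma'=\gamma$. Alternatively, for $2\le i\le k-1$ uniqueness of $(Y_i,\eta_i)$ is immediate from injectivity of the period map over $\Omega^{gen}_\Lambda$, and $(Y_k,\eta_k)$ is then the fibre over $V_k$ of a canonically determined family.

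The main obstacle is the last assertion of the building block: ensuring that at each generic intermediate period one can still find a K\"{a}hler class in the prescribed cohomological direction $\overline{V}_i^\perp\cap W_i$, where in general $W_i\ne W_{i-1}$ so Remark \ref{rem-canonical-Kahler-ray-for-fibers-of-twistor-family} alone does not supply it. This is precisely where genericity of the path is indispensable: it forces $\Pic(Y_i)$ to be small enough for the Global Torelli Theorem to identify the K\"{a}hler cone of $Y_i$ with its full positive cone, so that the required class is automatically K\"{a}hler. The accompanying orientation bookkeeping -- checking that the ray determined by $W_i$ and $\fM^0_\Lambda$ is the one meeting the K\"{a}hler cone, so that the twistor family's base is the prescribed twistor line -- is a minor but necessary point; the remaining ingredients are formal, given Remark \ref{rem-canonical-Kahler-ray-for-fibers-of-twistor-family} and the embedding property of classifying maps.
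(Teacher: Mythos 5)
Your proposal is correct and takes essentially the same approach as the paper, whose proof is a one-sentence sketch citing the surjectivity of the period map, its injectivity over $\Omega^{gen}_\Lambda$, and the definitions of generic twistor path. You simply unwind that sketch into an explicit induction along the twistor lines, with the genericity hypothesis entering exactly as you say—to identify the K\"{a}hler cone of each intermediate fiber with its full positive cone, so that the prescribed ray $\overline{V}_i^\perp\cap W_i$ is automatically K\"{a}hler.
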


\begin{proof}
The statement follows from the surjectivity of the period map and its injectivity over the locus of periods $V\in Gr_{++}(\LLambda)$,
such that $\overline{V}^\perp\cap\Lambda$ is trivial, or cyclic generated by a class of non-negative self intersection, and the definitions of a generic twistor path in $Gr_{++}(\LLambda)$ and in $\fM_\Lambda^0$.
\end{proof}

We have seen in Equation (\ref{eq-incidence-correspondence-is-the-universal-projectivised-positive-cone})
that the incidence variety $\I$ is the projectivization $\RR\PP\C^+$  of the positive cone $\C^+$ in the Hodge bundle $\H^{1,1}$ over 
$Gr_{++}(\LLambda)$ and 
$p:\I\rightarrow Gr_{++}(\LLambda)$ is the bundle map.
Let $\K$ be the universal K\"{a}hler cone in the Hodge bundle $P^*\H^{1,1}$ over $\fM_\Lambda^0$. 
The fiber of the natural projection 
$\K\rightarrow \fM_\Lambda^0$ over a marked pair $(X,\eta)$ is the image via $\eta$ of the K\"{a}hler cone of $X$. 
Recall that $\K$ is an open subset of $P^*\H^{1,1}$, by \cite[Theorem 15]{kodaira-spencer-III}.
The natural map from $P^*\H^{1,1}$ to $\H^{1,1}$ is a local homeomorphism, by Local Torelli. 
We get the local homeomorphism $\RR\PP(P^*\C^+)\rightarrow \RR\PP(\C^+)=\I$.
Denote by $\ringI$  the image in $\I$ of the projectivization of  $\K$.
The image $\ringI$  is an open subset of $\I$, being the image of an open set via a local homeomorphism.
Verbitsky's Global Torelli Theorem implies that the map 
\begin{equation}
\label{eq-widetilde-P}
\widetilde{P}:\RR\PP\K\rightarrow \ringI
\end{equation}
is an isomorphism \cite[Theorem 5.16]{markman-survey}. 
The complement of $\ringI$ is the union of a countable collection of closed real analytic codimension $3$ subsets in $\I$, consisting of projectivized walls of K\"{a}hler type chambers. More precisely, there is a subset $\Sigma\subset \Lambda$, consisting of a 
finite set of $Mon^2(\Lambda)$-orbits of
classes $\lambda\in \Lambda$ with $(\lambda,\lambda)<0$, such that the complement $\I\setminus \ringI$ is the union 
\[
\bigcup_{\lambda\in\Sigma} \{(\ell,\alpha) \ : \ \ell\in \lambda^\perp\cap\Omega_\Lambda \ \mbox{and} \ \alpha\in \lambda^\perp\cap \RR\PP\C^+(\ell)\},
\]
where $\C^+(\ell)$ is the positive cone in the subspace of $\LLambda$ orthogonal to $\ell$
\cite{amerik-verbitsky,bayer-hassett-tschinkel,mongardi}. Classes $\lambda$ in $\Sigma$ are called 
{\em monodromy birationally minimal (MBM) classes} in \cite{amerik-verbitsky}.
The subset $\ringI$ is connected, as its fiber over a generic point of $Gr_{++}(\LLambda)$ is the whole hyperbolic space (projectivization of the whole positive cone). 

Let $\kappa:\RR\PP\K\rightarrow \fM_\Lambda^0$ be the natural projection.
Over $\fM_\Lambda^0$ we have a universal 
family, by \cite{markman-universal-family}.
The pullback of the universal family via the map 
\begin{equation}
\label{eq-tilde-kappa}
\tilde{\kappa}:=\kappa\circ \widetilde{P}^{-1}:\ringI\rightarrow \fM_\Lambda^0
\end{equation} 
yields the family  
\begin{equation}
\label{eq-universal-family-over-projectivized-kahler-cone}
\pi : \X \rightarrow \ringI.
\end{equation}
The map $\pi$ is clearly real analytic and in particular differentiable. 
The above is thus an example of a {\em differentiable family of compact complex manifolds} in the following sense. 
 Denote by $GL(n,\CC;m,\RR)$ the group of matrices of the form 
 \[
 \left(
 \begin{array}{cc}
 A & B \\ 
 0 & C
 \end{array}
 \right),
 \]
 where $A\in GL(n,\CC)$, $C\in GL(m,\RR)$, and where $B$ is an arbitrary $n\times m$ matrix with complex entries. 
 $GL(n,\CC;m,\RR)$ is viewed as a subgroup of the group $GL(2n,m,\RR)$, of block upper triangular matrices, via the embedding
 \[
  \left(
 \begin{array}{cc}
 A & B \\ 
 0 & C
 \end{array}
 \right) \mapsto
 \left(
 \begin{array}{ccc}
\Re(A) & -\Im(A) & \Re(B)
\\
\Im(A) & \Re(A) & \Im(B)
\\
0 & 0 & C
\end{array}
 \right)
 \]
 
 \begin{defi} 
 \label{def-differentiable-family-of-complex-manifolds}
 \cite[Definition 1.1' page 337]{kodaira-spencer}. 
 A {\em differentiable family of complex manifolds} is a differentiable  fiber bundle 
 $\pi:\X\rightarrow \Sigma$ over a connected manifold $\Sigma$ with fiber a connected differentiable manifold $X$,
 $\dim_\RR(X)=2n$, together with a differentiable reduction of the structure group $GL(2n,m,\RR)$ of the relative tangent bundle 
 $T_\pi$ to $GL(n,\CC;m,\RR)$, which imparts to each fiber a complex analytic structure.
 \end{defi}
 
Differentiable above means of class $C^\infty$. There is a natural notion of an isomorphism of such families 
\cite[Definition 1.2]{kodaira-spencer}. 

\begin{rem}
\label{remark-blow-up-of-relative-doagonal}
All differentiable families $\pi:\X\rightarrow \Sigma$ in this paper will be obtained 
from a holomorphic family $\bar{\pi}:\bar{\X}\rightarrow \bar{\Sigma}$ as
the pullback via a differentiable map $\kappa:\Sigma\rightarrow \bar{\Sigma}$. 
Let $\bar{\beta}:\bar{\Y}\rightarrow \bar{\X}\times_{\bar{\pi}}\bar{\X}$ 
be the blow-up centered along the relative diagonal. Pulling back 
$\bar{\beta}$ via $\kappa$ we get the differentiable family $\Y\rightarrow \Sigma$ and the map 
$\beta:\Y\rightarrow \X\times_\pi\X$. We will refer to the latter as the {\em blow-up centered along the relative diagonal of}
$\X\times_\pi\X$.
\end{rem}

%
\subsubsection{Moduli of marked irreducible holomorphic symplectic manifolds with a K\"{a}hler-Einstein structure}
\label{section-kahler-einstein-structure}
We have a natural section $\RealNumbers\PP\K\rightarrow \K$, sending a ray in the K\"{a}hler cone to its unique 
K\"{a}hler class of self-intersection $1$ with respect to the Beauville-Bogomolov-Fujiki pairing. Hence, the universal family 
(\ref{eq-universal-family-over-projectivized-kahler-cone}) is endowed with a tautological K\"{a}hler class. 
For each K\"{a}hler class there exists a unique K\"ahler form representing it, such that the corresponding metric is Ricci flat, by
Yau's Theorem proving Calabi's Conjecture
\cite{yau}. Consequently, the relative tangent bundle of the universal family $\pi:\X\rightarrow \ringI$, given in (\ref{eq-universal-family-over-projectivized-kahler-cone}),
is endowed with a $C^\infty$ hermetian metric, which restricts to each fiber as the Ricci flat K\"{a}hler metric whose K\"{a}hler form represents the  tautological K\"{a}hler class. See \cite[Theorem 10]{kobayashi} for the construction in the case of the universal family of $K3$ surfaces. The general case follows via the same argument, using the isomorphism (\ref{eq-widetilde-P}).

%
\subsection{The universal twistor path and its universal family}
\label{sec-universal-twistor-path-and-family}
\hspace{1ex} \\
Given a point $t:=\{(V_1, \dots, V_k);(W_1, \dots, W_{k-1})\}$ of $Tw^k_\Lambda$, denote by 
$\rho_{i,i}(t)$ the open ray in the line $\overline{V}_i^\perp\cap W_i$ compatible with the orientations of $V_i$ and $W_i$, for $1\leq i \leq k-1$.
Define $\rho_{i,i-1}(t)$ similarly in terms of $V_{i}$ and $W_{i-1}$, for $2\leq i \leq k$.
Let
\begin{equation}
\label{eq-ring-Tw}
\ringTw\subset 
Tw^k_{\Lambda}
\end{equation}
be the open subset of points $t=\{(V_1, \dots, V_k),(W_1, \dots, W_{k-1})\}$ satisfying the following three conditions:
\begin{enumerate}
\item
\label{condition-on-twistor-lines}
The pairs $(V_i,W_i)$ and $(V_i,W_{i-1})$ all belong to $\ringI$. In other words, each of the rays $\rho_{i,i}(t)$ and $\rho_{i,i-1}(t)$
is not orthogonal to any MBM class of Hodge type $(1,1)$. 
\item
\label{condition-on-rays}
The rays $\rho_{i,i-1}(t)$ and $\rho_{i,i}(t)$ 
belong to the same K\"{a}hler type chamber in $\overline{V}_i^\perp$, for $2\leq i \leq k-1$. 
In other words, $\tilde{\kappa}(V_i,W_i)=\tilde{\kappa}(V_i,W_{i-1})$,
 where $\tilde{\kappa}:\ringI\rightarrow\fM^0_\Lambda$ is given in Equation (\ref{eq-tilde-kappa}).
\item
\label{condition-on-Ws}
$W_i\cap W_{i+3}=0$, for some $i$ in the range $2\leq i \leq k-5$.
\end{enumerate}
Note that $\ringTw$ is contained in $\breve{T}w^k_\Lambda$.
Condition (\ref{condition-on-twistor-lines}) excludes from $\breve{T}w^k_\Lambda$ a countable union of closed codimension three subsets.
Conditions (\ref{condition-on-twistor-lines})  and (\ref{condition-on-rays}) exclude from 
$\breve{T}w^k_\Lambda$ a countable union of closed subsets of real codimension two,
as a period which is not orthogonal to any MBM class has a unique K\"{a}hler type chamber.
Condition (\ref{condition-on-Ws}) excludes from $\breve{T}w^k_\Lambda$ the closed subset
where 
$W_i\cap W_{i+3}\neq 0$, for all $i$ in the range $2\leq i \leq k-5$, which has codimension $>1$ whenever $k>7$. 

\begin{rem}
\label{remark-twistor-paths-in-period-domain-which-come-from-paths-in-moduli-of-marked-pairs}
Twistor paths in $\Omega_\Lambda$, which satisfy Conditions (\ref{condition-on-twistor-lines}) and
(\ref{condition-on-rays}) above, are in natural bijection with twistor paths in $\fM^0_\Lambda$.
\end{rem}

Let $\tilde{\pi}_1:\ringTw\rightarrow\ringI$ be the map sending a twistor path 
$t:=\{(V_1, \dots, V_k);(W_1, \dots, W_{k-1})\}$ to $(V_1,W_1)$. 
Let $\tilde{\pi}_k:\ringTw\rightarrow\ringI$ be the map sending $t$ to $(V_k,W_{k-1})$. 
Set
\begin{equation}
\label{eq-tilde-kappa-1}
\tilde{\kappa}_i:=\tilde{\kappa}\circ \tilde{\pi}_i:\ringTw\rightarrow \fM^0_\Lambda,
\end{equation}
$i=1,k$. 
The restriction $\ringTw\rightarrow Gr_{++}(\LLambda)^2$  of $\breve{f}_k$, $k\geq 7$,
admits a continuous lift 
\begin{equation}
\label{eq-tilde-h}
\mathring{f}_k := (\tilde{\kappa}_1,\tilde{\kappa}_k) : \ringTw\rightarrow \fM_\Lambda^0\times \fM_\Lambda^0.
\end{equation}
Points in the fiber $\mathring{f}_k^{-1}((X_1,\eta_1),(X_k,\eta_k))$ represent twistor paths in $\fM_\Lambda^0$ from
$(X_1,\eta_1)$ to $(X_k,\eta_k)$.

\begin{prop}
\label{prop-ring-f}
The map $\mathring{f}_k$, $k\geq 10$, is  surjective and submersive with smooth connected fibers of dimension
$(k-3)(r-1)+2$. 
\end{prop}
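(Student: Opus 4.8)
The plan is to deduce Proposition \ref{prop-ring-f} from the corresponding statement for $\breve{f}_k$ (Proposition \ref{prop-breve-Tw}) together with the analysis of the universal K\"{a}hler cone in Section \ref{sec-universal-twistor-family}. First I would observe that $\ringTw$ is obtained from $\breve{T}w^k_\Lambda$ by removing the loci where Conditions (\ref{condition-on-twistor-lines}), (\ref{condition-on-rays}), (\ref{condition-on-Ws}) in the definition of $\ringTw$ fail, and that, as already noted in the text just before Remark \ref{remark-twistor-paths-in-period-domain-which-come-from-paths-in-moduli-of-marked-pairs}, these removed loci have real codimension $\geq 2$ in $\breve{T}w^k_\Lambda$ once $k>7$ (Condition (\ref{condition-on-twistor-lines}) removes a countable union of codimension $3$ sets, Condition (\ref{condition-on-rays}) a countable union of codimension $2$ sets, and Condition (\ref{condition-on-Ws}) a codimension $>1$ set). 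Since $\breve{T}w^k_\Lambda$ is a smooth connected manifold for $k\geq 6$, removing a subset of codimension $\geq 2$ keeps it connected, so $\ringTw$ is connected.

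Next I would address submersivity of $\mathring{f}_k$. By (\ref{eq-tilde-h}) we have $\mathring{f}_k=(\tilde\kappa_1,\tilde\kappa_k)$, and this map factors through $\breve f_k$ composed with the local homeomorphism coming from Local Torelli and the isomorphism $\widetilde{P}:\RR\PP\K\to\ringI$ of (\ref{eq-widetilde-P}); more precisely $\tilde\kappa=\kappa\circ\widetilde{P}^{-1}$ is a submersion $\ringI\to\fM^0_\Lambda$ because $\kappa:\RR\PP\K\to\fM^0_\Lambda$ is a submersion (its fibers are projectivized K\"{a}hler cones) and $\widetilde P$ is an isomorphism. So on $\ringTw$, where by Condition (\ref{condition-on-rays}) the two maps $\tilde\pi_1,\tilde\pi_k$ land in $\ringI$ and $\tilde\kappa_i=\tilde\kappa\circ\tilde\pi_i$, the differential of $\mathring f_k$ is the composition of the differential of $\breve f_k$ (surjective onto $T\Omega_\Lambda^2$, by Proposition \ref{prop-breve-Tw} for $k\geq 6$) with the surjective differentials of the two copies of $\tilde\kappa$. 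Hence $\mathring f_k$ is submersive, and in particular all its fibers are smooth of the expected dimension $\dim\ringTw-2\dim\fM^0_\Lambda=\dim\breve T w^k_\Lambda-2\dim\Omega_\Lambda=[(k+1)(r-1)-2]-2(r-2)=(k-3)(r-1)+2$, using Lemma \ref{lemma-Tw-k-is-simply-connected}.

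For surjectivity I would use Lemma \ref{lemma-lift-of-generic-twistor-path}: given $((X_1,\eta_1),(X_k,\eta_k))\in\fM^0_\Lambda\times\fM^0_\Lambda$, one first produces a twistor path in $Gr_{++}(\LLambda)^2$ from $P(X_1,\eta_1)$ to $P(X_k,\eta_k)$ lying in $\breve T w^k_\Lambda$ (possible for $k\geq 7$ by Proposition \ref{prop-breve-Tw}), then perturbs it — using that the conditions defining $\ringTw$ are open and their complements have positive codimension, and that $\breve f_k^{-1}(P(X_1,\eta_1),P(X_k,\eta_k))$ is connected for $k\geq 8$ (so one can move within the fiber to a generic point) — to arrange Conditions (\ref{condition-on-twistor-lines}), (\ref{condition-on-rays}), (\ref{condition-on-Ws}) while keeping the endpoints fixed, and finally lifts to $\fM^0_\Lambda$ via the bijection of Remark \ref{remark-twistor-paths-in-period-domain-which-come-from-paths-in-moduli-of-marked-pairs}, choosing the initial twistor line at $(X_1,\eta_1)$ to be spanned by a K\"{a}hler class. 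The main obstacle is connectedness of the fibers of $\mathring f_k$: one knows the fibers of $\breve f_k$ are connected for $k\geq 8$, but a fiber of $\mathring f_k$ is an \emph{open dense} subset of a fiber of $\breve f_k$ (the complement being the intersection of the positive-codimension loci with that fiber). I would argue that the removed set has codimension $\geq 2$ in each fiber of $\breve f_k$ — this requires checking that the countable unions of walls (Condition (\ref{condition-on-twistor-lines})) and of chamber-boundary loci (Condition (\ref{condition-on-rays})) and the locus of Condition (\ref{condition-on-Ws}) each meet the generic fiber in codimension $\geq 2$, which for $k$ large ($k\geq 10$) follows by a dimension count analogous to Lemma \ref{lemma-Sigma-V-is-connected} together with the codimension-$3$ and codimension-$2$ bounds already recorded for the walls in $\I$ — and then removing a codimension-$\geq 2$ subset from the connected smooth fiber of $\breve f_k$ leaves it connected. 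That dimension count for the fibers, ensuring the threshold $k\geq 10$ suffices, is the step I expect to require the most care.
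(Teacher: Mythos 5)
The main gap is in the connectedness argument. You claim that a fiber of $\mathring{f}_k$ over $\bigl((X_1,\eta_1),(X_k,\eta_k)\bigr)$ is an open dense subset of the fiber of $\breve{f}_k$ over $\bigl(P(X_1,\eta_1),P(X_k,\eta_k)\bigr)$, with complement of codimension $\geq 2$. This is false in general. If $\Pic(X_1)$ is nontrivial, the positive cone in $P(X_1,\eta_1)^\perp$ is divided by MBM walls into several K\"{a}hler type chambers, each corresponding to a \emph{different} marked pair with the same period $P(X_1,\eta_1)$. The condition $\tilde\kappa(V_1,W_1)=(X_1,\eta_1)$ forces the ray $\rho_{1,1}(t)$ to lie in one specific chamber, while the intersection $\breve{f}_k^{-1}(V_1,V_k)\cap\ringTw$ contains points with $\rho_{1,1}(t)$ in every chamber. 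So the fiber of $\mathring{f}_k$ is cut out of $\breve{f}_k^{-1}(V_1,V_k)\cap\ringTw$ by an \emph{open} chamber condition, not by removing a codimension $\geq 2$ subset, and it is certainly not dense when there is more than one chamber. The same happens at the endpoint $(X_k,\eta_k)$. Since the proposition must in particular be applied with $(X_1,\eta_1)=(M,\eta_0)$, where $M$ is projective (so $\Pic(M)\neq 0$), this case cannot be sidestepped.

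The paper's proof avoids this by a different reduction. It introduces, for each marked pair $(X_i,\eta_i)$, the subset $\ringI(X_i,\eta_i):=q^{-1}\bigl(q(\RR\PP(\eta_i(\K_{X_i})))\bigr)\subset\ringI$, the union of twistor lines through K\"{a}hler classes of $X_i$, and proves this set is connected using the fact that a K\"{a}hler class is not orthogonal to any MBM class of type $(1,1)$ (Amerik--Verbitsky), so each such twistor line meets the non-$\ringI$ locus in a zero-dimensional set. The fiber $\mathring{f}_k^{-1}\bigl((X_1,\eta_1),(X_k,\eta_k)\bigr)$ then embeds into the connected manifold $\bigl[\ringI(X_1,\eta_1)\times\ringI(X_k,\eta_k)\bigr]\times_{\Omega_\Lambda^2}\breve{T}w_\Lambda^{k-2}$, with complement of codimension $\geq 2$. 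Note this is why the paper requires $k\geq 10$ (it needs $\breve{f}_{k-2}$ to have connected fibers, i.e.\ $k-2\geq 8$), whereas your argument, were it valid, would already give the statement for $k\geq 8$ — a further sign the two proofs diverge. The surjectivity argument in the paper also differs: it first moves off $(X_1,\eta_1)$ and $(X_k,\eta_k)$ along generic twistor lines to points $(X_2,\eta_2)$, $(X_{k-1},\eta_{k-1})$ with \emph{trivial} Picard group, where the period determines the marked pair and there is a unique chamber, and only then invokes surjectivity of $\breve{f}_{k-2}$; your sketch does not address why both rays $\rho_{1,1}$ and $\rho_{k,k-1}$ can simultaneously be arranged to lie in the correct K\"{a}hler chambers. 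Finally, a minor point: the assertion that $d\mathring{f}_k$ is ``the composition of the differential of $\breve{f}_k$ with the differentials of $\tilde\kappa$'' does not type-check ($\breve{f}_k$ already includes the projection $p$); the correct argument is that $(P\times P)\circ\mathring{f}_k=\breve{f}_k$ on $\ringTw$ and $P$ is a local diffeomorphism by Local Torelli, so submersivity of $\breve{f}_k$ transfers.
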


\begin{proof}
Given two  points $(X_1,\eta_1)$, $(X_k,\eta_k)$ in $\fM_\Lambda^0$ we can choose generic twistor lines through each, a point $(X_2,\eta_2)$ on the first and $(X_{k-1},\eta_{k-1})$ on the second, such that $\Pic(X_i)$ is trivial, for $i=2,k-1$.
Any generic twistor path in $\breve{T}w^k_\Lambda$ from $P(X_2,\eta_2)$ to $P(X_{k-1},\eta_{k-1})$ lifts to a unique twistor path from
$(X_2,\eta_2)$  to $(X_{k-1},\eta_{k-1})$, by Lemma \ref{lemma-lift-of-generic-twistor-path}. 
Surjectivity of $\mathring{f}_k$, for $k\geq 9$, thus follows from that of $\breve{f}_{k-2}$ proven in 
Proposition \ref{prop-breve-Tw}.

The map $\mathring{f}_k$ is submersive and its fibers are of dimension $(k-3)(r-1)+2$, 
since the same holds for $\breve{f}_{k}$.
It remains to prove that the fibers are connected. 

The projectivization $\RR\PP(\eta_i(\K_{X_i}))$ of the image via $\eta_i$ of the
K\"{a}hler cone of $X_i$ embeds in the fiber of $p:\ringI\rightarrow \Omega_\LLambda$ over $P(X_i,\eta_i).$
Denote by $\ringI(X_i,\eta_i)$ the subset $q^{-1}(q(\RR\PP(\eta_i(\K_{X_i}))))$
of $\ringI$. The subset $\ringI(X_i,\eta_i)$ is the union of twistor lines determined by K\"{a}hler classes of $X_i$. 
A K\"{a}hler class is not orthogonal to any MBM class of Hodge type $(1,1)$ \cite[Theorem 1.19]{amerik-verbitsky}, and so the locus in the twistor line determined by it, consisting of complex structures which admit an MBM class of Hodge type $(1,1)$, is zero dimensional.
Hence, fibers of $q:\ringI\rightarrow Gr_{+++}(\LLambda)$ over $q(\RR\PP(\eta_i(\K_{X_i})))$ are connected. 
Consequently, $\ringI(X_i,\eta_i)$ is a smooth and connected manifold.

We have the natural embedding 
\begin{equation}
\label{eq-embedding-of-fiber}
\mathring{f}_k^{-1}((X_1,\eta_1),(X_2,\eta_2))
\rightarrow 
\left[\ringI(X_1,\eta_1)\times \ringI(X_2,\eta_2)\right]\times_{\Omega_\Lambda^2}\breve{T}w^{k-2}_\Lambda
\end{equation}
given by
\[
\{(V_1, \dots, V_k);(W_1, \dots, W_{k-1})\}\mapsto 
\left[(W_1,V_2), (W_{k-1},V_{k-1}), 
\{(V_2, \dots, V_{k-1});(W_2, \dots, W_{k-2})\}
\right].
\]
The target space is smooth and connected, since $\ringI(X_1,\eta_1)\times \ringI(X_2,\eta_2)$ is 
and the map $\breve{f}_{k-2}:\breve{T}w^{k-2}_\Lambda\rightarrow \Omega_\Lambda^2$  is surjective and submersive with connected fibers, by Proposition \ref{prop-breve-Tw}.
The complement of the image of the embedding (\ref{eq-embedding-of-fiber})
has codimension $\geq 2$. Hence, the fiber $\mathring{f}_k^{-1}((X_1,\eta_1),(X_2,\eta_2))$ is connected.
\end{proof}

Let $\W$ be the tautological rank $3$ real vector bundle over $Gr_{+++}(\LLambda)$. 
Note that $\I$ is naturally isomorphic to a conic subbundle of $\PP(\W\otimes_\RR\CC)$.
Let $q_i:Tw_\Lambda^k\rightarrow Gr_{+++}(\LLambda)$ be the map sending a point 
$\{(V_1, \dots, V_k); (W_1, \dots, W_{k-1})\}$
in $Tw^k_\Lambda$ to $W_i$.
Denote by $\W_i$
the pullback of $\W$ to $Tw_\Lambda^k$ via $q_i$, $1\leq i\leq k-1$. We get $k-1$ conic bundles $\T_i$
over $Tw_\Lambda^k$. The fiber of $\T_i$ over 
$\{(V_1, \dots, V_k); (W_1, \dots, W_{k-1})\}$ is $\Omega_\Lambda\cap \PP[W_i\otimes_\RR\CC]$.
$\T_i$ is the fiber product over $Gr_{+++}(\LLambda)$ of $\I$ and $Tw_\Lambda^k$ via $q:\I\rightarrow Gr_{+++}(\LLambda)$ and $q_i$.
$\T_1$ comes with a natural section $s'_1$, whose value at the above point corresponds to the first two dimensional oriented subspace $V_1$.
Similarly, $\T_{k-1}$ comes with a natural section $s_{k}''$ associated to $V_k$. For $1< i< k-1$, $\T_i$ comes with two sections, 
$s'_i$ associated to $V_{i}$ and $s_i''$ associated to $V_{i+1}$. 
We get a universal twistor path 
\[
\T \subset Tw_\Lambda^k \times \Omega_\Lambda
\]
over $Tw_\Lambda^k$ by gluing $\T_i$ and $\T_{i+1}$ along the two sections 
$s_i''$ and $s_{i+1}'$ corresponding to $V_{i+1}$. Let 
\begin{equation}
\label{eq-s-i}
s_i:Tw_\Lambda^k\rightarrow \T
\end{equation}
be the natural section associated to $s_{i-1}''$ and $s_i'$, $2\leq i \leq k$. Set $s_1:=s_1'$.
Let $per:\T\rightarrow \Omega_\Lambda$ be the projection to the second factor. The value of $per\circ s_i$
at the above point corresponds to the point $V_i$ of $Gr_{++}(\LLambda)$.
Let $\mathring{\T}\subset \ringTw\times \Omega_\Lambda$ (resp. $\mathring{\T}_i$) be the restriction of $\T$ (resp. $\T_i$) to $\ringTw$.

\begin{lem}
\label{lemma-per-lifts-to-Per}
The restrictions of $per$ to $\mathring{\T}_i$ and $\mathring{\T}$ admit  natural lifts to continuous maps
\begin{eqnarray*}
\widetilde{P}er_i:\mathring{\T}_i&\rightarrow &\RealNumbers\PP\K,
\\
Per:\mathring{\T}&\rightarrow &\fM_\Lambda^0.
\end{eqnarray*}
\end{lem}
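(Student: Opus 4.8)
The plan is to construct the lift $\widetilde{P}er_i$ first and then glue the $\widetilde{P}er_i$ together (after post-composing with $\kappa\colon\RealNumbers\PP\K\to\fM^0_\Lambda$) to obtain $Per$. Recall that $\mathring{\T}_i$ is the fiber product over $Gr_{+++}(\LLambda)$ of $\I$ and $\ringTw$ via $q\colon\I\to Gr_{+++}(\LLambda)$ and $q_i$, so a point of $\mathring{\T}_i$ is a pair $(t,x)$, where $t=\{(V_1,\dots,V_k);(W_1,\dots,W_{k-1})\}\in\ringTw$ and $x\in\Omega_\Lambda$ lies on the twistor conic $\PP(W_{i,\CC})\cap\Omega_\Lambda$; equivalently, $x$ corresponds to a period $V\in Gr_{++}(\LLambda)$ with $\overline V\subset W_i$. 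The key point is Condition (\ref{condition-on-twistor-lines}) in the definition of $\ringTw$: the rays $\rho_{i,i}(t)$ and $\rho_{i,i-1}(t)$ are not orthogonal to any MBM class, so the pairs $(V_i,W_i)$ and $(V_i,W_{i-1})$ lie in $\ringI$. But I need more: I need that for \emph{every} $x$ on the conic $\PP(W_{i,\CC})\cap\Omega_\Lambda$ the pair $(x, W_i)$ (viewed as a point of $\I\cong\RealNumbers\PP\C^+$) lies in $\ringI$, i.e.\ that the whole twistor line $\PP(W_{i,\CC})\cap\Omega_\Lambda$, with its marking by $W_i$, lands in $\RealNumbers\PP\K$. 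This is exactly the content of Remark \ref{rem-canonical-Kahler-ray-for-fibers-of-twistor-family} combined with \cite[Theorem 1.19]{amerik-verbitsky}: since $W_i$ contains a K\"ahler ray (the ray $\rho_{i,i}(t)$, which is a K\"ahler class by the twistor construction), and a K\"ahler class is orthogonal to no MBM class, each period $x$ on the twistor line defined by $W_i$ carries the K\"ahler ray $V(x)^\perp\cap W_i$, hence $(x,W_i)\in\ringI$. Therefore the composite
\[
\mathring{\T}_i\longrightarrow \I,\qquad (t,x)\longmapsto \bigl(x,\,\text{ray }V(x)^\perp\cap W_i\bigr),
\]
lands in $\ringI$, and we define $\widetilde{P}er_i:=\widetilde P^{-1}\circ(\text{this map})$, where $\widetilde P\colon\RealNumbers\PP\K\to\ringI$ is the isomorphism (\ref{eq-widetilde-P}). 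Continuity (indeed real-analyticity on each $\mathring{\T}_i$) is clear, as all the maps involved — the tautological subbundle $\W_i$, orthogonal projection, and $\widetilde P^{-1}$ — are.

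Next I would assemble $Per$ on $\mathring{\T}$. Recall that $\mathring{\T}$ is obtained by gluing consecutive $\mathring{\T}_i$ and $\mathring{\T}_{i+1}$ along the sections $s_i''$ and $s_{i+1}'$, both of which lie over $V_{i+1}$. On this overlap the two candidate values are $\widetilde{P}er_i(t,V_{i+1})$ and $\widetilde{P}er_{i+1}(t,V_{i+1})$, which correspond respectively to the K\"ahler rays $\overline V_{i+1}^\perp\cap W_i = \rho_{i+1,i}(t)$ and $\overline V_{i+1}^\perp\cap W_{i+1}=\rho_{i+1,i+1}(t)$. A priori these are two \emph{different} rays in the positive cone $\overline V_{i+1}^\perp\cap\RR\PP\C^+$, so the two maps to $\RealNumbers\PP\K$ need not agree — but after composing with $\kappa\colon\RealNumbers\PP\K\to\fM^0_\Lambda$ they \emph{do} agree, because Condition (\ref{condition-on-rays}) says precisely that $\rho_{i+1,i}(t)$ and $\rho_{i+1,i+1}(t)$ lie in the same K\"ahler-type chamber of $\overline V_{i+1}^\perp$, i.e.\ $\tilde\kappa(V_{i+1},W_i)=\tilde\kappa(V_{i+1},W_{i+1})$, which is exactly the statement that $\kappa\circ\widetilde{P}er_i=\kappa\circ\widetilde{P}er_{i+1}$ on the overlap. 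Hence the maps $\kappa\circ\widetilde{P}er_i\colon\mathring{\T}_i\to\fM^0_\Lambda$ glue to a well-defined continuous map $Per\colon\mathring{\T}\to\fM^0_\Lambda$. This proves the lemma.

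The main obstacle — and the only nontrivial input — is the claim underlying the construction of $\widetilde{P}er_i$: that the entire twistor line $\PP(W_{i,\CC})\cap\Omega_\Lambda$ (with marking $W_i$) lies in $\ringI$, not merely the two distinguished points $(V_i,W_i)$ and $(V_{i+1},W_i)$ guaranteed by Condition (\ref{condition-on-twistor-lines}). This requires knowing that $W_i$ contains an \emph{actual} K\"ahler ray of some fiber of the twistor family, which follows because the rays $\rho_{i,i}(t)$ are K\"ahler by the very construction of twistor families (Remark \ref{rem-canonical-Kahler-ray-for-fibers-of-twistor-family} and the HKLR theorem \cite{hklr}), together with the fact \cite[Theorem 1.19]{amerik-verbitsky} that a K\"ahler class meets no MBM wall, so the K\"ahler-chamber structure is constant along the twistor line and every point of it carries a canonical K\"ahler ray inside $W_i$. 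Once this is in hand, the gluing is purely formal, controlled by Conditions (\ref{condition-on-rays}); Condition (\ref{condition-on-Ws}) plays no role here (it is only used later, for submersivity and connectedness). I would spell out the AV input as a short separate paragraph or inline remark to keep the proof self-contained.
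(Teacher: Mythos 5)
Your proof is correct and follows essentially the same route as the paper: show each $\mathring{\T}_i$ maps into $\ringI$ via $(per_i,q_i\circ\tau_i)$, compose with $\widetilde P^{-1}$ to get $\widetilde{P}er_i$, then glue $\kappa\circ\widetilde{P}er_i$ along the sections $s_i$ using Condition (\ref{condition-on-rays}) in the definition of $\ringTw$. One small remark: the citation of \cite[Theorem 1.19]{amerik-verbitsky} is not needed for the landing-in-$\ringI$ step, since Remark \ref{rem-canonical-Kahler-ray-for-fibers-of-twistor-family} (via \cite{hklr}) already states directly that every ray $\rho_t$, $t\in\PP^1_\omega$, consists of K\"ahler classes.
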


\begin{proof}
Let $per_i:\mathring{\T}_i\rightarrow \Omega_\Lambda$ be the restriction of $per$.
Let $\tau_i:\mathring{\T}_i\rightarrow \ringTw$ be the natural projection.
Let $q_i:Tw^k_\Lambda\rightarrow Gr_{+++}(\LLambda)$ be the map sending a point 
$\{(V_1, \dots, V_k); (W_1, \dots, W_{k-1})\}$ to $W_i$, $1\leq i \leq k-1$. The map
$
(per_i,q_i\circ\tau_i):\T_i \rightarrow \I \subset \Omega_\Lambda\times Gr_{+++}(\LLambda)
$
sends $\mathring{\T}_i$ into $\ringI$, by Remark \ref{rem-canonical-Kahler-ray-for-fibers-of-twistor-family}.
Set $\widetilde{P}er_i:=\widetilde{P}^{-1}\circ (per_i,q_i\circ\tau_i):\T_i \rightarrow \RealNumbers\PP\K$.
The composition 
\[
Per_i:=\kappa\circ \widetilde{P}er_i
:\mathring{\T}_i\rightarrow \fM_\Lambda^0
\]
is thus a well defined continuous map.

Each point of the smooth locus $\mathring{\T}\setminus \bigcup_{i=2}^{k-1}s_i(\ringTw)$ of $\mathring{\T}$
belongs to the image of a unique universal twisor line $\mathring{\T}_i$. The maps $Per_{i-1}$ and $Per_i$ agree 
along $s_i(\ringTw)$, by the condition on $\rho_{i,i-1}(t)$ and $\rho_{i,i}(t)$ in the definition of $\ringTw$. Hence, 
$\{Per_i\}_{i=1}^{k-1}$ glue to a continuous map $Per$.
\end{proof}

There exists a universal family $\overline{\X}\rightarrow \fM_\Lambda^0$ over $\fM_\Lambda^0$, by
 \cite{markman-universal-family}.
Pulling back the universal family  
 via the map $Per$ we obtain the universal twistor deformation

\begin{equation}
\label{eq-universal-twistor-deformation}
\Pi: \X \rightarrow \mathring{\T}.
\end{equation}


Let $\overline{\Y}\RightArrowOf{\bar{\beta}} \overline{\X}\times_{\fM_\Lambda^0}\overline{\X}$ be the blow-up of the image of the diagonal embedding of $\overline{\X}$ in its fiber square.
Pulling back $\overline{\Y}\rightarrow \fM_\Lambda^0$ via the map $Per$ we obtain the commutative diagram
\begin{equation}
\label{eq-universal-diagram-over-universal-twistor-family}
\xymatrix{
\Y \ar[rr]^{\beta} \ar[dr]_{\widetilde{\Pi}} & &
\X\times_{\Pi}\X \ar[dl]
\\
& \mathring{\T}.
}
\end{equation}

A mild caveat:  $\mathring{\T}$ is not a manifold, but is instead the gluing of $k-1$ analytic manifolds. So Definition 
\ref{def-differentiable-family-of-complex-manifolds} does not apply to $\Pi: \X \rightarrow \mathring{\T}$ and 
$\widetilde{\Pi}:\Y\rightarrow \mathring{\T}$.
 We will use the following  generalization of Definition \ref{def-differentiable-family-of-complex-manifolds}.
Let $\G$ be a connected oriented graph with vertices $\{v\}_{v\in I}$, edges $\{e\}_{e\in J}$, 
head function $h:J\rightarrow I$, and tail function $t:J\rightarrow I$. 

\begin{defi} 
\label{def-G-family}
A {\em differentiable $\G$-family of complex manifolds} consists of the following data.
\begin{enumerate}
\item
An assignment of a differentiable family of complex manifolds  $\pi_v:\X_v\rightarrow \Sigma_v$,
for each vertex $v\in I$. 
\item
An assignment of connected submanifolds $M'_e\subset \Sigma_{h(e)}$ and 
$M_e''\subset \Sigma_{t(e)}$, and an isomorphism $\varphi_e$ from the restriction 
of $\pi_{h(e)}:\X_{h(e)}\rightarrow \Sigma_{h(e)}$ to $M'_e$
onto the restriction 
of $\pi_{t(e)}:\X_{t(e)}\rightarrow \Sigma_{t(e)}$ to $M_e''$, for each edge $e\in J$.
\end{enumerate}
A {\em differentiable $\G$-family of marked irreducible holomorphic symplectic manifolds}
consists, in addition, of trivializations $\eta_v:R^2\pi_{v,*}\Integers \rightarrow \Lambda$, whose restrictions to
$M'_e$ and $M_e''$ are compatible with the 
isomorphisms $\varphi_e$, for all edges $e$.
\end{defi}

Let $\G$ be the graph with vertices $I=\{v \ : \ 1\leq v \leq k\}$ and edges $J=\{e\ : \ 1\leq e \leq k-1\}$, with 
$h:J\rightarrow I$ given by $h(e)=e$, and $t:J\rightarrow I$ given by $t(e)=e+1$.
\begin{equation}
\label{eq-A-graph}
1\rightarrow 2 \rightarrow \dots \rightarrow (k-1)\rightarrow k.
\end{equation}
Then $\Pi: \X \rightarrow \mathring{\T}$  is a differentiable $\G$-family of complex manifolds
via the assignment of the differentiable family $Per_v^*(\overline{\X}) \rightarrow \mathring{\T}_v$, for $v\in I$, 
$M'_e:=s_e'(\ringTw)$, $M_e'':=s_{e+1}''(\ringTw)$, and the gluing $\varphi_e$, $e\in J$,
of the restrictions of $Per_e^*(\overline{\X})$ to $s_{e}''(\ringTw)$ with the restriction of
$Per_{e+1}^*(\overline{\X})$ to $s_{e+1}'(\ringTw)$, arising from the equality $Per_e\circ s_e''=Per_{e+1}\circ s_{e+1}'$. Similarly, 
$\widetilde{\Pi}:\Y\rightarrow \mathring{\T}$ is a  differentiable $\G$-family of complex manifolds in an analogous way.

%
\subsection{An equivalence relation for twistor paths}
\label{sec-homotopy}
Let  $\gamma=\{(V_1, \dots, V_k),(W_1, \dots, W_{k-1})\}$ be a twistor path in $\fM^0_\Lambda$
(Remark \ref{remark-twistor-paths-in-period-domain-which-come-from-paths-in-moduli-of-marked-pairs}). 
Set $(X_i,\eta_i):=\tilde{\kappa}(V_i,W_i)$, $1\leq i\leq k-1$, where $\tilde{\kappa}$ is given in Equation (\ref{eq-tilde-kappa}).
Set $(X_k,\eta_k):=\tilde{\kappa}(V_k,W_{k-1})$. 
If $(X_{i+1},\eta_{i+1})=(X_i,\eta_i)$, 
for some $1\leq i\leq k-1$,
then omitting the $i$-th twistor line (i.e., omitting 
$V_{i+1}$ and $W_i$) we obtain a twistor path $\gamma'$ in $\fM^0_\Lambda$. 
Conversely, given a twistor path $\gamma$ as above and a positive definite three space $W$ containing $\overline{V}_i$, 
such that $(V_i,W)$ belongs to $\ringI$ given in (\ref{eq-widetilde-P}) and corresponds to the same K\"{a}hler type chamber as $(V_i,W_i)$, so that $\tilde{\kappa}(V_i,W_i)=\tilde{\kappa}(V_i,W)$, 
we can repeat $V_i$ as the $i+1$ oriented positive plane and insert $W$ as the new $i$-th positive three space obtaining a twistor path 
\[
\{(V_1, \dots, V_i, V_i, V_{i+1}, \dots V_k),(W_1, \dots, W_{i-1}, W, W_i, \dots, W_{k-1})\}
\]
in $\fM^0_\Lambda$. 
Similarly, if $W_{j-1}=W_j$, for some $2\leq j \leq k-1$, then omitting $V_j$ and $W_j$ we obtain a twistor path $\gamma''$. 
Conversely, given an oriented plane $V$, such that $(V,W_{j-1})$ and $(V,W_j)$ both belong to $\ringI$ and correspond to the same K\"{a}hler type chamber, we can insert $V$ as the new $j$-th oriented plane and repeat $W_{j-1}$ as the new $j$-th positive three space obtaining a twistor path 
\[
\{(V_1, \dots, V_{j-1}, V, V_j, \dots V_k),(W_1, \dots, W_{j-1}, W_{j-1}, W_j, \dots, W_{k-1})\}
\]
in $\fM^0_\Lambda$. 

\begin{defi}
\label{def-equivalent-twsitor-paths}
We say that two twistor paths $\gamma_1$ and
$\gamma_2$ in $\fM_\Lambda^0$ are {\em equivalent}, and write $\gamma_1\sim\gamma_2$, if $\gamma_2$ can be obtained from $\gamma_1$ by a finite sequence of the above two types of omission or two types of insertion operations. 
\end{defi}

Given a twistor path $\gamma$ from $(X,\eta)$ to $(X',\eta')$ 
denote by $\gamma^{-1}$ the twistor path from $(X',\eta')$ to $(X,\eta)$ reversing the order of $V_i$'s and $W_j$'s.
Given a twistor path $\tilde{\gamma}$ from $(X',\eta')$ to $(X'',\eta'')$ denote by $\tilde{\gamma}\gamma$ the concatenated path from
$(X,\eta)$ to $(X'',\eta'')$. Then $\gamma^{-1}\gamma$ is equivalent to the constant path from $(X,\eta)$
to itself. If $\gamma$ belongs to $Tw^m_\Lambda$ and $\tilde{\gamma}$ belongs to $\ringTw$, given in (\ref{eq-ring-Tw}), then
$\tilde{\gamma}^{-1}\tilde{\gamma}\gamma$ is a twistor path in $\mathring{T}w^{m+2k-2}_\Lambda$ equivalent to $\gamma$.
Note also that every twistor path in $\ringTw$ is equivalent to a twistor path in $\mathring{T}w^{k+1}_\Lambda$, for example by inserting $W_k:=W_{k-1}$ and $V_{k+1}:=V_k$ at the end.
We conclude the following. 

\begin{lem}
\label{lemma-every-twistor-path-is-equivalent-to-one-in-ring-Tw}
Every twistor path is equivalent to a twistor path in the space $\ringTw$, for all $k\geq k_0$ for some $k_0$.
\end{lem}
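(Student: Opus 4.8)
The plan is to reduce an arbitrary twistor path $\gamma$ in $\fM^0_\Lambda$ to one lying in $\ringTw$ by a bounded sequence of the insertion/omission moves from Definition \ref{def-equivalent-twsitor-paths}, and then to absorb that path into $\ringTw$ for all large $k$ by a single further insertion. First I would record that $\ringTw$ differs from $\breve{T}w^k_\Lambda$ only by Conditions (\ref{condition-on-twistor-lines})--(\ref{condition-on-Ws}), and that Condition (\ref{condition-on-Ws}) ($W_i\cap W_{i+3}=0$ for some $i$ in the interior range) is the only one not automatically inherited once $\gamma$ is genuinely a twistor path in $\fM^0_\Lambda$: Conditions (\ref{condition-on-twistor-lines}) and (\ref{condition-on-rays}) are forced by the very definition of a twistor path in $\fM^0_\Lambda$ via $\tilde\kappa$ (Remark \ref{remark-twistor-paths-in-period-domain-which-come-from-paths-in-moduli-of-marked-pairs}) — each line $\rho_{i,i}(t)$ and $\rho_{i,i-1}(t)$ is spanned by a genuine K\"ahler class, hence not orthogonal to any MBM class, and the two rays over $V_i$ lie in the same chamber because they both equal the given marked pair $(X_i,\eta_i)$.

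Next I would use the three structural observations collected at the end of Section \ref{sec-homotopy}: (i) $\gamma^{-1}\gamma$ is equivalent to the constant path; (ii) if $\gamma\in Tw^m_\Lambda$ and $\tilde\gamma\in\ringTw$ then $\tilde\gamma^{-1}\tilde\gamma\gamma\in\mathring{T}w^{m+2k-2}_\Lambda$ is equivalent to $\gamma$; and (iii) every path in $\ringTw$ is equivalent to one in $\mathring{T}w^{k+1}_\Lambda$ by the trivial tail insertion $W_k:=W_{k-1}$, $V_{k+1}:=V_k$ (which is legitimate because $(V_k,W_{k-1})\in\ringI$ already and trivially lies in the same chamber as itself). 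The point of (ii) is that prepending $\tilde\gamma^{-1}\tilde\gamma$, for a fixed choice of $\tilde\gamma\in\ringTw$ with the correct initial marked pair, injects into $\gamma$ a long stretch of consecutive $W_j$'s satisfying Condition (\ref{condition-on-Ws}); since the moves only add or delete already-equivalent segments, and since I may take $\tilde\gamma$ to have as many twistor lines as I like, the concatenated path lands in $\mathring{T}w^{m+2k-2}_\Lambda$ for every sufficiently large $k$, giving the claimed $k_0$.

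The one genuinely delicate point — the step I expect to be the main obstacle — is verifying that the prepended block $\tilde\gamma^{-1}\tilde\gamma$ can be chosen so that the \emph{interior} index range $2\le i\le (m+2k-2)-5$ of the concatenated path actually contains one of the $W_i$'s from $\tilde\gamma$ with $W_i\cap W_{i+3}=0$, and that the concatenation is still a legitimate path in $\fM^0_\Lambda$ (i.e. the endpoints match as \emph{marked} pairs, not merely as periods). For the former, since $\tilde\gamma\in\ringTw$ already satisfies Condition (\ref{condition-on-Ws}) for some interior index of $\tilde\gamma$, and concatenation only shifts indices, one checks the relevant $W_i$ survives in the interior of the longer path once $k$ is large enough relative to $m$; for the latter, one chooses $\tilde\gamma$ to start at the marked pair $\tilde\kappa(V_1,W_1)$ where $(V_1,W_1)$ are the first data of $\gamma$, which is possible by surjectivity of $\mathring f_k$ (Proposition \ref{prop-ring-f}), so that $\tilde\gamma^{-1}$ ends exactly where $\gamma$ begins and the gluing is an honest insertion at a repeated marked pair. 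Assembling these observations yields: $\gamma$ is equivalent to the path in $\ringTw$ obtained this way, for all $k\ge k_0$, which is the assertion of Lemma \ref{lemma-every-twistor-path-is-equivalent-to-one-in-ring-Tw}.
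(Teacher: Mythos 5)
Your proposal is correct and matches the paper's argument: the three structural facts you cite---automaticity of Conditions (\ref{condition-on-twistor-lines})--(\ref{condition-on-rays}) for paths in $\fM^0_\Lambda$ via Remark \ref{remark-twistor-paths-in-period-domain-which-come-from-paths-in-moduli-of-marked-pairs}, cancellation of the loop $\tilde\gamma^{-1}\tilde\gamma$, and the trivial tail extension $W_k:=W_{k-1}$, $V_{k+1}:=V_k$---are exactly the observations the paper records immediately before the lemma. The only slip is notational: the paper's concatenation $\tilde\gamma\gamma$ reads ``first $\gamma$, then $\tilde\gamma$'' (function-composition order), so in $\tilde\gamma^{-1}\tilde\gamma\gamma$ the loop is appended at the \emph{endpoint} of $\gamma$, and $\tilde\gamma$ must be chosen to start at that endpoint rather than at $\tilde\kappa(V_1,W_1)$; since the argument works symmetrically at either end of $\gamma$, this does not affect its validity.
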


%
\section{Hyperholomorphic sheaves}
\label{sec-hyperholomorphic-sheaves}
In this section 
holomorphic families of hyperholomorphic sheaves are extended to differentiable families of hyperholomorphic sheaves over the relative universal twistor family. 
%
\label{sec-universal-hyperholomorphic-bundle}
Let $X_0$ be an irreducible holomorphic symplectic manifold and 
%
let $E$ be a twisted reflexive sheaf over $X_0\times X_0$, which satisfies the following assumption.

\begin{assumption}
\label{assumption-gamma-hyperholomorphic-for-all-gamma}
\begin{enumerate}
\item
$E$  is $\gamma$-hyperholomorphic for every twistor path $\gamma$ starting at $(X_0,\eta_0)$, for every marking $\eta_0$ (Definition \ref{def-gamma-hyperholomorphic}). 
\item
\label{assumption-item-locally-free}
Given a twistor path $\gamma$ from $X_0$ to $X$, denote by $\beta:Y\rightarrow X\times X$ the blow-up of the diagonal in $X\times X$ and let $\widetilde{E}_\gamma$ be the quotient of $\beta^*E_\gamma$ by its torsion subsheaf. 
The sheaf $\widetilde{E}_\gamma$ 
is locally free and $R^i\beta_*\widetilde{E}_\gamma=0$, for $i>0$ and for every twistor path $\gamma$ from $X_0$ to $X$. 
\end{enumerate}
\end{assumption}

The sheaf $E$ given in Equation (\ref{eq-modular-sheaf}) is 
an example of such a  sheaf, by \cite[Theorem 1.4]{markman-naturality}. Part (\ref{assumption-item-locally-free}) of the above assumption implies that the twisted sheaf $\E$ over the fiber-square of the twistor family associated to the path $\gamma$ is flat over the twistor path, as it is the direct image of a locally free sheaf over the blow-up of the relative diagonal, and the higher direct images  vanish. 
Denote by 
\[
B_\gamma
\] 
the projectivization of the  pullback of $E_\gamma$ to $Y$ modulo its torsion subsheaf. 
Clearly, $B_{\gamma_1}$ is isomorphic to $B_{\gamma_2}$, whenever $\gamma_1$ and $\gamma_2$ are equivalent.

\begin{rem}
\label{rem-projective-bundle-is-gamma-hyperholomorphic-for-all-gamma}
Note that if $E$ satisfies Assumption \ref{assumption-gamma-hyperholomorphic-for-all-gamma}, so does $E\otimes L$, for every line bundle $L$ over $X_0\times X_0$. Let $U:=X_0\times X_0\setminus \Delta$ be the complement of the diagonal and $\iota:U\rightarrow X_0\times X_0$ the inclusion. Then $\iota_*\iota^*E\cong E$, so that
the isomorphism class of the reflexive sheaf $E$ is determined by its locally free restriction $\iota^*E$ to $U$.
The equivalence class of $E$, up to isomorphisms and tensorization by line bundles, is determines by the projectivization 
$\PP(\iota^*E)$ of its restriction to $U$.
Hence, Assumption \ref{assumption-gamma-hyperholomorphic-for-all-gamma} may be formulated in terms of $\PP(\iota^*E)$. 
It would be convenient to reformulate the assumption in terms of a projective bundle over a compact manifold as follows.
Let $\beta_0:Y_0\rightarrow X_0\times X_0$ be the blow-up centered along the diagonal and denote by 
$D_0\subset Y_0$  the exceptional divisor. 
Let $B$ be a projective bundle over $Y_0$, such that $\bar{c}_1(B)=\beta_0^*\theta$, for some class $\theta\in H^2(X_0\times X_0,\mu_r)$ (see Section \ref{sec-first-characteristic-class}). 
Choose a locally free $\beta_0^*\theta$-twisted sheaf $\widetilde{E}$
over $Y_0$, such that $B\cong \PP\widetilde{E}$. 
The restriction of $\widetilde{E}$ to $Y_0\setminus D_0$ extends uniquely to a $\theta$-twisted reflexive sheaf $E$ over $X_0\times X_0$, by the Main Theorem of
\cite{siu}. 
We say that $B$ {\em satisfies Assumption \ref{assumption-gamma-hyperholomorphic-for-all-gamma}}, if $E$ does and 
$B$ is isomorphic to the projectivization of the quotient of $\beta_0^*E$ by its torsion subsheaf. 
\end{rem}

Let $\pi:\X\rightarrow \Sigma$ be a differentiable family of complex analytic manifolds and let $G$ be a complex Lie group. 
The notion of a {\em differentiable family $\B\rightarrow \X \RightArrowOf{\pi} \Sigma$ 
of holomorphic fiber bundles with structure group $G$} was defined in \cite[Definition 1.8]{kodaira-spencer}. The definition includes, in particular, the special cases of  families of vector bundles and projective bundles relevant for us. 
Let $\G$  be a connected oriented graph with vertices $\{v\}_{v\in I}$, edges $\{e\}_{e\in J}$, and head and tail functions $h,t:J\rightarrow I$.

\begin{defi}
\label{def-G-family-of-fiber-bundles}
A {\em differentiable $\G$-family  of holomorphic fiber bundles with structure group $G$} consists of the data of 
a differentiable $\G$-family of complex manifolds as in Definition \ref{def-G-family} together with differentiable families 
$\B_v\rightarrow \X_v\rightarrow \Sigma_v$, $v\in I$, of holomorphic fiber bundles with structure group $G$ and a lifting 
$\tilde{\varphi}_e$, $e\in J$, of the gluing isomorphisms in Definition \ref{def-G-family} to isomorphisms of the restrictions of 
$\B_{h(e)}$ and $\B_{t(e)}$ to the subfamilies over $M'_e\subset \Sigma_{h(e)}$ and $M_e''\subset \Sigma_{t(e)}$.
\end{defi}

Following is a relative version of Theorem \ref{thm-twistor-deformation-of-a-sheaf}.
Let $\Sigma$ be a differentiable manifold and let $\psi:\Sigma\rightarrow \ringI$ be a differentiable map.
Set $\bar{\psi}:=\tilde{\kappa}\circ\psi:\Sigma\rightarrow \fM^0_\Lambda$.
Pulling back the universal family over $\fM^0_\Lambda$ and the relative metric over $\ringI$ (Section \ref{section-kahler-einstein-structure}) we get the differentiable family
$\pi:\X\rightarrow \Sigma$ of marked irreducible holomorphic symplectic manifolds admitting a marking $\eta:R^2\pi_*\Integers\rightarrow \Lambda$ and a $C^{\infty}$ hermitian metric g on the relative tangent bundle of $\pi$, which restricts to a K\"{a}hler metric $g_\sigma$ on each fiber $X_\sigma$ of $\pi$ over $\sigma\in\Sigma$. 
Let $\PP^1_g\rightarrow \Sigma$ be the relative twistor line associated to the metric $g$ and let
$\Pi:\widetilde{\X}\rightarrow \PP^1_g$ be the relative twistor family. 
We get the following  diagram
\[
\xymatrix{
& \widetilde{\X} \ar[rrr] \ar[d]_{\Pi} & & & \overline{\X} \ar[d]
\\
& \PP^1_g \ar[r]^{\tilde{\psi}} \ar[d] & \mathring{T}w^2_\Lambda \ar[r]^{\tilde{\pi}_2} \ar[d]^{\tilde{\pi}_1} & \ringI\ar[r]^{\tilde{\kappa}} & 
\fM^0_\Lambda
\\
\X \ar[r]^{\pi} \ar[rd] & \Sigma \ar[r]^{\psi} \ar[rd]^{\bar{\psi}} & \ringI \ar[d]^{\tilde{\kappa}}
\\
& \overline{\X} \ar[r] & \fM^0_\Lambda
}
\]
where the top two squares and the bottom left parallelogram are cartesian, by definition. 
Let $\Y\rightarrow \X\times_\pi\X$ be the blow-up centered along the relative diagonal $\Delta$ 
(Remark \ref{remark-blow-up-of-relative-doagonal}).
Let $\B\rightarrow \Y\RightArrowOf{\pi}\Sigma$ be a differentiable family of projective bundles satisfying 
Assumption \ref{assumption-gamma-hyperholomorphic-for-all-gamma} 
in the sense of Remark \ref{rem-projective-bundle-is-gamma-hyperholomorphic-for-all-gamma}.
Denote by $B_\sigma$ the restriction of $\B$ to the fiber $Y_\sigma$ of $\Y$ over $\sigma\in \Sigma$.
Let
$\tilde{\beta}:\widetilde{\Y}\rightarrow \widetilde{\X}\times_\Pi\widetilde{\X}$ be the blow-up of the relative diagonal, and let
$\widetilde{\Pi}:\widetilde{\Y}\rightarrow \PP^1_g$ be the natural map. 
Let $s:\Sigma\rightarrow\PP^1_g$ be the natural section. Composing the isomorphism $\Y\cong s^*\widetilde{\Y}$ with the natural embedding $s^*\widetilde{\Y}\subset \widetilde{\Y}$ we get the embedding
$\tilde{s}:\Y\rightarrow \widetilde{\Y}$. 

\begin{prop}
\label{prop-relative-twistor-deformation-of-a-sheaf}
There exists a differentiable family $\widetilde{\B}\rightarrow \widetilde{\Y}\rightarrow \PP^1_g$ of holomorphic projective bundles satifying the following properties.
\begin{enumerate}
\item
$\tilde{s}^*\widetilde{\B}\cong \B$.
\item
The restriction of $\widetilde{\B}$ to the fiber of $\widetilde{\Y}$ over $\sigma\in\Sigma$ is the twistor deformation of the hyperholomorphic bundle $B_\sigma$ along the twistor line associated to the K\"{a}hler class of $g_\sigma$.
\end{enumerate}
\end{prop}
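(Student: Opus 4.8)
The plan is to reduce the statement to Theorem \ref{thm-twistor-deformation-of-a-sheaf} applied fibrewise, and then to package the fibrewise constructions into a differentiable family using the semicontinuity and local-triviality machinery of Kodaira--Spencer. First I would fix notation: for each $\sigma\in\Sigma$ the fibre $Y_\sigma$ is the blow-up of $X_\sigma\times X_\sigma$ along the diagonal, and $B_\sigma$ is a holomorphic projective bundle on $Y_\sigma$ which, by the hypothesis that $\B$ satisfies Assumption \ref{assumption-gamma-hyperholomorphic-for-all-gamma} in the sense of Remark \ref{rem-projective-bundle-is-gamma-hyperholomorphic-for-all-gamma}, is the projectivization of (the torsion-free quotient of the blow-up pullback of) a $\tilde\theta$-twisted reflexive sheaf $E_\sigma$ on $X_\sigma\times X_\sigma$ that is $\tilde\omega_\sigma$-slope-stable and $\gamma$-hyperholomorphic for every twistor path $\gamma$ starting at $(X_\sigma,\eta_\sigma)$. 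In particular $c_2(\SheafEnd(E_\sigma))$ remains of Hodge type along $\PP^1_{g_\sigma}$, so the hypotheses of Theorem \ref{thm-twistor-deformation-of-a-sheaf} hold and we obtain, for each $\sigma$, a twisted reflexive sheaf $\E_\sigma$ on $\widetilde{X}_\sigma\times_{\PP^1_{g_\sigma}}\widetilde{X}_\sigma$ restricting to $E_\sigma$ over $\sigma$ and fibrewise slope-stable; its projectivization pulled back to the relative blow-up $\widetilde{Y}_\sigma$ (and made torsion-free) defines a holomorphic projective bundle $\widetilde{B}_\sigma$ on $\widetilde{Y}_\sigma$, using Assumption \ref{assumption-gamma-hyperholomorphic-for-all-gamma}(\ref{assumption-item-locally-free}) to guarantee local freeness after blow-up. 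This gives the candidate fibres of $\widetilde{\B}$ and property (2).

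Next I would construct $\widetilde{\B}$ as a global differentiable $\G$-family. Since $\widetilde{\Pi}:\widetilde{\Y}\to\PP^1_g$ is obtained by pulling back a holomorphic family over $\fM^0_\Lambda$ (via the composite $\PP^1_g\to\mathring{T}w^2_\Lambda\to\ringI\to\fM^0_\Lambda$), the differentiable family of complex manifolds $\widetilde{\Y}\to\PP^1_g\to\Sigma$ is of the type considered in Definition \ref{def-differentiable-family-of-complex-manifolds}. To produce the bundle I would work Čech-locally on $\PP^1_g$: over a small open set the family of complex structures is a small holomorphic deformation, and Verbitsky's stability-under-hyperholomorphic-deformation (the mechanism underlying Theorem \ref{thm-twistor-deformation-of-a-sheaf}) together with the constancy of $\dim\Ext^i$ of the endomorphism sheaf (Theorem \ref{thm-constant-coherent-cohomology-dimension}, applied to the locally free model $\widetilde{E}$ on the blow-up, where $R^i\beta_*\widetilde{E}=0$ so endomorphisms match) shows that the Kuranishi-type deformation of $\widetilde{E}_\sigma$ is unobstructed along the twistor directions and varies $C^\infty$-ly. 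I would then invoke the Kodaira--Spencer local-triviality theorem for families whose relevant cohomology has locally constant dimension to glue these local models into a global differentiable family $\widetilde{\B}\to\widetilde{\Y}\to\PP^1_g$ of holomorphic projective bundles. Property (1), $\tilde s^*\widetilde{\B}\cong\B$, then holds because the section $s:\Sigma\to\PP^1_g$ realizes $\Y\cong s^*\widetilde{\Y}$ and the fibrewise construction was normalized to restrict to $B_\sigma$ over $\sigma$; the identifications are compatible over $\Sigma$ by uniqueness in Theorem \ref{thm-twistor-deformation-of-a-sheaf} (``$\SheafEnd(\E)$ depends canonically on $E$''), which forces the local gluings to agree with $\B$ on the nose.

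The main obstacle I expect is the passage from the fibrewise statements to a genuinely $C^\infty$ \emph{family} of holomorphic bundles in the Kodaira--Spencer sense: Theorem \ref{thm-twistor-deformation-of-a-sheaf} and Theorem \ref{thm-constant-coherent-cohomology-dimension} are each stated for a single twistor line, and one must check that the resulting sheaves/projective bundles fit together differentiably in the transverse ($\Sigma$-)directions as well. The delicate point is that $E_\sigma$ is reflexive but not locally free along the diagonal, so all the deformation-theoretic and gluing arguments must be run on the blow-up $\widetilde{Y}_\sigma$, where the torsion-free pullback $\widetilde{E}_\sigma$ is locally free and $R^i\beta_*$ vanishes (Assumption \ref{assumption-gamma-hyperholomorphic-for-all-gamma}(\ref{assumption-item-locally-free})); the compatibility of this blow-up construction with the family structure, and the fact that the twisting class $\tilde\theta$ extends to a class $\beta_0^*\theta$ pulled back from $X_0\times X_0$ (so the projectivization is well-defined without choosing a global twisted sheaf), are exactly the technical checks that make the reduction to the locally free Kodaira--Spencer theory legitimate. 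Once those are in place, the gluing along the graph $\G$ and the verification of (1) and (2) are formal.
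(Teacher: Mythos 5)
Your proposal correctly identifies the crucial obstacle — passing from the fibrewise twistor deformations to a genuinely $C^\infty$ family in the transverse ($\Sigma$-)direction — but the mechanism you propose for overcoming it does not actually address it, and this is where your argument has a real gap. You suggest working \v{C}ech-locally on $\PP^1_g$, invoking constancy of $\dim\Ext^i$ (Theorem \ref{thm-constant-coherent-cohomology-dimension}) and a ``Kodaira--Spencer local-triviality theorem'' to glue local models. But Kodaira--Spencer local triviality (e.g.\ \cite[Theorem 7.4]{kodaira-spencer}) is a statement about a \emph{given} differentiable family being locally trivial when the obstruction group vanishes; it presupposes that the family exists as a $C^\infty$ object, and that is precisely what must be constructed here. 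Likewise, constancy of $\dim\Ext^i$ along each twistor line says nothing about $C^\infty$ dependence on $\sigma$: the fibrewise objects produced by Theorem \ref{thm-twistor-deformation-of-a-sheaf} are a priori unrelated from one $\sigma$ to the next, and there is no universal deformation space (nor rigidity hypothesis) through which to glue. Without a concrete device exhibiting the $\sigma$-dependence as $C^\infty$, the proposed gluing has no content.

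The paper supplies exactly such a device and it is substantially different from what you sketch. It works directly with the unique admissible Einstein--Hermitian metric connection $\nabla_\sigma$ on the Azumaya algebra $\bar A_\sigma$ over $X_\sigma\times X_\sigma\setminus\Delta_\sigma$: the polystability of $\bar A_\sigma$ gives existence and uniqueness of the metric $h_\sigma$ by Bando--Siu, and its $C^\infty$ dependence on $\sigma$ is the content of Schumacher--Toma's theorem on moduli of K\"ahler manifolds with Hermite--Einstein bundles. The twistor deformation is then the tautological one: keep $\nabla$ fixed as a $C^\infty$ connection and vary the $(0,1)$-summand $\nabla^{(0,1)}_{\sigma,t}$ as the complex structure of $X_\sigma\times X_\sigma$ moves along $\PP^1_{g_\sigma}$; Verbitsky's theorem ensures each $\nabla^{(0,1)}_{\sigma,t}$ is integrable. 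Because the relative $(1,0)/(0,1)$ splitting of the complexified tangent bundle of $\X\times_\Pi\X$ varies differentiably, and $\nabla$ varies differentiably in $\sigma$, the resulting holomorphic structures automatically assemble into a differentiable family; Assumption \ref{assumption-gamma-hyperholomorphic-for-all-gamma}(\ref{assumption-item-locally-free}) then transports this from the complement of the diagonal to the blow-up $\widetilde\Y$ as you anticipated. So your final paragraph diagnoses the right difficulty and the right role of the blow-up, but the abstract deformation-theoretic gluing you offer would need to be replaced by this explicit Einstein--Hermitian connection argument (or something equivalent that exhibits the transverse $C^\infty$ dependence concretely).
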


\begin{proof}
We provide only a sketch of the proof. Denote by $\A$ the differentiable family of holomorphic Azumaya algebras over $\Y$ associated to $\B$ and by $A_\sigma$ its restriction to $Y_\sigma$. 
It suffices to construct the corresponding differentiable family $\widetilde{\A}$ of holomorphic Azumaya algebras over $\widetilde{\Y}$. 
We may regard $X_\sigma\times X_\sigma\setminus \Delta_\sigma$ as an open subset of $Y_\sigma$.
Let $\bar{A}_\sigma$ be the reflexive Azumaya algebra over 
$X_\sigma\times X_\sigma$ extending the restriction of $A_\sigma$ to $X_\sigma\times X_\sigma\setminus \Delta_\sigma$ via the Main Theorem of \cite{siu}, as in Remark \ref{rem-projective-bundle-is-gamma-hyperholomorphic-for-all-gamma}.
Denote by $\PP^1_{g_\sigma}$ the fiber of $\PP^1_g$ over $\sigma\in\Sigma$.
Let us first recall the construction of the Azumaya algebra $\widetilde{\A}_\sigma$ over the twistor line $\PP^1_{g_\sigma}$ 
associated to the hyperholomorphic Azumaya algebra $\bar{A}_\sigma$ over $X_\sigma\times X_\sigma$ (corresponding to the projective bundle $B_\sigma$ over $Y_\sigma$). 
Denote by $\omega_\sigma$ the K\"{a}hler form on $X_\sigma$ associated to the metric $g_\sigma$ and let $\tilde{\omega}_\sigma$
be the corresponding K\"{a}hler form on $X_\sigma\times X_\sigma$.
The $\tilde{\omega}_\sigma$-poly-stability of  $\bar{A}_\sigma$ implies the existence of an admissible 
$\tilde{\omega}_\sigma$-Einstein-Hermitian metric $h_\sigma$ on the restriction of $\bar{A}_\sigma$ 
to the complement $X_\sigma\times X_\sigma\setminus\Delta_\sigma$
of the diagonal, unique up to a scalar factor on each stable summand \cite[Theorems 3 and 4]{bando-siu}. 
The $h_\sigma$-metric connection $\nabla_\sigma$ is, by definition of $h_\sigma$,  
the unique admissible $\tilde{\omega}_\sigma$-Einstein-Hermitian connection  on $\bar{A}_\sigma$ away from $\Delta_\sigma$, 
see \cite[Rem. 3.20]{kaledin-verbitski-book}, where the Einstein-Hermitian property is 
referred to as {\em Yang-Mills}  \cite[Def. 3.6]{kaledin-verbitski-book}. The complex structure of $\bar{A}_\sigma$ is the $(0,1)$-summand of the connection $\nabla_\sigma$ with respect to the direct sum decomposition of the sheaf of $C^\infty$ one-forms on
$X_\sigma\times X_\sigma$ into $(1,0)$ and $(0,1)$-forms 
determined by the complex structure of $X_\sigma\times X_\sigma$. 
Varying the complex structure of $X_\sigma$ in the twistor family over $\PP^1_{g_\sigma}$, keeping
$\nabla_\sigma$ constant, varies the $(0,1)$-summand $\nabla_{\sigma,t}^{(0,1)}$, $t\in \PP^1_{g_\sigma}$,
of the connection $\nabla_\sigma$. Verbitsky proved that $\nabla_{\sigma,t}^{(0,1)}$ is an integrable complex structure 
associated to a reflexive sheaf $\bar{A}_{\sigma,t}$ 
and that these fit in a holomorphic sheaf $\bar{\A}_\sigma$ over the fiber square of the twistor family
\cite[Theorem 3.19]{kaledin-verbitski-book}.

The metric connection $\nabla_\sigma$ depends differentiably on $\sigma$, as so does the metric $h_\sigma$. In other words,
a $C^\infty$ metric $h$ can be constructed on $\A$ over $\X\times\X\setminus\Delta$, which restricts to an admissible 
$\omega_\sigma$-Einstein-Hermitian metric $h_\sigma$ over $\sigma\in \Sigma$, as done in the proof of Theorem 4 in \cite{schumacher-toma}.
Now the differentiable nature of the family $\Pi:\widetilde{\X}\rightarrow \PP^1_g$ means that the decomposition of the relative complexified tangent bundle of $\X\times_\Pi\X\rightarrow \PP^1_g$ into its $(1,0)$ and $(0,1)$ summands varies differentiably.
Combined with the differentiable dependence of $\nabla_\sigma$ on the parameter $\sigma$, we get the differentiable family $\bar{\A}$
of holomorphic vector bundles over $\widetilde{\X}\times_{\widetilde{\Pi}}\widetilde{\X}\setminus \widetilde{\Delta}$.
Assumption \ref{assumption-gamma-hyperholomorphic-for-all-gamma} implies that it corresponds to a
differentiable family of Azumaya algebras $\widetilde{\A}$, or equivalently of projective bundles $\widetilde{\B}$, over $\widetilde{\Y}$ (see Remark \ref{rem-projective-bundle-is-gamma-hyperholomorphic-for-all-gamma}).
\end{proof}

Choose a marking $\eta_0$ for $X_0$, such that $(X_0,\eta_0)$ belongs to $\fM_\Lambda^0$. 
Fix an integer $k\geq 10$ and a pair $(X,\eta)$ in $\fM_\Lambda^0$. The fiber 
\begin{equation}
\label{eq-Gamma}
\Gamma:=\Gamma_{(X_0,\eta_0)}^{(X,\eta)}:=\mathring{f}_k^{-1}((X_0,\eta_0),(X,\eta))
\end{equation}
of the map $\mathring{f}_k$ given in Equation (\ref{eq-tilde-h}) is a smooth and connected real analytic manifold, by 
Proposition \ref{prop-ring-f}. Given a twistor path $\gamma\in \Gamma$ and a sheaf $E$ over $X_0\times X_0$ satisfying Assumption
\ref{assumption-gamma-hyperholomorphic-for-all-gamma} 
we get a reflexive sheaf $E_\gamma$ on $X\times X$. Let $\mathring{\T}_\Gamma$ be the restriction of the universal twistor path 
$\mathring{\T}$ to $\Gamma\subset \ringTw$. Let 
\[
\Pi_\Gamma:\X_\Gamma \rightarrow \mathring{\T}_\Gamma
\]
be the restriction of the universal twistor family, given in (\ref{eq-universal-twistor-deformation}),  
to $\mathring{\T}_\Gamma$. We get the diagram
\[
\xymatrix{
\Y_\Gamma \ar[rr]^{\beta} \ar[dr]_{\widetilde{\Pi}_\Gamma} & &
\X_\Gamma\times_{\Pi_\Gamma}\X_\Gamma \ar[dl]
\\
& \mathring{\T}_\Gamma.
}
\]
 by restriction of
 Diagram (\ref{eq-universal-diagram-over-universal-twistor-family})
 to $\mathring{\T}_\Gamma$.

Let $Y$ be the blow-up of the diagonal in $X\times X$. Let $s_i:\ringTw\rightarrow \mathring{\T}$
be the restriction of the section in Equation (\ref{eq-s-i}), $1\leq i \leq k$.
We get the following commutative diagram.
\begin{equation}
\label{diagram-Y-Gamma}
\xymatrix{
\Y_\Gamma \ar[dr]^{\beta}
& & \X_\Gamma \ar[d]_{\Pi_\Gamma} \ar[r]^{\subset} & \X \ar[d]_\Pi
\\
\Gamma\times Y \ar[u]_{\cup}\ar[d]
& \X_\Gamma\times_{\Pi_\Gamma} \X_\Gamma \ar[r] & \mathring{\T}_\Gamma \ar[r]^{\subset} \ar[d] & 
\mathring{\T} \ar[d] 
\\
\Gamma\times X\times X \ar[r]_{\cong} &s_k^*\X_\Gamma\times_\Gamma s_k^*\X_\Gamma \ar[r] \ar[u]_{\cup} &
\Gamma \ar[r]_{\subset} & \ringTw \ar@/_1pc/[u]_{s_k} \ar[r]^{\mathring{f}_k} &
\fM^0_\Lambda\times \fM_\Lambda^0.
}
\end{equation}

Let $\beta_0:Y_0\rightarrow X_0\times X_0$ be the blow-up of the diagonal, let $E$ be a reflexive sheaf over $X_0\times X_0$ satisfying Assumption \ref{assumption-gamma-hyperholomorphic-for-all-gamma}, and denote by 
$B$ the projectivization of the quotient of $\beta_0^*E$  by its torsion subsheaf. Let $\G$ be the graph (\ref{eq-A-graph}).

\begin{lem}
\label{problem-construct-a-differentiable-family-B}
There exists a differentiable $\G$-family $\B\rightarrow\Y_\Gamma\rightarrow \mathring{\T}_\Gamma$ 
of holomorphic projective bundles 
with the following properties. 
The pull back of $\B$ via the section $s_1$ is  the constant family over $\Gamma\times Y_0$ with the projective bundle $B$, and its pullback via the section $s_k$ is a family 
\begin{equation}
\label{eq-E-Gamma}
\B_\Gamma \rightarrow \Gamma\times Y \rightarrow \Gamma
\end{equation}
over  $\Gamma\times Y$, which restricts over $\gamma\in\Gamma$ to the projective bundle $B_\gamma$.
\end{lem}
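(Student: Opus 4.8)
The strategy is to construct $\B$ one twistor line at a time and glue, using Proposition \ref{prop-relative-twistor-deformation-of-a-sheaf} as the engine at each step. Recall that $\mathring{\T}_\Gamma$ is a differentiable $\G$-family of complex manifolds (over the graph (\ref{eq-A-graph})) with vertex pieces $\mathring{\T}_{\Gamma,v}=$ the restriction of $\mathring{\T}_v$ to $\Gamma$, each of which maps to $\ringI$ via (the restriction of) $\widetilde{P}er_v$ and hence carries the pulled-back universal family and relative Ricci-flat metric of Section \ref{section-kahler-einstein-structure}. First I would set up, for each vertex $v$, the differentiable family $\pi_v:\X_{\Gamma,v}\to \mathring{\T}_{\Gamma,v}$ obtained by pullback, together with its marking, and form the blow-up $\Y_{\Gamma,v}\to\X_{\Gamma,v}\times_{\pi_v}\X_{\Gamma,v}$ of the relative diagonal as in Remark \ref{remark-blow-up-of-relative-doagonal}. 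Note that $\mathring{\T}_{\Gamma,v}$ itself admits a differentiable map $\psi_v$ to $\ringI$ coming from $\widetilde{P}er_v$, so Proposition \ref{prop-relative-twistor-deformation-of-a-sheaf} applies with $\Sigma$ replaced by a codimension-zero piece of $\mathring{\T}_{\Gamma,v}$ transverse to the twistor direction — more precisely, $\mathring{\T}_{\Gamma,v}$ is itself the relative twistor family $\PP^1_g$ associated to such a $\Sigma$, since its fiber over each point of $\Gamma$ is a chain of twistor lines and $\mathring{\T}_{\Gamma,v}$ is the $v$-th link, a $\PP^1$-bundle over the ``base'' $\Gamma$ recording the $v$-th polarization.

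Next, proceed by induction along the graph $1\to 2\to\cdots\to k$. The base case: over the first vertex we take the constant family $\Gamma\times Y_0$ with projective bundle $B$ (pulled back from $Y_0$), which satisfies Assumption \ref{assumption-gamma-hyperholomorphic-for-all-gamma} in the sense of Remark \ref{rem-projective-bundle-is-gamma-hyperholomorphic-for-all-gamma} by hypothesis on $E$. Inductive step: suppose $\B$ has been constructed over $\Y_{\Gamma,v}$ so that its restriction over the distinguished section $s_{v}$ (the ``endpoint of link $v$'') is a family of hyperholomorphic projective bundles on the blown-up cartesian squares of the manifolds parametrized there. Apply Proposition \ref{prop-relative-twistor-deformation-of-a-sheaf} to the $(v{+}1)$-st link: its input is precisely a differentiable family of holomorphic projective bundles over the base $\Sigma_{v+1}$ (a transverse slice of $\mathring{\T}_{\Gamma,v+1}$, which is identified via $s_{v+1}'$ with the section $s_v''$ inside $\mathring{\T}_{\Gamma,v}$) satisfying Assumption \ref{assumption-gamma-hyperholomorphic-for-all-gamma}; the output is a differentiable family $\widetilde{\B}_{v+1}$ over all of $\mathring{\T}_{\Gamma,v+1}$ restricting to the given one over $\Sigma_{v+1}$ and restricting over each point of $\Gamma$ to the twistor deformation of the corresponding hyperholomorphic bundle. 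The gluing datum $\tilde\varphi_{e}$ of Definition \ref{def-G-family-of-fiber-bundles} is then exactly the canonical isomorphism between the restriction of $\B$ over $s_v''$ and the restriction of $\widetilde{\B}_{v+1}$ over $s_{v+1}'$, which holds by construction since both equal (the projectivization of) the reflexive twisted sheaf attached to the Bando–Siu admissible Einstein–Hermitian connection over the complement of the diagonal, and Assumption \ref{assumption-gamma-hyperholomorphic-for-all-gamma}(\ref{assumption-item-locally-free}) guarantees this descends to a genuine projective bundle on the blow-up. After $k-1$ steps we obtain the differentiable $\G$-family $\B\to\Y_\Gamma\to\mathring{\T}_\Gamma$.

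The stated properties are then immediate: pulling back along $s_1$ gives, by the base case, the constant family $\Gamma\times Y_0$ with bundle $B$; pulling back along $s_k$ gives a family $\B_\Gamma\to\Gamma\times Y\to\Gamma$ whose fiber over $\gamma\in\Gamma$ is, by the second conclusion of Proposition \ref{prop-relative-twistor-deformation-of-a-sheaf} applied inductively, the iterated twistor deformation of $B$ along the links of $\gamma$, i.e.\ $B_\gamma$. Here one uses that $E$ is $\gamma$-hyperholomorphic for every twistor path $\gamma$ (Assumption \ref{assumption-gamma-hyperholomorphic-for-all-gamma}) to ensure that each application of Proposition \ref{prop-relative-twistor-deformation-of-a-sheaf} is legitimate, the slope-stability hypotheses being satisfied at every stage by Definition \ref{def-gamma-hyperholomorphic}. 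The main obstacle is bookkeeping rather than substance: one must check that the ``differentiable family'' structures produced by Proposition \ref{prop-relative-twistor-deformation-of-a-sheaf} — which are built analytically from metric connections that vary $C^\infty$ in parameters — fit together smoothly across the gluing submanifolds $s_v(\Gamma)\subset\mathring{\T}_\Gamma$, i.e.\ that the reduction of structure group of Definition \ref{def-differentiable-family-of-complex-manifolds} and Definition \ref{def-G-family-of-fiber-bundles} is respected at the seams. This reduces, as in \cite{schumacher-toma} and the proof of Proposition \ref{prop-relative-twistor-deformation-of-a-sheaf}, to the differentiable dependence of the admissible Einstein–Hermitian metric on all parameters simultaneously, together with uniqueness of that metric up to scalars on stable summands, which forces the two a priori-different smooth structures near a seam to coincide.
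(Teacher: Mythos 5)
Your proposal is correct and follows essentially the same route as the paper: a vertex-by-vertex recursive application of Proposition \ref{prop-relative-twistor-deformation-of-a-sheaf}, with $\Sigma=\Gamma$ (viewed via the section $s_i$ as the transverse slice of $\mathring{\T}_{\Gamma,i}$), the relative twistor line $\PP^1_g$ being the $i$-th link $\mathring{\T}_{\Gamma,i}$, and the gluing isomorphisms $\tilde\varphi_e$ supplied by the restriction property $\tilde s^*\widetilde{\B}\cong\B$ in that proposition. Your closing worry about matching smooth structures at the seams is unnecessary, since Definitions \ref{def-G-family} and \ref{def-G-family-of-fiber-bundles} ask only for isomorphisms of the restricted families over the gluing submanifolds, not for a global smooth structure across them; otherwise the argument is a faithful elaboration of the paper's brief proof.
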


\begin{proof}
The family is constructed vertex by vertex via a recursive application of Proposition 
\ref{prop-relative-twistor-deformation-of-a-sheaf}. In the $i$-th iterate the map $\psi:\Sigma\rightarrow \ringI$ of 
Proposition \ref{prop-relative-twistor-deformation-of-a-sheaf} is the composition
\[
\Gamma\RightArrowOf{\subset} \ringTw\RightArrowOf{s_i}\mathring{\T}_i\RightArrowOf{\widetilde{P}er_i}
\RealNumbers\PP\K\RightArrowOf{\widetilde{P}}\ringI,
\]
where $s_i$ is the restriction of the section in Equation (\ref{eq-s-i}), 
$\widetilde{P}er_i$ is the map in Lemma \ref{lemma-per-lifts-to-Per}, 
and $\widetilde{P}$ is the isomorphism in Equation (\ref{eq-widetilde-P}). The twistor line $\PP^1_g\rightarrow\Sigma$ of
Proposition 
\ref{prop-relative-twistor-deformation-of-a-sheaf} is the restriction of $\mathring{\T}_i\rightarrow \ringTw$ to $\Gamma$.
\end{proof}

The manifold $\Gamma:=\Gamma_{(X_0,\eta_0)}^{(X,\eta)}$,
given in Equation (\ref{eq-Gamma}), 
parametrizes twistor paths consisting of $k-1$ twistor lines with fixed initial point $(X_0,\eta_0)$
and endpoint $(X,\eta)$. 
Its construction could be generalized as follows. 
Assume 
given a smooth complex manifold $\Sigma$, a complex analytic family $\zeta:\Z\rightarrow \Sigma$ 
of irreducible holomorphic symplectic manifolds, and an isometry $\eta$ of $R^2\zeta_*\ZZ$ with the trivial local system with fiber the lattice 
$\Lambda$. 
Assume, further, that we are given a holomorphic family $\E_\Z\rightarrow \Z\times_\Sigma\Z$, flat over $\Sigma$, of twisted reflexive sheaves satisfying Assumption \ref{assumption-gamma-hyperholomorphic-for-all-gamma}.
Given a point $\sigma\in \Sigma$, denote by $(Z_\sigma,\eta_\sigma)$ the corresponding marked pair and let $E_\sigma$ be the restriction of $\E_\sigma$ to $Z_\sigma\times Z_\sigma$. The assumption requires   
 that each $E_\sigma$, $\sigma\in \Sigma$, is $\gamma$-hyperholomorphic with respect to every twistor path $\gamma$ starting at 
 $(Z_\sigma,\eta_\sigma)$. 
 Denote by $\Y$ the blow-up  of the relative diagonal of $\Z\times_{\Sigma} \Z$.
 Let $\B_\Z$ be the family of projective bundles over $\Y$ corresponding to $\E_\Z$. Denote by $B_\sigma$ the restriction of $\B_\Z$ to the fiber over $\sigma\in \Sigma$. 
 We can then let the initial point vary in $\Sigma$ and let the initial projective bundle be $B_\sigma$ and replace the 
 family $\Gamma_{(X_0,\eta_0)}^{(X,\eta)}$  by a relative version $\Gamma_\Sigma\rightarrow\Sigma$ whose fiber $\Gamma_\sigma$, $\sigma\in \Sigma$,
 is described by the manifold $\Gamma_{(Z_\sigma,\eta_\sigma)}$ of twistor paths in 
 $\ringTw$ with initial point $(Z_\sigma,\eta_\sigma)$. 
 Explicitly, if we let $\kappa_\Sigma:\Sigma\rightarrow \fM^0_\Lambda$ be the classifying morphism, then $\Gamma_\Sigma$ is the
 submanifold of $\Sigma\times \ringTw$, which is the inverse image of the diagonal via
 $\kappa_\Sigma\times\tilde{\kappa}_1:\Sigma\times \ringTw\rightarrow \fM^0_\Lambda\times \fM^0_\Lambda$, where $\tilde{\kappa}_1$ is given in Equation (\ref{eq-tilde-kappa-1}). 
 We get the relative version of diagram
 (\ref{diagram-Y-Gamma}) over $\Gamma_\Sigma$. Let $\mathring{\T}_{\Gamma_\Sigma}$ be the fiber product of $\Gamma_\Sigma$ and $\mathring{\T}$
 over $\ringTw$.
 The relative version of  Lemma \ref{problem-construct-a-differentiable-family-B}
 is the following, and is again an immediate consequence of Proposition 
\ref{prop-relative-twistor-deformation-of-a-sheaf}.

\begin{lem}
\label{problem-construct-a-differentiable-family-B-relative-version}
There exists a differentiable $\G$-family $\B\rightarrow\Y_{\Gamma_\Sigma}\rightarrow \mathring{\T}_{\Gamma_\Sigma}$ 
of holomorphic projective bundles 
with the following properties. 
The pull back of $\B$ via the section 
\[
(1_\Sigma\times s_1\restricted{)}{\Gamma_\Sigma}:\Gamma_\Sigma\rightarrow \mathring{\T}_{\Gamma_\Sigma}
\]
is  the 
pullback to $\Gamma_\Sigma\times_{\Sigma} \Y$ of the 
original family $\B_\Z$.
The pullback of $\B$ via the section $(1_\Sigma\times s_k\restricted{)}{\Gamma_\Sigma}$ 
 restricts over $(\sigma,\gamma)\in\Gamma_\Sigma$ to the projective bundle 
$(B_\sigma)_\gamma$.
\end{lem}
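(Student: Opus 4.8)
The plan is to rerun the proof of Lemma \ref{problem-construct-a-differentiable-family-B} essentially verbatim, with the base $\Gamma:=\Gamma_{(X_0,\eta_0)}^{(X,\eta)}$ replaced by $\Gamma_\Sigma$ and the constant initial family replaced by the pullback of $\B_\Z$. First I would record that $\Gamma_\Sigma$ is a smooth manifold: it is the preimage of the diagonal under $\kappa_\Sigma\times\tilde{\kappa}_1:\Sigma\times\ringTw\rightarrow\fM^0_\Lambda\times\fM^0_\Lambda$, which is transverse to the diagonal because $\tilde{\kappa}_1$ is a submersion, being a component of the submersion $\mathring{f}_k$ of Proposition \ref{prop-ring-f}; this is the same construction as that of the fibration $\Gamma_\Sigma^{(Y,\psi)}\rightarrow\Sigma$ in diagram (\ref{diagram-Gamma-Sigma-X-k-eta-k}). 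Write $\mathrm{pr}_1:\Gamma_\Sigma\rightarrow\Sigma$ and $\mathrm{pr}_2:\Gamma_\Sigma\rightarrow\ringTw$ for the two projections, so that $\mathring{\T}_{\Gamma_\Sigma}$, $\Y_{\Gamma_\Sigma}$ and the relative form of diagram (\ref{diagram-Y-Gamma}) are the base changes along $\mathrm{pr}_2$ of the corresponding universal objects over $\ringTw$.

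Next I would construct $\B\rightarrow\Y_{\Gamma_\Sigma}\rightarrow\mathring{\T}_{\Gamma_\Sigma}$ by a recursive application of Proposition \ref{prop-relative-twistor-deformation-of-a-sheaf}, one application per edge of the graph (\ref{eq-A-graph}), where at the $i$-th step the base $\Sigma$ of that Proposition is taken to be $\Gamma_\Sigma$, the map $\psi:\Gamma_\Sigma\rightarrow\ringI$ is the composition
\[
\Gamma_\Sigma\xrightarrow{\mathrm{pr}_2}\ringTw\xrightarrow{s_i}\mathring{\T}_i\xrightarrow{\widetilde{P}er_i}\RealNumbers\PP\K\xrightarrow{\widetilde{P}}\ringI,
\]
with $s_i$, $\widetilde{P}er_i$, $\widetilde{P}$ as in (\ref{eq-s-i}), Lemma \ref{lemma-per-lifts-to-Per} and (\ref{eq-widetilde-P}), the twistor line $\PP^1_g$ of that Proposition is the pullback to $\Gamma_\Sigma$ of $\mathring{\T}_i\rightarrow\ringTw$, and the output of the $i$-th step restricted along the section corresponding to $V_{i+1}$ serves as the input of the $(i+1)$-st step. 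The only ingredient not already present in the proof of Lemma \ref{problem-construct-a-differentiable-family-B} is the input at the first step: for $i=1$ one has $\tilde{\kappa}\circ\psi=\tilde{\kappa}_1\circ\mathrm{pr}_2$ (since $\tilde{\kappa}\circ\widetilde{P}=\kappa$ and $Per_1\circ s_1=\tilde{\kappa}_1$), hence $\tilde{\kappa}\circ\psi=\kappa_\Sigma\circ\mathrm{pr}_1$ on $\Gamma_\Sigma$ by the defining property of $\Gamma_\Sigma$. Since the marked family $\Z$ is the pullback of the universal family along its classifying morphism $\kappa_\Sigma$ \cite{markman-universal-family}, the differentiable family of marked manifolds that Proposition \ref{prop-relative-twistor-deformation-of-a-sheaf} attaches to $\psi$ at the first step is identified with $\mathrm{pr}_1^*\Z$, and under this identification $\mathrm{pr}_1^*\B_\Z$ is a differentiable family of holomorphic projective bundles over the blow-up of the relative diagonal satisfying Assumption \ref{assumption-gamma-hyperholomorphic-for-all-gamma} in the sense of Remark \ref{rem-projective-bundle-is-gamma-hyperholomorphic-for-all-gamma} (the Assumption being fibrewise); it is the input family $\B\rightarrow\Y\rightarrow\Gamma_\Sigma$ of that Proposition at the first step.

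The recursion then proceeds exactly as in the proof of Lemma \ref{problem-construct-a-differentiable-family-B}: Condition (\ref{condition-on-rays}) in the definition (\ref{eq-ring-Tw}) of $\ringTw$ and the gluing data of $\mathring{\T}$ make the families produced at consecutive steps agree over the junction submanifolds $s_i(\ringTw)$, so that they glue into a differentiable $\G$-family $\B\rightarrow\Y_{\Gamma_\Sigma}\rightarrow\mathring{\T}_{\Gamma_\Sigma}$. Property (1) of Proposition \ref{prop-relative-twistor-deformation-of-a-sheaf} at the first step yields that the pullback of $\B$ along $(1_\Sigma\times s_1\restricted{)}{\Gamma_\Sigma}$ is $\mathrm{pr}_1^*\B_\Z$, i.e. the pullback of $\B_\Z$ to $\Gamma_\Sigma\times_\Sigma\Y$; an induction over the steps using Property (2) identifies the restriction of $\B$ to the fibre of $\Y_{\Gamma_\Sigma}$ over $(\sigma,\gamma)$ singled out by $(1_\Sigma\times s_k\restricted{)}{\Gamma_\Sigma}$ with the result of deforming $B_\sigma$ successively along the $k-1$ twistor lines of $\gamma$, namely $(B_\sigma)_\gamma$. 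The only real work is the bookkeeping around these identifications; the one point needing a little care is to make the identification of the pulled-back universal family with $\mathrm{pr}_1^*\Z$ compatible with both the marking and the projective bundle, so that Property (1) at the first step returns $\mathrm{pr}_1^*\B_\Z$ on the nose and not merely up to isomorphism.
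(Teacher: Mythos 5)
Your argument is exactly the paper's: the paper states the lemma is "again an immediate consequence of Proposition \ref{prop-relative-twistor-deformation-of-a-sheaf}" via the same vertex-by-vertex recursion as in Lemma \ref{problem-construct-a-differentiable-family-B}, and you have supplied precisely that recursion with the base replaced by $\Gamma_\Sigma$ and the initial data replaced by $\mathrm{pr}_1^*\B_\Z$. The computation $\tilde{\kappa}\circ\psi=\tilde{\kappa}_1\circ\mathrm{pr}_2=\kappa_\Sigma\circ\mathrm{pr}_1$ identifying the first-step input is correct, and the gluing and inductive bookkeeping match the paper's proof of the non-relative version.
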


Let $\Gamma_\Sigma^{(X,\eta)}\subset \Gamma_\Sigma$ be the 
family over $\Sigma$ of twistor paths ending at $(X,\eta)$, given in Diagram (\ref{diagram-Gamma-Sigma-X-k-eta-k}).
 Note that the pullback of $\B$ via the section $(1_\Sigma\times s_k\restricted{)}{\Gamma_\Sigma}$  
restricts to $\Gamma_\Sigma^{(X,\eta)}$ as a family 
\begin{equation}
\label{eq-E-Gamma-Sigma}
\B_{\Gamma_\Sigma^{(X,\eta)}} \rightarrow \Gamma_\Sigma^{(X,\eta)}\times Y \rightarrow \Gamma_\Sigma^{(X,\eta)}
\end{equation}
over  $\Gamma_\Sigma^{(X,\eta)}\times Y$.

%
\section{Rigid hyperholomorphic sheaves}
\label{sec-rigid-hyperholomorphic-sheaves}
Let $\B\rightarrow Y\times\Sigma\rightarrow \Sigma$ be a differentiable family of holomorphic projective bundles satisfying Assumption \ref{assumption-gamma-hyperholomorphic-for-all-gamma} in the sense of Remark 
\ref{rem-projective-bundle-is-gamma-hyperholomorphic-for-all-gamma}.
In Section \ref{sec-rigidity-is-an-open-and-closed-condition} we prove that the locus of $\sigma\in \Sigma$, where the restriction $B_\sigma$ of $\B$ to $Y\times \{\sigma\}$ is infinitesimally rigid, is both open and closed (Corollary \ref{cor-single-isomorphism-class-for-G-family}). Let $\B\rightarrow \Y\rightarrow \Sigma$ be a differentiable family  of holomorphic projective bundles satisfying Assumption \ref{assumption-gamma-hyperholomorphic-for-all-gamma}, where $\Y$ is the blow-up of the relative diagonal of a marked differentiable family $\X\rightarrow \Sigma$.
In Section \ref{sec-rigidity-locus-in-moduli} we associate to the family $\B$  
an open subset $U_\B$ of the moduli space $\fM^0_\Lambda$ of marked irreducible holomorphic symplectic manifolds consisting of
pairs $(X,\eta)$, such that $(B_\sigma)_\gamma$ is infinitesimally rigid for every twistor path $\gamma$ from $(X_\sigma,\eta_\sigma)$ to $(X,\eta)$, for every $\sigma\in \Sigma$ (Corollary \ref{cor-open-subset-U-G-of-moduli}). In Section 
\ref{sec-monodromy-invariance-of-the-rigidity-locus} we define the monodromy group of the family $\B$ and prove that 
the rigidity locus $U_\B$ is monodromy invariant (Corollary \ref{cor-U-B-is-Mon-B-invariant}).
%
\subsection{Rigidity  is an open and closed condition in families of stable sheaves over a fixed variety}
\label{sec-rigidity-is-an-open-and-closed-condition}
Let $Y$ be a  compact K\"{a}hler manifold, $\Sigma$ a connected differentiable manifold, $\sigma_0\in \Sigma$ a point, and 
$\Sigma_0\subset \Sigma$ a contractible open subset containing $\sigma_0$. 
Let 
$\B\rightarrow \Sigma \times Y\rightarrow \Sigma$ be a differentiable family of holomorphic $\PP^{r-1}$-bundles. 
Denote by $\B_{\Sigma_0}$ its restriction to $\Sigma_0$.
The product $\C:=B_{\sigma_0}^*\times_{Y} \B_{\Sigma_0}$ is a differentiable family of holomorphic 
$\PP^{r-1}\times \PP^{r-1}$-bundles over $\Sigma_0\times Y$.  
Set $\C_\sigma:=B_{\sigma_0}^*\times_{Y}B_\sigma$, $\sigma\in\Sigma_0$.
Let $p:\C\rightarrow \Sigma_0\times Y$ and $p_\sigma:\C_\sigma\rightarrow Y$ be the natural projections.

\begin{lem}
\label{lemma-existence-of-LB}
There exists a differentiable family $\LB\rightarrow \C \rightarrow \Sigma_0$ of holomorphic line bundles with the following properties.
Let $\LB_\sigma$ be the restriction of $\LB$ to $\C_\sigma$, $\sigma\in \Sigma_0$.
\begin{enumerate}
\item
\label{lemma-item-existence-of-LB}
$\LB$ restricts to the $\PP^{r-1}\times \PP^{r-1}$ fiber over each point $(\sigma,y)\in \Sigma_0\times Y$ as the line bundles 
$\StructureSheaf{\PP^{r-1}\times \PP^{r-1}}(1,1)$. 
\item
\label{lemma-item-hom-sheaf-has-zero-first-chern-class}
The equality  $c_1(p_{\sigma_*}(\LB_\sigma))=0$ holds, for all $\sigma\in \Sigma_0$. 
\end{enumerate}
\end{lem}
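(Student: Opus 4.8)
The plan is to build $\LB$ out of the two tautological hyperplane bundles on the projective-bundle factors of $\C$, the point being that the Brauer twists of $B_{\sigma_0}^*$ and of $\B_{\Sigma_0}$ are inverse to one another and therefore cancel on the fibre product.

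First I fix a presentation $B_{\sigma_0}=\PP(E_{\sigma_0})$ with $E_{\sigma_0}$ a $\theta_0$-twisted locally free sheaf of rank $r$ on $Y$, for some $\theta_0\in H^2(Y,\mathcal O^\ast)$, so that $B_{\sigma_0}^*=\PP(E_{\sigma_0}^\vee)$ is $\theta_0^{-1}$-twisted. Let $q:\Sigma_0\times Y\to Y$ be the projection. Since $\Sigma_0$ is contractible, the differentiable family $\B_{\Sigma_0}$ is the projectivization $\PP(\mathcal E)$ of a differentiable family $\mathcal E$ of $q^*\theta_0$-twisted locally free sheaves of rank $r$; after tensoring $\mathcal E$ by the $q$-pullback of a line bundle on $Y$ I may assume $\mathcal E|_{\{\sigma_0\}\times Y}=E_{\sigma_0}$. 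On
\[
\C=\PP(q^*E_{\sigma_0}^\vee)\times_{\Sigma_0\times Y}\PP(\mathcal E)
\]
one then has the relative hyperplane bundles $\mathcal O_1(1)$ and $\mathcal O_2(1)$, twisted by $q^*\theta_0$ and $q^*\theta_0^{-1}$ respectively, and I set $\LB:=\mathcal O_1(1)\otimes\mathcal O_2(1)$. This is an honest line bundle, because $\LB^{-1}=\mathcal O_1(-1)\otimes\mathcal O_2(-1)$ is a sub-line-bundle of the pullback of the untwisted rank-$r^2$ sheaf $q^*E_{\sigma_0}^\vee\otimes\mathcal E=\SheafHom(q^*E_{\sigma_0},\mathcal E)$, and for the same reason it is a differentiable family of holomorphic line bundles on $\C$ (equivalently: the obstruction in $H^2(\Sigma_0\times Y,\mathcal O^\ast)$ to lifting the fibrewise class $(1,1)$ along $p$ is $q^*\theta_0\cdot q^*\theta_0^{-1}=1$). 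Restricted to the $\PP^{r-1}\times\PP^{r-1}$ fibre of $p$ over any point of $\Sigma_0\times Y$, each $\mathcal O_i(1)$ restricts to $\StructureSheaf{\PP^{r-1}}(1)$ on the $i$-th factor, so $\LB$ restricts to $\StructureSheaf{\PP^{r-1}\times\PP^{r-1}}(1,1)$; this gives part (\ref{lemma-item-existence-of-LB}).

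For part (\ref{lemma-item-hom-sheaf-has-zero-first-chern-class}) I would argue as follows. Since $\StructureSheaf{\PP^{r-1}\times\PP^{r-1}}(1,1)$ has vanishing higher cohomology and an $r^2$-dimensional space of sections, $p_{\sigma*}\LB_\sigma$ is locally free of rank $r^2$ on $Y$, and the projective-bundle push-forward formula applied to the cartesian square defining $\C_\sigma$ yields a canonical isomorphism $p_{\sigma*}\LB_\sigma\cong E_{\sigma_0}^\vee\otimes\mathcal E_\sigma=\SheafHom(E_{\sigma_0},\mathcal E_\sigma)$, where $\mathcal E_\sigma:=\mathcal E|_{\{\sigma\}\times Y}$ (the $\theta_0^{-1}$ and $\theta_0$ twists cancelling). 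Now $c_1(p_{\sigma*}\LB_\sigma)\in H^2(Y,\ZZ)$ is locally constant in $\sigma\in\Sigma_0$, hence, $\Sigma_0$ being connected, equal to its value at $\sigma_0$. There $\mathcal E_{\sigma_0}=E_{\sigma_0}$, so $p_{\sigma_0*}\LB_{\sigma_0}\cong\SheafEnd(E_{\sigma_0})$, whose determinant is $(\det E_{\sigma_0})^{\otimes r}\otimes(\det E_{\sigma_0})^{\otimes(-r)}=\StructureSheaf{Y}$ (again the twists cancel), so its first Chern class vanishes. Therefore $c_1(p_{\sigma*}\LB_\sigma)=0$ for every $\sigma\in\Sigma_0$.

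The step I expect to be the main obstacle is the first one: making precise, within the Kodaira--Spencer formalism of differentiable families, that $\B_{\Sigma_0}$ is presented by a single differentiable family $\mathcal E$ of twisted locally free sheaves with the \emph{fixed} twisting class $q^*\theta_0$ --- this is exactly where the contractibility of $\Sigma_0$ enters --- and that the line bundle $\LB$ produced from it is genuinely a differentiable family of holomorphic line bundles on $\C$. Once these foundational points are settled, parts (\ref{lemma-item-existence-of-LB}) and (\ref{lemma-item-hom-sheaf-has-zero-first-chern-class}) reduce to the short fibrewise computations indicated above.
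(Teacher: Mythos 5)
Your first step -- lifting the differentiable family of projective bundles $\B_{\Sigma_0}$ to a differentiable family $\mathcal E$ of $q^*\theta_0$-twisted locally free sheaves -- is a genuine gap, and you are right to flag it as the main obstacle. Contractibility of $\Sigma_0$ gives you control over integral cohomology (restriction to the fibre $\{\sigma_0\}\times Y$ is an isomorphism on $H^*(\cdot,\ZZ)$), but the obstruction to lifting through $1\to\StructureSheaf{}^*\to GL_r\to PGL_r\to 1$ lives in $H^2$ of the Kodaira--Spencer structure sheaf $\StructureSheaf{}^*$ of fibrewise-holomorphic functions, and that group does not trivially restrict isomorphically to a single fibre. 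Knowing that the fibrewise Brauer classes are all $[\theta_0]$ is not by itself enough to conclude that the obstruction on $\Sigma_0\times Y$ equals $q^*[\theta_0]$; one would have to invoke a base-change theorem in the Kodaira--Spencer framework (essentially \cite[Theorem 2.2]{kodaira-spencer}) to propagate fibrewise information to a global conclusion, which is exactly the kind of argument your proposal defers.

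The paper avoids the lifting problem altogether by working directly on $\C$, where the fibres of $p$ are $\PP^{r-1}\times\PP^{r-1}$ and there is no Brauer obstruction to speak of. It takes the relative anticanonical family $\omega_p^*$, whose first Chern class is $r\lambda$ for a class $\lambda\in H^2(\C,\ZZ)$ that restricts to the class of the incidence divisor on each $\C_\sigma$. Since $c_1(\omega_{p_\sigma})$ already lies in the Picard group of $\C_\sigma$, the integral class $\lambda$ dies in $H^2(\C_\sigma,\StructureSheaf{\C_\sigma})$ for every $\sigma$; then \cite[Theorem 2.2(ii)]{kodaira-spencer} upgrades this fibrewise vanishing to vanishing in $H^2(\C,\StructureSheaf{\C})$, and the long exact exponential sequence produces $\LB$ with $c_1(\LB)=\lambda$. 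In effect, the paper applies the Kodaira--Spencer base-change theorem once, to the cleanest possible class on $\C$, whereas your route would have to apply the same kind of theorem to the Brauer obstruction on $\Sigma_0\times Y$, where the bookkeeping of twists is considerably more delicate. Your part (2) and part (3) computations would go through once $\mathcal E$ is available (modulo the usual $\PP(E)$ vs. $\PP(E^\vee)$ convention in the pushforward of $\mathcal O(1,1)$), and they parallel the paper's argument that $p_{\sigma_0,*}\LB_{\sigma_0}$ is the Azumaya algebra of $B_{\sigma_0}$, whose $c_1$ vanishes, followed by local constancy in $\sigma$.
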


\begin{proof}
(\ref{lemma-item-existence-of-LB})
Let $\StructureSheaf{\C}$ be the sheaf of germs of differentiable complex valued functions, 
which restrict to holomorphic functions on fibers of $\C\rightarrow \Sigma_0$.
Denote by $\StructureSheaf{\C}^*$ the sheaf of invertible such germs of functions 
\cite[Sec. I.1]{kodaira-spencer}.
We have the standard short exact exponential sequence 
\[
0\rightarrow \ZZ \rightarrow \StructureSheaf{\C}\rightarrow \StructureSheaf{\C}^*\rightarrow 0
\]
and its long exact cohomology sequence 
\[
\cdots H^1(\C,\StructureSheaf{\C})\rightarrow H^1(\C,\StructureSheaf{\C}^*)\RightArrowOf{c_1} 
H^2(\C,\Integers)\rightarrow H^2(\C,\StructureSheaf{\C}) \rightarrow \cdots
\]
The group $H^1(\C,\StructureSheaf{\C}^*)$ parametrizes equivalence classes of differentiable families of holomorphic line bundles
over $\C\rightarrow \Sigma_0$, by \cite[Prop. 1.1]{kodaira-spencer}.

Let $\omega_{p_\sigma}$ be the relative canonical line bundle over $\C_\sigma$ and 
let $\omega_p\rightarrow  \C \rightarrow \Sigma_0$ be the corresponding differentiable family of holomorphic line bundles over $\C\rightarrow \Sigma_0$. 
The dual line bundle $\omega_{p_{\sigma_0}}^*$ is isomorphic to 
$\StructureSheaf{\C_{\sigma_0}}(rD)$, where $D\subset B_{\sigma_0}^*\times_{Y} B_{\sigma_0}$ is the incidence divisor. 
In particular, the class $c_1(\omega_{p_{\sigma_0}}^*)$ is equal to 
$r\lambda_0$, where $\lambda_0\in H^2(\C_{\sigma_0},\ZZ)$ is the cohomology class of $D$. 
The restriction homomorphism 
$H^2(\C,\Integers)\rightarrow H^2(\C_\sigma,\ZZ)$ is an isomorphism, for all $\sigma\in \Sigma_0$, 
since $\C\rightarrow \Sigma_0$ is a differentiable fibration over a contractible base. Consequently, $c_1(\omega_p)=r\lambda$, for the class 
$\lambda\in H^2(\C,\Integers)$ restricting to $\lambda_0$. The class $\lambda$ maps to zero in $H^2(\C_\sigma,\StructureSheaf{\C_\sigma})$, for all $\sigma\in \Sigma_0$, since so does $c_1(\omega_p)$. 
Hence, the image of $\lambda$ in $H^2(\C,\StructureSheaf{\C})$ vanishes, by 
\cite[Theorem 2.2(ii)]{kodaira-spencer}. It follows that $\lambda$ is the image of some class $\tilde{\lambda}$ in
$H^1(\C,\StructureSheaf{\C}^*)$, by the exactness of the long exact sequence above. 
The existence of a differentiable family $\LB$ of 
holomorphic line bundles over $\C\rightarrow \Sigma_0$ with $c_1(\LB)=\lambda$ follows, by \cite[Prop. 1.1]{kodaira-spencer}.

(\ref{lemma-item-hom-sheaf-has-zero-first-chern-class})
The vector bundle $p_{\sigma_0,*}\LB_{\sigma_0}$ is isomorphic to the Azumaya algebra of $B_{\sigma_0}$ and hence
its first Chern class vanishes. Let $f:\C\rightarrow \Sigma_0$ be the natural map. 
The Chern classes $c_1(p_{\sigma_*}(\LB_\sigma))$, $\sigma\in\Sigma_0$,  
define a continuous section of the local system $R^2f_*\ZZ$ over $\Sigma_0$, which vanishes at $\sigma_0$ and hence vanishes globally.
\end{proof}

\begin{defi}
A projective $\PP^{r}$-bundle $B$  is 
said to be {\em infinitesimally rigid}, if $H^1(ad(P(B)))=0,$
where $P(B)$ is the principal $PGL(r+1,\CC)$-bundle associated to $B$ and $ad(P(B))$ is the adjoint Lie algebra bundle. 
\end{defi}

\begin{prop}
\label{prop-single-isomorphism-class-or-non-rigid}
Assume that the differentiable family $\B\rightarrow Y\times \Sigma\rightarrow \Sigma$ above has the following additional properties:
\begin{enumerate}
\item
$Y$ is the blow-up of a simply connected compact K\"{a}hler manifold $Z$ along a smooth subvariety.
\item
Each projective bundle $B_\sigma$ is the projectivization of the  pullback to $Y$ of a twisted reflexive sheaf 
$E_\sigma$ modulo its torsion subsheaf.
\item
There exists a K\"{a}hler class $\omega$ on $Z$, such that $E_\sigma$ is $\omega$-slope-stable, for every $\sigma\in\Sigma$.
\end{enumerate}
Then either $B_\sigma$ is infinitesimally rigid, for every $\sigma\in \Sigma$, or $B_\sigma$ is not infinitesimally rigid for any 
$\sigma\in \Sigma$. In the former case the set $\{B_\sigma, \sigma\in \Sigma\}$ consists of a single isomorphism class. 
\end{prop}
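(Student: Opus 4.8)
The plan is to show that the subset $\Sigma^{\mathrm{rig}} := \{\sigma\in\Sigma : B_\sigma \text{ is infinitesimally rigid}\}$ is both open and closed in the connected manifold $\Sigma$, and that on this set all the $B_\sigma$ are mutually isomorphic. Openness will follow from the Semicontinuity Theorem applied in the differentiable-family setting of Kodaira--Spencer; closedness and the single-isomorphism-class conclusion will follow by combining the constancy of $\dim H^0(\mathrm{ad}(P(B_\sigma)))$ with the fact that rigidity forces local triviality, together with a Hartogs-type extension across the exceptional divisor and the uniqueness of a stable reflexive extension.

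First I would translate the problem into a statement about the Azumaya algebra $\mathcal A_\sigma := \mathcal Hom$-sheaf of $B_\sigma$. Working on a contractible open $\Sigma_0\ni\sigma_0$, I use Lemma \ref{lemma-existence-of-LB} to produce the differentiable family of line bundles $\LB\to\mathcal C\to\Sigma_0$ on $\mathcal C = B_{\sigma_0}^*\times_Y\B_{\Sigma_0}$, so that $p_{\sigma,*}\LB_\sigma$ is a differentiable family of holomorphic vector bundles on $Y$ whose fiber over $\sigma_0$ is $\mathcal A_{\sigma_0}$ and whose fiber over a general $\sigma$ is $\mathcal Hom(E_{\sigma_0},E_\sigma)$ pulled to $Y$ (modulo torsion), and with $c_1=0$ for all $\sigma$. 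Now $H^1(Y,\mathrm{ad}(P(B_\sigma)))$ is a summand of $H^1(Y,\mathcal End(p_{\sigma,*}\LB_\sigma))$, and more to the point $\mathrm{Hom}$ between two stable reflexive sheaves of the same slope is computed by $H^0$ of the Hom-sheaf on $Z$, which is zero unless the sheaves are isomorphic and one-dimensional when they are. By the Semicontinuity Theorem for the differentiable family $p_{\sigma,*}\LB_\sigma$ on $Y$ (valid by \cite{kodaira-spencer}, as invoked in the excerpt), $\dim H^0(Y,\mathcal Hom\text{-bundle})$ and $\dim H^1$ are upper semicontinuous in $\sigma$; hence $\Sigma^{\mathrm{rig}}$ is open.

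For closedness and the isomorphism-class claim, I would argue as follows. The function $\sigma\mapsto \dim H^0(Y, p_{\sigma,*}\LB_\sigma)$ is upper semicontinuous and (since $c_1=0$ and $E_\sigma$ is $\omega$-slope-stable on $Z$, with $Y$ the blow-up and $R^i\beta_*$ vanishing by Assumption \ref{assumption-gamma-hyperholomorphic-for-all-gamma}) it equals $\dim\mathrm{Hom}_Z(E_{\sigma_0},E_\sigma)$, which is $0$ or $1$. It is $\geq 1$ at $\sigma=\sigma_0$. If $\sigma_0\in\Sigma^{\mathrm{rig}}$, then near $\sigma_0$ rigidity of $B_\sigma$ together with the Kodaira--Spencer local triviality theorem for rigid holomorphic objects forces the family $\B$ to be locally trivial as a family of projective bundles, so $B_\sigma\cong B_{\sigma_0}$ for $\sigma$ near $\sigma_0$; in particular $\dim\mathrm{Hom}_Z(E_{\sigma_0},E_\sigma)=1$ locally. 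Thus the locus where this Hom-dimension equals $1$ is open; it is also closed by semicontinuity; by connectedness of $\Sigma$ it is everything, which gives that $E_\sigma$ (hence $B_\sigma$, via the projectivization and the fact that a nonzero homomorphism between slope-stable reflexive sheaves of equal rank and slope is an isomorphism in codimension one, extended by reflexivity and Hartogs across the blown-up locus using simple-connectedness of $Z$) is isomorphic to $B_{\sigma_0}$ for every $\sigma$, and in particular $B_\sigma$ is rigid for every $\sigma$. Conversely if no $B_\sigma$ were assumed rigid, there is nothing to prove; the dichotomy in the statement is exactly: either $\sigma_0\in\Sigma^{\mathrm{rig}}$ for some (hence every) $\sigma_0$, or $\Sigma^{\mathrm{rig}}=\emptyset$.

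The main obstacle I anticipate is the bookkeeping around the blow-up $Y\to Z$: one must check carefully that infinitesimal rigidity of the projective bundle $B_\sigma$ on $Y$ is equivalent to, or at least controlled by, the vanishing of $\mathrm{Ext}^1$ of the reflexive sheaf $E_\sigma$ on $Z$, using $R^i\beta_*\widetilde E_\gamma=0$ from Assumption \ref{assumption-gamma-hyperholomorphic-for-all-gamma} and the projection formula, and that a nonzero section of the Hom-bundle on $Y$ descends to a genuine morphism $E_{\sigma_0}\to E_\sigma$ of reflexive sheaves on $Z$ which, being between slope-stable objects of equal slope and rank, is an isomorphism. The other delicate point is invoking the Semicontinuity and local-triviality theorems in the $C^\infty$-family category rather than the analytic one; this is exactly what \cite{kodaira-spencer} provides, and I would simply cite it, as the excerpt already does.
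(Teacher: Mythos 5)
Your core argument --- show $U := \{\sigma : B_\sigma \cong B_{\sigma_0}\}$ is open by local triviality of rigid objects and closed by upper semicontinuity of $\dim H^0(Y, p_{\sigma,*}\LB_\sigma)$ together with the fact that slope-stability of the $E_\sigma$ on $Z$ forces $\dim\Hom(E_{\sigma_0},E_\sigma)\in\{0,1\}$ (so that $H^0\neq 0$ iff $B_\sigma\cong B_{\sigma_0}$), then invoke connectedness of $\Sigma$ --- is exactly the paper's. Two caveats about your first paragraph, neither of which damages the argument: the cohomology $H^1(Y,\mathrm{ad}(P(B_\sigma)))$ is not a direct summand of $H^1(Y,\SheafEnd(p_{\sigma,*}\LB_\sigma))$, since the fiber of $p_{\sigma,*}\LB_\sigma$ is $\SheafHom(\widetilde B_{\sigma_0},\widetilde B_\sigma)$ rather than $\SheafEnd(\widetilde B_\sigma)$, so semicontinuity of $\dim H^1$ for that family does not by itself give openness of $\Sigma^{\mathrm{rig}}$ (the correct tool there would be semicontinuity for the family of Azumaya algebras, or, as both the paper and your second paragraph effectively use, the local triviality theorem); and the blow-up bookkeeping you flag as the main worry --- relating infinitesimal rigidity of $B_\sigma$ on $Y$ to that of $E_\sigma$ on $Z$ --- is not needed for this proposition at all, but only later in Step~2 of the proof of Theorem~\ref{thm-rigidity}, since here the $E_\sigma$ enter only through the injection $\Hom(\widetilde B_{\sigma_0},\widetilde B_{\sigma_1})\hookrightarrow\Hom(E_{\sigma_0},E_{\sigma_1})$ and the resulting stability dichotomy.
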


\begin{proof}
Assume that there exists a point 
$\sigma_0$ in $\Sigma$, such that $B_{\sigma_0}$ is infinitesimally rigid. 
Let $U\subset \Sigma$ be the subset
\[
\{\sigma\in \Sigma \ : \ B_\sigma \cong B_{\sigma_0}\}.
\]
Denote its complement by $U^c$. The subset $U$ is open, by \cite[Theorem 7.4]{kodaira-spencer}.
Let $\sigma_1$ be a point of $U^c$. 
Choose an open contractible subset $\Sigma_0\subset \Sigma$, such that $\Sigma_0$ contains
$\{\sigma_0,\sigma_1\}$. This is possible, since $\Sigma$ is a connected manifold. Let
$\LB\rightarrow \C \rightarrow \Sigma_0$ be 
a differentiable family of holomorphic line bundles with the properties of Lemma
\ref{lemma-existence-of-LB}. The characteristic classes 
$\tilde{\theta}_\sigma  \in H^1(Y,\mu_r)$ of $B_\sigma$, $\sigma\in \Sigma$, define a continuous section of the trivial local system with fiber $H^1(Y,\mu_r)$ over $\Sigma$. Hence, 
$\tilde{\theta}_{\sigma_1}=\tilde{\theta}_{\sigma_2}$. Choose a cocycle $\theta$ representing this characteristic class and 
let $\widetilde{B}_{\sigma_i}$, $i=0,1$, be $\theta$-twisted locally free sheaves over $Y$, such that
$\PP(\widetilde{B}_{\sigma_i})=B_{\sigma_i}$. Then the vector bundle 
$p_{\sigma_1,*}(\LB_{\sigma_1})$ is isomorphic to the tensor product of 
$\SheafHom(\widetilde{B}_{\sigma_0},\widetilde{B}_{\sigma_1})$ by some line bundle over $Y$. 
The vector bundle $p_{\sigma_0,*}(\LB_{\sigma_0})$  is isomorphic to $\SheafHom(\widetilde{B}_{\sigma_0},\widetilde{B}_{\sigma_0})$, since $c_1:\Pic(Y)\rightarrow H^2(Y,\ZZ)$ is injective and $c_1(p_{\sigma,*}(\LB_{\sigma}))=0$, for all $\sigma\in\Sigma_0$, by the property  of $\LB$ mentioned in Lemma \ref{lemma-existence-of-LB} (\ref{lemma-item-hom-sheaf-has-zero-first-chern-class}). 
The lift $\widetilde{B}_{\sigma_1}$ is determined by the lift $\widetilde{B}_{\sigma_0}$ and the condition that 
$c_1\left(\SheafHom(\widetilde{B}_{\sigma_0},\widetilde{B}_{\sigma_1})\right)=0$. Furthermore, with that lift 
$p_{\sigma_1,*}(\LB_{\sigma_1})$ is isomorphic to $\SheafHom(\widetilde{B}_{\sigma_0},\widetilde{B}_{\sigma_1})$. 
If $B_{\sigma_0}$ is isomorphic to $B_{\sigma_1}$ then $p_{\sigma_1,*}(\LB_{\sigma_1})$ is isomorphic
to the Azumaya algebra of $B_{\sigma_0}$, since the first Chern classes of both vanish.
Hence, $\dim H^0(Y,p_{\sigma_1,*}(\LB_{\sigma_1}))>0$, whenever $B_{\sigma_0}$ is isomorphic to $B_{\sigma_1}$.

If $B_{\sigma_0}$ is not isomorphic to $B_{\sigma_1}$, then $H^0(Y,p_{\sigma_1,*}(\LB_{\sigma_1}))$
vanishes. This is seen as follows. Let $\beta:Y\rightarrow Z$ be the blow-up morphism. 
By assumption, $B_\sigma$ is the projectivization of the locally free part of the pullback 
$\beta^*E_\sigma$ of a twisted sheaf $E_\sigma$ over $Z$. 
We can choose $E_{\sigma_i}$ so that $\widetilde{B}_{\sigma_i}$ is the tensor product of 
the locally free part of $\beta^*E_{\sigma_i}$ with
$\StructureSheaf{Y}(j_iD)$, for some integer $j_i$, $i=0,1$. 
Similarly, each of the characteristic classes $\tilde{\theta}_\sigma$ is a pullback of the characteristic class over
$Z$ of the reflexive twisted sheaf $E_\sigma$. 
We can thus choose the cocycle $\theta$ to be the pullback of a cocycle over $Z$.
The first Chern class of $\SheafHom(\widetilde{B}_{\sigma_0},\widetilde{B}_{\sigma_1})$ vanishes, 
by the isomorphism of the latter with $p_{\sigma_1,*}(\LB_{\sigma_1})$. 
The vanishing of $c_1\left(\SheafHom(\widetilde{B}_{\sigma_0},\widetilde{B}_{\sigma_1})\right)$ implies that $j_0=j_1=j$,
for some integer $j$.
Then $\beta_*(\widetilde{B}_{\sigma_i}(-jD))\cong E_{\sigma_i}$, $i=0,1$. 

The functor $\beta_*$ induces a natural injective homomorphism 
\[
\Hom(\widetilde{B}_{\sigma_0},\widetilde{B}_{\sigma_1})
\cong \Hom(\widetilde{B}_{\sigma_0}(-jD),\widetilde{B}_{\sigma_1}(-jD))
\RightArrowOf{\beta_*}
\Hom(E_{\sigma_0},E_{\sigma_1}).
\]
The sheaves $E_{\sigma_i}$, $i=0,1$, are $\omega$-slope-stable. The first Chern class of the sheaf 
$\SheafHom(E_{\sigma_0},E_{\sigma_1})$ 
is equal to the image via the Gysin map $\beta_*:H^2(Y,\ZZ)\rightarrow H^2(Z,\ZZ)$ of that of 
$\SheafHom(\widetilde{B}_{\sigma_0},\widetilde{B}_{\sigma_1})$. 
We have seen that the first Chern class of $\SheafHom(\widetilde{B}_{\sigma_0},\widetilde{B}_{\sigma_1})$ vanishes. 
Hence, $c_1(\SheafHom(E_{\sigma_0},E_{\sigma_1}))=0.$
Consequently, $\Hom(E_{\sigma_0},E_{\sigma_1})$ does not vanish, if and only if 
$E_{\sigma_0}$ is isomorphic to $E_{\sigma_1}$. It follows that 
$\Hom(\widetilde{B}_{\sigma_0},\widetilde{B}_{\sigma_1})$ does not vanish, if and only if 
$B_{\sigma_0}$ is isomorphic to $B_{\sigma_1}$. 
The isomorphism 
$\Hom(\widetilde{B}_{\sigma_0},\widetilde{B}_{\sigma_1})\cong H^0(Y,p_{\sigma_{1,*}}(\LB_{\sigma_1}))$ implies 
that the space $H^0(Y,p_{\sigma_{1,*}}(\LB_{\sigma_1}))$ does not vanish, if and only if 
$B_{\sigma_0}$ is isomorphic to $B_{\sigma_1}$.

We have shown that the intersection $U^c\cap \Sigma_0$ is precisely the subset of $\Sigma_0$ consisting of points $\sigma$ such that 
$H^0(Y,p_{\sigma_*}(\LB_{\sigma}))$ vanishes. The latter is an open subset of $\Sigma_0$, by 
 the Upper-Semi-Continuity Theorem \cite[Theorem 2.1]{kodaira-spencer}. 
 We conclude that $U^c$ is an open subset of $\Sigma$. The space $\Sigma$ is a connected manifold. 
 Hence, $U^c$ must be empty.
\hide{
Let $\Sigma_{rigid}\subset \Sigma$ be the subset consisting of $\sigma\in\Sigma$ for which $B_\sigma$ is infinitesimally rigid,
\[
\Sigma_{rigid}=\{
\sigma\in \Sigma \ : \ H^1(Y,ad(P(B_\sigma)))=0
\}.
\]
Denote its complement by $\Sigma_{rigid}^c\subset \Sigma$. 
The space $\Sigma$ is a connected manifold.
It suffices to prove that both $\Sigma_{rigid}$ and $\Sigma_{rigid}^c$ are open subsets.
The subset $\Sigma_{rigid}$ is open, by the Upper-Semi-Continuity Theorem \cite[Theorem 2.1]{kodaira-spencer}.
}
\end{proof}

\hide{
Assume that a differentiable family $\B$ as in Problem \ref{problem-construct-a-differentiable-family-B} has been constructed. 
Fix a path $\gamma_0\in \Gamma$. 
Let $\Gamma_0\subset \Gamma$ be a contractible open subset containing $\gamma_0$.
The product $\C:=B_{\gamma_0}^*\times_{Y_k} \B_{\Gamma_0}$ is a differentiable family of holomorphic 
$\PP^{r-1}\times \PP^{r-1}$-bundles over $\Gamma_0\times Y_k$.  
Set $\C_\gamma:=B_{\gamma_0}^*\times_{Y_k}B_\gamma$, $\gamma\in\Gamma_0$.
Let $p:\C\rightarrow \Gamma_0\times Y_k$ and $p_\gamma:\C_\gamma\rightarrow Y_k$ be the natural projections. 

\begin{lem}
\label{lemma-existence-of-LB}
There exists a differentiable family $\LB\rightarrow \C \rightarrow \Gamma_0$ of holomorphic line bundles with the following properties.
Let $\LB_\gamma$ be the restriction of $\LB$ to $\C_\gamma$, $\gamma\in \Gamma_0$.
\begin{enumerate}
\item
\label{lemma-item-existence-of-LB}
$\LB$ restricts to the $\PP^{r-1}\times \PP^{r-1}$ fiber over each point $(\gamma,y)\in \Gamma_0\times Y_k$ as the line bundles 
$\StructureSheaf{\PP^{r-1}\times \PP^{r-1}}(1,1)$. 
\item
\label{lemma-item-hom-sheaf-has-zero-first-chern-class}
The equality  $c_1(p_{\gamma_*}(\LB_\gamma))=0$ holds, for all $\gamma\in \Gamma_0$. 
\end{enumerate}
\end{lem}

\begin{proof}
(\ref{lemma-item-existence-of-LB})
Let $\StructureSheaf{\C}$ be the sheaf of germs of differentiable complex valued functions, 
which restrict to holomorphic functions on fibers of $\C\rightarrow \Gamma_0$.
Denote by $\StructureSheaf{\C}^*$ the sheaf of invertible such germs of functions 
\cite[Sec. I.1]{kodaira-spencer}.
We have the standard short exact exponential sequence 
\[
0\rightarrow \ZZ \rightarrow \StructureSheaf{\C}\rightarrow \StructureSheaf{\C}^*\rightarrow 0
\]
and its long exact cohomology sequence 
\[
\cdots H^1(\C,\StructureSheaf{\C})\rightarrow H^1(\C,\StructureSheaf{\C}^*)\RightArrowOf{c_1} 
H^2(\C,\Integers)\rightarrow H^2(\C,\StructureSheaf{\C}) \rightarrow \cdots
\]
The group $H^1(\C,\StructureSheaf{\C}^*)$ parametrizes equivalence classes of differentiable families of holomorphic line bundles
over $\C\rightarrow \Gamma_0$, by \cite[Prop. 1.1]{kodaira-spencer}.

Let $\omega_{p_\gamma}$ be the relative canonical line bundle over $\C_\gamma$ and 
let $\omega_p\rightarrow  \C \rightarrow \Gamma_0$ be the corresponding differentiable family of holomorphic line bundles over $\C\rightarrow \Gamma_0$. 
The dual line bundle $\omega_{\gamma_0}^*$ is isomorphic to $\StructureSheaf{\C_{\gamma_0}}(rD)$, where $D\subset B_{\gamma_0}^*\times_{Y_k} B_{\gamma_0}$ is the incidence divisor. In particular, the class $c_1(\omega_{\gamma_0}^*)$ is equal to 
$r\lambda_0$, where $\lambda_0\in H^2(\C_{\gamma_0},\ZZ)$ is the cohomology class of $D$. 
The restriction homomorphism 
$H^2(\C,\Integers)\rightarrow H^2(\C_\gamma,\ZZ)$ is an isomorphism, for all $\gamma\in \Gamma_0$, 
since $\C\rightarrow \Gamma_0$ is a differentiable fibration over a contractible base. Consequently, $c_1(\omega_p)=r\lambda$, for the class 
$\lambda\in H^2(\C,\Integers)$ restricting to $\lambda_0$. The class $\lambda$ maps to zero in $H^2(\C_\gamma,\StructureSheaf{\C_\gamma})$, for all $\gamma\in \Gamma_0$, since so does $c_1(\omega_p)$. 
Hence, the image of $\lambda$ in $H^2(\C,\StructureSheaf{\C})$ vanishes, by 
\cite[Theorem 2.2(ii)]{kodaira-spencer}. It follows that $\lambda$ is the image of some class $\tilde{\lambda}$ in
$H^2(\C,\StructureSheaf{\C}^*)$, by the exactness of the long exact sequence above. 
The existence of a differentiable family $\LB$ of 
holomorphic line bundles over $\C\rightarrow \Gamma_0$ with $c_1(\LB)=\lambda$ follows, by \cite[Prop. 1.1]{kodaira-spencer}.

\hide{
Such a line bundle $\LB_{\gamma_0}$ clearly exists over  $\C_{\gamma_0}$. 
The manifold $\Gamma_0\times Y_k$ is simply connected. Hence, $H^2(\C,\ZZ)$ is isomorphic to 
$H^2(\Gamma_0\times Y_k,\ZZ)\oplus \ZZ^2$ and 
the kernel of the restriction homomorphism
\[
H^2(\C,\ZZ)\rightarrow H^2(\C_\gamma,\ZZ)\rightarrow H^2(\C_\gamma,\StructureSheaf{\C_\gamma})
\]
is a constant subgroup isomorphic to $H^2(\Gamma_0,\ZZ)\oplus \Pic(Y_k)\oplus \ZZ^2.$
Choose a class $\lambda\in H^2(\C,\Integers)$ which restricts to  $c_1(\LB_{\gamma_0})$ in $H^2(\C_{\gamma_0},\ZZ)$. 
Then the image of $\lambda$ in $H^2(\C_\gamma,\StructureSheaf{\C_\gamma})$ vanishes, for all $\gamma\in \Gamma_0$.
Hence, the image of $\lambda$ in $H^2(\C,\StructureSheaf{\C})$ vanishes, by 
\cite[Theorem 2.2(ii)]{kodaira-spencer}. It follows that $\lambda$ is the image of some class $\tilde{\lambda}$ in
$H^2(\C,\StructureSheaf{\C}^*)$, by the exactness of the log exact sequence above. 
The existence of a differentiable family $\LB$ of 
holomorphic line bundles follows, by \cite[Prop. 1.1]{kodaira-spencer}.
}
(\ref{lemma-item-hom-sheaf-has-zero-first-chern-class})
The vector bundle $p_{\gamma_0,*}\LB_{\gamma_0}$ is isomorphic to the Azumaya algebra of $B_{\gamma_0}$ and hence
its first Chern class vanishes. Let $f:\C\rightarrow \Gamma_0$ be the natural map. 
The Chern classes $c_1(p_{\gamma_*}(\LB_\gamma))$, $\gamma\in\Gamma_0$,  
define a continuous section of the local system $R^2f_*\ZZ$ over $\Gamma_0$, which vanishes at $\gamma_0$ and hence vanishes globally.
\end{proof}

\begin{prop}
\label{prop-single-isomorphism-class-or-non-rigid}
Either $B_\gamma$ is infinitesimally rigid, for every $\gamma\in \Gamma$, or $B_\gamma$ is not infinitesimally rigid for any 
$\gamma\in \Gamma$. In the former case the set $\{B_\gamma, \gamma\in \Gamma\}$ consists of a single isomorphism class. 
\end{prop}

\begin{proof}
Assume that there exists a point 
$\gamma_0$ in $\Gamma$, such that $B_{\gamma_0}$ is infinitesimally rigid. 
Let $U\subset \Gamma$ be the subset
\[
\{\gamma\in \Gamma \ : \ B_\gamma \cong B_{\gamma_0}\}.
\]
Denote its complement by $U^c$. The subset $U$ is open, by \cite[Theorem 7.4]{kodaira-spencer}.
Let $\gamma_1$ be a point of $U^c$. 
Choose an open contractible subset $\Gamma_0\subset \Gamma$, such that $\Gamma_0$ contains
$\{\gamma_0,\gamma_1\}$. This is possible, since $\Gamma$ is a connected manifold. Let
$\LB\rightarrow \C \rightarrow \Gamma_0$ be 
a differentiable family of holomorphic line bundles with the properties of Lemma
\ref{lemma-existence-of-LB}. The characteristic classes 
$\tilde{\theta}_\gamma  \in H^1(Y_k,\mu_r)$ of $B_\gamma$, $\gamma\in \Gamma$, define a continuous section of the trivial local system with fiber $H^1(Y_k,\mu_r)$ over $\Gamma$. Hence, 
$\tilde{\theta}_{\gamma_1}=\tilde{\theta}_{\gamma_2}$. Choose a cocycle $\theta$ representing this characteristic class and 
let $\widetilde{B}_{\gamma_i}$, $i=0,1$, be $\theta$-twisted locally free sheaves over $Y_k$, such that
$\PP(\widetilde{B}_{\gamma_i})=B_{\gamma_i}$. Then the vector bundle 
$p_{\gamma_1,*}(\LB_{\gamma_1})$ is isomorphic to the tensor product of 
$\SheafHom(\widetilde{B}_{\gamma_0},\widetilde{B}_{\gamma_1})$ by some line bundle over $Y_k$. We may assume that 
$p_{\gamma_1,*}(\LB_{\gamma_1})$ is isomorphic to $\SheafHom(\widetilde{B}_{\gamma_0},\widetilde{B}_{\gamma_1})$,
possibly after replacing $\widetilde{B}_{\gamma_1}$ by its tensor product with a line bundle. 
If $B_{\gamma_0}$ is isomorphic to $B_{\gamma_1}$ then $p_{\gamma_1,*}(\LB_{\gamma_1})$ is isomorphic
to the Azumaya algebra of $B_{\gamma_0}$, since $c_1(p_{\gamma_1,*}(\LB_{\gamma_1}))=0$ by the construction of $\LB$.
Hence, $\dim H^0(Y_k,p_{\gamma_1,*}(\LB_{\gamma_1}))>0$ in this case.

If $B_{\gamma_0}$ is not isomorphic to $B_{\gamma_1}$, then $H^0(Y_k,p_{\gamma_1,*}(\LB_{\gamma_1}))$
vanishes. This is seen as follows. Let $\beta_k:Y_k\rightarrow Z$ be the blow-up morphism. 
By construction, $B_\gamma$ is the projectivization of the locally free part of the pullback 
$\beta_k^*E_\gamma$ of a twisted sheaf $E_\gamma$ over $X_k\times X_k$. 
We can choose $E_{\gamma_i}$ so that $\widetilde{B}_{\gamma_i}$ is the tensor product of 
the locally free part of $\beta_k^*E_{\gamma_i}$ with
$\StructureSheaf{Y_k}(j_iD)$, for some integer $j_i$, $i=0,1$. 
Similarly, each of the characteristic classes $\tilde{\theta}_\gamma$ is a pullback of the characteristic class over
$X_k\times X_k$ of the reflexive twisted sheaf $E_\gamma$. 
We can thus choose the cocycle $\theta$ to be the pullback of a cocycle over $X_k\times X_k$ of the form 
$\pi_1^*\bar{\theta}\pi_2^*\bar{\theta}^{-1}$, where $\bar{\theta}$ is a cocycle over $X_k$ and $\pi_i$ is the projection from 
$X_k\times X_k$ onto the $i$-th factor, $i=1,2$.
\hide{
Denote by $D:=\PP(TX_k)$ the exceptional divisor of $Y_k\rightarrow X_k\times X_k$.
Let $f:D\rightarrow X$ be the natural morphism.
Let $\ell\subset f^*TX$ be the tautological line bundle. Let $\ell^\perp\subset f^*TX$ be the symplectic-orthogonal subbundle. 
We can choose $\widetilde{B}_{\gamma_i}$ to be the tensor product of 
$\StructureSheaf{Y_k}(D)$ with the locally free part of the pullback of $E_{\gamma_i}$. 
The locally free sheaves $\widetilde{B}_{\gamma_i}$, $i=0,1$, restrict to $D$
as the same untwisted vector bundle $[\ell^\perp/\ell]$. 
}
The first Chern class of $\SheafHom(\widetilde{B}_{\gamma_0},\widetilde{B}_{\gamma_1})$ vanishes, 
by the isomorphism of the latter with $p_{\gamma_1,*}(\LB_{\gamma_1})$. 
The vanishing of $c_1\left(\SheafHom(\widetilde{B}_{\gamma_0},\widetilde{B}_{\gamma_1})\right)$ implies that $j_0=j_1=j$,
for some integer $j$.
Then $\beta_{k,*}(\widetilde{B}_{\gamma_i}(-jD))\cong E_{\gamma_i}$, $i=0,1$. 

The functor $\beta_{k,*}$ induces a natural injective homomorphism 
\[
\Hom(\widetilde{B}_{\gamma_0},\widetilde{B}_{\gamma_1})
\cong \Hom(\widetilde{B}_{\gamma_0}(-jD),\widetilde{B}_{\gamma_1}(-jD))
\RightArrowOf{\beta_{k,*}}
\Hom(E_{\gamma_0},E_{\gamma_1}).
\]
The sheaves $E_{\gamma_i}$, $i=0,1$, are $\widetilde{\omega}$-slope-stable with respect to every K\"{a}hler class $\omega$ on $X_k$. The first Chern class of the sheaf $\SheafHom(E_{\gamma_0},E_{\gamma_1})$ 
is equal to the image via the Gysin map $\beta_{k,*}:H^2(Y_k,\ZZ)\rightarrow H^2(X_k\times X_k,\ZZ)$ of that of 
$\SheafHom(\widetilde{B}_{\gamma_0},\widetilde{B}_{\gamma_1})$. 
We have seen that the first Chern class of $\SheafHom(\widetilde{B}_{\gamma_0},\widetilde{B}_{\gamma_1})$ vanishes. 
Hence, $c_1(\SheafHom(E_{\gamma_0},E_{\gamma_1}))=0.$
Consequently, $\Hom(E_{\gamma_0},E_{\gamma_1})$ does not vanish, if and only if 
$E_{\gamma_0}$ is isomorphic to $E_{\gamma_1}$. It follows that 
$\Hom(\widetilde{B}_{\gamma_0},\widetilde{B}_{\gamma_1})$ does not vanish, if and only if 
$B_{\gamma_0}$ is isomorphic to $B_{\gamma_1}$. 
The isomorphism 
$\Hom(\widetilde{B}_{\gamma_0},\widetilde{B}_{\gamma_1})\cong H^0(Y_k,p_{\gamma_{1,*}}(\LB_{\gamma_1}))$ implies 
that the space $H^0(Y_k,p_{\gamma_{1,*}}(\LB_{\gamma_1}))$ does not vanish, if and only if 
$B_{\gamma_0}$ is isomorphic to $B_{\gamma_1}$.

We have shown that the intersection $U^c\cap \Gamma_0$ is precisely the subset of $\Gamma_0$ consisting of points $\gamma$ such that 
$H^0(Y_k,p_{\gamma_*}(\LB_{\gamma}))$ vanishes. The latter is an open subset of $\Gamma_0$, by 
 the Upper-Semi-Continuity Theorem \cite[Theorem 2.1]{kodaira-spencer}. 
 We conclude that $U^c$ is an open subset of $\Gamma$. The space $\Gamma$ is a connected manifold. 
 Hence, $U^c$ must be empty.
\hide{
Let $\Gamma_{rigid}\subset \Gamma$ be the subset consisting of $\gamma\in\Gamma$ for which $B_\gamma$ is infinitesimally rigid,
\[
\Gamma_{rigid}=\{
\gamma\in \Gamma \ : \ H^1(Y_k,ad(P(B_\gamma)))=0
\}.
\]
Denote its complement by $\Gamma_{rigid}^c\subset \Gamma$. 
The space $\Gamma$ is a connected manifold.
It suffices to prove that both $\Gamma_{rigid}$ and $\Gamma_{rigid}^c$ are open subsets.
The subset $\Gamma_{rigid}$ is open, by the Upper-Semi-Continuity Theorem \cite[Theorem 2.1]{kodaira-spencer}.
}
\end{proof}

} 

\begin{cor}
\label{cor-single-isomorphism-class-for-G-family}
Assume given a differentiable $\G$-family of $\PP^{r-1}$-bundles 
$\B_v\rightarrow Y\times \Sigma_v \rightarrow \Sigma_v$, $v\in I$, 
as in Definition \ref{def-G-family-of-fiber-bundles}, with constant fiber $Y$,
each satisfying the hypotheses of Proposition \ref{prop-single-isomorphism-class-or-non-rigid}.
Assume further that $B_{\sigma_0}$ 
is infinitesimally rigid, for some $\sigma_0\in\Sigma_{v_0}$, for some vertex $v_0$. 
Then the restrictions of $\B_v$ to $Y\times \{\sigma\}$, for all vertices $v$ and all $\sigma\in \Sigma_v$, are isomorphic to a single projective bundle over $Y$.
\end{cor}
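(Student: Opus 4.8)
The plan is to deduce Corollary~\ref{cor-single-isomorphism-class-for-G-family} from Proposition~\ref{prop-single-isomorphism-class-or-non-rigid} by propagating the rigidity/isomorphism information along the graph $\G$. First I would fix the vertex $v_0$ and the point $\sigma_0\in\Sigma_{v_0}$ for which $B_{\sigma_0}$ is infinitesimally rigid. Since $\Sigma_{v_0}$ is connected and $\B_{v_0}\to Y\times\Sigma_{v_0}\to\Sigma_{v_0}$ satisfies the hypotheses of Proposition~\ref{prop-single-isomorphism-class-or-non-rigid}, that proposition gives the dichotomy on $\Sigma_{v_0}$: either every $B_\sigma$ ($\sigma\in\Sigma_{v_0}$) is infinitesimally rigid and they all lie in a single isomorphism class, or none is rigid. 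As $\sigma_0$ witnesses rigidity, the first alternative holds, so all the fibers of $\B_{v_0}$ are isomorphic to one projective bundle, call it $B$, and $B$ is infinitesimally rigid.

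Next I would spread this along edges of $\G$. Because $\G$ is connected, it suffices to show that if the conclusion holds at a vertex $v$ (i.e. every fiber of $\B_v$ is isomorphic to $B$ and $B$ is rigid), then it holds at any vertex $w$ joined to $v$ by an edge $e$ (with $\{h(e),t(e)\}=\{v,w\}$). By Definition~\ref{def-G-family-of-fiber-bundles}, the gluing isomorphism $\tilde\varphi_e$ identifies the restriction of $\B_v$ over the submanifold $M'_e$ (or $M''_e$) with the restriction of $\B_w$ over the corresponding submanifold of $\Sigma_w$; since $M'_e$ and $M''_e$ are nonempty (they are connected submanifolds), there is at least one point $\sigma\in\Sigma_w$ with $B_\sigma\cong B$, hence $B_\sigma$ infinitesimally rigid. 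Now apply Proposition~\ref{prop-single-isomorphism-class-or-non-rigid} to the family $\B_w\to Y\times\Sigma_w\to\Sigma_w$, which also satisfies the proposition's hypotheses by assumption: the existence of one rigid fiber forces all fibers of $\B_w$ to be infinitesimally rigid and to form a single isomorphism class, which must then be the class of $B$.

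Finally I would package this as an induction on the combinatorial distance in $\G$ from $v_0$: the set $S$ of vertices $v$ such that every fiber of $\B_v$ is isomorphic to $B$ is nonempty (it contains $v_0$) and, by the edge-propagation step, is closed under passing to adjacent vertices; since $\G$ is connected, $S=I$. Hence for every vertex $v$ and every $\sigma\in\Sigma_v$ the restriction of $\B_v$ to $Y\times\{\sigma\}$ is isomorphic to the single projective bundle $B$, which is what the corollary asserts.

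I do not anticipate a serious obstacle here: the only subtlety is making sure that the hypotheses of Proposition~\ref{prop-single-isomorphism-class-or-non-rigid} are genuinely inherited by each $\B_v$ (constant fiber $Y$ which is a blow-up of a simply connected compact K\"ahler manifold along a smooth subvariety, projectivizations of pullbacks of slope-stable twisted reflexive sheaves, a common polarization), but this is precisely what the statement of the corollary assumes. The one point requiring a word of care is that the gluing in Definition~\ref{def-G-family-of-fiber-bundles} identifies the \emph{families} over $M'_e$ and $M''_e$, so it genuinely produces a point of $\Sigma_w$ whose fiber is isomorphic to a fiber of $\B_v$; granting that, the argument is a straightforward connectedness-plus-dichotomy propagation.
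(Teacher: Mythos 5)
Your proposal is correct and follows essentially the same route as the paper's proof: apply Proposition~\ref{prop-single-isomorphism-class-or-non-rigid} at the vertex $v_0$, then propagate the single rigid isomorphism class along edges of $\G$ by induction on graph distance, using the gluing isomorphisms $\tilde\varphi_e$ and connectedness of the $\Sigma_v$'s. The paper states this in one sentence; you have merely spelled out the induction, which is fine.
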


\begin{proof}
The statement follows from 
Proposition \ref{prop-single-isomorphism-class-or-non-rigid} by induction on the distance  from a vertex $v$ to $v_0$ in the graph,
since the graph $\G$ is assumed connected and the
base manifolds $\Sigma_v$, $v\in I$, are assumed connected, by Definitions 
\ref{def-differentiable-family-of-complex-manifolds} and \ref{def-G-family}.
\end{proof}

%
\subsection{The rigidity locus in the moduli space of marked pairs}
\label{sec-rigidity-locus-in-moduli}
Assume given a differentiable family $\B$ as in Lemma \ref{problem-construct-a-differentiable-family-B}. 
We get the differentiable family
$\B_\Gamma\rightarrow \Gamma\times Y\rightarrow \Gamma$ in Equation (\ref{eq-E-Gamma}).
The assumptions of Proposition \ref{prop-single-isomorphism-class-or-non-rigid} then follow 
from Assumption \ref{assumption-gamma-hyperholomorphic-for-all-gamma}. We can apply 
Proposition \ref{prop-single-isomorphism-class-or-non-rigid} with the K\"{a}hler class $\tilde{\omega}$ on
$Z:=X\times X$, for any K\"{a}hler class $\omega$ on $X$, as the sheaves $E_\gamma$ in 
Assumption \ref{assumption-gamma-hyperholomorphic-for-all-gamma} are required to be $\tilde{\omega}$-slope-stable, 
for every K\"{a}hler class $\omega$ and every $\gamma\in\Gamma$.

\begin{cor}
\label{cor-B-gamma-isomorphic-to-B-for-a-twistor-loop-gamma}
Let $B$ be an infinitesimally rigid bundle over the blow-up $Y_0$ of the diagonal in $X_0\times X_0$ associated to a sheaf $E$ over $X_0\times X_0$ satisfying Assumption \ref{assumption-gamma-hyperholomorphic-for-all-gamma}.
\begin{enumerate}
\item
\label{cor-item-rigid-bundle-returns-to-itself-along-a-loop}
The bundle $B_\gamma$ is isomorphic to $B,$ for every twistor path $\gamma$ from $(X_0,\eta_0)$ to itself.
\item
\label{cor-item-deformation-of-rigid-depends-only-on-the-endpoint}
The isomorphism class of the bundle $B_\gamma$ depends only on the endpoint of $\gamma$, and is independent of $\gamma$.
\end{enumerate}
\end{cor}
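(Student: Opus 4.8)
The plan is to obtain both parts from Proposition~\ref{prop-single-isomorphism-class-or-non-rigid}, applied to the differentiable family $\B_\Gamma\rightarrow\Gamma\times Y_0\rightarrow\Gamma$ of Equation~(\ref{eq-E-Gamma}) over the smooth connected manifold $\Gamma:=\mathring{f}_k^{-1}\bigl((X_0,\eta_0),(X_0,\eta_0)\bigr)$ of Equation~(\ref{eq-Gamma}), connectedness being Proposition~\ref{prop-ring-f}. The hypotheses of Proposition~\ref{prop-single-isomorphism-class-or-non-rigid} are met: $Y_0$ is the blow-up of the simply connected compact K\"ahler manifold $Z:=X_0\times X_0$ along the smooth diagonal, each $B_\gamma$ is the projectivization of the pullback to $Y_0$ of the reflexive sheaf $E_\gamma$ modulo torsion, and, for any K\"ahler class $\omega$ on $X_0$, all the sheaves $E_\gamma$, $\gamma\in\Gamma$, are $\tilde\omega$-slope-stable by Assumption~\ref{assumption-gamma-hyperholomorphic-for-all-gamma}. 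What Proposition~\ref{prop-single-isomorphism-class-or-non-rigid} then requires as input is a single infinitesimally rigid member of this family.

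To produce one I would use a ``back-and-forth'' loop. Using the surjectivity statement in Proposition~\ref{prop-ring-f}, choose a twistor path $\delta$ of the kind in~(\ref{eq-ring-Tw}), with sufficiently many twistor lines, starting at $(X_0,\eta_0)$, and set $\gamma_{\mathrm{ref}}:=\delta^{-1}\delta$; taking $k$ to be one more than the number of twistor lines of $\gamma_{\mathrm{ref}}$, a direct verification of Conditions~(\ref{condition-on-twistor-lines})--(\ref{condition-on-Ws}) shows that $\gamma_{\mathrm{ref}}\in\ringTw$, hence $\gamma_{\mathrm{ref}}\in\Gamma$, while the elementary cancellation moves of Definition~\ref{def-equivalent-twsitor-paths} reduce $\gamma_{\mathrm{ref}}$ to the constant path at $(X_0,\eta_0)$. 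Since $B$ is its own deformation along the constant path (which has no twistor lines), and equivalent twistor paths yield isomorphic projectivized sheaves, $B_{\gamma_{\mathrm{ref}}}\cong B$, which is infinitesimally rigid by hypothesis. With this, part~(\ref{cor-item-rigid-bundle-returns-to-itself-along-a-loop}) follows: given a twistor path $\gamma$ from $(X_0,\eta_0)$ to itself, enlarge $k$ if necessary and invoke Lemma~\ref{lemma-every-twistor-path-is-equivalent-to-one-in-ring-Tw} to replace $\gamma$ by an equivalent $\gamma'\in\ringTw$; equivalence preserves endpoints, so $\gamma'\in\Gamma$ and $B_{\gamma'}\cong B_\gamma$, and Proposition~\ref{prop-single-isomorphism-class-or-non-rigid}, together with the rigidity of $B_{\gamma_{\mathrm{ref}}}\cong B$, forces $\{B_\sigma:\sigma\in\Gamma\}$ to consist of the single isomorphism class of $B$; in particular $B_\gamma\cong B$.

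For part~(\ref{cor-item-deformation-of-rigid-depends-only-on-the-endpoint}), let $\gamma_1,\gamma_2$ be twistor paths from $(X_0,\eta_0)$ to a common pair $(X,\eta)$. The concatenation $\gamma_1^{-1}\gamma_2$ is a twistor loop at $(X_0,\eta_0)$, so $B_{\gamma_1^{-1}\gamma_2}\cong B$ by part~(\ref{cor-item-rigid-bundle-returns-to-itself-along-a-loop}). Because $\gamma_1\gamma_1^{-1}$ is equivalent to the constant path at $(X,\eta)$ and the equivalence relation is compatible with concatenation, $\gamma_2$ is equivalent to $\gamma_1(\gamma_1^{-1}\gamma_2)$, whence $B_{\gamma_2}\cong B_{\gamma_1(\gamma_1^{-1}\gamma_2)}$. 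Finally, the recursive construction of $E_\gamma$ via Theorem~\ref{thm-twistor-deformation-of-a-sheaf} is functorial under concatenation and, by the canonical dependence asserted in that theorem, depends only on the isomorphism class of the input sheaf; hence $B_{\gamma_1(\gamma_1^{-1}\gamma_2)}$ is the twistor deformation of $B_{\gamma_1^{-1}\gamma_2}\cong B$ along $\gamma_1$, namely $B_{\gamma_1}$. Therefore $B_{\gamma_1}\cong B_{\gamma_2}$.

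The main obstacle, I expect, is the manufacture of the rigid reference member: the family $\B_\Gamma$ has no obviously rigid member a priori, so one must check with some care that $\gamma_{\mathrm{ref}}=\delta^{-1}\delta$ genuinely satisfies the defining conditions of $\ringTw$ (so that $\Gamma$ is the connected manifold supplied by Proposition~\ref{prop-ring-f} and the family $\B_\Gamma$ of Equation~(\ref{eq-E-Gamma}) is defined over it) and that it is equivalent to the constant path with all intermediate twistor paths remaining admissible, and likewise verify the compatibility of both the equivalence relation and the twistor-deformation construction with concatenation used in part~(\ref{cor-item-deformation-of-rigid-depends-only-on-the-endpoint}). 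These are bookkeeping arguments with Definitions~\ref{def-twistor-path}, \ref{def-gamma-hyperholomorphic} and~\ref{def-equivalent-twsitor-paths}, but they must be carried out carefully.
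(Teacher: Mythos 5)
Your argument is correct and follows essentially the same route as the paper: you produce a rigid reference member of $\B_\Gamma$ over $\Gamma=\mathring{f}_k^{-1}((X_0,\eta_0),(X_0,\eta_0))$ by exhibiting a loop in $\ringTw$ equivalent to the constant path (the paper appeals directly to Lemma~\ref{lemma-every-twistor-path-is-equivalent-to-one-in-ring-Tw} rather than spelling out $\gamma_{\mathrm{ref}}=\delta^{-1}\delta$, but the mechanism is the same, and indeed the back-and-forth construction is exactly what underlies that lemma), then apply Proposition~\ref{prop-single-isomorphism-class-or-non-rigid} to conclude the whole fiber $\Gamma$ yields a single isomorphism class; and part~(\ref{cor-item-deformation-of-rigid-depends-only-on-the-endpoint}) is the same chain $B_{\gamma_2}\cong B_{\gamma_1\gamma_1^{-1}\gamma_2}\cong (B_{\gamma_1^{-1}\gamma_2})_{\gamma_1}\cong B_{\gamma_1}$ that the paper writes down, with your remarks about functoriality and canonical dependence making explicit what the paper leaves implicit.
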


\begin{proof}
(\ref{cor-item-rigid-bundle-returns-to-itself-along-a-loop})
The statement depends only on the equivalence class of $\gamma$, and so we may choose $k$ sufficiently large, such that 
$\ringTw$ contains both a path $\gamma_0$ equivalent to the constant twistor path from $(X_0,\eta_0)$ to itself, 
as well as the path $\gamma$, by 
Lemma \ref{lemma-every-twistor-path-is-equivalent-to-one-in-ring-Tw}, possibly after replacing $\gamma$ by an equivalent twistor path. Then 
$B_{\gamma_0}=B$. 
Hence, $B_\gamma$ is isomorphic to $B$, for all $\gamma\in \Gamma$, by Proposition
\ref{prop-single-isomorphism-class-or-non-rigid} applied with the differentiable family
$\B_\Gamma\rightarrow \Gamma\times Y_0\rightarrow \Gamma$ given in (\ref{eq-E-Gamma}).

(\ref{cor-item-deformation-of-rigid-depends-only-on-the-endpoint}) Let $\gamma_1$ and $\gamma_2$ be two twistor paths from $(X_0,\eta_0)$ to the same point $(X,\eta)$. Then
$B_{\gamma_2}\cong B_{\gamma_1\gamma_1^{-1}\gamma_2}\cong (B_{\gamma_1^{-1}\gamma_2})_{\gamma_1}\cong B_{\gamma_1}$, where the last isomorphism follows from Part (\ref{cor-item-rigid-bundle-returns-to-itself-along-a-loop}).
\end{proof}

Let $\zeta_v:\Z_v\rightarrow \Sigma_v$, $v\in I$, be a differentiable $\G$-family of {\em marked} irreducible holomorphic symplectic manifolds. 
It extends to a differentiable $\G$-family 
$\B_v\rightarrow \Y_{\Gamma_{\Sigma_v}}\rightarrow \mathring{\T}_{\Gamma_{\Sigma_v}}$, $v\in I$,  of $\PP^{r-1}$-bundles as in the relative set-up of 
Lemma \ref{problem-construct-a-differentiable-family-B-relative-version}. 

\begin{lem}
\label{lemma-if-B-sigma-gamma-is-rigid}
Assume that there exists  a point $(\sigma,\gamma)$  of $\Gamma_{\Sigma_{v_0}}$, for some vertex $v_0$, 
such that $(B_\sigma)_\gamma$ is infinitesimally rigid.  Then 
$(B_{\sigma'})_{\gamma'}$ is isomorphic to $(B_\sigma)_\gamma$, for every $(\sigma',\gamma')\in\Gamma_{\Sigma_v}$ such that
$\gamma'$ has the same endpoint as $\gamma$,
 for all vertices $v$ of $\G$. 
 In particular, $B_{\sigma'}$ is isomorphic to $(B_\sigma)_{\gamma''}$, for some twistor path $\gamma''$.
\end{lem}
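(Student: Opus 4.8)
The plan is to reduce Lemma \ref{lemma-if-B-sigma-gamma-is-rigid} to Corollary \ref{cor-single-isomorphism-class-for-G-family} applied to a suitable $\G$-family of projective bundles over the \emph{fixed} blow-up $Y$ of the diagonal in $X\times X$, where $X$ is the common endpoint. First I would fix the endpoint $(X,\eta)$ of $\gamma$ and, for each vertex $v$, restrict the $\G$-family of Lemma \ref{problem-construct-a-differentiable-family-B-relative-version} to the sub-family $\Gamma_{\Sigma_v}^{(X,\eta)}\subset \Gamma_{\Sigma_v}$ of twistor paths ending at $(X,\eta)$, which appears in Diagram (\ref{diagram-Gamma-Sigma-X-k-eta-k}). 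Pulling back $\B_v$ via the section $(1_{\Sigma_v}\times s_k\restricted{)}{\Gamma_{\Sigma_v}}$ produces, by the last display of Section \ref{sec-hyperholomorphic-sheaves}, a differentiable family $\B_{\Gamma_{\Sigma_v}^{(X,\eta)}}\rightarrow \Gamma_{\Sigma_v}^{(X,\eta)}\times Y\rightarrow \Gamma_{\Sigma_v}^{(X,\eta)}$ over the \emph{constant} base manifold $Y$, whose fiber over $(\sigma',\gamma')$ is the projective bundle $(B_{\sigma'})_{\gamma'}$.

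The key point is that $\Gamma_{\Sigma_v}^{(X,\eta)}\rightarrow\Sigma_v$ is a smooth differentiable fibration with connected fibers (the fiber over $\sigma'$ being $\mathring{f}_k^{-1}((Z_{\sigma'},\eta_{\sigma'}),(X,\eta))$, connected by Proposition \ref{prop-ring-f}) over the connected base $\Sigma_v$, hence $\Gamma_{\Sigma_v}^{(X,\eta)}$ is itself a connected manifold. Thus each family $\B_{\Gamma_{\Sigma_v}^{(X,\eta)}}$ satisfies the hypotheses of Proposition \ref{prop-single-isomorphism-class-or-non-rigid}: $Y$ is the blow-up of the simply connected compact K\"{a}hler manifold $X\times X$ along the smooth diagonal; each fiber is the projectivization of the locally free part of the pullback of the reflexive sheaf $(E_{\sigma'})_{\gamma'}$; and Assumption \ref{assumption-gamma-hyperholomorphic-for-all-gamma}(1) guarantees that each $(E_{\sigma'})_{\gamma'}$ is $\tilde{\omega}$-slope-stable for any K\"{a}hler class $\omega$ on $X$, so the third hypothesis holds with that $\tilde{\omega}$. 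Now these families, as $v$ ranges over the vertices of $\G$, assemble into a differentiable $\G$-family over constant fiber $Y$: along each edge $e$, the gluing isomorphism $\varphi_e$ of the underlying marked families identifies the restrictions of $\Z_{h(e)}$ and $\Z_{t(e)}$ over $M'_e$ and $M''_e$, and hence identifies the induced endpoint-twistor-path families $\Gamma^{(X,\eta)}$ and the projective bundles on them, exactly as in Definition \ref{def-G-family-of-fiber-bundles}.

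With this $\G$-family in hand, I would invoke Corollary \ref{cor-single-isomorphism-class-for-G-family}: since by hypothesis $(B_\sigma)_\gamma$ is infinitesimally rigid for some $(\sigma,\gamma)\in\Gamma_{\Sigma_{v_0}}$ — and we may assume $\gamma$ lands at $(X,\eta)$, so $(\sigma,\gamma)\in\Gamma_{\Sigma_{v_0}}^{(X,\eta)}$ — the Corollary forces the restrictions of all the $\B_{\Gamma_{\Sigma_v}^{(X,\eta)}}$ to $Y\times\{(\sigma',\gamma')\}$ to be isomorphic to a single projective bundle over $Y$. In other words $(B_{\sigma'})_{\gamma'}\cong (B_\sigma)_\gamma$ for every $(\sigma',\gamma')$ in any $\Gamma_{\Sigma_v}^{(X,\eta)}$, which is the first assertion. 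For the final sentence, given an arbitrary $\sigma'$ in any $\Sigma_v$, choose \emph{some} twistor path $\gamma''$ from $(Z_{\sigma'},\eta_{\sigma'})$ to $(X,\eta)$ inside $\ringTw$ for $k$ large (possible by surjectivity of $\mathring{f}_k$, Proposition \ref{prop-ring-f}, combined with Lemma \ref{lemma-every-twistor-path-is-equivalent-to-one-in-ring-Tw}); then on one hand $(B_{\sigma'})_{\gamma''}\cong (B_\sigma)_\gamma$ by what we just proved, and on the other hand, running the constant twistor path argument as in Corollary \ref{cor-B-gamma-isomorphic-to-B-for-a-twistor-loop-gamma}(\ref{cor-item-rigid-bundle-returns-to-itself-along-a-loop}), $B_{\sigma'}$ is the value of the family over the constant path, so $B_{\sigma'}\cong (B_{\sigma'})_{\gamma''\cdot(\gamma'')^{-1}}$, i.e. $B_{\sigma'}$ is a twistor deformation $(B_\sigma)_{\gamma''}$ for an appropriate concatenated path. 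The main obstacle I anticipate is the bookkeeping needed to check that the gluing data $\varphi_e$ of the marked $\G$-family genuinely descends to a $\G$-family structure on the $\Gamma^{(X,\eta)}$-families of projective bundles over the \emph{single} fixed $Y$ — i.e.\ that the constructions of Section \ref{sec-hyperholomorphic-sheaves} are sufficiently functorial along edges — but this is exactly the content already packaged in Lemma \ref{problem-construct-a-differentiable-family-B-relative-version}, so it should be a matter of unwinding definitions rather than new input.
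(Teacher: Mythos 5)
Your proof is correct and follows essentially the same route as the paper's: restrict each family $\B_v$ to $\Gamma_{\Sigma_v}^{(X,\eta)}$, observe these form a $\G$-family of projective bundles over the fixed $Y$ satisfying the hypotheses of Proposition \ref{prop-single-isomorphism-class-or-non-rigid}, apply Corollary \ref{cor-single-isomorphism-class-for-G-family}, and then deduce the final sentence by concatenating with the inverse of a path ending at $(X,\eta)$. The only small inaccuracy is in the last step, where the identity $B_{\sigma'}\cong (B_{\sigma'})_{\gamma''\cdot(\gamma'')^{-1}}$ should be attributed to the fact that $\gamma''\cdot(\gamma'')^{-1}$ is equivalent to the constant path (in the sense of Definition \ref{def-equivalent-twsitor-paths}) rather than to Corollary \ref{cor-B-gamma-isomorphic-to-B-for-a-twistor-loop-gamma}, whose hypothesis would presuppose the rigidity of $B_{\sigma'}$ which is not yet known at that stage — but this does not affect the substance.
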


\begin{proof}
Let $(X_k,\eta_k)$  be the endpoint of $\gamma$. The statement 
follows immediately from Corollary \ref{cor-single-isomorphism-class-for-G-family} applied to the restriction of each family 
$\B_v$ to the submanifold $\Gamma_{\Sigma_v}^{(X_k,\eta_k)}$ given in Equation (\ref{eq-E-Gamma-Sigma}). 
Taking $\gamma'':=\gamma'^{-1}\gamma$ we get the isomorphism $B_{\sigma'}\cong (B_\sigma)_{\gamma''}$.
\end{proof}

We continue to consider a differentiable $\G$-family 
$\B_v\rightarrow \Y_{\Gamma_{\Sigma_v}}\rightarrow \mathring{\T}_{\Gamma_{\Sigma_v}}$, $v\in I$,  each as in the relative set-up of 
Lemma \ref{problem-construct-a-differentiable-family-B-relative-version}. We denote the data of the $\G$-family by $\B$. 
Let $U_\B$ be the subset of $\fM^0_\Lambda$ consisting of all marked pairs $(X,\eta)$, such that there exists 
some point $\sigma$ of $\Sigma_{v}$,
for some vertex $v$, and there exists a twistor path $\gamma$ in $\fM^0_\Lambda$ from 
$(X_{\sigma},\eta_{\sigma})$ to $(X,\eta)$, such that $(B_{\sigma})_{\gamma}$ is infinitesimally rigid.

\begin{cor}
\label{cor-open-subset-U-G-of-moduli}
$U_\B$ is an open subset of $\fM^0_\Lambda$. 
For every point $(X,\eta)\in U_\B$, for every point $\sigma\in \Sigma_v$, for every vertex $v$ of $\G$,   
and for every twistor path $\gamma$ from $(X_\sigma,\eta_\sigma)$ to $(X,\eta)$, the bundle 
$(B_\sigma)_\gamma$ is infinitesimally rigid. 
\end{cor}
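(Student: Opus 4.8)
The plan is to assemble the statement from Lemma~\ref{lemma-if-B-sigma-gamma-is-rigid}, the Kodaira--Spencer upper-semicontinuity theorem \cite[Theorem 2.1]{kodaira-spencer}, and the submersivity of the endpoint map on $\Gamma_{\Sigma_v}$ coming from Proposition~\ref{prop-ring-f}; essentially all the substance sits in those earlier results, and this corollary is their bookkeeping.

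First I would treat the ``for every'' assertion. Given $(X,\eta)\in U_\B$, by definition there are a vertex $v_0$, a point $\sigma_0\in\Sigma_{v_0}$, and a twistor path $\gamma_0$ from $(X_{\sigma_0},\eta_{\sigma_0})$ to $(X,\eta)$ with $(B_{\sigma_0})_{\gamma_0}$ infinitesimally rigid. Replacing $\gamma_0$ by an equivalent twistor path in $\ringTw$ (Lemma~\ref{lemma-every-twistor-path-is-equivalent-to-one-in-ring-Tw}) changes neither the associated projective bundle up to isomorphism nor, therefore, the vanishing of $H^1(ad(P((B_{\sigma_0})_{\gamma_0})))$; so I may assume $(\sigma_0,\gamma_0)\in\Gamma_{\Sigma_{v_0}}$. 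Now fix an arbitrary vertex $v$, an arbitrary $\sigma\in\Sigma_v$, and an arbitrary twistor path $\gamma$ from $(X_\sigma,\eta_\sigma)$ to $(X,\eta)$; the same replacement puts a representative $(\sigma,\gamma)$ in $\Gamma_{\Sigma_v}$ with $\gamma$ sharing the endpoint $(X,\eta)$ with $\gamma_0$. Lemma~\ref{lemma-if-B-sigma-gamma-is-rigid} then gives $(B_\sigma)_\gamma\cong(B_{\sigma_0})_{\gamma_0}$, and isomorphic projective bundles have isomorphic adjoint bundles, so $(B_\sigma)_\gamma$ is infinitesimally rigid.

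For openness I would work with the differentiable family over $\Gamma_{\Sigma_{v_0}}$ whose fiber over $(\sigma,\gamma)$ is $(B_\sigma)_\gamma$, obtained by pulling $\B_{v_0}$ back along the endpoint section $s_k$ restricted to $\Gamma_{\Sigma_{v_0}}$ (Lemma~\ref{problem-construct-a-differentiable-family-B-relative-version}). Here the blown-up fiber varies with the endpoint of $\gamma$, but this is a genuine differentiable family of complex manifolds, so \cite[Theorem 2.1]{kodaira-spencer} applies and the function $(\sigma,\gamma)\mapsto\dim H^1(ad(P((B_\sigma)_\gamma)))$ is upper semicontinuous; hence its zero set $N\subset\Gamma_{\Sigma_{v_0}}$ is open and contains $(\sigma_0,\gamma_0)$. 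Let $e\colon\Gamma_{\Sigma_{v_0}}\to\fM^0_\Lambda$ record the endpoint of the twistor path. Restricted to $\{\sigma_0\}\times\tilde{\kappa}_1^{-1}((X_{\sigma_0},\eta_{\sigma_0}))\subset\Gamma_{\Sigma_{v_0}}$ this map is submersive onto $\fM^0_\Lambda$ by Proposition~\ref{prop-ring-f}, so $e$ is submersive, and in particular open, near $(\sigma_0,\gamma_0)$; thus $e(N)$ contains an open neighbourhood of $(X,\eta)$, and every point of $e(N)$ lies in $U_\B$ because its preimage in $N$ supplies the required rigid deformation. This shows $U_\B$ is open.

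The only fiddly point I anticipate is coherence in the integer $k$: the witness path for a given point of $U_\B$ may force a value of $k$ larger than the one underlying $\B$, so one should record that enlarging $k$ embeds the old spaces of twistor paths via the insertion moves of Section~\ref{sec-homotopy} and that the families of Lemma~\ref{problem-construct-a-differentiable-family-B-relative-version} are compatible with this enlargement. Granting this routine remark, the two steps above go through as written, and I expect no further obstacle.
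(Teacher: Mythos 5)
Your proof is correct and follows essentially the same architecture as the paper's: semi-continuity of $H^1$ gives openness of the rigid locus in $\Gamma_{\Sigma_{v_0}}$, submersivity of the endpoint map from Proposition~\ref{prop-ring-f} pushes this forward to openness of $U_\B$, and Lemma~\ref{lemma-if-B-sigma-gamma-is-rigid} handles the ``for every'' clause. The paper deals with your ``coherence in $k$'' point a bit more explicitly — it first produces a witness path $\gamma_1\in\ringTw$ at the original $k$ and only then bumps to a common $k'$ to compare $\gamma_1$ with an arbitrary $\gamma$ — but this is the same routine step you flagged, and it does indeed go through as you anticipate.
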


\begin{proof}
The statement is vacuous if $U_\B$ is empty. Assume otherwise. 
So $(B_{\sigma_0})_{\gamma_0}$ is infinitesimally rigid, for some point $\sigma_0$ of $\Sigma_{v_0}$,
for some vertex $v_0$, and for some twistor path $\gamma_0$ in $\fM^0_\Lambda$ starting at 
$(X_{\sigma_0},\eta_{\sigma_0})$. 
We may assume that the twistor path $\gamma_0$ belongs to $\ringTw$, for some $k\geq 10$, possibly after 
replacing it by  an equivalent  twistor path, by Lemma \ref{lemma-every-twistor-path-is-equivalent-to-one-in-ring-Tw}. 
The pair $(\sigma_0,\gamma_0)$ is then a point of
$\Gamma_{\Sigma_{v_0}}$. 
Denote by $\kappa_{\Sigma_v,k}:\Gamma_{\Sigma_v}\rightarrow \fM^0_\Lambda$ the composition
\[
\Gamma_{\Sigma_v}\subset \Sigma_v\times \ringTw\rightarrow \ringTw\RightArrowOf{\tilde{\kappa}_k}\fM^0_\Lambda,
\]
where $\tilde{\kappa}_k$ is the map given in Equation (\ref{eq-tilde-kappa-1}).

Let $U_v\subset \Gamma_{\Sigma_v}$ be the subset consisting of pairs $(\sigma,\gamma)\in \Gamma_{\Sigma_v}$, such that
$(B_\sigma)_\gamma$ is infinitesimally rigid. 
The subset $U_v$ is open in $\Gamma_{\Sigma_v}$, by the Semi-Continuity Theorem \cite[Theorem 2.1]{kodaira-spencer},
and non-empty containing all twistor paths with the same endpoint as $\gamma_0$, 
by Lemma \ref{lemma-if-B-sigma-gamma-is-rigid}. 
The map $\kappa_{\Sigma_v,k}$
is submersive and surjective, since its restriction to every fiber of 
$\Gamma_{\Sigma_v}\rightarrow \Sigma_v$ is, by Proposition \ref{prop-ring-f}. 
Hence, $\kappa_{\Sigma_v,k}(U_v)$
are open subsets of $\fM^0_\Lambda$, for all $v\in I$, and all are equal to $U_\B$,
by Lemma \ref{lemma-if-B-sigma-gamma-is-rigid}. 

Assume given a point $(X,\eta)$ in $U_\B$ and a twistor path $\gamma$ from $(X_\sigma,\eta_\sigma)$ to
$(X,\eta)$. Choose a twistor path $\gamma_1$ in $\ringTw$ from $(X_\sigma,\eta_\sigma)$ to
$(X,\eta)$. Then $(\sigma,\gamma_1)$ belongs to $\Gamma_{\Sigma_v}$ and so 
the bundle $(B_\sigma)_{\gamma_1}$ is infinitesimally rigid, by Lemma \ref{lemma-if-B-sigma-gamma-is-rigid}. 
There exists an integer $k'\geq k$ and twistor paths $\tilde{\gamma}_1$ and $\tilde{\gamma}$ in 
$\mathring{T}w^{k'}_\Lambda$, such that $\gamma\sim \tilde{\gamma}$ and $\gamma_1\sim\tilde{\gamma}_1$. 
Then $(B_\sigma)_{\tilde{\gamma}_1}$ is infinitesimally rigid, being isomorphic to $(B_\sigma)_{\gamma_1}$.
Hence, $(B_\sigma)_{\tilde{\gamma}}$ is infinitesimally rigid, by Lemma \ref{lemma-if-B-sigma-gamma-is-rigid}. 
Consequently, $(B_\sigma)_\gamma$, which is isomorphic to $(B_\sigma)_{\tilde{\gamma}}$, is infinitesimally rigid.
\end{proof}

Given a point $(X,\eta)$ of the set $U_\B$ of Corollary \ref{cor-open-subset-U-G-of-moduli}, 
a point $\sigma\in \Sigma_v$, for some vertex $v$, and a twistor path $\gamma$ from $(X_\sigma,\eta_\sigma)$ to
$(X,\eta)$, 
the isomorphism class 
of the bundle $(B_\sigma)_\gamma$ over the blow-up $Y$ of the diagonal in $X\times X$ is independent of the choice of 
$\sigma$ and $\gamma$, by Lemma \ref{lemma-if-B-sigma-gamma-is-rigid}. We denote this isomorphism class by
\begin{equation}
\label{eq-B-X-eta}
B_{(X,\eta)}.
\end{equation}

The following is a useful special case of Corollary \ref{cor-open-subset-U-G-of-moduli}.
Let $(X_0,\eta_0)$ be a marked pair, $Y_0$ the blow-up of the diagonal in $X_0\times X_0$, and $B_0$ the projective bundle over $Y_0$ associated to a reflexive sheaf $E_0$ over $X_0\times X_0$ satisfying 
Assumption \ref{assumption-gamma-hyperholomorphic-for-all-gamma}.
Let $\fM_\Lambda^0$ be the component containing $(X_0,\eta_0)$ and let $U_{B_0}$ be the subset of
$\fM_\Lambda^0$ consisting of pairs $(X,\eta)$, such that $(B_0)_\gamma$ is infinitesimally rigid for some twistor path $\gamma$ from $(X_0,\eta_0)$ to $(X,\eta)$. 

\begin{lem}
\label{lemma-U-B-0}
\begin{enumerate}
\item
\label{lemma-item-U-B-0-is-open}
$U_{B_0}$ is an open subset of $\fM^0_\Lambda$.
\item
\label{lemma-item-rigidity-for-some-path-implies-for-all-paths}
For every $(X,\eta)\in U_{B_0}$ and for every twistor path $\gamma$ from $(X_0,\eta_0)$ to $(X,\eta)$ the bundle
$(B_0)_\gamma$ is infinitesimally rigid and its isomorphism class is independent of the choice of $\gamma$.
\item
\label{lemma-item-two-open-sets-are-equal}
For every marked pair $(X,\eta)$ in $\fM^0_\Lambda$ and for every twistor path $\gamma$ from $(X_0,\eta_0)$ to $(X,\eta)$ the equality 
$U_{B_0}=U_{(B_0)_\gamma}$ holds.
\end{enumerate}
\end{lem}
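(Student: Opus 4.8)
The plan is to deduce all three parts from Corollary~\ref{cor-open-subset-U-G-of-moduli}, Lemma~\ref{lemma-if-B-sigma-gamma-is-rigid}, and the compatibility of the twistor-deformation construction with concatenation of twistor paths. First I would specialize the relative set-up preceding Lemma~\ref{problem-construct-a-differentiable-family-B-relative-version} to the trivial instance: take $\G$ to be the graph with a single vertex $v_0$ and no edges, $\Sigma_{v_0}$ a point whose classifying morphism is $(X_0,\eta_0)$, $\Z_{v_0}$ the constant family $X_0$, $\E_{\Z}:=E_0$, and $\B:=B_0$. With this choice the set $U_{\B}$ of Corollary~\ref{cor-open-subset-U-G-of-moduli} is literally $U_{B_0}$, so part~(\ref{lemma-item-U-B-0-is-open}) and the infinitesimal rigidity of $(B_0)_\gamma$ for \emph{every} twistor path $\gamma$ from $(X_0,\eta_0)$ to a point $(X,\eta)\in U_{B_0}$ follow at once. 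The independence of the isomorphism class of $(B_0)_\gamma$ asserted in part~(\ref{lemma-item-rigidity-for-some-path-implies-for-all-paths}) then follows from Lemma~\ref{lemma-if-B-sigma-gamma-is-rigid} applied to this one-vertex family: as soon as $(B_0)_\gamma$ is infinitesimally rigid for one twistor path $\gamma$ with endpoint $(X,\eta)$, it is isomorphic to $(B_0)_{\gamma'}$ for every twistor path $\gamma'$ with the same endpoint, and this common bundle is the $B_{(X,\eta)}$ of~(\ref{eq-B-X-eta}).

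For part~(\ref{lemma-item-two-open-sets-are-equal}) I would begin by recording that $(B_0)_\gamma$ again satisfies Assumption~\ref{assumption-gamma-hyperholomorphic-for-all-gamma}, so that $U_{(B_0)_\gamma}$ is defined in the first place: it is $\gamma'$-hyperholomorphic for every twistor path $\gamma'$ starting at $(X,\eta)$ because $B_0$ is $(\gamma'\gamma)$-hyperholomorphic and the twistor deformation of $(B_0)_\gamma$ along $\gamma'$ is canonically identified with the twistor deformation of $B_0$ along the concatenation $\gamma'\gamma$, and the local-freeness and higher-direct-image vanishing in part~(\ref{assumption-item-locally-free}) of the Assumption are inherited in the same way. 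In particular $((B_0)_\gamma)_\delta\cong (B_0)_{\delta\gamma}$ for every twistor path $\delta$ from $(X,\eta)$ to any marked pair $(X',\eta')$. The inclusion $U_{(B_0)_\gamma}\subseteq U_{B_0}$ is then immediate: if $((B_0)_\gamma)_\delta$ is infinitesimally rigid for some such $\delta$, then so is $(B_0)_{\delta\gamma}$, and $\delta\gamma$ joins $(X_0,\eta_0)$ to $(X',\eta')$, whence $(X',\eta')\in U_{B_0}$. For the reverse inclusion, given $(X',\eta')\in U_{B_0}$ I would choose any twistor path $\delta$ from $(X,\eta)$ to $(X',\eta')$, which exists by the surjectivity of $\mathring{f}_k$ in Proposition~\ref{prop-ring-f} together with Lemma~\ref{lemma-every-twistor-path-is-equivalent-to-one-in-ring-Tw}; then $\delta\gamma$ is a twistor path from $(X_0,\eta_0)$ to $(X',\eta')$, so $(B_0)_{\delta\gamma}$ is infinitesimally rigid by part~(\ref{lemma-item-rigidity-for-some-path-implies-for-all-paths}), and the identification $(B_0)_{\delta\gamma}\cong ((B_0)_\gamma)_\delta$ gives $(X',\eta')\in U_{(B_0)_\gamma}$.

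The only point that needs genuine care is the compatibility of the twistor-path machinery with concatenation --- that $(B_0)_\gamma$ indeed falls under Assumption~\ref{assumption-gamma-hyperholomorphic-for-all-gamma} and that $((B_0)_\gamma)_\delta\cong (B_0)_{\delta\gamma}$. This, however, is already part of the construction in Section~\ref{sec-hyperholomorphic-sheaves} and was used, for instance, in the proof of Corollary~\ref{cor-B-gamma-isomorphic-to-B-for-a-twistor-loop-gamma} (where the identification $B_{\gamma_2}\cong (B_{\gamma_1^{-1}\gamma_2})_{\gamma_1}$ was invoked); so in practice this lemma is a bookkeeping consequence of results already established rather than a source of new difficulty.
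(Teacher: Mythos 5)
Your argument matches the paper's own proof: parts (\ref{lemma-item-U-B-0-is-open}) and (\ref{lemma-item-rigidity-for-some-path-implies-for-all-paths}) are obtained by specializing Corollary \ref{cor-open-subset-U-G-of-moduli} (together with Lemma \ref{lemma-if-B-sigma-gamma-is-rigid} for the independence of the isomorphism class) to the single-vertex, single-point case, and part (\ref{lemma-item-two-open-sets-are-equal}) is deduced from the concatenation identity $((B_0)_\gamma)_{\tilde\gamma}\cong(B_0)_{\tilde\gamma\gamma}$. You spell out the double-inclusion and the persistence of Assumption \ref{assumption-gamma-hyperholomorphic-for-all-gamma} more explicitly than the paper does, but the route is the same.
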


\begin{proof}
Parts (\ref{lemma-item-U-B-0-is-open}) and (\ref{lemma-item-rigidity-for-some-path-implies-for-all-paths}) form a special case of 
Corollary \ref{cor-open-subset-U-G-of-moduli} where the graph $\G$ is trivial, consisting of a single vertex $v$ and no edges, and $\Sigma_v=\{(X_0,\eta_0)\}$. Part (\ref{lemma-item-two-open-sets-are-equal}) follows from the obvious equality
$((B_0)_\gamma)_{\tilde{\gamma}}=(B_0)_{\tilde{\gamma}\gamma}$.
\end{proof}

We continue to consider a differentiable $\G$-family 
$\B_v\rightarrow \Y_{\Gamma_{\Sigma_v}}\rightarrow \mathring{\T}_{\Gamma_{\Sigma_v}}$, $v\in I$,  as in the relative set-up of 
Lemma \ref{problem-construct-a-differentiable-family-B-relative-version}.
Assume that the marked pairs $(X_{\sigma_i},\eta_{\sigma_i})$, for two points $\sigma_i\in \Sigma_{v_i}$, $i=1,2$, are isomorphic. 
So there exists an isomorphism $g:X_{\sigma_2}\rightarrow X_{\sigma_1}$ such that
$\eta_{\sigma_1}=\eta_{\sigma_2}\circ g^*$.
Let $\tilde{g}:Y_{\sigma_2}\rightarrow Y_{\sigma_1}$ be the induced isomorphism on the blow-ups 
$Y_{\sigma_i}\rightarrow X_{\sigma_i}\times X_{\sigma_i}$ 
of the diagonals. 

\begin{cor}
\label{cor-bundles-on-isomorphic-marked-pairs-are-isomorphic}
If the projective bundle $B_{\sigma_1}$ is infinitesimally rigid, then $B_{\sigma_2}$ is isomorphic to 
$\tilde{g}^*B_{\sigma_1}$.
\end{cor}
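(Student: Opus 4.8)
The plan is to reduce Corollary \ref{cor-bundles-on-isomorphic-marked-pairs-are-isomorphic} to the monodromy-invariance machinery already in place, in particular Lemma \ref{lemma-if-B-sigma-gamma-is-rigid} and the equivalence relation for twistor paths of Section \ref{sec-homotopy}. First I would observe that the isomorphism $g:X_{\sigma_2}\to X_{\sigma_1}$ with $\eta_{\sigma_1}=\eta_{\sigma_2}\circ g^*$ means precisely that $(X_{\sigma_1},\eta_{\sigma_1})$ and $(X_{\sigma_2},\eta_{\sigma_2})$ are the \emph{same} point of $\fM^0_\Lambda$. Hence, choosing any twistor path $\gamma$ from $(X_{\sigma_1},\eta_{\sigma_1})$ to $(X_{\sigma_2},\eta_{\sigma_2})$ in $\fM^0_\Lambda$ — for instance the constant path, after moving $\gamma$ into some $\ringTw$ via Lemma \ref{lemma-every-twistor-path-is-equivalent-to-one-in-ring-Tw} — the point $(\sigma_1,\gamma)$ lies in $\Gamma_{\Sigma_{v_1}}$, and Lemma \ref{lemma-if-B-sigma-gamma-is-rigid} applies: since $B_{\sigma_1}=(B_{\sigma_1})_{\gamma_0}$ is infinitesimally rigid (taking $\gamma_0$ the constant path), we get that $B_{\sigma_2}$ is isomorphic to $(B_{\sigma_1})_{\gamma''}$ for a twistor path $\gamma''$ from $(X_{\sigma_1},\eta_{\sigma_1})$ to $(X_{\sigma_2},\eta_{\sigma_2})$, which as a path in $\fM^0_\Lambda$ is a loop at that common marked point. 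By Corollary \ref{cor-B-gamma-isomorphic-to-B-for-a-twistor-loop-gamma}(\ref{cor-item-rigid-bundle-returns-to-itself-along-a-loop}), deformation of the rigid bundle $B_{\sigma_1}$ along a twistor loop returns it to itself, so $(B_{\sigma_1})_{\gamma''}$ is isomorphic to $B_{\sigma_1}$ as a projective bundle over the blow-up $Y$ of the diagonal in (this common copy of) $X\times X$.

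The remaining, and genuinely substantive, point is bookkeeping of the identification of the two blow-ups $Y_{\sigma_1}$ and $Y_{\sigma_2}$. The statement $B_{\sigma_2}\cong \tilde g^* B_{\sigma_1}$ asks for an isomorphism over $Y_{\sigma_2}$, whereas the argument above produces an abstract isomorphism $B_{\sigma_2}\cong (B_{\sigma_1})_{\gamma''}$ over $Y_{\sigma_2}$, where $(B_{\sigma_1})_{\gamma''}$ is a priori the bundle living on the blow-up of $X_{\sigma_2}\times X_{\sigma_2}$ obtained by transporting $B_{\sigma_1}$ along $\gamma''$. Here I would use that the modular-type construction $\SheafEnd(E)$ (equivalently the projective bundle $B$) depends \emph{canonically} on the hyperholomorphic input data along the twistor path — this is built into Theorem \ref{thm-twistor-deformation-of-a-sheaf} and the construction of $E_\gamma$ in (\ref{eq-E-gamma}) — so that an isomorphism of marked pairs intertwining the universal families necessarily intertwines the deformed bundles. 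Concretely: $g$ induces $\tilde g:Y_{\sigma_2}\to Y_{\sigma_1}$, and since $g^*$ identifies the marked Hodge/twistor data, the canonical-dependence of the construction along $\gamma''$ forces the transported bundle on $Y_{\sigma_2}$ to be exactly $\tilde g^* B_{\sigma_1}$. Combining this with the isomorphism $B_{\sigma_2}\cong (B_{\sigma_1})_{\gamma''}$ gives $B_{\sigma_2}\cong \tilde g^* B_{\sigma_1}$.

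The main obstacle I anticipate is precisely making the last naturality step airtight: one must check that the isomorphism $g$ of marked pairs, which a priori only identifies cohomology and the complex manifolds, actually lifts to a compatibility of the \emph{differentiable $\G$-families} of projective bundles used to build $(B_{\sigma_1})_{\gamma''}$, i.e.\ that the Bando--Siu/Verbitsky construction in the proof of Proposition \ref{prop-relative-twistor-deformation-of-a-sheaf} is $g$-equivariant. This follows because the admissible Einstein--Hermitian metric is unique up to scale (by \cite[Theorems 3 and 4]{bando-siu}), hence canonical, so any automorphism of the underlying data carries the connection, and thus the whole twistor family of bundles, to itself; I would spell this out by transporting the metric $h_{\sigma_1}$ via $g$ and invoking uniqueness. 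Once equivariance of the construction is in hand, the statement drops out. A cleaner packaging, which I would adopt if space allows, is simply to note that $\tilde g^* B_{\sigma_1}$ is itself obtainable from $\SheafEnd(E)$ via deformation along the $\gamma$-translate of a twistor path — so $(X_{\sigma_2},\eta_{\sigma_2})\in U_\B$ with associated bundle $\tilde g^* B_{\sigma_1}$ — and then $B_{\sigma_2}\cong B_{(X_{\sigma_2},\eta_{\sigma_2})}\cong \tilde g^* B_{\sigma_1}$ by the well-definedness of $B_{(X,\eta)}$ in (\ref{eq-B-X-eta}).
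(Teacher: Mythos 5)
Your proposal follows essentially the same two-step route as the paper: use Lemma \ref{lemma-if-B-sigma-gamma-is-rigid} to produce a twistor path $\gamma''$ with $B_{\sigma_2}\cong (B_{\sigma_1})_{\gamma''}$, then apply Corollary \ref{cor-B-gamma-isomorphic-to-B-for-a-twistor-loop-gamma} to $\gamma''$ viewed as a loop at $(X_{\sigma_1},\eta_{\sigma_1})$ (i.e.\ after pullback via $\tilde g^{-1}$) to conclude $(B_{\sigma_1})_{\gamma''}\cong\tilde g^*B_{\sigma_1}$. The only difference is that you make explicit, via Bando--Siu uniqueness of the admissible Einstein--Hermitian connection, the $g$-naturality of the twistor deformation that the paper leaves implicit in the phrase ``considered as bundles on $Y_{\sigma_1}$''.
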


\begin{proof}
There exists a twistor path $\gamma$ from $(X_{\sigma_1},\eta_{\sigma_1})$  to $(X_{\sigma_2},\eta_{\sigma_2})$,
such that $(B_{\sigma_1})_\gamma\cong B_{\sigma_2}$, by Lemma \ref{lemma-if-B-sigma-gamma-is-rigid}. Now, 
$(X_{\sigma_1},\eta_{\sigma_1})\cong (X_{\sigma_2},\eta_{\sigma_2})$, and so 
$(B_{\sigma_1})_{\gamma}\cong B_{\sigma_1}$,  by Corollary
\ref{cor-B-gamma-isomorphic-to-B-for-a-twistor-loop-gamma}, when they are considered as bundles on $Y_{\sigma_1}$, i.e., when
the left hand side of the latter isomorphism is pulled back to $Y_{\sigma_1}$ via $\tilde{g}^{-1}$.
\end{proof}

\begin{rem}
\label{caution-need-to-assume-isomorphism-of-marked-pairs}
Corollary \ref{cor-bundles-on-isomorphic-marked-pairs-are-isomorphic} is false if we weaken the assumption 
that the marked pairs are isomorphic and assume only that 
$X_{\sigma_1}$ is isomorphic to $X_{\sigma_2}$, as demonstrated by 
Part \ref{thm-item-isomorphic-if-compatible} of Theorem \ref{thm-rigidity}. 
\hide{The  $\PP^{r-1}$ bundle $B_{\sigma_i}$  has a characteristic class\footnote{$B_{\sigma_i}$ corresponds to a class in the first cohomology of $Y_{\sigma_i}$ with coefficients in the sheaf of holomorphic maps to $PGL(r)$ and the characteristic class is the image of this class via the connecting homomorphism associated to the short exact sequence  $0\rightarrow \Integers/r\Integers\rightarrow SL(r)\rightarrow PGL(r) \rightarrow 0$.}
$\bar{c}_1(B_{\sigma_i})$ in $H^2(Y_{\sigma_i},\Integers/r\Integers)$. A necessary condition for $\tilde{g}^*B_{\sigma_2}$ to be isomorphic to $B_{\sigma_1}$ is that $\tilde{g}^*(\bar{c}_1(B_{\sigma_2}))=\bar{c}_1(B_{\sigma_1})$. This condition necessarily holds if the marked pairs are isomorphic. This condition may fail for unmarked $X_{\sigma_i}$ if the characteristic class $\bar{c}_1(B_{\sigma_i})$ is not invariant under the diagonal $\Mon^2(X_{\sigma_i})$-action. Indeed, the $\PP^{2n-3}$-bundle $B$ over the blow-up $Y$ of the diagonal of $M\times M$ associated to the 
sheaf $E$ given in (\ref{eq-modular-sheaf}) has a characteristic class, which is invariant only up to sign and so under an index $2$ subgroup of $\Mon^2(M)$, but not under the whole of $\Mon^2(M)$, if $n\geq 3$ \cite[Lemma 7.2]{markman-hodge}.
}
\hide{
Consider a $K3$ surface $S$ with a cyclic Picard group generated by an ample line bundle $H$ of degree $2n+2$.
Let $X$ be the moduli space of $H$-stable torsion sheaves of pure one-dimensional support in the linear system $\linsys{H}$, with first Chern class $c_1(H)$, and with Euler characteristic $0$. 
$X$ admits a regular involution $g$ sending a sheaf $F$ to the 
sheaf $\SheafExt^1(F,\StructureSheaf{S})$, by \cite[Theorem 5.7]{le-potier} (see also \cite[Theorem 3.21]{markman-brill-noether}). If $C$ is a curve in the linear system $\linsys{H}$,
$\iota:C\rightarrow S$ the embedding, and $F=\iota_*L$ for a line bundle $L$ on $C$, then 
$\SheafExt^1(F,\StructureSheaf{S})\cong \iota_*(L^*\otimes\omega_C)$ \cite[Lemma 3.23]{markman-brill-noether}.
Denote by $\tilde{g}$ the induced involution  of the blow-up $Y\rightarrow X\times X$ of the diagonal. Then $\tilde{g}$ pulls back the modular $\PP^{2n-3}$ bundle (??? define ???) $B$ over $Y$ to $B^*$.
The characteristic  class of $B$ in $H^2(Y,\Integers/(2n-2)\Integers)$ has order $2n-2$ and the characteristic class of $B^*$ is its inverse \cite[Lemma 7.2]{markman-hodge}. Hence, $B$ is not isomorphic to $\tilde{g}^*B$ if $n>2$. Next choose a marking $\eta$ for $X$. Then 
$h:=\eta g_*\eta^{-1}$ is an isometry of $\Lambda$ leaving the connected component of $\fM_\Lambda^0$ invariant.
The triples $(X,\eta,B)$ and $(X,h\eta,B^*)$ are isomorphic, via $g$ and $\tilde{g}$. 
Furthermore, $g$ is the unique automorphism of $X$ inducing an isomorphism of the marked pairs $(X,\eta)$ and $(X,h\eta)$. 
Hence, the two triples
$(X,\eta,B)$ and $(X,h\eta,B)$ are {\em not} isomorphic.
Choose a generic (non-$g$-invariant) K\"{a}hler class $\omega$ on $X$ and consider the twistor family 
$\pi:\X\rightarrow \PP^1_\omega$, the blow-up $\Y\rightarrow \X\times_{\PP^1_\omega}\X$ of the relative diagonal, and the hyperholomorphic family $\B$ over $\Y$ deforming $B$. 
The automorphism $g$ 
induces a natural isomorphism $\bar{g}:\PP^1_{g_*(\omega)}\rightarrow \PP^1_\omega$.
Regarding $\PP^1_\omega$ as a line in $\fM_\Lambda^0$ through $(X,\eta)$, then 
$\PP^1_{g_*(\omega)}$ is its image in $\fM_\Lambda^0$ via $h$ and 
the two meet at the isomorphism class of $(X,\eta)\cong (X,\eta g)=(X,h\eta)$. 
Denote by $\tilde{\eta}:R^2\pi_*\Integers\rightarrow \Lambda$ the marking of the twistor family induced by $\eta$.
Then the twistor family $\bar{g}^*(\X)\rightarrow \PP^1_{g_*(\omega)}$ 
comes with the marking $h\circ (\bar{g}^*\tilde{\eta})$. The extension of the hyperholomprphic family $\B$ to the
union of the two twistor families restricts to $\bar{g}^*(\Y)$ 
as the hyperholomorphic family $\bar{g}^*\B^*$, since it restricts to $(X,h\eta)$ as $B^*$. 
Next  choose a generic fiber $(X_t,\eta_t)$, $t\in \PP^1_{\omega}$. 
The two triples $(X_t,\eta_t,\B_t)$ and $(X_{\bar{g}(t)},\eta_{\bar{g}(t)},\B_{\bar{g}(t)})=(X_t,h\eta_t,\B_t^*)$ are thus both deformations of $(X,\eta,B)$, but the two pairs $(X_t,\B_t)$ and $(X_t,\B_t^*)$  are not isomorphic, 
as the involution $g$ does not deform along a generic twistor deformation. The failure of the latter two to be isomorphic does not contradict Corollary \ref{cor-bundles-on-isomorphic-marked-pairs-are-isomorphic}, since the marked pairs
$(X,\eta_t)$ and $(X,h\eta_t)$ are not isomorphic.
}
\end{rem}
%
\subsection{Monodromy invariance of the rigidity locus}
\label{sec-monodromy-invariance-of-the-rigidity-locus}
We continue to consider a differentiable $\G$-family $\pi_v:\X_v\rightarrow \Sigma_v$ of $\Lambda$-marked irreducible holomorphic symplectic manifolds, for some graph $\G$, which extends to  a differentiable $\G$-family 
$\B_v\rightarrow \Y_{\Gamma_{\Sigma_v}}\rightarrow \mathring{\T}_{\Gamma_{\Sigma_v}}$, $v\in I$, each as in the relative set-up of 
Lemma \ref{problem-construct-a-differentiable-family-B-relative-version}. 
Let $\Mon^2(X)$ be the image of the monodromy group $\Mon(X)$ (Definition \ref{def-monodromy}) 
in the isometry group of $H^2(X,\Integers)$.
Set $\Mon(\fM^0_\Lambda):=\eta\circ\Mon^2(X)\circ\eta^{-1}$, for some marked pair $(X,\eta)\in \fM^0_\Lambda$.
Then  $\Mon(\fM^0_\Lambda)$ is the largest subgroup of the isometry group $O(\Lambda)$ leaving invariant
the connected component $\fM^0_\Lambda$. In particular, $\Mon(\fM^0_\Lambda)$ is independent of the choice of the point $(X,\eta)$.

\begin{lem}
\label{lemma-U-B-is-invariant-with-respect-to-monodromy-operators-of-triples}
Let $\phi$ be an element of $\Mon(\fM^0_\Lambda)$. 
Assume that the following two conditions hold.
\begin{enumerate}
\item
\label{lemma-assumption-phi-is-a-parallel-transport-operator}
There exist vertices $v_i$, points $\sigma_i\in\Sigma_{v_i}$, $i=1,2$, and an isomorphism of the marked pairs $(X_{\sigma_1},\phi\circ \eta_{\sigma_1})$ and $(X_{\sigma_2},\eta_{\sigma_2})$, i.e., 
an isomorphism 
$g:X_{\sigma_1}\rightarrow X_{\sigma_2}$ satisfying $\phi\circ\eta_{\sigma_1}\circ g^*=\eta_{\sigma_2}$. 
\item
\label{lemma-assumption-triples-are-isomorphic}
$B_{\sigma_1}$ is isomorphic to $\tilde{g}^*B_{\sigma_2}$, where $\tilde{g}:Y_{\sigma_1}\rightarrow Y_{\sigma_2}$ is the isomorphism induced by $g$. 
\end{enumerate}
Then $\phi(U_\B)=U_\B$, where $U_\B\subset \fM^0_\Lambda$ is the open subset in Corollary \ref{cor-open-subset-U-G-of-moduli}.
Furthermore, the bundles $B_{(X,\eta)}$ and $B_{(X,\phi\circ\eta)}$ are isomorphic, for every point $(X,\eta)$ of $U_\B$, where
we used the notation (\ref{eq-B-X-eta}).
\end{lem}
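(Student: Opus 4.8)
## Proof plan for Lemma \ref{lemma-U-B-is-invariant-with-respect-to-monodromy-operators-of-triples}

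The plan is to unwind the definition of $U_\B$ and show that $\phi$ permutes its defining data. First I would fix a point $(X,\eta)\in U_\B$, so that by Corollary \ref{cor-open-subset-U-G-of-moduli} the bundle $(B_{\sigma})_{\gamma}$ is infinitesimally rigid for \emph{every} choice of vertex $v$, every $\sigma\in\Sigma_v$, and every twistor path $\gamma$ from $(X_\sigma,\eta_\sigma)$ to $(X,\eta)$. In particular, taking $v=v_1$ and $\sigma=\sigma_1$, there is a twistor path $\gamma$ from $(X_{\sigma_1},\eta_{\sigma_1})$ to $(X,\eta)$ with $(B_{\sigma_1})_\gamma$ infinitesimally rigid, and by Lemma \ref{lemma-if-B-sigma-gamma-is-rigid} we may moreover identify $(B_{\sigma_1})_\gamma$ with the well-defined isomorphism class $B_{(X,\eta)}$ of (\ref{eq-B-X-eta}), pulled back to the blow-up $Y$ of the diagonal in $X\times X$.

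Next I would transport everything by $\phi$. The isometry $\phi\in\Mon(\fM^0_\Lambda)$ acts on $\fM^0_\Lambda$ by $(Z,\psi)\mapsto (Z,\phi\circ\psi)$, and it carries a twistor path $\gamma$ from $(X_{\sigma_1},\eta_{\sigma_1})$ to $(X,\eta)$ to a twistor path $\phi(\gamma)$ from $(X_{\sigma_1},\phi\circ\eta_{\sigma_1})$ to $(X,\phi\circ\eta)$, since a twistor line in $\fM^0_\Lambda$ depends only on the underlying data in $\Omega_\Lambda$ and the K\"ahler cone, both of which are transported isometrically (condition (\ref{lemma-assumption-phi-is-a-parallel-transport-operator}) guarantees $\phi$ preserves $\fM^0_\Lambda$, hence the relevant orientations and K\"ahler-type chambers). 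By assumption (\ref{lemma-assumption-phi-is-a-parallel-transport-operator}) there is an isomorphism of marked pairs $g:(X_{\sigma_1},\phi\circ\eta_{\sigma_1})\IsomRightArrow (X_{\sigma_2},\eta_{\sigma_2})$, so $g$ (more precisely $\tilde g$ on the blow-ups) carries $\phi(\gamma)$ to a twistor path $\gamma':=g(\phi(\gamma))$ from $(X_{\sigma_2},\eta_{\sigma_2})$ to $(X,\phi\circ\eta)$. I would now compare the deformed bundles: the hyperholomorphic deformation construction of Proposition \ref{prop-relative-twistor-deformation-of-a-sheaf} is functorial under isomorphisms of the underlying marked data, so $(B_{\sigma_2})_{\gamma'}\cong \tilde g^*\big((\tilde g^{-1})^* (B_{\sigma_1})_{\gamma}\big)$ after transporting by the chain of isomorphisms; combined with assumption (\ref{lemma-assumption-triples-are-isomorphic}), which identifies $B_{\sigma_1}$ with $\tilde g^*B_{\sigma_2}$, one concludes $(B_{\sigma_2})_{\gamma'}$ is isomorphic, as a bundle on $Y$, to $(B_{\sigma_1})_\gamma \cong B_{(X,\eta)}$. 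Since $(B_{\sigma_1})_\gamma$ is infinitesimally rigid and infinitesimal rigidity is an isomorphism invariant, $(B_{\sigma_2})_{\gamma'}$ is infinitesimally rigid as well.

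Therefore $(X,\phi\circ\eta)$ satisfies the defining condition of $U_\B$ witnessed by $\sigma_2\in\Sigma_{v_2}$ and the twistor path $\gamma'$, so $\phi(U_\B)\subseteq U_\B$. Applying the same argument to $\phi^{-1}$ (note conditions (\ref{lemma-assumption-phi-is-a-parallel-transport-operator}) and (\ref{lemma-assumption-triples-are-isomorphic}) are symmetric in $\sigma_1,\sigma_2$ upon replacing $g$ by $g^{-1}$ and $\phi$ by $\phi^{-1}$) gives $\phi^{-1}(U_\B)\subseteq U_\B$, hence $\phi(U_\B)=U_\B$. For the final assertion, for $(X,\eta)\in U_\B$ we computed $(B_{\sigma_2})_{\gamma'}\cong B_{(X,\eta)}$ on $Y$; but by Lemma \ref{lemma-if-B-sigma-gamma-is-rigid} applied to the point $(\sigma_2,\gamma')\in\Gamma_{\Sigma_{v_2}}$ the bundle $(B_{\sigma_2})_{\gamma'}$ represents the isomorphism class $B_{(X,\phi\circ\eta)}$, whence $B_{(X,\eta)}\cong B_{(X,\phi\circ\eta)}$.

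The main obstacle I anticipate is the bookkeeping in the middle step: making precise that the twistor deformation of Proposition \ref{prop-relative-twistor-deformation-of-a-sheaf} commutes with the isomorphisms $g$ and $\phi$ of marked data, i.e., that $g\circ\phi$ induces a genuine isomorphism between the universal hyperholomorphic family over the twistor path $\gamma$ and the one over $\gamma'$, rather than merely an abstract identification of the endpoints. This is essentially the canonicity built into Theorem \ref{thm-twistor-deformation-of-a-sheaf} (``$\SheafEnd(\E)$ depends canonically on $E$'') together with the naturality of the Bando--Siu metric and its Einstein--Hermitian connection under biholomorphisms, but it requires care to state cleanly; everything else is a direct application of Corollary \ref{cor-open-subset-U-G-of-moduli} and Lemma \ref{lemma-if-B-sigma-gamma-is-rigid}.
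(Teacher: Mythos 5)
Your proof takes essentially the same approach as the paper's: both transport a twistor path from $(X_{\sigma_1},\eta_{\sigma_1})$ to $(X,\eta)$ via $g$ and the action of $\phi$ on $\fM^0_\Lambda$ into a twistor path from $(X_{\sigma_2},\eta_{\sigma_2})$ to $(X,\phi\circ\eta)$, extend the isomorphism $B_{\sigma_1}\cong\tilde g^*B_{\sigma_2}$ along the twistor families to obtain $(B_{\sigma_1})_\gamma\cong(B_{\sigma_2})_{\gamma'}$ on $Y$, and then read off both the invariance of $U_\B$ and the isomorphism $B_{(X,\eta)}\cong B_{(X,\phi\circ\eta)}$ from this. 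The one blemish is the displayed formula $(B_{\sigma_2})_{\gamma'}\cong\tilde g^*\bigl((\tilde g^{-1})^*(B_{\sigma_1})_\gamma\bigr)$, which is a tautology and does not express the intended use of assumption (\ref{lemma-assumption-triples-are-isomorphic}); the correct statement (which you recover anyway) is that the transport carries the deformation of $B_{\sigma_1}$ to the deformation of $(\tilde g^{-1})^*B_{\sigma_1}\cong B_{\sigma_2}$, yielding $(B_{\sigma_1})_\gamma\cong(B_{\sigma_2})_{\gamma'}$ directly.
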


\begin{proof}
Let $(X,\eta)$ be a point of $U_\B$.
The isomorphism $g$ induces an isomorphism from the space
$\Gamma_{(X_{\sigma_1},\eta_{\sigma_1})}^{(X,\eta)}$ of twistor paths in $\fM^0_\Lambda$ from $(X_{\sigma_1},\eta_{\sigma_1})$ to $(X,\eta)$ onto 
$\Gamma_{(X_{\sigma_2},\phi^{-1}\circ \eta_{\sigma_2})}^{(X,\eta)}$, which lifts to an isomorphism of twistor families.
Composing with the automorphism $\phi$ of $\fM^0_\Lambda$ we get the isomorphism, denoted by $g$ as well, 
from $\Gamma_{(X_{\sigma_1},\eta_{\sigma_1})}^{(X,\eta)}$ to $\Gamma_{(X_{\sigma_2}, \eta_{\sigma_2})}^{(X,\phi\circ\eta)}$. Choose a path $\gamma\in \Gamma_{(X_{\sigma_1},\eta_{\sigma_1})}^{(X,\eta)}$. Then the isomorphism
$B_{\sigma_1}\cong\tilde{g}^*B_{\sigma_2}$ extends to an isomorphism 
$(B_{\sigma_1})_\gamma\cong (B_{\sigma_2})_{g(\gamma)}$. 
The projective bundle $(B_{\sigma_1})_\gamma$ is infinitesimally rigid, by the definition of 
$U_\B$ and Lemma \ref{lemma-if-B-sigma-gamma-is-rigid}. Hence, so is $(B_{\sigma_2})_{g(\gamma)}$ 
and $(X,\phi\circ \eta)$ belongs to $U_\B$. Furthermore, the isomorphism $(B_{\sigma_1})_\gamma\cong (B_{\sigma_2})_{g(\gamma)}$
translates to the desired isomorphism $B_{(X,\eta)}\cong B_{(X,\phi\circ\eta)}$ via notation (\ref{eq-B-X-eta}).
\end{proof}

\begin{rem}
Assumption (\ref{lemma-assumption-triples-are-isomorphic}) of the above Lemma does not follow from assumption 
(\ref{lemma-assumption-phi-is-a-parallel-transport-operator}), as we saw in cautionary Remark 
\ref{caution-need-to-assume-isomorphism-of-marked-pairs}.
Assumption (\ref{lemma-assumption-phi-is-a-parallel-transport-operator}) stipulates that 
$\eta_{\sigma_1}^{-1} \phi \eta_{\sigma_1}$ is a monodromy operator.
This is seen as follows. Consider the family $\pi_\G:\X_\G\rightarrow \Sigma_\G$ obtained from the $\G$-family via the gluings associated to edges. 
As the family $\X_v\rightarrow \Sigma_v$ is marked, for each vertex, and the gluings are compatible with the markings, by Definition \ref{def-G-family}, 
then the local system $R^2\pi_{\G,*}\Integers$ is trivial and 
the composition $\eta_{\sigma_2}^{-1} \eta_{\sigma_1}$ is the parallel transport operator 
for any path $\gamma$ from $\sigma_1$ to $\sigma_2$. 
Further gluing the points $\sigma_1$ and $\sigma_{2}$ and the fibers $X_{\sigma_1}$ and $X_{\sigma_2}$ via the isomorphism
$g$, the path $\gamma$ becomes a loop and its monodromy operator is
$\eta_{\sigma_1}^{-1}\phi^{-1} \eta_{\sigma_1}$ (substitute $g_*\eta_{\sigma_1}^{-1}\phi^{-1}$ for $\eta_{\sigma_2}^{-1}$ in the parallel transport operator $\eta_{\sigma_2}^{-1} \eta_{\sigma_1}$ and drop $g_*$). Assumptions 
(\ref{lemma-assumption-phi-is-a-parallel-transport-operator})  and (\ref{lemma-assumption-triples-are-isomorphic}) may be regarded as stipulating that $\phi$ is a monodromy operator of the pair $(X_{\sigma_1},B_{\sigma_1})$.
\end{rem}

\begin{defi}
\label{def-Mon-B}
Let $\Mon(\B)$ be the subgroup of $\Mon(\fM^0_\Lambda)$ generated by elements $\phi$ satisfying assumptions 
(\ref{lemma-assumption-phi-is-a-parallel-transport-operator})  and (\ref{lemma-assumption-triples-are-isomorphic}) of Lemma \ref{lemma-U-B-is-invariant-with-respect-to-monodromy-operators-of-triples}.
\end{defi}

\begin{cor}
\label{cor-U-B-is-Mon-B-invariant}
$U_\B$ is $\Mon(\B)$-invariant and $B_{(X,\eta)}\cong B_{(X,\phi\circ \eta)}$, for every $(X,\eta)$ in $U_\B$ and every $\phi$ in
 $\Mon(\B)$.
\end{cor}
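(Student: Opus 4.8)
The statement to prove is Corollary \ref{cor-U-B-is-Mon-B-invariant}: that $U_\B$ is $\Mon(\B)$-invariant and that $B_{(X,\eta)}\cong B_{(X,\phi\circ\eta)}$ for every $(X,\eta)\in U_\B$ and every $\phi\in\Mon(\B)$.

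\medskip

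The plan is to deduce this directly from Lemma \ref{lemma-U-B-is-invariant-with-respect-to-monodromy-operators-of-triples} together with the definition of $\Mon(\B)$ (Definition \ref{def-Mon-B}) as the subgroup of $\Mon(\fM^0_\Lambda)$ generated by the elements $\phi$ satisfying assumptions (\ref{lemma-assumption-phi-is-a-parallel-transport-operator}) and (\ref{lemma-assumption-triples-are-isomorphic}) of that Lemma. First I would observe that Lemma \ref{lemma-U-B-is-invariant-with-respect-to-monodromy-operators-of-triples} already gives, for each \emph{generator} $\phi$ of $\Mon(\B)$, both $\phi(U_\B)=U_\B$ and the isomorphism $B_{(X,\eta)}\cong B_{(X,\phi\circ\eta)}$ for all $(X,\eta)\in U_\B$. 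So what remains is a closure/induction argument over words in the generators: one must check that the two conclusions propagate under composition and inversion of group elements. For the invariance of $U_\B$ this is immediate, since $\{\psi\in\Mon(\fM^0_\Lambda):\psi(U_\B)=U_\B\}$ is visibly a subgroup of $\Mon(\fM^0_\Lambda)$, hence contains the subgroup generated by the generators, namely all of $\Mon(\B)$.

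\medskip

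For the second assertion I would argue as follows. Fix $(X,\eta)\in U_\B$. For $\phi,\phi'$ in $\Mon(\B)$ with the isomorphisms $B_{(X,\eta)}\cong B_{(X,\phi\circ\eta)}$ known for all base points, apply the statement first to the pair $((X,\phi'\circ\eta),\phi)$ — legitimate because $(X,\phi'\circ\eta)\in U_\B$ by the already-established invariance — to get $B_{(X,\phi'\circ\eta)}\cong B_{(X,\phi\circ\phi'\circ\eta)}$, and then to $((X,\eta),\phi')$ to get $B_{(X,\eta)}\cong B_{(X,\phi'\circ\eta)}$; composing yields $B_{(X,\eta)}\cong B_{(X,(\phi\phi')\circ\eta)}$. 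Inversion is handled similarly: given $\phi\in\Mon(\B)$, apply the known isomorphism to the point $(X,\phi^{-1}\circ\eta)\in U_\B$, obtaining $B_{(X,\phi^{-1}\circ\eta)}\cong B_{(X,\eta)}$, which is the desired statement for $\phi^{-1}$ with base point $(X,\eta)$ replaced by $(X,\phi^{-1}\circ\eta)$; since $(X,\eta)$ was arbitrary in $U_\B$ this proves the claim for $\phi^{-1}$. Thus by induction on word length in the generators the isomorphism $B_{(X,\eta)}\cong B_{(X,\phi\circ\eta)}$ holds for every $\phi\in\Mon(\B)$ and every $(X,\eta)\in U_\B$, where $B_{(X,\eta)}$ is the isomorphism class defined in (\ref{eq-B-X-eta}), which is well defined by Lemma \ref{lemma-if-B-sigma-gamma-is-rigid}.

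\medskip

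I do not anticipate a serious obstacle here: the corollary is a formal consequence of the lemma, the only subtlety being to make sure the intermediate base points $(X,\phi'\circ\eta)$ stay inside $U_\B$ (handled by first proving invariance) so that the notation $B_{(X,\phi'\circ\eta)}$ from (\ref{eq-B-X-eta}) is even defined before one invokes the isomorphism for a further monodromy operator. The entire proof can be written in a few lines: state that invariance follows because the stabilizer of $U_\B$ is a subgroup, then run the two-line composition/inversion argument to get the bundle isomorphisms, citing Lemma \ref{lemma-U-B-is-invariant-with-respect-to-monodromy-operators-of-triples} and Definition \ref{def-Mon-B}.
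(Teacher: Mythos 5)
Your proof is correct and matches the paper's intent: the paper gives Corollary \ref{cor-U-B-is-Mon-B-invariant} with no written proof, treating it as an immediate formal consequence of Lemma \ref{lemma-U-B-is-invariant-with-respect-to-monodromy-operators-of-triples} and Definition \ref{def-Mon-B}. You have simply spelled out the closure argument (stabilizer of $U_\B$ is a subgroup; propagate the bundle isomorphism under composition and inversion using the already-established invariance to keep intermediate base points inside $U_\B$), which is exactly the argument the authors leave implicit.
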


\begin{example}
\label{example-passage-to-universal-cover}
Let $\xi:\X\rightarrow \Sigma$ be a family of irreducible holomorphic symplectic manifolds, 
$\E_\X\rightarrow \X\times_\Sigma\X$ a family of reflexive sheaves satisfying Assumption \ref{assumption-gamma-hyperholomorphic-for-all-gamma},
$\Y\rightarrow \X\times_\Sigma\X$ the blow-up of the relative diagonal, and $\B\rightarrow \Y$ the $\PP^{r-1}$-bundle associated to $\E_\X$. Choose a point $\sigma_0\in\Sigma$ and a marking $\eta_{\sigma_0}$ of $X_{\sigma_0}$ and let $\fM^0_\Lambda$ be the component of the moduli space of marked pairs containing $(X_{\sigma_0},\eta_{\sigma_0})$. We get the composite homomorphism 
\[
\pi_1(\Sigma,\sigma_0)\rightarrow \Mon^2(X)\rightarrow \Mon(\fM^0_\Lambda), 
\]
where the latter is conjugation by $\eta_{\sigma_0}$. Let $\widetilde{\Sigma}\rightarrow \Sigma$ be the universal cover
and $\tilde{\xi}:\widetilde{\X}\rightarrow \widetilde{\Sigma}$ the pulled back family.
Choose a point $\tilde{\sigma}_0\in\widetilde{\Sigma}$ over $\sigma_0$ and let $\eta$ be the trivialization of the local system
$R^2\tilde{\xi}_*\Integers$ determined by the marking $\eta_{\tilde{\sigma}_0}:=\eta_{\sigma_0}$. The trivialization endows each fiber of $\tilde{\xi}$ with a marking.
Set $\widetilde{\Y}:=\Y\times_\Sigma\widetilde{\Sigma}$ and let $\widetilde{\B}\rightarrow \widetilde{\Y}$ be the pulled back family of projective bundles. Then $\widetilde{\B}$ is 
$Gal(\widetilde{\Sigma}/\Sigma)$-equivariant, $Gal(\widetilde{\Sigma}/\Sigma)$ is isomorphic to $\pi_1(\Sigma,\sigma_0)$,
and so the image of $\pi_1(\Sigma,\sigma_0)$ in $\Mon(\fM^0_\Lambda)$ is contained in $\Mon(\widetilde{\B})$.
Hence, $U_{\widetilde{\B}}$ is $\pi_1(\Sigma,\sigma_0)$-invariant, by Corollary \ref{cor-U-B-is-Mon-B-invariant}.
\end{example}

Given an irreducible holomorphic symplectic manifold $X$, let $r(X)$ be the rank of the lattice 
$[H^{2,0}(X)+H^{0,2}(X)]\cap H^2(X,\Integers)$. 
Note that $0\leq r(X)\leq 2$, and $r(X)=2$ if and only if the Picard rank of $X$ is maximal. 

\begin{thm}
\label{thm-main-general-ihsm}
Assume that $\Mon(\B)$ is a finite index subgroup of $\Mon(\fM^0_\Lambda)$ and $U_\B$ is non-empty. Then $U_\B$ contains every marked pair $(X,\eta)$ with $r(X)=0$. If, furthermore, $U_\B$ contains some marked pair $(X,\eta)$ with $r(X)=1$, then $U_\B$ contains every marked pair with non-maximal Picard rank.
\end{thm}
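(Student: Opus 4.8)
The plan is to deduce Theorem \ref{thm-main-general-ihsm} from the monodromy invariance of $U_\B$ (Corollary \ref{cor-U-B-is-Mon-B-invariant}) together with the density theorem of Verbitsky cited in the introduction \cite[Theorem 4.11]{verbitsky-ergodicity}, \cite[Theorem 2.5]{verbitsky-ergodic-erratum}. First I would recall what that density statement gives: if $G$ is a finite-index subgroup of $\Mon(\fM^0_\Lambda)$ (equivalently, of an arithmetic lattice acting on the period domain $\Omega_\Lambda$), then every $G$-orbit of a point $(X,\eta)\in\fM^0_\Lambda$ with $r(X)=0$ is dense in $\fM^0_\Lambda$; more precisely the orbit of the \emph{period} is dense in $\Omega_\Lambda$, and points with $r(X)=0$ are exactly those whose period is not contained in any rational hyperplane $\lambda^\perp$, $\lambda\in\Lambda$. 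So the first step is: since $\Mon(\B)$ has finite index in $\Mon(\fM^0_\Lambda)$ by hypothesis, and $U_\B$ is non-empty and open (Corollary \ref{cor-open-subset-U-G-of-moduli}) and $\Mon(\B)$-invariant (Corollary \ref{cor-U-B-is-Mon-B-invariant}), its $\Mon(\B)$-saturation is a non-empty open $\Mon(\B)$-invariant set, hence by the ergodicity/density theorem it is dense in $\fM^0_\Lambda$.

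The second step handles the $r(X)=0$ locus. Let $(X,\eta)$ be any marked pair with $r(X)=0$. I want to show $(X,\eta)\in U_\B$. Pick a point $(X_0,\eta_0)\in U_\B$; by density of the $\Mon(\B)$-orbit of the $r(X)=0$ points of $U_\B$ (or more simply: by density of $U_\B$ itself, since $U_\B\supseteq \Mon(\B)\cdot U_\B$ after step one), $U_\B$ is dense. But I need more than density — I need that $(X,\eta)$ itself lies in $U_\B$. Here is where I would use that $U_\B$ is not merely dense but that, after step one, the union of $\Mon(\B)$-translates of any one $r(X)=0$ point of $U_\B$ already sweeps out all $r(X)=0$ points. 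The cleanest route: fix $(X_0,\eta_0)\in U_\B$ with $r(X_0)=0$ (such a point exists because $U_\B$ is open non-empty, so it meets the dense $G_\delta$ of maximal-transcendence periods); then the $\Mon(\B)$-orbit of $P(X_0,\eta_0)$ in $\Omega_\Lambda$ is dense, and by Verbitsky's Global Torelli the orbit closure is governed by the ergodic action, giving that the orbit is in fact \emph{all} of the $\{r=0\}$ periods up to the finite-index issue — the standard statement is that a finite-index subgroup of the monodromy group acts with dense orbits and the orbit of a non-rational-hyperplane point contains every other such point's translate under the full $O^+(\Lambda)$; combined with $\Mon(\B)$-invariance of $U_\B$ this forces every $r(X)=0$ point into $U_\B$. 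So step two concludes: $\{(X,\eta): r(X)=0\}\subseteq U_\B$.

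The third step bootstraps from $r=1$. Suppose now $(X_1,\eta_1)\in U_\B$ with $r(X_1)=1$. The period $P(X_1,\eta_1)$ lies on exactly one rational hyperplane $\lambda^\perp$, $\lambda\in\Lambda$ primitive. The locus of marked pairs whose period lies in $\lambda^\perp$ is itself (a component of) a moduli space $\fM^0_{\lambda^\perp}$ of marked irreducible holomorphic symplectic manifolds for the lattice $\lambda^\perp$ of rank $r-1$, and inside it the points with $r(X)=1$ (rather than $2$) are the $\{r(X)=0\}$-analogue — i.e. those whose period is not on any further rational hyperplane of $\lambda^\perp$. The stabilizer of $\lambda$ inside $\Mon(\B)$ still has finite index in the corresponding monodromy group of $\fM^0_{\lambda^\perp}$ (this uses that the monodromy group of $\Lambda$ surjects, up to finite index, onto the relevant orthogonal group of $\lambda^\perp$, a standard lattice-theoretic fact). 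Applying the $r=0$ result \emph{within} $\fM^0_{\lambda^\perp}$ — i.e. step two applied to the rank $r-1$ situation — shows $U_\B$ contains every marked pair with period on $\lambda^\perp$ and not on any smaller hyperplane, i.e. every pair in that hyperplane of Picard rank $1$. Then running $\Mon(\B)$-invariance once more, since $\Mon(\B)$ acts transitively (up to finite index) on primitive vectors $\lambda$ of a fixed square, and the possible squares are finite in number among MBM/relevant classes — actually one needs transitivity on all primitive classes of each negative square, which holds up to finite index — we sweep over all rational hyperplanes, capturing every marked pair of Picard rank exactly $1$. Together with step two this gives: $U_\B$ contains every marked pair with non-maximal Picard rank.

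The main obstacle I anticipate is the precise form of the density/ergodicity input and the lattice-theoretic surjectivity in step three: one must verify that a finite-index subgroup of $\Mon(\fM^0_\Lambda)$ acts on $\Omega_\Lambda$ with the property that the orbit closure of a non-rational point is the whole space (Verbitsky's ergodicity theorem gives this for the full monodromy group, and finite index subgroups of lattices in semisimple groups without compact factors still act ergodically, hence with dense orbits, by Moore's ergodicity theorem), and that the stabilizer of a primitive $\lambda$ maps to a finite-index subgroup of $\Mon(\fM^0_{\lambda^\perp})$. Both are standard but need to be invoked carefully; the geometric content — that $U_\B$ is open and monodromy-invariant — is already in hand from the earlier sections, so the proof is essentially an application of these homogeneous-dynamics and arithmetic-group facts, exactly as in \cite[Theorem 1.11]{torelli}.
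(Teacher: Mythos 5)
Your step two reaches the right conclusion but by a detour: you do not need the orbit of one fixed $(X_0,\eta_0)\in U_\B$ to "be all of the $\{r=0\}$ periods" (it cannot be: an orbit is countable). The paper's cleaner version, which you essentially have at the end, is simply: for an \emph{arbitrary} $(X,\eta)$ with $r(X)=0$, its $\Mon(\B)$-orbit is dense in $\fM^0_\Lambda$ by Verbitsky's ergodicity theorem; since $U_\B$ is non-empty, open, and $\Mon(\B)$-invariant, the orbit meets $U_\B$, hence $(X,\eta)\in U_\B$. So the first half is the same argument, stated slightly awkwardly.

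Your step three, however, is a genuinely different route from the paper's and it has a real gap. You restrict to the hyperplane $\lambda_0^\perp$ through $P(X_1,\eta_1)$, run the $r=0$ argument inside $\fM^0_{\lambda_0^\perp}$, and then try to sweep to the other hyperplanes $\lambda^\perp$ by $\Mon(\B)$-translation. But no isometry of $\Lambda$ can carry a primitive $\lambda_0$ to a primitive $\lambda$ with $(\lambda,\lambda)\neq(\lambda_0,\lambda_0)$, and there are infinitely many negative squares $(\lambda,\lambda)$ that occur. You notice this ("actually one needs transitivity on all primitive classes of each negative square") but then assert it "holds up to finite index" — that claim is only correct \emph{square by square}, and gives no way to reach a hyperplane of a different square. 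Openness of $U_\B$ does not rescue this either: having all generic points of $\lambda_0^\perp$ in the open set $U_\B$ does not yield a tubular neighborhood of $\lambda_0^\perp$ and hence does not give you generic points of any nearby $\lambda^\perp$. The paper avoids the hyperplane-by-hyperplane bootstrap entirely by invoking the stronger input [Theorem 2.5]{verbitsky-ergodic-erratum}: the $\Mon(\B)$-orbit of any marked pair with $r=1$ is dense in the \emph{full} locus $\{r>0\}$ (not merely in its own hyperplane). That single density statement immediately gives the $r=1$ case exactly as the $r=0$ case was handled, and is not reducible to the per-hyperplane analysis you propose.
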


\begin{proof}
Any non-empty open subset of $\fM^0_\Lambda$, which is invariant under some finite index subgroup $G$ of $\Mon^2(\fM^0_\Lambda)$, 
necessarily contains 
all marked pairs $(X,\eta)$ in $\fM^0_\Lambda$ with $r(X)=0$, since the $G$-orbit of $(X,\eta)$ is dense in $\fM^0_\Lambda$ by a result of 
Verbitsky \cite[Theorem 4.11]{verbitsky-ergodicity} (which applies to $\fM^0_\Lambda$ using  the isomorphism of \cite[Cor. 4.31]{verbitsky-torelli} between each component of Teichm\"{u}ller space and the associated component of the moduli space of marked pairs).
If, in addition,  the open subset contains some marked pair $(X',\eta')$ with $r(X')=1$, 
then it necessarily contains 
all marked pairs $(X,\eta)$ in $\fM^0_\Lambda$ with $r(X)=1$ as well, since the $G$-orbit of such a marked pair is dense in the locus
of marked pairs with $r(X)>0$, 
by \cite[Theorem 2.5]{verbitsky-ergodic-erratum}. 
\end{proof}

%
\section{Monodromy equivariance of the modular hyperholomorphic sheaf}
\label{sec-monodromy-equivariance-of-the-modular-sheaf}
We prove Theorem \ref{thm-rigidity} in this section. 

%
\subsection{The polarized surface monodromy group of a moduli space of sheaves}
Let $S_0$ be a $K3$ surface with a cyclic Picard group and $v=(r,kh_0,s)\in \widetilde{H}(S,\Integers)$ a primitive Mukai vector, 
where $h_0$ is the ample generator  of $H^{1,1}(S_0,\Integers)$ and $k$ is a non-zero integer. Assume that $r>0$ or $k>0$ and
$(v,v)=2n-2$, where $n\geq 2$. Then
$M_H(v)$ is a smooth projective manifold of $K3^{[n]}$-type. 
The second cohomology $H^2(S_0,\Integers)$ is a direct summand  in $\widetilde{H}(S,\Integers)$.
The sublattice $h_0^\perp$ of $H^2(S_0,\Integers)$ orthogonal to $h_0$ is contained in the sublattice $v^\perp$ 
of $\widetilde{H}(S,\Integers)$ orthogonal to $v$. Let $\Mon^2(S_0)_{h_0}$ be the subgroup of the monodromy group
of $S_0$ stabilizing $h_0$. We regard $\Mon^2(S_0)$ also as a subgroup of the isometry group of 
the Mukai lattice acting via the identity on $H^i(S_0,\Integers)$, $i=0, 4$. Then $\Mon^2(S_0)_{h_0}$ leaves the Mukai vector $v$ invariant and embedds in the isometry group of $v^\perp$. The latter is naturally isometric to $H^2(M_H(v),\Integers)$ via Mukai's Hodge isometry $m_v$, given in (\ref{eq-Mukai-isomorphism}).
Denote by $\Mon^2(S_0)_{h_0}^{m_v}$ the image of $\Mon^2(S_0)_{h_0}$ in the isometry group of $H^2(M_H(v),\Integers)$ via conjugation by $m_v$.

\begin{prop}
\label{prop-surface-monodromy-group-of-a-moduli-space}
There exists a smooth quasi-projective curve $C$ and a family $p:\cS\rightarrow C$ of $K3$ surfaces admitting a section $h$ 
of the local system $R^2p_*\Integers$ and a smooth proper morphism $\pi:\M\rightarrow C$ with the following properties.
The class $h_t$ is of type $(1,1)$ and ample, for all $t\in C$. The fiber $\M_t$ is a smooth and projective moduli space of $H'_t$-stable sheaves on $S_t$ with Mukai vector $v_t:=(r,kh_t,s)$, for some $v_t$-generic polarization $H'_t$. 
$S_0$ is isomorphic to the fiber of $p$ over some point $t_0\in C$, and the class $h_{t_0}$ corresponds to $h_0$ via this isomorphism.
The fiber $\M_{t_0}$ of $\pi$ is thus isomorphic to $M_H(v)$. 
The image of $\pi_1(C,t_0)$ in $\Mon^2(M_H(v))$ is equal to $\Mon^2(S_0)_{h_0}^{m_v}$.
\end{prop}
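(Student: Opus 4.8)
The plan is to construct the family by families of $K3$ surfaces rather than moduli spaces, and then use functoriality of the Yoshioka construction to obtain the relative moduli space. First I would recall that the monodromy group $\Mon^2(S_0)$ of a $K3$ surface is the full orientation-preserving isometry group $O^+(H^2(S_0,\Integers))$, by the work of Borcea and others, and that $\Mon^2(S_0)_{h_0}$ is realized by monodromy of families of polarized $K3$ surfaces preserving $h_0$. More precisely, there is a coarse moduli space $\mathcal{K}_d$ of $h_0^2 = d$-polarized $K3$ surfaces which is connected and quasi-projective; passing to a suitable level structure or taking an étale cover $\mathcal{K}_d' \to \mathcal{K}_d$ over which a universal family $p : \cS \to \mathcal{K}_d'$ exists (with a polarization section $h$), the image of $\pi_1(\mathcal{K}_d', [S_0]) \to O(h_0^\perp)$ is exactly the group that, together with the Hodge-theoretic constraints, generates $\Mon^2(S_0)_{h_0}$ acting on $h_0^\perp \subset H^2(S_0,\Integers)$. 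One must be slightly careful: the statement is about the action on $v^\perp \cong H^2(M_H(v),\Integers)$, and $\Mon^2(S_0)_{h_0}$ acts on $v^\perp$ via the identity on $H^0 \oplus H^4$ and its given action on $h_0^\perp \oplus (\text{classes in } H^0\oplus H^4 \cap v^\perp)$, so I would spell out that the conjugation by $m_v$ intertwines these.

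The key steps, in order: (1) produce a smooth quasi-projective curve $C$ mapping to $\mathcal{K}_d'$ (a general complete-intersection curve through $[S_0]$, using Bertini) such that the induced map $\pi_1(C,t_0) \to \pi_1(\mathcal{K}_d', [S_0])$ is surjective — this is a Lefschetz-type hyperplane argument, valid because $\mathcal{K}_d'$ is quasi-projective and one may need to allow $C$ to meet the boundary, which is why $C$ is only quasi-projective; (2) restrict the universal family $p : \cS \to \mathcal{K}_d'$ and the polarization section to $C$, getting $p : \cS \to C$ with $h_t$ ample of type $(1,1)$ for all $t$; (3) after possibly shrinking $C$ (removing finitely many points where $H'_t$ hits a wall for $v_t$), choose a relatively ample polarization $H'$ on $\cS/C$ which is $v_t$-generic for every $t \in C$ — this uses the local finiteness of the wall-and-chamber structure and properness, so that a fixed $\mathbb{Q}$-divisor class works generically and one discards the bad fibers; (4) invoke Yoshioka's relative construction of moduli of sheaves to get $\pi : \M \to C$, a smooth projective morphism whose fiber $\M_t$ is $M_{H'_t}(v_t)$, with $\M_{t_0} \cong M_H(v)$ (using that $M_{H'_{t_0}}(v) \cong M_H(v)$ as $v$-generic polarizations on the same surface give deformation-equivalent, indeed isomorphic for $K3^{[n]}$-type with the same $v$, moduli spaces — or at worst birational, but one can arrange genuine isomorphism by choosing the chamber, or absorb a birational modification into an extra twistor step, though here I would keep it clean by choosing $H'$ in the same chamber as $H$); (5) identify the monodromy: the local system $R^2\pi_*\Integers$ over $C$ is, via the relative Mukai isometry $m_{v}$ (which exists in families by the construction of the universal sheaf on $\cS \times_C \M$ up to the twist $W$, exactly as in the absolute case), isomorphic to the local subsystem $\mathbf{v}^\perp \subset R^*p_*\Integers$, where $\mathbf{v}$ is the locally constant Mukai vector. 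Hence the image of $\pi_1(C,t_0) \to \Mon^2(\M_{t_0})$ equals the image of $\pi_1(C,t_0)$ acting on $v^\perp$, which by step (1) is the image of $\pi_1(\mathcal{K}_d', [S_0])$, which is $\Mon^2(S_0)_{h_0}$ acting on $v^\perp$, i.e.\ $\Mon^2(S_0)_{h_0}^{m_v}$.

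The main obstacle I expect is step (5) combined with step (1): one must show the image is \emph{exactly} $\Mon^2(S_0)_{h_0}^{m_v}$, not merely contained in it. Containment is formal from the relative Mukai isometry. For equality one needs: (a) that $\pi_1$ of the (level-structured) polarized $K3$ moduli space surjects onto $\Mon^2(S_0)_{h_0}$ — this is the deep input, essentially the surjectivity part of the global Torelli / monodromy theorem for $K3$ surfaces (one may need to note that $\Mon^2(S_0)_{h_0}$ is generated by monodromy of \emph{families of polarized $K3$s}, which is a known refinement, e.g.\ via the theory of $(-2)$-reflections in $h_0^\perp$ all being realized geometrically), and (b) that passing to the quasi-projective curve $C$ loses nothing, which is the Lefschetz argument in step (1). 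I would present (a) by citing the appropriate monodromy results for $K3$ surfaces (Borcea, or the discussion in Markman's survey on the $K3^{[n]}$ monodromy which traces back to the surface case) and reduce (b) to a standard application of Zariski's theorem on the fundamental group of a general curve section, being careful that $C$ is allowed to be non-compact so that the section theorem applies to the quasi-projective $\mathcal{K}_d'$. A secondary technical point to handle carefully is the passage between $v^\perp$-monodromy and $h_0^\perp$-monodromy: since $\Mon^2(S_0)_{h_0}$ acts trivially on $H^0(S_0) \oplus H^4(S_0)$ and these contribute the rank-$1$ or rank-$2$ complement of $h_0^\perp$ inside $v^\perp$ (depending on whether $(v, H^0\oplus H^4) $ has a primitive orthogonal vector), the action on $v^\perp$ is completely determined by the action on $h_0^\perp$, so no information is created or lost.
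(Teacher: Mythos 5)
Your proposal reconstructs essentially the same argument that the paper cites (the last paragraph of the proof of Theorem~6.1 in [Ma1]): take a curve $C$ through $[S_0]$ in the moduli of degree-$d$ polarized $K3$ surfaces whose $\pi_1$ surjects onto that of the moduli space (Lefschetz), restrict the universal polarized family, run the relative moduli construction, and transport the surface monodromy along the relative Mukai isometry $m_v$ to get the stated image in $\Mon^2(M_H(v))$. The only place where you deviate from the paper's route is in handling the $v_t$-generic polarizations: you propose to fix one relatively ample $H'$ (e.g.\ $h$ itself) and delete from $C$ the finitely many points where $H'_t$ lands on a wall, whereas the paper instead invokes Yoshioka's Proposition~5.1 to allow the choice of $H'_t$ to vary \emph{discontinuously} in $t$, so that no points of $C$ need to be removed and one does not have to argue separately that the bad locus is finite. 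Both devices work; yours requires the (true, but not entirely obvious) observation that only finitely many Noether--Lefschetz components are relevant because walls for a fixed Mukai vector come from classes of bounded discriminant, and also that the resulting family of moduli spaces over the punctured curve is still smooth proper -- which is fine since removing points only enlarges $\pi_1$ and keeps genericity. Your aside about "choosing the chamber" at $t_0$ is harmless but moot: since $\Pic(S_0)$ is cyclic there are no walls at $t_0$, so $M_{H'_{t_0}}(v) = M_H(v)$ automatically, and the birationality worry does not arise there.

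Two small points worth tightening if you write this out: (i) when you pass to a level cover $\mathcal K_d'$ to get a universal family, you must check that the level structure is fine enough to rigidify the family yet coarse enough not to cut down the monodromy below $\Mon^2(S_0)_{h_0}$ -- the standard fix is to work with the stack (orbifold $\pi_1$) or to avoid the locus of polarized $K3$s with nontrivial automorphisms, rather than imposing a cohomological level structure; (ii) the equality, not merely containment, of the image of $\pi_1(\mathcal K_d')$ in $O(h_0^\perp)$ with the restriction of $\Mon^2(S_0)_{h_0}$ is exactly the surjectivity statement in the $K3$ monodromy theory you cite, and it is worth flagging that this is the genuinely nontrivial input being imported here.
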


\begin{proof}
The statement is proven in the last paragraph of the proof of Theorem 6.1 in \cite{markman-monodromy-I}.
$C$ is a curve in the moduli space of polarized $K3$ surfaces of degree $2n-2$, whose fundamental group surjects onto that of the moduli space, and we use the fact that the construction of moduli spaces of sheaves works in families, as well as a result of Yoshioka
\cite[Prop. 5.1]{yoshioka-abelian-surface} which enables one to choose the $v_t$-generic polarizations $H'_t$ in a non-continuous fashion. 
\end{proof}

Over $\cS\times_C \M$ we have a relative twisted universal sheaf $\U$, yielding over $\M\times_C\M$ a flat family $\E_C$ of 
reflexive modular sheaves (\ref{eq-modular-sheaf}). Let $\Y\rightarrow \M\times_C\M$ be the blow-up of the relative diagonal and 
$\B\rightarrow \Y$ the corresponding family of $\PP^{2n-3}$-bundles. Next apply the construction of Example
\ref{example-passage-to-universal-cover}.
Let $\widetilde{C}\rightarrow C$ be the universal cover and $\widetilde{\B}\rightarrow \widetilde{\Y}:=\Y\times_C\widetilde{C}\rightarrow \widetilde{C}$ the pulled back family. Choosing a marking $\eta_{t_0}$ for $\M_{t_0}\cong M_H(v)$ we get the subset
$U_{\widetilde{\B}}$ of the component $\fM^0_\Lambda$ containing $(M_H(v),\eta_{t_0})$.

\begin{prop} 
\label{prop-invariance-under-the-surface-monodromy-group-of-moduli-space}
The subset $U_{\widetilde{\B}}$ of $\fM^0_\Lambda$  is $\eta_{t_0}\left[\Mon^2(S_0)_{h_0}^{m_v}\right]\eta_{t_0}^{-1}$-invariant.
\end{prop}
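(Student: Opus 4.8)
The plan is to combine Proposition \ref{prop-surface-monodromy-group-of-a-moduli-space} with Corollary \ref{cor-U-B-is-Mon-B-invariant} via Example \ref{example-passage-to-universal-cover}. Concretely, Proposition \ref{prop-surface-monodromy-group-of-a-moduli-space} produces a family $\pi:\M\rightarrow C$ over a smooth quasi-projective curve whose fiber over $t_0$ is $M_H(v)$ and whose monodromy image in $\Mon^2(M_H(v))$ is $\Mon^2(S_0)_{h_0}^{m_v}$. The relative modular sheaf $\E_C$ over $\M\times_C\M$ and the associated family of $\PP^{2n-3}$-bundles $\B\rightarrow \Y$ satisfy Assumption \ref{assumption-gamma-hyperholomorphic-for-all-gamma} fiberwise, since each fiber $\E_{C,t}$ is the modular sheaf (\ref{eq-modular-sheaf}) of $\M_t\cong M_{H'_t}(v_t)$, which is $\gamma$-hyperholomorphic for every twistor path $\gamma$ by \cite[Theorem 1.4]{markman-naturality} together with the footnote referencing \cite[Prop. 3.2]{torelli} and \cite[Lemma 7.6]{torelli}; moreover the local-freeness part of the assumption holds by \cite[Prop. 4.1]{markman-hodge}. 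Thus $\B$ is a family of the type appearing in Example \ref{example-passage-to-universal-cover}, with $\Sigma=C$, $\sigma_0=t_0$.

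First I would pass to the universal cover $\widetilde{C}\rightarrow C$ and form $\widetilde{\B}\rightarrow \widetilde{\Y}:=\Y\times_C\widetilde{C}$, which is $\mathrm{Gal}(\widetilde{C}/C)\cong\pi_1(C,t_0)$-equivariant with respect to the marking $\eta$ on $R^2\tilde{\xi}_*\ZZ$ determined by $\eta_{t_0}$. Here $\widetilde{C}$ is a connected smooth manifold, so the hypotheses of Lemma \ref{problem-construct-a-differentiable-family-B-relative-version} (with the trivial one-vertex graph $\G$, $\Sigma_v=\widetilde{C}$) are met and the rigidity locus $U_{\widetilde{\B}}\subset\fM^0_\Lambda$ of Corollary \ref{cor-open-subset-U-G-of-moduli} is defined. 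By the argument in Example \ref{example-passage-to-universal-cover}, the image of $\pi_1(C,t_0)$ in $\Mon(\fM^0_\Lambda)$ under $\pi_1(C,t_0)\rightarrow\Mon^2(M_H(v))\xrightarrow{\ \eta_{t_0}\cdot\eta_{t_0}^{-1}\ }\Mon(\fM^0_\Lambda)$ is contained in $\Mon(\widetilde{\B})$: for each loop $\delta$ at $t_0$, the deck transformation $g_\delta$ of $\widetilde{C}$ supplies two points $\tilde{t}_0,\ g_\delta(\tilde{t}_0)$ over $t_0$ with isomorphic underlying manifolds and an isomorphism carrying $\widetilde{\B}_{\tilde t_0}$ to $\widetilde{\B}_{g_\delta(\tilde t_0)}$ (namely the equivariance isomorphism), so both assumptions (\ref{lemma-assumption-phi-is-a-parallel-transport-operator}) and (\ref{lemma-assumption-triples-are-isomorphic}) of Lemma \ref{lemma-U-B-is-invariant-with-respect-to-monodromy-operators-of-triples} hold with $\phi$ the corresponding element of $\Mon(\fM^0_\Lambda)$. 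Then Corollary \ref{cor-U-B-is-Mon-B-invariant} gives that $U_{\widetilde{\B}}$ is invariant under $\Mon(\widetilde{\B})$, hence in particular under the image of $\pi_1(C,t_0)$.

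Finally I would identify that image with the group named in the statement. By Proposition \ref{prop-surface-monodromy-group-of-a-moduli-space} the image of $\pi_1(C,t_0)$ in $\Mon^2(M_H(v))$ is exactly $\Mon^2(S_0)_{h_0}^{m_v}$; conjugating by $\eta_{t_0}$ transports this to $\eta_{t_0}\big[\Mon^2(S_0)_{h_0}^{m_v}\big]\eta_{t_0}^{-1}\subset\Mon(\fM^0_\Lambda)$, which is precisely the group appearing in the desired invariance. Therefore $U_{\widetilde{\B}}$ is $\eta_{t_0}\big[\Mon^2(S_0)_{h_0}^{m_v}\big]\eta_{t_0}^{-1}$-invariant, as claimed.

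The step requiring the most care is verifying that the relative family $\E_C$ indeed satisfies Assumption \ref{assumption-gamma-hyperholomorphic-for-all-gamma} uniformly over $C$ — i.e. that the hyperholomorphicity and local-freeness properties, known for a single modular sheaf, hold simultaneously for every fiber $\M_t\cong M_{H'_t}(v_t)$ despite the non-continuous choice of $v_t$-generic polarizations $H'_t$ coming from \cite[Prop. 5.1]{yoshioka-abelian-surface}. This is not a new difficulty, since the cited results of \cite{markman-naturality,markman-hodge,torelli} apply to every such moduli space, but it is the place where one must be explicit about working in families; the remaining bookkeeping — running Example \ref{example-passage-to-universal-cover} and quoting Corollary \ref{cor-U-B-is-Mon-B-invariant} and Proposition \ref{prop-surface-monodromy-group-of-a-moduli-space} — is routine.
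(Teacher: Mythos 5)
Your proof is correct and follows precisely the same route as the paper's, which states in one line that the statement is a special case of Example \ref{example-passage-to-universal-cover} and follows from Corollary \ref{cor-U-B-is-Mon-B-invariant}; you have simply unpacked the bookkeeping (constructing $\widetilde{\B}$, checking the hypotheses of Lemma \ref{lemma-U-B-is-invariant-with-respect-to-monodromy-operators-of-triples} for each deck transformation, and identifying the image of $\pi_1(C,t_0)$ via Proposition \ref{prop-surface-monodromy-group-of-a-moduli-space}). Your closing concern about the non-continuous choice of polarizations $H'_t$ is a non-issue for exactly the reason you already give: Assumption \ref{assumption-gamma-hyperholomorphic-for-all-gamma} is a fiberwise condition, verified for each $\M_t$ individually by \cite[Theorem 1.4]{markman-naturality} and \cite[Prop.~4.1]{markman-hodge}.
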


\begin{proof}
The construction is a special case of the one carried out in Example \ref{example-passage-to-universal-cover} and thus follows from
Corollary \ref{cor-U-B-is-Mon-B-invariant}.
\end{proof}

%
\subsection{Invariance of $U_\B$ under the surface monodromy group of Douady spaces}
\label{sec-invariance-under-the-surface-monodromy-group-of-a-Douady-space}
  Let $\Lambda_{K3}$ be the $K3$ lattice and let $\Sigma$ be a component $\fM^0_{\Lambda_{K3}}$ of the moduli space 
 of marked $K3$ surfaces. The Beauville-Bogomolov lattice of a manifold of $K3^{[n]}$-type is the orthogonal direct sum $\Lambda_{K3}\oplus\Integers\delta$, where $(\delta,\delta)=2-2n$, $n\geq 2$.
 Let $f:\cS\rightarrow \Sigma$ be the universal $K3$ surface and let
 $\Z:=\cS^{[n]}\rightarrow \Sigma$ be the relative Hilbert scheme of length $n$ subschemes of fibers of $f$. Denote by
 $\U$ the ideal sheaf of the universal subscheme in $\cS\times_\Sigma \cS^{[n]}$,
 and let 
 $\E_\Z$ be the relative extension sheaf
 \[
 \E_\Z:=\SheafExt^1_{\pi_{13}}(\pi_{12}^*\U,\pi_{23}^*\U),
 \]
 where $\pi_{ij}$ is the projection from $\cS^{[n]}\times_\Sigma\cS\times_\Sigma\times \cS^{[n]}$ onto the fiber product of the $i$-th and $j$-th factors.
 The restriction of the sheaf $\E_\Z$ to the fiber $S^{[n]}_\sigma\times S^{[n]}_\sigma$
 of $\cS^{[n]}\times_\Sigma \cS^{[n]}$ over $\sigma\in\Sigma$ is an example of the modular sheaf 
 (\ref{eq-modular-sheaf}) and so satisfies 
 Assumption \ref{assumption-gamma-hyperholomorphic-for-all-gamma}, by \cite[Theorem 1.4]{markman-naturality}. 
 The marking $\bar{\eta}_\sigma:H^2(S_\sigma,\Integers)\rightarrow \Lambda_{K3}$,
 of each $K3$ surface, extends canonically to a marking 
 $\eta_\sigma:H^2(S^{[n]}_\sigma,\Integers)\rightarrow \Lambda$ of $S^{[n]}_\sigma$, by sending half the class of the divisor of non-reduced subschemes to $\delta$ \cite{beauville}. 
 The monodromy group $\Mon(\fM^0_{\Lambda_{K3}})$ acts on $\Sigma$ and the action lifts to 
 an action on the universal $K3$ surface $\cS$, since any automorphism of a $K3$ surface, which acts as the identity on its second cohomology, is the identity. Hence, the universal Hilbert scheme $\cS^{[n]}$ is $\Mon(\fM^0_{\Lambda_{K3}})$-equivariant as well. 
 It follows that the universal ideal sheaf  $\U$ is  $\Mon(\fM^0_{\Lambda_{K3}})$-equivariant with respect to the diagonal action
 on $\cS\times_\Sigma\cS^{[n]}$. Hence, the universal modular sheaf $\E_\Z$ is $\Mon(\fM^0_{\Lambda_{K3}})$-equivariant.
 
 Extending each element of $\Mon(\fM^0_{\Lambda_{K3}})$ to an isometry of $\Lambda$, by acting as the identity on $\delta$, 
 we get an embedding $\nu:\Mon(\fM^0_{\Lambda_{K3}})\rightarrow \Mon(\fM^0_\Lambda)$ in the monodromy group 
 $\Mon(\fM^0_\Lambda)$ of the corresponding component
 $\fM^0_\Lambda$ of marked manifolds of $K3^{[n]}$-type. 

Let $\Y\rightarrow \cS^{[n]}\times_\Sigma \cS^{[n]}$ be the blow-up of the relative diagonal.
The torsion free quotient of the pullback of $\E_\Z$ to $\Y$ is locally free, by 
\cite[Prop. 4.1]{markman-hodge}.
 Denote its projectivization by $\B\rightarrow \Y$. The $\Mon(\fM^0_{\Lambda_{K3}})$-equivariance of $\B$ yields the inclusion 
 $\nu\!\left[\Mon(\fM^0_{\Lambda_{K3}})\right]\subset \Mon(\B)$, by Definition \ref{def-Mon-B}. 
Corollary \ref{cor-U-B-is-Mon-B-invariant} thus yields the following statement.

\begin{prop}
\label{prop-invariance-under-the-surface-monodromy-group-of-Douady-space}
The open subset $U_\B$ of $\fM^0_\Lambda$ is $\nu\!\left[\Mon(\fM^0_{\Lambda_{K3}})\right]$-invariant.
\end{prop}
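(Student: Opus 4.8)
The plan is to recognize Proposition \ref{prop-invariance-under-the-surface-monodromy-group-of-Douady-space} as essentially a direct application of the machinery already developed in Sections \ref{sec-rigid-hyperholomorphic-sheaves} and \ref{sec-monodromy-equivariance-of-the-modular-sheaf}. First I would set up the family $f:\cS\rightarrow\Sigma$, $\Z:=\cS^{[n]}\rightarrow\Sigma$, the universal ideal sheaf $\U$, the relative extension sheaf $\E_\Z$, and the marking $\eta_\sigma$ exactly as in the statement, and verify that the fiberwise restriction of $\E_\Z$ over each $\sigma\in\Sigma$ is a modular sheaf (\ref{eq-modular-sheaf}), hence satisfies Assumption \ref{assumption-gamma-hyperholomorphic-for-all-gamma} by \cite[Theorem 1.4]{markman-naturality}. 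This puts us in the relative set-up of Lemma \ref{problem-construct-a-differentiable-family-B-relative-version} with $\G$ the trivial one-vertex graph and $\Sigma$ itself as the base, so the open rigidity locus $U_\B\subset\fM^0_\Lambda$ of Corollary \ref{cor-open-subset-U-G-of-moduli} is defined.

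The crux is then to produce enough monodromy operators lying in $\Mon(\B)$ (Definition \ref{def-Mon-B}) to conclude $\nu\!\left[\Mon(\fM^0_{\Lambda_{K3}})\right]$-invariance via Corollary \ref{cor-U-B-is-Mon-B-invariant}. For this I would argue the equivariance chain: the $\Mon(\fM^0_{\Lambda_{K3}})$-action on $\Sigma$ lifts to an action on $\cS$, because a $K3$ automorphism acting trivially on $H^2$ is the identity; hence it lifts canonically to $\cS^{[n]}$, to the universal subscheme, to its ideal sheaf $\U$ (acting diagonally on $\cS\times_\Sigma\cS^{[n]}$), and therefore to $\E_\Z$ and to the projective bundle $\B\rightarrow\Y$ over the blow-up $\Y\rightarrow\cS^{[n]}\times_\Sigma\cS^{[n]}$ of the relative diagonal, using that $\B$ is well-defined by the local freeness of the torsion-free quotient, \cite[Prop. 4.1]{markman-hodge}. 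Given any $\phi\in\nu\!\left[\Mon(\fM^0_{\Lambda_{K3}})\right]$, pick $\sigma_1\in\Sigma$ and let $\sigma_2$ be its image under the corresponding element $g_0\in\Mon(\fM^0_{\Lambda_{K3}})$; the automorphism $g:S_{\sigma_1}^{[n]}\rightarrow S_{\sigma_2}^{[n]}$ induced by $g_0$ satisfies $\phi\circ\eta_{\sigma_1}\circ g^*=\eta_{\sigma_2}$ (this is exactly how $\nu$ and the markings $\eta_\sigma$ were defined, extending $\bar\eta_\sigma$ by fixing $\delta$), giving assumption (\ref{lemma-assumption-phi-is-a-parallel-transport-operator}) of Lemma \ref{lemma-U-B-is-invariant-with-respect-to-monodromy-operators-of-triples}, while the equivariance of $\B$ gives the required isomorphism $B_{\sigma_1}\cong\tilde g^*B_{\sigma_2}$, which is assumption (\ref{lemma-assumption-triples-are-isomorphic}). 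Hence every such $\phi$ lies in $\Mon(\B)$, so $\nu\!\left[\Mon(\fM^0_{\Lambda_{K3}})\right]\subset\Mon(\B)$.

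Finally, Corollary \ref{cor-U-B-is-Mon-B-invariant} immediately yields that $U_\B$ is $\Mon(\B)$-invariant, and therefore $\nu\!\left[\Mon(\fM^0_{\Lambda_{K3}})\right]$-invariant, which is the assertion. The main obstacle, such as it is, is purely bookkeeping: one must make sure the passage from the $K3$ family to the Hilbert-scheme family really does carry the $\Mon(\fM^0_{\Lambda_{K3}})$-equivariance all the way through to $\B$ in a way compatible with the canonical extension of markings $\bar\eta_\sigma\mapsto\eta_\sigma$ (so that the isometry $\phi$ genuinely is the image under $\nu$ of the $K3$ monodromy operator, acting as the identity on $\delta$), and that the local freeness input of \cite[Prop. 4.1]{markman-hodge} is what legitimizes forming $\B$ fiberwise and globally. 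All the analytic content — openness of the rigidity locus, constancy of the isomorphism class along twistor paths, and the monodromy-invariance mechanism — has already been packaged into Corollaries \ref{cor-open-subset-U-G-of-moduli} and \ref{cor-U-B-is-Mon-B-invariant} and Lemma \ref{lemma-U-B-is-invariant-with-respect-to-monodromy-operators-of-triples}, so no new estimates are needed.
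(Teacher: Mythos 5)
Your proposal is correct and follows essentially the same route as the paper: establish the $\Mon(\fM^0_{\Lambda_{K3}})$-equivariance of $\cS$, $\cS^{[n]}$, $\U$, $\E_\Z$, and hence of $\B$ over the blow-up $\Y$, then verify assumptions (\ref{lemma-assumption-phi-is-a-parallel-transport-operator}) and (\ref{lemma-assumption-triples-are-isomorphic}) of Lemma \ref{lemma-U-B-is-invariant-with-respect-to-monodromy-operators-of-triples} to get the inclusion $\nu\!\left[\Mon(\fM^0_{\Lambda_{K3}})\right]\subset\Mon(\B)$, and conclude via Corollary \ref{cor-U-B-is-Mon-B-invariant}. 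The only minor cosmetic difference is that the paper packages part of this as a reference to Example \ref{example-passage-to-universal-cover}, whereas you unwind the equivariance chain directly; the substance is identical.
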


\begin{rem}
\label{rem-invariance-under-the-surface-monodromy-group-of-Douady-space}
Let $v=(1,0,1-n)$ be the Mukai vector of an ideal sheaf of a length $n$ subscheme of a $K3$ surface. 
 The markings $\eta_\sigma$ and $\bar{\eta}_\sigma$ conjugate the homomorphism $\nu$ to 
 an embedding $\nu_\sigma:\Mon^2(S_\sigma)\rightarrow \Mon^2(S_\sigma^{[n]})$. The latter is the 
 the composition of the embedding of $\Mon^2(S_\sigma)$  is the isometry group of $v^\perp$ (extending the action on $H^2(S_\sigma,\Integers)$ to an action on the Mukai lattice via the trivial action on $H^i(S_\sigma,\Integers)$, $i=0,4$),
 followed by conjugation via Mukai's Hodge isometry $m_v$, given in (\ref{eq-Mukai-isomorphism}). 
 We denote the image of $\nu_\sigma$ by $\Mon^2(S_\sigma)^{m_v}$ in analogy to the notation used in Proposition
 \ref{prop-surface-monodromy-group-of-a-moduli-space}. 
\end{rem}

%
\subsection{Stability preserving Fourier-Mukai functors
}
\label{sec-Fourier-Mukai}

Let $S_i$  be a $K3$ surface, $v_i$ a primitive Mukai vector in $\widetilde{H}(S_i,\Integers)$, and $H_i$ a $v_i$-generic polarization,
$i=1,2$. Assume that $M_i:=M_{H_i}(v_i)$ is non-empty,
$i=1,2$. Let $\Phi:D^b(S_1)\rightarrow D^b(S_2)$ be an equivalence of the bounded derived categories of coherent sheaves. Assume that the object $\Phi(F)$ is represented by an $H_2$ stable sheaf of Mukai verctor $v_2$, for every $H_1$ stable sheaf with Mukai verctor $v_1$. Then $\Phi$ induces an isomorphism $\phi:M_1\rightarrow M_2$, by \cite[Theorem 1.6]{mukai-applications} (see also \cite[Lemma 5.6]{markman-monodromy-I}). 
Let $\pi_i$ be the projection from $S_1\times S_2$ to $S_i$, $i=1,2$.
There exists an object $\P$ over
$S_1\times S_2$, known as a {\em Fourier-Mukai kernel}, such that
$\Phi$ is the integral transform $R\pi_{2,*}(\P\otimes L\pi_1^*)$, where the tensor product is taken in the derived category \cite{orlov}. 
Let $\pi_{ij}$ be the projection from
$S_1\times M_1\times S_2\times M_2$ onto the product of the $i$-th and $j$-th factors. Let $\Gamma_\phi\subset M_1\times M_2$ 
be the graph of $\phi$. The integral transform with respect to the object $\pi_{13}^*(\P)\otimes \pi_{24}^*(\StructureSheaf{\Gamma_\phi})$ is an equivalence $\widetilde{\Phi}:D^b(S_1\times M_1,\pi_2^*\alpha)\rightarrow D^b(S_2\times M_2,\pi_2^*\phi_*\alpha)$, which takes a  universal sheaf $\U_{v_1}$ over $S_1\times M_1$, twisted by a Brauer class $\alpha$ of $M_1$, to an object represented by a universal sheaf $\U_{v_2}$ over $S_2\times M_2$, twisted by the Brauer class $\phi_*(\alpha)$ on $M_2$, again by \cite[Theorem 1.6]{mukai-applications}. 
Consequently, the pullback via $\phi\times\phi:M_1\times M_1\rightarrow M_2\times M_2$ of the modular sheaf $E_{v_2}$ over $M_2\times M_2$, given in (\ref{eq-modular-sheaf}),  is the modular sheaf $E_{v_1}$ over $M_1\times M_1$. 

Let $Y_i$ be the blow-up of the diagonal in $M_i\times M_i$.  
The torsion free quotient of the pullback of $E_{v_i}$ to $Y_i$ is locally free, by \cite[Prop. 4.1]{markman-hodge}, and we denote its projectivization by $B_{v_i}$. We will refer to $B_{v_i}$ as the {\em modular projective bundle}.
Choose a marking $\eta_1$ for $M_1$ and set $\eta_2:=\eta_1\circ\phi^*$. 
The two markings are then compatible (Definition \ref{def-compatible-modular-markings}), by the computation of the 
characteristic classes of the modular sheaves (\ref{eq-characteristic-class-of-modular-sheaf}).
Denote by $\fM^0_\Lambda$ the connected component containing the marked pairs $(M_i,\eta_i)$, $i=1,2$.  Let $U_{B_{v_i}}$
be the open subset of $\fM^0_\Lambda$ associated to the triple $(M_i,\eta_i,B_{v_i})$, $i=1,2$,  
in Lemma \ref{lemma-U-B-0}.
Then $U_{B_{v_1}}=U_{B_{v_2}}$, since $\phi$ lifts to an isomorphism of the two triples. Propositions 
\ref{prop-invariance-under-the-surface-monodromy-group-of-moduli-space} and 
\ref{prop-invariance-under-the-surface-monodromy-group-of-Douady-space} yield the following conclusion.

\begin{cor}
\label{cor-invariance-under-two-subgroups}
$U_{B_{v_1}}$  is invariant under the subgroup $G$ of
$\Mon(\fM^0_\Lambda)$ generated by the two subgroups
$\eta_i\left[\Mon(S_i)^{m_{v_i}}_{c_1(v_i)}\right]\eta_i^{-1}$,  $i=1,2$. Furthermore, 
$B_{(X,\eta)}\cong B_{(X,\phi\circ\eta)}$, for every $(X,\eta)\in U_{B_{v_1}}$ and every $\phi$ in $G$.
\end{cor}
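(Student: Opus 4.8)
The plan is to deduce Corollary \ref{cor-invariance-under-two-subgroups} by combining the two monodromy–invariance statements that precede it. First I would recall that the modular bundle $B_{v_1}$ over $Y_1$ (the blow-up of the diagonal in $M_1\times M_1$) and the modular bundle $B_{v_2}$ over $Y_2$ are related by the isomorphism of triples $(M_1,\eta_1,B_{v_1})\cong (M_2,\eta_2,B_{v_2})$ induced by $\phi$: the Fourier--Mukai argument above shows $(\phi\times\phi)^*E_{v_2}=E_{v_1}$, hence $\tilde{\phi}^*B_{v_2}=B_{v_1}$ where $\tilde\phi:Y_1\to Y_2$ is the induced map, and $\eta_2=\eta_1\circ\phi^*$ makes the markings correspond. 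Consequently, by Lemma \ref{lemma-U-B-0}(\ref{lemma-item-two-open-sets-are-equal}) applied through this isomorphism, $U_{B_{v_1}}=U_{B_{v_2}}$ as open subsets of $\fM^0_\Lambda$, and the associated deformed bundle $B_{(X,\eta)}$ is the same whether computed from $(M_1,\eta_1,B_{v_1})$ or from $(M_2,\eta_2,B_{v_2})$.

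Next I would invoke the two propositions. Proposition \ref{prop-invariance-under-the-surface-monodromy-group-of-moduli-space} (applied to $M_i=M_{H_i}(v_i)$, once we realize $M_i$ as a fiber of a family over the universal cover $\widetilde{C}_i$ of a curve $C_i$ in the moduli of polarized $K3$ surfaces as in Proposition \ref{prop-surface-monodromy-group-of-a-moduli-space}) shows that $U_{B_{v_i}}$ is invariant under $\eta_i\left[\Mon^2(S_i)^{m_{v_i}}_{c_1(v_i)}\right]\eta_i^{-1}$. Here one must check that the open set $U_{\widetilde\B_i}$ produced in Proposition \ref{prop-invariance-under-the-surface-monodromy-group-of-moduli-space} coincides with $U_{B_{v_i}}$ of Lemma \ref{lemma-U-B-0}; this is immediate from the definition of $U_\B$ in Corollary \ref{cor-open-subset-U-G-of-moduli} together with Lemma \ref{lemma-if-B-sigma-gamma-is-rigid}, since the fiber of the family over any point of $\widetilde C_i$ is a moduli space $M_{H'}(v_t)$ with $v_t$ in the same $O^+$–orbit, connected to $(M_i,\eta_i)$ by a twistor path, and the rigidity locus is a single open set by the connectedness arguments. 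Separately, Proposition \ref{prop-invariance-under-the-surface-monodromy-group-of-Douady-space} together with Corollary \ref{cor-U-B-is-Mon-B-invariant} applied to the universal Hilbert-scheme family gives invariance of the rigidity locus (for the Hilbert-scheme modular bundle) under $\nu[\Mon(\fM^0_{\Lambda_{K3}})]$; and since the markings $\eta_\sigma,\bar\eta_\sigma$ conjugate $\nu$ to $\nu_\sigma:\Mon^2(S_\sigma)\hookrightarrow\Mon^2(S_\sigma^{[n]})$ with image $\Mon^2(S_\sigma)^{m_v}$ for $v=(1,0,1-n)$ (Remark \ref{rem-invariance-under-the-surface-monodromy-group-of-Douady-space}), this is exactly invariance under $\eta\left[\Mon^2(S)^{m_v}\right]\eta^{-1}$ in the $K3^{[n]}$ case. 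One then has to note that, by the Fourier--Mukai identification of the modular bundle on $M_{v_i}$ with the Hilbert-scheme modular bundle after choosing compatible markings and connecting twistor paths (or directly by realizing $M_{v_i}$ inside a family deforming to a Douady space), the locus $U_{B_{v_i}}$ is also invariant under the Douady-space surface monodromy group; alternatively, and more cleanly, I would invoke Proposition \ref{prop-invariance-under-the-surface-monodromy-group-of-moduli-space} directly for each $v_i$, which already covers the general $M_{H_i}(v_i)$ case and subsumes the Hilbert-scheme case.

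Finally, since $U_{B_{v_1}}=U_{B_{v_2}}$ is invariant under each of the two subgroups $\eta_i\left[\Mon^2(S_i)^{m_{v_i}}_{c_1(v_i)}\right]\eta_i^{-1}$, $i=1,2$, it is invariant under the subgroup $G$ they generate. The statement $B_{(X,\eta)}\cong B_{(X,\phi\circ\eta)}$ for $\phi\in G$ then follows because both propositions are proved via Corollary \ref{cor-U-B-is-Mon-B-invariant}, whose conclusion includes precisely this isomorphism of deformed bundles for every generator $\phi$ of $\Mon(\B)$, and the isomorphism class $B_{(X,\eta)}$ is well defined and independent of which of the two triples it is computed from. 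I expect the main obstacle to be the bookkeeping identifying the open sets $U_{\widetilde\B_i}$ produced by the two propositions with the single open set $U_{B_{v_1}}$ of Lemma \ref{lemma-U-B-0} — i.e. checking that passing to universal covers, choosing non-continuous $v_t$-generic polarizations à la Yoshioka, and threading twistor paths does not change the rigidity locus — but this is exactly the content of Lemma \ref{lemma-if-B-sigma-gamma-is-rigid} and Corollary \ref{cor-open-subset-U-G-of-moduli}, so the argument reduces to citing those results correctly.
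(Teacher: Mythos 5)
Your proposal is correct and follows essentially the same approach as the paper: first observe that the Fourier--Mukai isomorphism $\phi$ lifts to an isomorphism of the triples $(M_1,\eta_1,B_{v_1})\cong(M_2,\eta_2,B_{v_2})$, so $U_{B_{v_1}}=U_{B_{v_2}}$; then invoke Propositions \ref{prop-invariance-under-the-surface-monodromy-group-of-moduli-space} and \ref{prop-invariance-under-the-surface-monodromy-group-of-Douady-space} (via Corollary \ref{cor-U-B-is-Mon-B-invariant}) to get invariance of this common rigidity locus under each of the two generating subgroups, hence under $G$, together with the isomorphism $B_{(X,\eta)}\cong B_{(X,\phi\circ\eta)}$.

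Two small points. First, you attribute $U_{B_{v_1}}=U_{B_{v_2}}$ to Lemma \ref{lemma-U-B-0}(\ref{lemma-item-two-open-sets-are-equal}); that lemma is about changing the base point along a twistor path, whereas what is needed here is simply that the definition of $U_{B_0}$ is invariant under isomorphisms of marked triples $(X_0,\eta_0,B_0)$, which holds since $(M_1,\eta_1)$ and $(M_2,\eta_2)$ represent the same point of $\fM_\Lambda^0$ and $\tilde\phi^*B_{v_2}\cong B_{v_1}$. Second, your ``alternatively, and more cleanly, I would invoke Proposition \ref{prop-invariance-under-the-surface-monodromy-group-of-moduli-space} directly for each $v_i$, which $\dots$ subsumes the Hilbert-scheme case'' is not correct: for $v=(1,0,1-n)$ one has $c_1(v)=0$ and Proposition \ref{prop-invariance-under-the-surface-monodromy-group-of-moduli-space} only yields invariance under the stabilizer $\Mon^2(S_0)_{h_0}^{m_v}$ of the chosen ample generator $h_0$, which is a proper subgroup of the full $\Mon^2(S_0)^{m_v}$ that the corollary asserts. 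Proposition \ref{prop-invariance-under-the-surface-monodromy-group-of-Douady-space} is genuinely needed in that case, exactly as you used it in your primary route.
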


If, furthermore, $B_{v_1}$ is infinitesimally rigid, then $U_{B_{v_1}}$  contains $(M_1,\eta_1)$ and so is non-empty.

%
\subsection{Proof of Theorem \ref{thm-rigidity}}
\label{sec-proof-of-rigidity-theorem}
Step 1: Let $(S,\bar{\eta})$ be a marked $K3$ surface and $(S^{[n]},\eta_1)$ its Hilbert scheme with the extended marking as in Section \ref{sec-invariance-under-the-surface-monodromy-group-of-a-Douady-space}. Let $\fM_\Lambda^0$ be the connected component containing $(S^{[n]},\eta_1)$.
Denote by $E_1$ the modular sheaf over $S^{[n]}\times S^{[n]}$. 
Let $Y_1$ be the blow-up of $S^{[n]}\times S^{[n]}$ along the diagonal and let $B_1$ be the modular projective bundle over $Y_1$. 
$E_1$ is infinitesimally rigid, by \cite[Lemma 5.2]{generalized-deformations}. The projective bundle $B_1$ is infinitesimally rigid, by 
\cite[Lemma 4.3]{torelli}. Hence, $U_{B_1}$ contains $(S^{[n]},\eta_1)$.
Let $\Mon^2(S^{[n]})_{cov}$ be the subgroup of $\Mon^2(S^{[n]})$ acting trivially on
$H^2(S^{[n]},\Integers)^*/H^2(S^{[n]},\Integers)$. Then $\Mon^2(S^{[n]})_{cov}$ is an index $2$ subgroup of 
$\Mon^2(S^{[n]})$, if $n>2$, and the whole of $\Mon^2(S^{[n]})$ if $n=2$, by \cite[Theorem 1.2 and Lemma 4.2]{markman-constraints}. Let $\Mon(\fM^0_\Lambda)_{cov}$ be the corresponding subgroup of $\Mon(\fM^0_\Lambda)$.
We first show the the subset $U_{B_1}$ of $\fM_\Lambda^0$ is $\Mon(\fM^0_\Lambda)_{cov}$ invariant. 

Set $v_1:=(1,0,1-n)$.
We already know that $U_{B_1}$ is $\eta_1\left[\Mon^2(S)^{m_{v_1}}\right]\eta_1^{-1}$ invariant, by Proposition 
\ref{prop-invariance-under-the-surface-monodromy-group-of-Douady-space}, using the notation of Remark 
\ref{rem-invariance-under-the-surface-monodromy-group-of-Douady-space}.
We may assume that $S$ admits an elliptic fibration 
with a section and that the  rank of $\Pic(S)$ is $2$. Denote by $f$
the class in $H^2(S,\Integers)$ of an elliptic fiber, and let $\sigma$ be the class of the section. 
Let $v_2$ be the Mukai vector $(0,\sigma+nf,1)$. The line bundle $H$ with $c_1(H)=\sigma+kf$ is $v_2$-generic, for $k$ sufficiently large, and there exists over $S\times S$ an object $\P$, inducing an auto-equivalence $\Phi$ of $D^b(S)$ sending 
the ideal sheaf of a length $n$ subscheme (with Mukai vector $v_1$) to an $H$-stable sheaf over $S$ with Mukai vector $v_2$,
by \cite[Theorem 3.15]{yoshioka-abelian-surface}. 
We get the isomorphism $\phi:S^{[n]}\rightarrow M_H(v_2)$. 
The two subgroups $\Mon^2(S)^{m_{v_1}}$ and $\phi^*\left[\Mon^2(S)^{m_{v_2}}_{\sigma+kf}\right]\phi_*$ generate the subgroup
$\Mon^2(S^{[n]})_{cov}$ of $\Mon^2(S^{[n]})$, by \cite[Prop. 7.1 and Prop. 8.6]{markman-monodromy-I}, as explained in 
\cite[Sec. 1.3.1, sub-step 1.2]{markman-monodromy-I}.
We conclude that $U_{B_1}$ is $\Mon(\fM^0_\Lambda)_{cov}$ invariant, 
and $B_{(X,\eta)}\cong B_{(X,\phi\circ\eta)}$, for all $(X,\eta)\in U_{B_1}$ and $\phi\in \Mon(\fM^0_\Lambda)_{cov}$,
by Corollary \ref{cor-invariance-under-two-subgroups}. 

Step 2: Proof of Part \ref{thm-item-rigidity} of Theorem \ref{thm-rigidity} in the case 
where the sheaf $E$ in Equation (\ref{eq-modular-sheaf}) is $E_1$ and $M=S^{[n]}$.
$U_{B_1}$ contains marked Hilbert schemes 
$(S^{[n]},\eta)$ with all possible values of $r(S^{[n]})$ (note that $r(S^{[n]})=r(S)$). 
Hence, $U_{B_1}$ contains every marked pair $(X,\eta)$ in $\fM^0_\Lambda$, such that the Picard rank of $X$ is not maximal,
by Theorem \ref{thm-main-general-ihsm}.

Let $\gamma$ be a twistor path from $(S^{[n]},\eta_1)$ to $(X,\eta)\in U_{B_1}$, let $(E_1)_\gamma$ be the sheaf over 
$X\times X$ obtained from the modular sheaf $E_1$ via $\gamma$, and let $(B_1)_\gamma$ be the associated projective bundle over the blow-up $Y$ of the diagonal in $X\times X$.
Then $(B_1)_\gamma$ is infinitesimally rigid, by Lemma \ref{lemma-U-B-0}. Infinitesimal rigidity of $(B_1)_\gamma$ was shown to imply that of 
$(E_1)_\gamma$ in \cite{torelli} as follows. Let $\A$ be the Azumaya algebra over $Y$ associated to $(B_1)_\gamma$.
We have the left exact sequence
\[
0\rightarrow H^1(X\times X,\beta_*\A)\rightarrow H^1(Y,\A)\rightarrow H^0(X\times X,R^1\beta_*\A).
\]
The isomorphism 
\begin{equation}
\label{eq-blow-down-morphism-pushes-Azumaya-algebra-to-such}
\beta_*\A\cong \SheafEnd((E_1)_\gamma)
\end{equation}
was established in \cite[Step 3 of the proof of Prop. 3.2]{torelli}.
Hence, rigidity of $(B_1)_\gamma$, which is equivalent to 
the vanishing of $H^1(Y,\A)$, implies the vanishing of $H^1(X\times X,\SheafEnd((E_1)_\gamma))$.
Next, we have the left exact
\[
0\rightarrow H^1(X\times X,\SheafEnd((E_1)_\gamma))\rightarrow \Ext^1((E_1)_\gamma,(E_1)_\gamma)\rightarrow 
H^0(X\times X,\SheafExt^1((E_1)_\gamma,(E_1)_\gamma)).
\]
The right space $H^0(X\times X,\SheafExt^1((E_1)_\gamma,(E_1)_\gamma))$ vanishes, by \cite[Prop. 3.5]{torelli},
and the left space vanishes, as noted above. Hence,
$\Ext^1((E_1)_\gamma,(E_1)_\gamma)$ vanishes as well and $(E_1)_\gamma$ is infinitesimally rigid.

Step 3: Proof of Part \ref{thm-item-independence-of-the-path} of Theorem \ref{thm-rigidity}  in the case 
where the sheaf $E$ in Equation (\ref{eq-modular-sheaf}) is $E_1$ and $M=S^{[n]}$.
Assume first that the Picard rank of $X$ is not maximal. 
The isomorphism class of the projective bundle $(B_1)_\gamma$ is independent of the choice of the path $\gamma$ 
from $(S^{[n]},\eta_1)$ to $(X,\eta)$, by its rigidity and 
Lemma \ref{lemma-if-B-sigma-gamma-is-rigid}.
The isomorphism 
 (\ref{eq-blow-down-morphism-pushes-Azumaya-algebra-to-such})
implies the independence of $(E_1)_\gamma$ of the choice of $\gamma$.
Assume next that the Picard rank of $X$ is maximal. Let $(X',\eta')$ be a marked pair with $X'$ of non-maximal Picard rank. Let $\gamma_i$, $i=1,2$, be two twistor paths from $(S^{[n]},\eta_1)$ to $(X,\eta)$ and $\gamma'$ a twistor path from $(X,\eta)$ to $(X',\eta')$.
The isomorphism class of $(E_1)_{\gamma'\gamma_1}$ and $(E_1)_{\gamma'\gamma_2}$ are equal, since the Picard rank of $X'$ is not maximal, and consequently so are $((E_1)_{\gamma'\gamma_i})_{\gamma'^{-1}}$, which in turn are isomorphic to 
$(E_1)_{\gamma_i}$, $i=1,2$.

Step 4: Proof of Part \ref{thm-item-monodromy-invariance} of Theorem \ref{thm-rigidity}  in the case 
where $(M,\eta_0)=(S^{[n]},\eta_1)$ and $\phi$ belongs to $\Mon(\Lambda)_{cov}$. 
The isomorphism $B_{(X,\eta)}\cong B_{(X,\phi\circ\eta)}$ was established in Step 1, 
for $(X,\eta)\in U_{B_1}$ and $\phi\in \Mon^2(\fM^0_\Lambda)_{cov}$. 
The isomorphism $\SheafEnd(E_{(X,\eta)})\cong \SheafEnd(E_{(X,\phi\circ\eta)})$ follows for such marked pairs and $\phi$ 
via the isomorphism (\ref{eq-blow-down-morphism-pushes-Azumaya-algebra-to-such}). 
The equality 
$\Mon^2(\fM^0_\Lambda)_{cov}=\Mon(\Lambda)_{cov}$ is established in \cite[Theorem 1.2 and Lemma 4.2]{markman-constraints}. 

Step 5: Proof of Part \ref{thm-item-isomorphic-if-compatible} of Theorem \ref{thm-rigidity}  in the case 
where the markings of $(\widetilde{M},\tilde{\eta}_0)$ and $(M,\eta_0)$ are compatible.
It suffices to prove the statement for $(M,\eta_0)=(S^{[n]},\eta_1)$ and $(\widetilde{M},\tilde{\eta}_0)$ with a compatible marking. 
It suffices to prove it for one marking $\widetilde{\eta}_0$ in the compatibility class, by Step 4. 
Denote by $B_{\tilde{v}}$ the modular projective bundle over the blow-up of the diagonal in $\widetilde{M}\times \widetilde{M}$.
It suffices to prove that there exists a twistor path $\gamma$ from $(S^{[n]},\eta_1)$
to  $(\widetilde{M},\tilde{\eta}_0)$, such that $(B_1)_\gamma$ is isomorphic to $B_{\tilde{v}}$, as the sets $U_{B_1}$ and $U_{B_{\tilde{v}}}$ would then be equal, by Lemma \ref{lemma-U-B-0} (\ref{lemma-item-two-open-sets-are-equal}).
The latter statement would follow, if there exists a differentiable $\G$-family $\B$, as in the set-up of Lemma
\ref{problem-construct-a-differentiable-family-B-relative-version}, vertices $v_i$, and points $\sigma_i\in\Sigma_{v_i}$, $i=1,2$,
such that $(X_{\sigma_1},\eta_{\sigma_1},B_{\sigma_1})\cong (S^{[n]},\eta_1,B_1)$, and
$(X_{\sigma_2},\eta_{\sigma_2},B_{\sigma_2})\cong (\widetilde{M},\tilde{\eta}_0,B_{\tilde{v}})$, by
Lemma \ref{lemma-if-B-sigma-gamma-is-rigid}. 

There exists a finite sequence of algebraic curves $C_i$, $1\leq i \leq N$, a family of $K3$ surfaces 
$p_i:\cS_i\rightarrow C_i$, sections $h_i$ of $R^2p_{i,*}\Integers$, and smooth proper morphisms 
$\pi_i:\M_i\rightarrow C_i$, satisfying the properties of Proposition \ref{prop-surface-monodromy-group-of-a-moduli-space}, 
points $\sigma_i$ in $C_i$, and stability preserving Fourier-Mukai functors
\[
\Phi_i : D^b(S_{\sigma_i})\rightarrow D^b(S_{\sigma_{i+1}})
\]
inducing isomorphisms $\phi_i:\M_{\sigma_i}\rightarrow \M_{\sigma_{i+1}}$, such that
$\phi_i\times\phi_i$ pulls back the modular sheaf $E_{\sigma_{i+1}}$ over $\M_{\sigma_{i+1}}\times \M_{\sigma_{i+1}}$
to the modular sheaf $E_{\sigma_i}$ over $\M_{\sigma_i}\times \M_{\sigma_i}$ as in Section \ref{sec-Fourier-Mukai},
and such that $\M_{\sigma_1}$ is $S^{[n]}$ and $\M_{\sigma_N}$ is $\tilde{M}$, by the work of  Yoshioka
\cite{yoshioka-abelian-surface} (see also \cite[Sec. 1.3.1]{markman-monodromy-I}). 
We get the graph $\G$ with vertices $\{1, 2, \dots, N\}$ and edges $\{e_i\}_{i=1}^{N-1}$ induced by $\phi_i$, 
and the family $\B_i\rightarrow \Y_i\rightarrow C_i$ of modular  projective bundles over the blow-up $\Y_i$ of the relative diagonal in
$\M_i\times_{C_i}\M_i$, $1\leq i \leq N$. Let $\tilde{C}_i$ be the universal cover, $\tilde{\sigma}_i\in \tilde{C}_i$ a point over $\sigma_i$,
$1\leq i\leq N$, and form the pulled back $\G$-family 
$\tilde{\B}_i\rightarrow \tilde{\Y}_i\rightarrow \tilde{C}_i$ gluing again via $\phi_i$ the fibers of $\tilde{\M}_i$ over 
$\tilde{\sigma}_i$ and of $\tilde{\M}_{i+1}$ over $\tilde{\sigma}_{i+1}$, $1\leq i \leq N-1$. The marking $\eta_1$ of
$\tilde{\M}_{\tilde{\sigma}_1}:=S^{[n]}$ determines a marking $\tilde{\eta}_0$ of $\tilde{\M}_{\tilde{\sigma}_N}:=\tilde{M}$,
since the curves $\tilde{C}_i$ are simply connected and so is the reducible curve $\tilde{C}$ obtained from their union
by gluing $\tilde{\sigma}_i$  to $\tilde{\sigma}_{i+1}$. We claim that the marked pairs $(S^{[n]},\eta_1)$ and
$(\tilde{M},\tilde{\eta}_0)$ are compatible. Indeed, 
$\tilde{\eta}_0^{-1}\eta_1:H^2(S^{[n]},\Integers)\rightarrow H^2(\tilde{M},\Integers)$ is a parallel transport operator, by construction,
and so it maps the value of any flat section of a local system at $\tilde{\sigma}_1$ 
to its value at $\tilde{\sigma}_N$. The characteristic classes $\bar{c}_1(\E_\sigma)$, $\sigma\in \tilde{C}$, of the relative (twisted) modular sheaf $\E_\sigma$, given in (\ref{eq-characteristic-class-of-modular-sheaf}), form such a flat section. We conclude that 
the marked pairs $(S^{[n]},\eta_1)$ and $(\tilde{M},\tilde{\eta}_0)$ are compatible, by the characterization in Section \ref{sec-first-characteristic-class} of the compatibility relation  in terms of these characteristic classes.

Step 6: Proof of Parts \ref{thm-item-monodromy-invariance} and  \ref{thm-item-isomorphic-if-compatible} of Theorem \ref{thm-rigidity}. 
It remains to provide an example where
$\SheafEnd(\tilde{E}_{\tilde{\gamma}})$ is isomorphic to $\SheafEnd(E^*_\gamma)$, for $\gamma$ a twistor path from $(M,\eta_0)$
to $(X,\eta)$ and $\tilde{\gamma}$ a twistor path from $(\tilde{M},\tilde{\eta}_0)$ to $(X,\eta)$. The two marked moduli spaces 
$(M,\eta_0)$ and $(\tilde{M},\tilde{\eta}_0)$ would then necessarily have incompatible markings. 
Theorem 7.9 in \cite{markman-monodromy-I} provides an example of two smooth and projective $2n$-dimensional moduli spaces $M$ and $\tilde{M}$ of stable sheaves on a $K3$ surface and an isomorphism $f:M\rightarrow \tilde{M}$, such that the pull back
$(f\times f)^*(\tilde{E})$ of the modular sheaf over $\tilde{M}\times\tilde{M}$ is isomorphic to the dual $E^*$ of
the modular sheaf over $M\times M$, for every integer $n\geq 2$. Choose a marking $\eta_0$ for $M$, set 
$\tilde{\eta}_0:=\eta_0\circ f^*$, so that $(\tilde{M},\tilde{\eta}_0)\cong (M,\eta_0)$.
The desired example is provided, by choosing $\gamma$ and $\tilde{\gamma}$ to be the trivial paths.

Step 7: 
The general form of Parts \ref{thm-item-rigidity} and 
\ref{thm-item-independence-of-the-path} now follows from Part \ref{thm-item-isomorphic-if-compatible} of the Theorem.
This completes the proof of Theorem \ref{thm-rigidity}.
\EndProof

%

{\bf Acknowledgements:}
The work of E. Markman was partially supported by a grant  from the Simons Foundation (\#427110) and his work during March 2017 by the Max Planck Institute in Bonn. S. Mehrotra acknowledges support from
CONICYT by way of the grant FONDECYT Regular 1150404.



\begin{thebibliography}{B-N-R}

\bibitem[AV]{amerik-verbitsky} Amerik, E., Verbitsky, M.: {\em Rational curves on hyperk\"{a}hler manifolds.\/}
Int. Math. Res. Not. 2015, no. 23, 13009--13045. 

\bibitem[B1]{beauville}
Beauville, A.: {\em Vari\'{e}t\'{e}s K\"ahleriennes dont la premiere classe de Chern
est nulle.}  J. Diff. Geom. 18, p. 755--782 (1983).

\bibitem[B2]{beauville-automorphisms} Beauville, A. {\em Some remarks on 
k\"{a}hler manifolds with $c_1=0$.\/
}
in Classification of algebraic and analytic manifolds (Katata, 1982), 1-26, 
Progr. Math., 39, Birkh\"{a}user Boston, 1983. 

\bibitem[BHT]{bayer-hassett-tschinkel} Bayer, A., Hassett, B., Tschinkel, Y.:
{\em Mori cones of holomorphic symplectic varieties of K3 type.\/}
Ann. Sci. \'{E}c. Norm. Sup\'{e}r. (4) 48 (2015), no. 4, 941--950.

\bibitem[BS]{bando-siu}
Bando, S., Siu, Y. T.: 
{\em Stable sheaves and Einstein-Hermitian metrics.\/} 
Geometry and analysis on complex manifolds,  39--50, 
World Sci. Publ., River Edge, NJ, 1994.


\bibitem[C]{chevalley} Chevalley, C.:
{\em The algebraic theory of spinors.\/}
Columbia Univ. Press 1954.

\bibitem[HKLR]{hklr}Hitchin, N., Karlhede, A., Lindstr\"{o}m, U., Ro\v{c}ek, M.:
{\em Hyper-K\"{a}hler metrics and supersymmetry.\/} Comm. Math. Phys. 108 (1987), no. 4, 535--589. 


\bibitem[HL]{huybrechts-lehn-book}
Huybrechts, D, Lehn, M.: 
{\em The geometry of moduli spaces of sheaves.\/} 
Second edition. Cambridge University Press, Cambridge, 2010.


\bibitem[Hu]{huybrects-basic-results}
Huybrechts, D.: 
{\em Compact Hyperk\"{a}hler Manifolds: Basic results.\/}
Invent. Math. 135 (1999), no. 1, 63-113 and
Erratum: Invent. Math. 152 (2003), no. 1, 209--212. 

\bibitem[Ko]{kobayashi} Kobayashi, R.: {Moduli of Einstein metrics on a $K3$ surface and degenerations of type I.\/}
K\"{a}hler metric and moduli spaces, 257--311, Adv. Stud. Pure Math., 18-II, Academic Press, Boston, MA, 1990.


\bibitem[KS1]{kodaira-spencer} Kodaira, K., Spencer, D. C.:
{\em On deformations of complex analytic structures, I.\/}
Annals of Math. Second Serries, Vol. 67, No. 2, (1958), pp. 328--401.

\bibitem[KS2]{kodaira-spencer-III} Kodaira, K., Spencer, D. C.:
{\em On deformations of complex analytic structures III, stability theorems for complex analytic structures.\/}
Annals of Math. Second Serries, Vol. 71, No. 1, (1960), pp. 43--46.




\bibitem[Ma1]{markman-monodromy-I} Markman, E.:
{\em On the monodromy of moduli spaces of sheaves on
K3 surfaces.\/}
J. Algebraic Geom. {\bf 17}  (2008), 29--99.

\bibitem[Ma2]{markman-constraints} Markman, E.:
{\em Integral constraints on
the monodromy group of the hyperk\"{a}hler
resolution of a symmetric product of a $K3$
surface.\/} Internat. J. of Math. 21, (2010), no. 2, 169--223.  


\bibitem[Ma3]{markman-hodge} Markman, E.:
{\em The Beauville-Bogomolov class as a
characteristic class.\/} Electronic preprint arXiv:1105.3223v3.

\bibitem[Ma4]{markman-survey} Markman, E.:
{\em A survey of Torelli and monodromy results for holomorphic-symplectic varieties.\/}
In  ``Complex and Differential Geometry'', W. Ebeling et. al. (eds.),
Springers Proceedings in Math. 8, (2011), pp 257--323.
Available at arXiv:1101.4606.

\bibitem[Ma5]{markman-naturality} Markman, E.:
{\em Naturality  of the hyperholomorphic sheaf over the cartesian square of a manifold of $K3^{[n]}$-type.\/}
Electronic preprint arXiv:1608.05798.v1.

\bibitem[Ma6]{markman-universal-family} Markman, E.:
{\em On the existence of universal families of marked hyperk\"{a}hler varieties.\/} Electronic preprint arXiv:1701.08690.

\bibitem[Mo]{mongardi} Mongardi, G.: {\em A note on the K\"{a}hler and Mori cones of hyperk\"{a}hler manifolds.\/}
Agian J. Math. 19 (2015), no. 4, 583--591.

\bibitem[MM1]{torelli} Markman, E., Mehrotra, S.:
 {\it A global Torelli theorem for rigid hyperholomorphic sheaves.\/}
Electronic preprint, arXiv: 1310.5782v1.

\bibitem[MM2]{generalized-deformations} Markman, E., Mehrotra, S.:
{\it Integral transforms and deformations of $K3$ surfaces.\/} Electronic preprint, arXiv: 1507.03108v1.


\bibitem[Mu1]{mukai-hodge} Mukai, S.:
{\em On the moduli space of bundles on K3 surfaces I},
Vector bundles on algebraic varieties,
Proc. Bombay Conference, 1984, Tata Institute of Fundamental Research Studies,
no. 11, Oxford University Press, 1987, pp. 341--413.

\bibitem[Mu2]{mukai-applications} Mukai, S.: 
{\em Fourier functor and its application to the moduli of bundles 
on an Abelian variety.\/}
Adv. Studies in Pure Math. 10, 515-550 (1987).


\bibitem[O'G]{ogrady-weight-two} O'Grady, K.: 
{\em The weight-two Hodge structure of moduli spaces of sheaves on a K3 
surface.\/} J. Algebraic Geom. 6 (1997), no. 4, 599--644.

\bibitem[Or]{orlov} Orlov, D.: {\em Derived categories of coherent sheaves and equivalences between them.\/}
Uspekhi Mat. Nauk 58 (2003), no. 3(351), 89--172; translation in
Russian Math. Surveys 58 (2003), no. 3, 511--591.

\bibitem[Siu]{siu} Siu, Y.: 
{\em Extension of locally free analytic sheaves.\/} Math. Ann. 179, 285--294 (1969).

\bibitem[ST]{schumacher-toma} Schumacher, G., Toma, M.:
{\em Moduli of K\"{a}hler manifolds equipped with Hermite-Einstein vector bundles.\/}
Rev. Roumaine Math. Pures Appl. 38 (1993), no. 7-8, 703--719. 

\bibitem[V1]{kaledin-verbitski-book} Verbitsky, M.:
{\em Hyperholomorphic sheaves and new examples of hyperkaehler manifolds.\/} 
In the book: Hyperk\"{a}hler manifolds, by Kaledin, D. and Verbitsky, M., 
Mathematical Physics (Somerville), 12. International Press, 
Somerville, MA, 1999. 

\bibitem[V2]{Ver} Verbitsky, M.:
{\em Coherent sheaves on general K3 surfaces and tori.\/} 
Pure Appl. Math. Q. 4 (2008), no. 3, part 2, 651--714.

\bibitem[V3]{verbitsky-1996} Verbitsky, M.:
{\em Hyperholomorphic bundles over a hyper\"{a}hler manifold.\/}
J. Alg. Geom. 5 (1996), 633--669.

\bibitem[V4]{verbitsky-ergodicity} Verbitsky, M.:
{\em Ergodic complex structures on hyperk\"{a}hler manifolds.\/}
Acta Math. 215 (2015), no. 1, 161--182.


\bibitem[V5]{verbitsky-torelli} Verbitsky, M.: 
{\em Mapping class group and a global Torelli theorem for hyperk\"{a}hler manifolds.\/} Duke Math. J. 162 (2013), No. 15, 2929--2986.

\bibitem[V6]{verbitsky-ergodic-erratum} Verbitsky, M.: 
{\em Ergodic complex structures on hyperk\"{a}hler manifolds: an erratum.\/} Preprint 2017.

\bibitem[Ya]{yau} Yau, S. T.: {\em On the Ricci curvature of a compact K\"{a}hler manifold and the complex Monge-Amp\'{e}re equation. I.\/}
Comm. Pure Appl. Math. 31 (1978), no. 3, 339--411. 

\bibitem[Yo]{yoshioka-abelian-surface} Yoshioka, K.:
{\em 
Moduli spaces of stable sheaves on abelian surfaces. \/
}
Math. Ann. 321 (2001), no. 4, 817--884.

\end{thebibliography}
\end{document}